\definecolor{cite}{rgb}{0.50,0.00,1.00}
\definecolor{url}{rgb}{0.00,0.50,0.75}
\definecolor{link}{rgb}{0.00,0.00,0.50}
\numberwithin{equation}{section}
\theoremstyle{plain}
\newtheorem{proposition}{Proposition}[section]
\newtheorem{corollary}[proposition]{Corollary}
\newtheorem{lem}[proposition]{Lemma}
\newtheorem{theorem}[proposition]{Theorem}
\theoremstyle{remark}
\newtheorem{definition}[proposition]{Definition}
\newtheorem{notation}[proposition]{Notation}
\newtheorem{remark}[proposition]{Remark}
\renewcommand{\b}[1]{\mathbf{#1}}
\renewcommand{\c}[1]{\mathcal{#1}}
\renewcommand{\d}[1]{\mathbb{#1}}
\newcommand{\f}[1]{\mathfrak{#1}}
\renewcommand{\r}[1]{\mathrm{#1}}
\newcommand{\s}[1]{\mathscr{#1}}
\renewcommand{\sf}[1]{\mathsf{#1}}
\renewcommand{\(}{\left(}
\renewcommand{\)}{\right)}
\newcommand{\res}{\mathbin{|}}
\newcommand{\ol}{\overline}
\newcommand{\ul}{\underline}
\newcommand{\Sec}{\S}
\newcommand{\ang}[1]{\langle{#1}\rangle}
\newcommand{\bA}{\b A}
\newcommand{\bD}{\b D}
\newcommand{\bG}{\b G}
\newcommand{\bH}{\b H}
\newcommand{\bM}{\b M}
\newcommand{\bP}{\b P}
\newcommand{\bc}{\b c}
\newcommand{\bw}{\b w}
\newcommand{\cA}{\c A}
\newcommand{\cF}{\c F}
\newcommand{\cG}{\c G}
\newcommand{\cH}{\c H}
\newcommand{\cL}{\c L}
\newcommand{\cO}{\c O}
\newcommand{\cT}{\c T}
\newcommand{\cX}{\c X}
\newcommand{\cY}{\c Y}
\newcommand{\cZ}{\c Z}
\newcommand{\dA}{\d A}
\newcommand{\dB}{\d B}
\newcommand{\dC}{\d C}
\newcommand{\dF}{\d F}
\newcommand{\dN}{\d N}
\newcommand{\dQ}{\d Q}
\newcommand{\dR}{\d R}
\newcommand{\dZ}{\d Z}
\newcommand{\fG}{\f G}
\newcommand{\fM}{\f M}
\newcommand{\fP}{\f P}
\newcommand{\fR}{\f R}
\newcommand{\fX}{\f X}
\newcommand{\fY}{\f Y}
\newcommand{\fc}{\f c}
\newcommand{\fm}{\f m}
\newcommand{\fp}{\f p}
\newcommand{\rD}{\r D}
\newcommand{\rH}{\r H}
\newcommand{\rI}{\r I}
\newcommand{\rJ}{\r J}
\newcommand{\rL}{\r L}
\newcommand{\rN}{\r N}
\newcommand{\rR}{\r R}
\newcommand{\rT}{\r T}
\newcommand{\rU}{\r U}
\newcommand{\rV}{\r V}
\newcommand{\rd}{\r d}
\newcommand{\rn}{\r n}
\newcommand{\sA}{\s A}
\newcommand{\sB}{\s B}
\newcommand{\sC}{\s C}
\newcommand{\sD}{\s D}
\newcommand{\sF}{\s F}
\newcommand{\sH}{\s H}
\newcommand{\sI}{\s I}
\newcommand{\sL}{\s L}
\newcommand{\sM}{\s M}
\newcommand{\sN}{\s N}
\newcommand{\sO}{\s O}
\newcommand{\sP}{\s P}
\newcommand{\sQ}{\s Q}
\newcommand{\sU}{\s U}
\newcommand{\sV}{\s V}
\newcommand{\sW}{\s W}
\newcommand{\sX}{\s X}
\newcommand{\sfe}{\sf e}
\newcommand{\ab}{\r{ab}}
\newcommand{\ac}{\r{ac}}
\newcommand{\Ad}{\r{Ad}}
\newcommand{\anti}{\r{anti}}
\newcommand{\BT}{\sf{BT}}
\newcommand{\can}{\r{can}}
\newcommand{\cl}{\r{cl}}
\newcommand{\cusp}{\r{cusp}}
\newcommand{\DR}{\r{dR}}
\newcommand{\et}{\acute{\r{e}}\r{t}}
\newcommand{\Fr}{\sf{F}}
\newcommand{\Gm}[1]{\bG_{m,#1}}
\newcommand{\Gaf}{\widehat{\bG}_{a}}
\newcommand{\Gmf}{\widehat{\bG}_{m}}
\newcommand{\id}{\r{id}}
\newcommand{\KS}{\r{KS}}
\newcommand{\loc}{\r{loc}}
\newcommand{\lt}{\r{lt}}
\newcommand{\LT}{\cL\!\cT}
\newcommand{\Nilp}{\sf{Nilp}}
\newcommand{\nr}{\r{nr}}
\newcommand{\ord}{\r{ord}}
\newcommand{\p}{\fp}
\newcommand{\RES}{\r{res}}
\newcommand{\rig}{\r{rig}}
\newcommand{\rrd}{\:\rd}
\newcommand{\tame}{\r{tame}}
\newcommand{\Tp}{\rT_p}
\newcommand{\U}[1]{\prescript{\prime}{}{#1}}
\newcommand{\un}{\r{univ}}
\newcommand{\Ver}{\sf{V}}
\newcommand{\wtimes}{\widehat{\otimes}}
\DeclareMathOperator{\AV}{AV}
\DeclareMathOperator{\Char}{ch}
\DeclareMathOperator{\End}{End}
\DeclareMathOperator{\Ext}{Ext}
\DeclareMathOperator{\Frob}{Frob}
\DeclareMathOperator{\Gal}{Gal}
\DeclareMathOperator{\GL}{GL}
\DeclareMathOperator{\Hom}{Hom}
\DeclareMathOperator{\IM}{Im}
\DeclareMathOperator{\Ker}{Ker}
\DeclareMathOperator{\Lie}{Lie}
\DeclareMathOperator{\Map}{Map}
\DeclareMathOperator{\Mat}{Mat}
\DeclareMathOperator{\Nm}{Nm}
\DeclareMathOperator{\RE}{Re}
\DeclareMathOperator{\Sp}{Sp}
\DeclareMathOperator{\Span}{Span}
\DeclareMathOperator{\Spec}{Spec}
\DeclareMathOperator{\Spf}{Spf}
\DeclareMathOperator{\std}{std}
\DeclareMathOperator{\Sym}{Sym}
\DeclareMathOperator{\vol}{vol}
\begin{document}

\title
{On $p$-adic Waldspurger formula}

\author{Yifeng Liu}
\address{Department of Mathematics, Massachusetts Institute of Technology, Cambridge, MA 02139}
\email{liuyf@math.mit.edu}

\author{Shouwu Zhang}
\address{Department of Mathematics, Princeton University, Princeton, NJ 08544}
\email{shouwu@math.princeton.edu}

\author{Wei Zhang}
\address{Department of Mathematics, Columbia University, New York, NY 10027}
\email{wzhang@math.columbia.edu}

\date{October 19, 2014}
\subjclass[2010]{Primary: 11G40; Secondary: 11F67, 11G18, 11J95}

\begin{abstract}
In this article, we study $p$-adic torus periods for certain $p$-adic valued
functions on Shimura curves coming from classical origin. We prove a $p$-adic
Waldspurger formula for these periods, generalizing the recent work of
Bertolini, Darmon, and Prasanna. In pursuing such a formula, we construct a
new anti-cyclotomic $p$-adic $L$-function of Rankin--Selberg type. At a
character of positive weight, the $p$-adic $L$-function interpolates the
central critical value of the complex Rankin--Selberg $L$-function. Its value
at a Dirichlet character, which is outside the range of interpolation,
essentially computes the corresponding $p$-adic torus period.
\end{abstract}

\maketitle

\tableofcontents

\section{Introduction}
\label{s:introduction}

In this article, we study \emph{$p$-adic torus periods} for certain
$\dC_p$-valued functions on Shimura curves coming from classical origin. We
prove a \emph{$p$-adic Waldspurger formula} for these periods, generalizing
the recent work of Bertolini--Darmon--Prasanna \cite{BDP13}. We may view it
as the counterpart of the classical Waldspurger formula for (complex
automorphic) torus periods. In pursuing such a formula, we construct a new
anti-cyclotomic $p$-adic $L$-function of Rankin--Selberg type. At a character
of positive weight, the $p$-adic $L$-function interpolates the central
critical value of the complex Rankin--Selberg $L$-function. Its value at a
Dirichlet character, which is outside the range of interpolation, essentially
computes the corresponding $p$-adic torus period.

The nonvanishing of such period provides a new criterion for the
nontriviality of Heegner points on modular abelian varieties of $\GL(2)$-type
over totally real number fields. This sort of result, in a slightly different
form, was first obtained by Rubin in \cite{Rub92}, for CM elliptic curves
over the rationals. The generalization to other elliptic curves or abelian
varieties parameterized by modular curves is due to
Bertolini--Darmon--Prasanna assuming the Heegner condition and other control
of ramification (see \cite{BDP13}*{Assumption 5.12} for a list of
conditions). Also, recently we learn that Brooks, in his PhD thesis
\cite{Bro}, obtains a result similar to \cite{BDP13} for classical new forms
under certain control of ramification without assuming the Heegner condition,
which is a special case of our formula when $F=\dQ$. Their method uses
$p$-adic differential operators, traced back to the work of Katz
\cite{Kat78}, which is different from Rubin's. Our result generalizes all
known results and is placed in the framework of Waldspurger formula
(\cite{Wal85}, for the central value of the complex $L$-function), or
Yuan--Zhang--Zhang's general version of Gross--Zagier formula (\cite{YZZ13},
for the central derivative of the complex $L$-function). The method we use is
a global Mellin transform for the Lubin--Tate action on the Igusa tower of
the Shimura curve at infinite level, which is closely related to $p$-adic
differential operators in the classical situation.

We remark that the $p$-adic $L$-function we construct is a distribution, or
equivalently, a rigid analytic function on certain ``space of (twisted)
anti-cyclotomic characters''. Note that the rigid analyticity is crucial for
developing the corresponding Iwasawa theory. Also, our construction assumes
neither the Heegner condition (which is somehow apparent since we work over
totally real fields), nor any control of ramification on either
representations, characters, or test vectors. We only need one assumption --
the prime $\p$ in consideration of the totally real field is split in the CM
extension we use to define torus periods/Heegner points.

\subsection{$p$-adic Maass functions and $p$-adic torus periods}
\label{ss:maass_functions}

Throughout the article, we fix a prime $p$, a totally imaginary number field
$E\subset\dC_p$ with $F$ the maximal totally real subfield whose degree is
$g$. Thus we obtain a distinguished place $\fp$ (resp.\ $\fP$) of $F$ (resp.\
$E$) above $p$. Denote by $\dA$ (resp.\ $\dA^\infty$) the ring of ad\`{e}les
(resp.\ finite ad\`{e}les) of $F$. Let $\eta=\prod \eta_v\colon
F^\times\backslash\dA^\times\to\{\pm1\}$ be the quadratic character
associated to $E/F$. In particular, we have the $L$-function
$L(s,\eta)=\prod_{v<\infty}L(s,\eta_v)$. Let $\delta_E\in\dZ_{>0}$ be the
absolute value of the discriminant of $E$.

Recall from \cite{YZZ13}*{\Sec 1.2.1} that a quaternion algebra $\dB$ over
$\dA$ is \emph{incoherent} if $\Sigma_{\dB}$, the set of places $v$ of $F$
where $\dB$ is ramified, is a finite set of odd cardinality; $\dB$ is
\emph{totally definite} if $\Sigma_{\dB}$ contains all archimedean places of
$F$. Let $\dB$ be a totally definite incoherent quaternion algebra over
$\dA$, which gives rise to a projective system of Shimura curves
$\{X(\dB)_U\}_U$ defined over $F$, indexed by compact open subgroups $U$ of
$\dB^{\infty\times}\colonequals(\dB\otimes_\dA\dA^\infty)^\times$. Put
$X(\dB)=\varprojlim_U X(\dB)_U$. For every $g\in\dB^{\infty\times}$, we
denote by $\rT_g\colon X(\dB)\to X(\dB)$ the induced Hecke morphism by right
translation.

\begin{definition}[$p$-adic Maass function]
We introduce the following objects and notation.
\begin{enumerate}
  \item We say a function $X(\dB)(\dC_p)\to\dC_p$ is a \emph{$p$-adic
      Maass function} if it is the pullback of some locally analytic
      $\dC_p$-valued function on $X(\dB)_U\otimes_F\dC_p$ for some $U$.
      Denote by $\sA_{\dC_p}(\dB^\times)$ the $\dC_p$-vector space
      spanned by all $p$-adic Maass functions on $X(\dB)(\dC_p)$, which
      is a representation of $\dB^{\infty\times}$ such that
      $g\in\dB^{\infty\times}$ acts by $\rT_g^*$. For simplicity, we will
      write $g^*$ instead of $\rT_g^*$ in what follows.

  \item Let $A$ be an abelian variety over $F$, $f\colon X\to A$ be a
      morphism, and $\omega\in\rH^0(A,\Omega^1_A)$ be a differential
      form. Then we have the function $f^*\log_\omega$ on $X(\dC_p)$,
      where $\log_\omega\colon A(\dC_p)\to\dC_p$ is the $p$-adic
      logarithm on $A$ along $\omega$ \cite{Bour}, which is an element in
      $\sA_{\dC_p}(\dB^\times)$. A $p$-adic Maass function is
      \emph{(cuspidal) classical} if it is a finite linear combination of
      functions of the form $f^*\log_\omega$ for different
      $(A,f,\omega)$.

  \item An irreducible $\dB^{\infty\times}$-subrepresentation $\pi$ of
      $\sA_{\dC_p}(\dB^\times)$ is \emph{classical} if $\pi$ contains a
      nonzero classical $p$-adic Maass function.

  \item If $\pi$ is classical, then there exists a simple $F$-abelian
      variety $A$, unique up to isogeny, and an embedding $i\colon
      M\colonequals\End^0(A)\hookrightarrow\dC_p$ such that $\pi$ is the
      space generated by $f^*\log_\omega^i$ for
      $\omega\in\rH^0(A,\Omega_A^1)$ and $f\colon X(\dB)\to A$ a modular
      parametrization (see Notation \ref{no:modular_parametrization}).
      Here, $\log_\omega^i\colon A(\dC_p)\to
      M\otimes_\dQ\dC_p\xrightarrow{i}\dC_p$ is the $M$-linear logarithm.
      In particular, every function in $\pi$ is classical. Moreover, we
      have a decomposition $\pi=\bigotimes'_v\pi_v$ such that $\pi_v$ is
      a representation of $\dB^\times_v$, which is unramified for all but
      finitely many $v$.

  \item Suppose $\pi$ is classical, and we may replace $A$ by $A^\vee$ in
      (4). Then we obtain another classical representation $\pi^\vee$,
      called the \emph{dual} of $\pi$, which is isomorphic to
      $\pi\otimes\omega_\pi^{-1}$ as a representation. Here, $\omega_\pi$
      denotes the central character of $\pi$.
\end{enumerate}
\index{$p$-adic Maass function, $\sA_{\dC_p}(\dB^\times)$}\index{$p$-adic
Maass function, $\sA_{\dC_p}(\dB^\times)$!classical}
\end{definition}

\begin{remark}
It is an interesting question to give a function-theoretical criterion for a
$p$-adic Maass function to be classical.
\end{remark}

\begin{definition}\label{de:e_embedding}
An \emph{$E$-embedding} of $\dB$ is an embedding
\begin{align}\label{eq:e_embedding}
\sfe={\prod}'_v\sfe_v\colon \dA_E^\infty
={\prod}'_{v<\infty}E\otimes_FF_v\hookrightarrow\dB^{\infty}
\end{align}
of $\dA^\infty$-algebras. We say $\dB$ is \emph{$E$-embeddable} if there
exists an $E$-embedding of $\dB$.\index{$E$-embedding}
\end{definition}

Now we take a quaternion algebra $\dB$ together with an $E$-embedding. Put
$X=X(\dB)$ for simplicity.

\begin{definition}[CM-subscheme]\label{de:cm_subscheme}
We define the \emph{CM-subscheme} $Y$ to be $X^{E^\times}$, the subscheme of
$X$ fixed by the action of $\sfe(E^\times)$. In fact, we have $Y=Y^+\coprod
Y^-$ such that $E^\times$ acts on the tangent space of points in $Y^\pm$ via
the character $t\mapsto(t/t^c)^{\pm1}$, where $c$ denotes the nontrivial
element in $\Gal(E/F)$. Both $Y^+(\dC_p)$ and $Y^-(\dC_p)$ are equipped with
the natural profinite topology, and admit a transitive action of
$\dA^{\infty\times}_E$ via Hecke morphisms.
\end{definition}

For a $p$-adic Maass function $\phi\in\sA_{\dC_p}(\dB^\times)$ and locally
constant functions $\varphi_\pm\colon Y^\pm(\dC_p)\to\dC_p$, we define the
\emph{$p$-adic torus periods} to be
\[\sP_{Y^\pm}(\phi,\varphi_\pm)=\int_{Y^\pm(\dC_p)}\phi(y)\varphi_\pm(y)\rrd y,\]
where the Haar measures $\rd y$ have total volume $1$, and the integrals can
be expressed as a finite sums.\index{$p$-adic torus period, $\sP_{Y^\pm}$}

\subsection{A $p$-adic Rankin--Selberg $L$-function}
\label{ss:l_function}

From now on, we will assume $\fp$ is split in $E$. We fix a classical
irreducible representation $\pi$ contained in $\sA_{\dC_p}(\dB^\times)$. Put
$\pi^+=\pi$ and $\pi^-=\pi^\vee$, both as subspaces of
$\sA_{\dC_p}(\dB^\times)$.

\begin{definition}
Let $\chi\colon E^\times\backslash \dA^{\infty\times}_E\to\dC_p^\times$ be a
character.
\begin{enumerate}
  \item We say $\chi$ is a \emph{$p$-adic character of weight $w\in\dZ$}
      if there exists a compact open subgroup $V$ of
      $\dA^{\infty\times}_E$ such that $\chi(t)=(t_{\fP}/t_{\fP^c})^w$
      for $t\in V$.

    \item Let $\iota\colon\dC_p\xrightarrow{\sim}\dC$ be an isomorphism.
        For a locally algebraic character $\chi$ of weight $w$, we attach
        following local characters
      \begin{itemize}
      \item $\chi^{(\iota)}_v=1$ if $v|\infty$ but not equal to
          $\iota\res_F$;

      \item $\chi^{(\iota)}_v(z)=(z/z^c)^w$ for $v=\iota\res_F$,
          where $z\in
          E\otimes_{F,\iota}\dR\xrightarrow{\iota\res_E}\dC$;

      \item $\chi^{(\iota)}_v=\iota\circ\chi_v$ for $v<\infty$ but
          $v\neq\p$;

      \item
          $\chi^{(\iota)}_\p(t)=\iota\((t_{\fP}/t_{\fP^c})^{-w}\chi_\p(t)\)$
          for $t\in E_\p^\times$.
      \end{itemize}
      In particular,
      $\chi^{(\iota)}\colonequals\otimes_v\chi^{(\iota)}_v\colon
      \dA^\times_E\to\dC^\times$ is an automorphic character, which is
      called the \emph{$\iota$-avatar} of $\chi$.

  \item We say a $p$-adic character of weight $w$ is \emph{$\pi$-related}
      if $\omega_\pi\cdot\chi\res_{\dA^{\infty\times}}=1$, and
      \[\epsilon(1/2,\pi_v,\chi_v)=\chi_v(-1)\eta_v(-1)\epsilon(\dB_v)\]
      holds for every finite place $v\neq\p$ of $F$, where
      $\epsilon(1/2,\pi_v,\chi_v)$ is the local Rankin--Selberg
      $\epsilon$-factor and $\epsilon(\dB_v)\in\{\pm1\}$ is the Hasse
      invariant. Denote by $\Xi(\pi)_w$ be the set of all $\pi$-related
      $p$-adic characters of weight $w$.

  \item Define $\Omega_{X,Y^\pm}$ to be the restriction
      $\Omega^1_X\res_{Y^\pm}$, which is an
      $\dA^{\infty\times}_E$-equivariant sheaf on $Y^\pm$. For
      $\chi\in\Xi(\pi)_k$, we define $\sigma_\chi^\pm$ to be the subspace
      of $\rH^0(Y^\pm,\Omega_{X,Y^\pm}^{\otimes-k})\otimes_F\dC_p$
      consisting of $\varphi$ such that $t^*\varphi=\chi(t)^{\pm
      1}\varphi$ for all $t\in\dA^{\infty\times}_E$.
\end{enumerate}
\index{$p$-adic character of weight $w$}\index{$p$-adic character of weight
$w$!$\pi$-related, $\Xi(\pi)_w$}\index{$p$-adic character of weight
$w$!$\iota$-avatar of}
\end{definition}

There is an (ind-)proper rigid analytic curve over $\dC_p$, which
parameterizes locally analytic characters of $\dA^{\infty\times}_E$ and
contains $\bigcup_\dZ\Xi(\pi)_w$ as a Zariski dense subset. We denote by
$\sD(\pi)$ the coordinate ring of such curve, which we call the
\emph{$\pi$-related distribution algebra}. It is a complete $\dC_p$-algebra;
see Definition \ref{de:character_space} for its rigorous definition.
\index{distribution algebra!$\pi$-related, $\sD(\pi)$}

To state our result for $p$-adic $L$-function, we need to fix some $p$-adic
pairing and archimedean pairing. For $p$-adic pairing, we need to make three
choices:
\begin{itemize}
  \item a \emph{$p$-adic Petersson inner product} for $\pi$, which
      amounts to a nonzero $\dB^{\infty\times}$-invariant bilinear
      pairing
      \[(\;,\;)_\pi\colon\pi^+\times\pi^-\to\dC_p.\]
      Such pairing exists uniquely up to a scalar in $\dC_p^\times$;

  \item an \emph{abstract conjugation}, that is, an
      $\dA^{\infty\times}_E$-equivariant isomorphism
      \[\bc\colon Y^+(\dC_p)\xrightarrow{\sim} Y^-(\dC_p);\]

  \item an additive character $\psi\colon F_\p\to\dC_p^\times$ of level
      $0$, that is, the kernel of $\psi$ contains $O_\p$ but not
      $\p^{-1}$, where $O_\p$ is the ring of integers in $F_\p$.
\end{itemize}
\index{abstract conjugation, $\bc$}The abstract conjugation $\bc$ and $\psi$
induce a $\dA^{\infty\times}_E$-invariant pairing
\[(\;,\;)_\chi\colon\sigma_\chi^+\times\sigma_\chi^-\to\dC_p\]
by the formula
$(\varphi_+,\varphi_-)_\chi=(\varphi_+\otimes\omega_{\psi+}^k)\cdot
\bc^*(\varphi_-\otimes\omega_{\psi-}^k)$, where the right-hand side is a
constant function on $Y^+$, hence an element in $\dC_p$. Here,
$\omega_{\psi\pm}$ is a section of $\Omega_{X,Y^\pm}\otimes_F\dC_p$
determined by $\psi$ (see \eqref{eq:omega_lt} for details).

For archimedean pairing, we assume $k\geq1$. For every
$\iota\colon\dC_p\xrightarrow{\sim}\dC$, we have a
$\dB^{\infty\times}\times\dA^{\infty\times}_E$-invariant $\iota$-linear
pairing
\[(\;,\;)_{\pi,\chi}^{(\iota)}\colon
(\pi^+\otimes\sigma_\chi^+)\times(\pi^-\otimes\sigma_\chi^-)\to\dC,\]
such that for $\phi_\pm\colonequals f_\pm^*\log_{\omega_\pm}\in\pi^\pm$ and
$\varphi_\pm\in\sigma_\chi^\pm$,
\[(\phi_+\otimes\varphi_+,\phi_-\otimes\varphi_-)_{\pi,\chi}^{(\iota)}
=(\iota\varphi_+\otimes\fc_\iota^*\iota\varphi_-\otimes\mu^k)\times
\int_{X_\iota(\dC)}\frac{\Theta_\iota^{k-1}f_+^*\omega_+\otimes\fc_\iota^*\Theta_\iota^{k-1}f_-^*\omega_-}{\mu^k}\rrd
x,\]
where
\begin{itemize}
  \item $\fc_\iota$ is the complex conjugation on (the underlying real
      analytic space of) $X_\iota\colonequals X\otimes_{F,\iota}\dC$;

  \item $\mu$ is an arbitrary Hecke invariant hyperbolic metric on
      $X_\iota(\dC)$;

  \item $\iota\varphi_+\otimes\fc_\iota^*\iota\varphi_-\otimes\mu^k$ is
      regarded as a complex number since it is a constant function on
      $(Y^+\otimes_{F,\iota}\dC)(\dC)$;

  \item $\Theta_\iota$ is the Shimura--Maass operator on $X_\iota$ (see
      \Sec\ref{ss:comparison_differential} for details); and

  \item $\rd x$ is the Tamagawa measure on $X_\iota(\dC)$.
\end{itemize}
Then we define the \emph{period ratio} $\Omega_\iota(\chi)$ to be the unique
element in $\dC^\times$ such that\index{period ratio, $\Omega_\iota(\chi)$}
\[(\;,\;)_{\pi,\chi}^{(\iota)}=\Omega_\iota(\chi)\cdot\iota(\;,\;)_\pi\otimes\iota(\;,\;)_\chi.\]

\begin{theorem}[$p$-adic $L$-function]\label{th:l_function_maass}
There is a unique element $\sL(\pi)\in\sD(\pi)$ such that for every
$\pi$-related $p$-adic character $\chi\in\Xi(\pi)_k$ of weight $k\geq 1$, and
every isomorphism $\iota\colon\dC_p\xrightarrow{\sim}\dC$,
\begin{align*}
\iota\sL(\pi)(\chi)=L(1/2,\pi^{(\iota)},\chi^{(\iota)})\cdot
\frac{2^{g-3}\delta_E^{1/2}\zeta_F(2)\Omega_\iota(\chi)}{L(1,\eta)^2L(1,\pi^{(\iota)},\Ad)}
\frac{\epsilon(1/2,\psi,\pi^{(\iota)}_\fp\otimes\chi^{(\iota)}_{{\fP}^c})}
{L(1/2,\pi^{(\iota)}_\fp\otimes\chi^{(\iota)}_{{\fP}^c})^2},
\end{align*}
where $\pi^{(\iota)}\colonequals\pi\otimes_{\dC_p,\iota}\dC$ and
$\pi_\fp^{(\iota)}\colonequals\pi_\fp\otimes_{\dC_p,\iota}\dC$; and global
$L$-functions do not include archimedean factors.
\end{theorem}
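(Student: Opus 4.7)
The plan is to construct $\sL(\pi)\in\sD(\pi)$ as a Mellin transform of $p$-adic torus periods along the action of $\dA^{\infty\times}_E$ on the CM-subscheme $Y$, and then verify the interpolation formula at positive-weight characters by matching with the explicit Waldspurger formula of \cite{YZZ13}. The essential new input beyond \cite{YZZ13} is a comparison between the Shimura--Maass operator $\Theta_\iota$ on the complex Shimura curve and the $p$-adic differential operator produced by the Lubin--Tate action on the Igusa tower at infinite level above $\fp$.

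\textbf{Construction and rigid analyticity.} Fix nonzero test vectors $\phi_\pm\in\pi^\pm$ with $(\phi_+,\phi_-)_\pi\neq 0$. Since $\fp$ splits in $E$, the profinite group $\dA^{\infty\times}_E$ acts continuously on each $Y^\pm(\dC_p)$, and the section $\omega_{\psi\pm}$ trivializes $\Omega_{X,Y^\pm}\otimes_F\dC_p$ along orbits. Integrating $\phi_\pm$ against the $\chi^{\pm 1}$-twists of powers of $\omega_{\psi\pm}$ produces a function of $\chi$ which is the Mellin transform of a bounded continuous function on a profinite group, hence extends uniquely to a rigid analytic function on the character space. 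Dividing by $(\phi_+,\phi_-)_\pi$ and by the pairing $(\varphi_+^\chi,\varphi_-^\chi)_\chi$ yields an element $\sL(\phi_+,\phi_-)\in\sD(\pi)$; independence from the choice of $\phi_\pm$, which defines $\sL(\pi)$ intrinsically, follows from local multiplicity-one for torus periods on $\dB_v^\times$.

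\textbf{Interpolation at positive weight.} For $\chi\in\Xi(\pi)_k$ with $k\geq 1$, choose $\phi_\pm=f_\pm^*\log_{\omega_\pm}$. By definition the archimedean pairing $(\phi_+\otimes\varphi_+,\phi_-\otimes\varphi_-)^{(\iota)}_{\pi,\chi}$ is a Petersson integral of $\Theta^{k-1}_\iota f_\pm^*\omega_\pm$, while $\iota\sL(\phi_+,\phi_-)(\chi)$ extracts the analogous $k$-th iteration of the Lubin--Tate differential operator applied to $f_\pm^*\log_{\omega_\pm}$ along the CM-subscheme. The core step is to show that these two procedures agree at CM-points via the de Rham/rigid comparison for the universal abelian variety on $X$. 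Granted this, \cite{YZZ13} expresses the archimedean pairing in terms of $L(1/2,\pi^{(\iota)},\chi^{(\iota)})$ and the global constants $2^{g-3}\delta_E^{1/2}\zeta_F(2)/L(1,\eta)^2L(1,\pi^{(\iota)},\Ad)$, and the period ratio $\Omega_\iota(\chi)$ enters precisely as the scalar relating the archimedean and $p$-adic pairings. The remaining local factor $\epsilon(1/2,\psi,\pi_\fp^{(\iota)}\otimes\chi_{\fP^c}^{(\iota)})/L(1/2,\pi_\fp^{(\iota)}\otimes\chi_{\fP^c}^{(\iota)})^2$ at $\fp$ appears because our Mellin transform is built from $\omega_{\psi\pm}$ attached to the level-$0$ additive character $\psi$, whereas the Waldspurger local matrix coefficient at $\fp$ is computed with a different test vector; a direct local zeta-integral computation on $\GL_2(F_\fp)$, drastically simplified by the splitting of $\fp$ in $E$ (so that $\chi_\fP$ and $\chi_{\fP^c}$ decouple), produces exactly this ratio.

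\textbf{Uniqueness and main obstacle.} Uniqueness of $\sL(\pi)$ is automatic since $\bigcup_{w\in\dZ}\Xi(\pi)_w$ is Zariski dense in the character space by the definition of $\sD(\pi)$. The principal difficulty is the differential-operator comparison in the interpolation step -- the $F$-general, assumption-free analogue of the Katz--BDP identification of Serre's $\theta$-operator with Maass--Shimura derivatives. Establishing this at CM-points requires a careful synthesis of the Hodge filtration on $\Omega^1_X$ near $\fp$, the $\dB^{\infty\times}$-equivariance of $\omega_{\psi\pm}$, and the Lubin--Tate uniformization of the Igusa tower; it is this comparison, more than any single computation, that simultaneously controls $\Omega_\iota(\chi)$ and yields the local factor at $\fp$ in the interpolation formula.
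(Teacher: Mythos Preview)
Your strategy---Mellin-transform construction, differential-operator comparison at CM points, then the Waldspurger formula---matches the paper's, and you correctly locate the technical heart in comparing $\Theta_\iota$ with the $p$-adic operator (this is Lemma~\ref{le:theta_comparison}). But two steps in your construction do not work as stated.

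First, dividing by $(\phi_+,\phi_-)_\pi\cdot(\varphi_+,\varphi_-)_\chi$ does not give test-vector independence. Multiplicity-one only says the torus period factors through a canonical functional; the proportionality constant is the regularized matrix-coefficient integral $\alpha^\natural(\phi_+,\phi_-;\varphi_+,\varphi_-)$, which genuinely varies with $\phi_\pm$. The paper instead constructs a separate \emph{local period distribution} $\sQ(f_+,f_-)\in\sD(\omega_A,K)$ interpolating the normalized $\alpha^\natural$ as $\chi$ varies (Proposition~\ref{pr:matrix_integral}, assembled place-by-place in Lemmas~\ref{le:matrix_tame} and~\ref{le:matrix_wild}), and defines the $L$-function as the ratio $\sP^+_\un(f_+)\sP^-_\un(f_-)/\sQ(f_+,f_-)$; only this ratio is independent of the test vectors (Corollary~\ref{co:ratio}). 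Your ``direct local zeta-integral computation at $\p$'' is part of this, but it must be carried out distribution-theoretically, not at a single $\chi$.

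Second, and more fundamentally, your rigid-analyticity argument fails. The phrase ``Mellin transform of a bounded continuous function on a profinite group'' does not land in $\sD(\pi)$: the weight direction is parametrized by locally $F_\p$-analytic characters of $O_\p^\anti$, so one needs a locally analytic \emph{distribution}, and for varying weight $k$ the integrand changes (it involves $k$ applications of a differential operator), so there is no single function on $Y^\pm(\dC_p)$ whose naive Mellin transform interpolates all weights. The paper's mechanism is Theorem~\ref{th:family}: the Lubin--Tate formal group acts on the Igusa tower $\fX(\infty)$ via a morphism $\beta$ (Proposition~\ref{pr:lt_action}), and pulling a \emph{stable} convergent modular form back along $\beta$ yields, via the Schneider--Teitelbaum Fourier isomorphism $\sO(\sB,K)^\heartsuit\simeq D(O_\p^\times,K)$, an element of $\sM^0(\infty,K)\wtimes_{F_\p} D(O_\p^\anti,F_\p)$ whose specializations at $\ang{k}$ recover the Atkin--Serre iterates $\Theta_\ord^k f$. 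Restricting this global Mellin transform to the CM formal subscheme and multiplying by the universal character $\chi^\pm_\un$ then produces the universal torus period $\sP^\pm_\un(f_\pm)$ as a genuine element of $\sD(\omega_A,K)$ (Definition~\ref{de:character_universal}). The stability hypothesis on $f$ is not cosmetic: it is exactly what forces the Fourier transform to be supported on $O_\p^\times$ rather than on all of $O_\p$.
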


\begin{remark}
The element $\sL(\pi)$ is our \emph{anti-cyclotomic $p$-adic $L$-function}
for $\pi$. It depends only on the choices of a $p$-adic Petersson inner
product $(\;,\;)_\pi$, an abstract conjugation $\bc$, and an additive
character $\psi$ of $F_\p$ of level $0$. More precisely,
\begin{enumerate}
  \item if we change $(\;,\;)_\pi$ to $(\;,\;)'_\pi=c(\;,\;)_\pi$ for
      some $c\in\dC_p^\times$, then $\sL(\pi)$ is multiplied by $c^{-1}$;

  \item if we change $\bc$ to $\bc'=\rT_t\circ\bc$ for some
      $t\in\dA^{\infty\times}_E$, then $\sL(\pi)$ is multiplied by $[t]$,
      the Dirac distribution at $t$;

  \item if we change $\psi$ to $\psi_a$ for some $a\in O_\p^\times$,
      where $\psi_a(x)=\psi(ax)$ for $x\in F_\p$, then $\sL(\pi)$ is
      multiplied by $\omega_{\pi_\p}(a)\cdot [a]^2$, where $a$ is
      regarded at the place $\fP^c$ in $[a]$.
\end{enumerate}
\index{anti-cyclotomic $p$-adic $L$-function, $\sL(\pi)$}
\end{remark}

In \Sec\ref{ss:l_function_abelian}, we will state a version of Theorem
\ref{th:l_function_maass} in terms of Heegner cycles on abelian varieties,
which implies Theorem \ref{th:l_function_maass} by Lemma \ref{le:l_function}.

\subsection{A $p$-adic Waldspurger formula}
\label{ss:waldspurger_formula}

Note that the set of interpolation of $\sL(\pi)$ is $\bigcup_{k\geq
1}\Xi(\pi)_k$. Thus, a natural question would be seeking the value of
$\sL(\pi)(\chi)$ at a locally constant character $\chi$, that is,
$\chi\in\Xi(\pi)_0$. The main theorem stated in this section answers this
question, through a so called \emph{$p$-adic Waldspurger formula}.

Having chosen a $p$-adic Petersson inner product $(\;,\;)_\pi$ for $\pi$ and
an abstract conjugation $\bc$, we may define a nonzero element
$\alpha^\natural$, called the \emph{local period}, in the space
\[\Hom_{\dA^{\infty\times}_E}(\pi^+\otimes\sigma_\chi^+,\dC_p)\otimes\Hom_{\dA^{\infty\times}_E}(\pi^-\otimes\sigma_\chi^-,\dC_p)\]
such that for $\phi_\pm\in\pi^\pm$, $\varphi_\pm\in\sigma_\chi^\pm$ and
$\iota\colon\dC_p\xrightarrow{\sim}\dC$, the complex number
$\iota\alpha^\natural(\phi_+,\phi_-;\varphi_+,\varphi_-)$ is a regularization
of the following (formal) matrix coefficient integral
\[\int_{\dA^{\infty\times}\backslash
\dA^{\infty\times}_E}\iota(t^*\phi_+,\phi_-)_\pi\cdot\iota(t^*\varphi_+,\varphi_-)_\chi\rrd
t.\] See Definition \ref{de:matrix_integral} for details. \index{local period, $\sQ$}

\begin{theorem}[$p$-adic Waldspurger formula]\label{th:waldspurger_maass}
Let $\chi\in\Xi(\pi)_0$ be a $p$-adic character of weight $0$, that is, a
locally constant character. We have for every $\phi_\pm\in\pi^\pm$ and
$\varphi_\pm\in\sigma_\chi^\pm$,
\[\sP_{Y^+}(\phi_+,\varphi_+)\sP_{Y^-}(\phi_-,\varphi_-)=\sL(\pi)(\chi)\cdot
\frac{L(1/2,\pi_\fp\otimes\chi_{\fP^c})^2}{\epsilon(1/2,\psi,\pi_\fp\otimes\chi_{\fP^c})}\cdot
\alpha^\natural(\phi_+,\phi_-;\varphi_+,\varphi_-).\]
\end{theorem}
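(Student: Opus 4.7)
The plan is to evaluate $\sL(\pi)(\chi)$ at a locally constant character directly from the construction of $\sL(\pi)$ (to be carried out in later sections) as a $p$-adic Mellin transform along the Lubin--Tate action on the Igusa tower of $X$ above the ordinary locus at $\fp$, and to match the result with the product of torus periods. The ordinary locus and its Igusa tower are available because $\fp$ splits in $E$. The Mellin construction assigns to each pair $(\phi_\pm,\varphi_\pm)$ a distribution in $\sD(\pi)$, and the interpolation of Theorem \ref{th:l_function_maass} comes from relating, at weights $k\geq 1$, this Mellin transform to the complex Rankin--Selberg integral via a $p$-adic avatar of the Shimura--Maass operator $\Theta_\iota^{k-1}$. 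At weight $0$ no such differentiation occurs, so the Mellin transform collapses to an honest integral of $\phi_\pm|_{Y^\pm}$ against $\chi_\pm$, which is essentially the torus period itself.

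I would carry this out in three steps. First, from the definition of the Mellin transform I would write $\sL(\pi)(\chi)$ for $\chi\in\Xi(\pi)_0$ as a finite sum over $Y^\pm(\dC_p)$; using the abstract conjugation $\bc$ and the fact that the sections $\omega_{\psi\pm}^k$ degenerate to $1$ at $k=0$, this sum factorises as $\sP_{Y^+}(\phi_+,\varphi_+)\cdot\sP_{Y^-}(\phi_-,\varphi_-)$ up to a purely local constant at $\fp$ produced by the normalisation of the Lubin--Tate torsor by $\psi$. Second, I would unfold $\alpha^\natural$: by construction it is a regularisation of the product of local matrix-coefficient integrals, and at the places $v\neq\fp$ these integrals combine with the global normalising constant of Theorem \ref{th:l_function_maass} (the factor $2^{g-3}\delta_E^{1/2}\zeta_F(2)/(L(1,\eta)^2\,L(1,\pi^{(\iota)},\Ad))$ and its partners) into a ratio independent of $\chi$ and compatible with both sides. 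Third, I would identify the surviving local factor at $\fp$ as $L(1/2,\pi_\fp\otimes\chi_{\fP^c})^2/\epsilon(1/2,\psi,\pi_\fp\otimes\chi_{\fP^c})$ by a direct computation of the local Lubin--Tate Mellin integral in terms of a standard local zeta integral at a split place, making essential use of the level-zero condition on $\psi$.

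The main obstacle is this third step. At the split place $\fp$ the Lubin--Tate action on the Igusa tower realises a $p$-adic Fourier transform valued in $\sD(\pi)$, and one has to compute its value at a locally constant character $\chi_\fp$ and match it with a local Rankin--Selberg zeta integral for $\pi_\fp\otimes\chi_{\fP^c}$. The saving grace is that the reciprocal factor $\epsilon/L^2$ at $\fp$ appearing in Theorem \ref{th:l_function_maass} is produced by the same local computation carried out at weights $k\geq 1$, so cancelling it on both sides reduces to a uniform local identity in $k$ which is then specialised to $k=0$. In this sense Theorem \ref{th:waldspurger_maass} should be read as the assertion that the $p$-adic Mellin transform over the Lubin--Tate group carries a single $\fp$-adic transfer factor that is uniform across all $p$-adic characters, and that at weight $0$ the non-local content of the transform is exactly the product of $p$-adic torus periods.
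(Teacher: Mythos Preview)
Your broad architecture is right and matches the paper's: the $p$-adic $L$-function is built as a ratio of universal torus periods (a global Mellin transform along the Lubin--Tate action restricted to the CM locus) by a local period distribution $\sQ(f_+,f_-)$, and at weight $0$ one wants the numerator to become the product $\sP_{Y^+}\cdot\sP_{Y^-}$ while the denominator accounts for $\alpha^\natural$ and the local $L/\epsilon$-factor at $\fp$. Your steps~2 and~3 are essentially the content of Proposition~\ref{pr:matrix_integral} and Lemma~\ref{le:matrix_wild}, and the paper handles them much as you sketch.

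There is, however, a genuine gap in your step~1. The Mellin transform is not applied to the $p$-adic Maass function $\phi_\pm=f_\pm^*\log_{\omega_\pm}$; it is applied to the \emph{differential form} $(f_\pm^*\omega_\pm)_\ord\in\sM^1_\flat(m,K)^\heartsuit$. At weight $k\geq 1$ one obtains $\Theta_\ord^{k-1}(f_\pm^*\omega_\pm)_\ord$, and at weight $0$ one obtains a convergent modular function $F_\pm$ characterised by $\Theta_\ord F_\pm=(f_\pm^*\omega_\pm)_\ord$ (Theorem~\ref{th:family}~(3)). In other words, the weight-$0$ value is an \emph{antiderivative} of $f_\pm^*\omega_\pm$ on the ordinary locus, not a priori the restriction of $\phi_\pm$. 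You still have to prove that this particular antiderivative coincides with $f_\pm^*\log_{\omega_\pm}$. This is not formal: both are primitives of the same $1$-form, but primitives on an affinoid are only unique up to a locally constant function, and one must pin down the correct one inside the stable subspace $\sM^0_\flat(m,K)^\heartsuit$.

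The paper closes this gap via Coleman integration. It shows (Proposition~\ref{pr:coleman}, proved in Appendix~\ref{s:compatibility_logarithm}) that $f_\pm^*\log_{\omega_\pm}$ restricted to the ordinary affinoid is a Coleman integral of $f_\pm^*\omega_\pm$, and separately that the Mellin value $\bM((f_\pm^*\omega_\pm)_\ord)(\chi\res_{O^\times_{E_{\fP^c}}})$ is also such a Coleman integral lying in the stable subspace; the two then agree (Proposition~\ref{pr:waldspurger_abelian}). The paper also routes the whole argument through the abelian-variety reformulation (Theorem~\ref{th:waldspurger_abelian} and Lemma~\ref{le:waldspurger}), which makes the Heegner-cycle side $\log_{A^\pm}P^\pm_\chi(f_\pm)$ explicit and lets the Coleman-integral comparison be stated cleanly. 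Your sentence ``at weight $0$ no such differentiation occurs, so the Mellin transform collapses to an honest integral of $\phi_\pm|_{Y^\pm}$'' elides precisely this point; without the Coleman-integral identification you have not connected the weight-$0$ specialisation to the torus period of $\phi_\pm$.
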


We note that the ratio
$\sL(\pi)(\chi)/\epsilon(1/2,\psi,\pi_\fp\otimes\chi_{\fP^c})$ does not
depend on $\psi$. In \Sec\ref{ss:heegner_waldspurger}, we will state a
version of Theorem \ref{th:waldspurger_maass} in terms of Heegner cycles on
abelian varieties, which implies Theorem \ref{th:waldspurger_maass} by Lemma
\ref{le:waldspurger}.

\subsection{Notation and conventions}
\label{ss:notation_conventions}

\begin{itemize}
  \item Denote by $F_\p^\nr$ (resp.\ $F_\p^\ab$) the completion of the
      maximal unramified (resp.\ abelian) extension of $F_\p$ in $\dC_p$
      and $O_\p^\nr$ (resp.\ $O_\p^\ab$) its ring of integers. We denote
      by $\kappa$ the residue field of $O_\p^\nr$, which is isomorphic to
      $\dF_p^\ac$.

  \item Denote by $F^\times_\cl$ (resp.\ $E^\times_\cl$) the closure of
      $F^\times$ (resp.\ $E^\times$) in $\dA^{\infty\times}$ (resp.\
      $\dA^{\infty\times}_E$).

  \item Put $O_\p^\anti=O_{E_\p}^\times/O_\p^\times$. We will write
      elements $t\in E_\p^\times$ in the form $(t_\bullet,t_\circ)$ where
      $t_\bullet\in F_\p^\times$ (resp.\ $t_\circ\in F_\p^\times$) is the
      component at $\fP^c$ (resp.\ $\fP$).

  \item We fix an identification between $\dB_\p$ and $\Mat_2(F_\p)$ such
      that in \eqref{eq:e_embedding},
      \[\sfe_\p(E\otimes_FF_\p)=\left(
                                \begin{array}{cc}
                                  E_{{\fP}^c} &  \\
                                   & E_\fP \\
                                \end{array}
                              \right).\]

  \item Denote by $\rJ$ the matrix $\left(
                                   \begin{array}{cc}
                                      & 1 \\
                                     -1 &  \\
                                   \end{array}
                                 \right)$ in $\dB_\p=\Mat_2(F_\p)$.

  \item For $m\in\dZ$, define the \emph{$\p$-Iwahori subgroup of level
      $m$} to be
      \begin{align*}
      U_{\p,m}&=\left\{g\in\GL_2(O_\p)\res g\equiv\left(
                                                    \begin{array}{cc}
                                                      1 & * \\
                                                      0 & 1 \\
                                                    \end{array}
                                                  \right) \mod \p^m\right\}
      \quad\text{if $m\geq0$},\\
      U_{\p,m}&=\left\{g\in\GL_2(O_\p)\res g\equiv\left(
                                                    \begin{array}{cc}
                                                      1 & 0 \\
                                                      * & 1 \\
                                                    \end{array}
                                                  \right) \mod \p^{-m}\right\}
      \quad\text{if $m<0$}.
      \end{align*}

  \item $\dN=\{m\in\dZ\res m\geq 0\}$. We write elements in $A^{\oplus
      m}$ in columns for an object $A$ (with a well-defined underlying
      set) in an abelian category.

  \item The local or global Artin reciprocity maps send uniformizers to
      geometric Frobenii.

  \item If $G$ is a reductive group over $F$, we always take the Tamagawa
      measure when integrating on the ad\`{e}lic group $G(\dA)$. In
      particular, the total volume of $\dA^\times E^\times\backslash
      \dA^\times_E$ is $2$ under such measure.

  \item For a relative (formal) scheme $X/S$, we will simply write
      $\Omega^1_X$ instead of $\Omega^1_{X/S}$ for the sheaf of relative
      differentials if the base is clear in the context.

  \item Denote by $\Gmf$ (resp.\ $\Gaf$) the multiplicative (resp.\
      additive) formal group. We also regard $\Gmf$ as a formal
      $p$-divisible group. They have the coordinate $T$.

  \item Denote by $\LT$ the Lubin--Tate group over $O_\p^\nr$, which is
      unique up to isomorphism.

  \item Denote by $F_\p^\lt$ the field extension of $F_\p^\nr$ by adding
      the ``period'' of the Lubin--Tate group $\LT$ (see
      \cite{ST01}*{page 460}). It is not discrete unless $F_\p=\dQ_p$ by
      \cite{ST01}*{Lemma 3.9}.

  \item In this article, we will only use basic knowledge about rigid
      analytic varieties over complete $p$-adic fields in the sense of
      Tate. The readers may use the book \cite{BGR84} for a reference. If
      $\sX$ is an $L$-rigid analytic variety for some complete
      non-archimedean field $L$, we denote by $\sO(\sX,K)$ the complete
      $K$-algebra of $K$-valued rigid analytic functions on $\sX$ for any
      complete field extension $K/L$.
\end{itemize}

We fix the Haar measure $\rd t_v$ on $F_v^\times\backslash E_v^\times$ for
every place $v$ of $F$ determined by the following conditions:
\begin{itemize}
  \item When $v$ is archimedean, the total volume of
      $F_v^\times\backslash
      E_v^\times\simeq\dR^\times\backslash\dC^\times$ is $1$;

  \item When $v$ is split, the volume of the maximal compact subgroup of
      $F_v^\times\backslash E_v^\times\simeq F_v^\times$ is $1$;

  \item When $v$ is nonsplit and unramified, the total volume is $1$;

  \item When $v$ is ramified, the total volume is $2$.
\end{itemize}
Then the product measure $\prod_v\rd t_v$ is $2^{-g}\delta_E^{-1/2}L(1,\eta)$
times the Tamagawa measure (compare with \cite{YZZ13}*{1.6}).

In the main part of the article, we will fix an additive character
$\psi\colon F_\p\to\dC_p^\times$ of level $0$. We choose a generator
$\nu\colon\LT\to\Gmf$ in the free $O_\p$-module $\Hom(\LT,\Gmf)$ of rank $1$.
Then there are unique isomorphisms
\begin{align}\label{eq:lt_data}
\nu_\pm\colon F_\p/O_\p\xrightarrow{\sim}\LT[\p^\infty]
\end{align}
such that the induced composite map
\[\nu[\p^\infty]\circ\nu_\pm\colon F_\p/O_\p\to\Gmf[p^\infty]\subset\dC_p^\times\]
coincides with $\psi^\pm$, where $\psi^+=\psi$ and $\psi^-=\psi^{-1}$.

\subsubsection*{Acknowledgements}

We would like to thank Daniel Disegni for his careful reading of the draft
and useful comments. Y.~L. is supported by NSF grant DMS \#1302000; S.~Z. is
supported by NSF grant DMS \#0970100 and \#1065839; W.~Z. is supported by NSF
grant DMS \#1301848, and a Sloan research fellowship.

\section{Arithmetic of quaternionic Shimura curves}
\label{s:arithmetic_quaternionic}

In this chapter, we study some $p$-adic arithmetic properties of quaternionic
Shimura curves over a totally real field. We start from the local theory of
some $p$-adic Fourier analysis on Lubin--Tate groups, following the work of
\cite{ST01}, in \Sec\ref{ss:fourier_theory}. In \Sec\ref{ss:kodaira_spencer},
we study the Gauss--Manin connection and the Kodaira--Spencer isomorphism for
quaternionic Shimura curves, followed by the discussion of universal
convergent modular forms in \Sec\ref{ss:universal_convergent}. In particular,
we prove Theorem \ref{th:family}, which is one of the most crucial technical
results of the article. In \Sec\ref{ss:comparison_differential}, we prove
some results involving comparison with transcendental constructions under a
given complex uniformization. The last one \Sec\ref{ss:proof_claims} contains
the proof of six claims in the previous ones, which requires an auxiliary use
of unitary Shimura curves. In particular, no representations will show up in
this chapter.

\subsection{Fourier theory on Lubin--Tate group}
\label{ss:fourier_theory}

Let $G$ be a topologically finitely generated abelian locally $F_\p$-analytic
group. For a complete field $K$ containing $F_\p$, denote by $C(G,K)$ the
locally convex $K$-vector space of locally analytic $K$-valued functions on
$G$, and $D(G,K)$ its strong dual which is a topological $K$-algebra by
convolution. Recall that the strong dual topology coincides with topology of
uniform convergence on bounded sets in $C(G,K)$. We have a natural continuous
injective homomorphism
\[[\;]\colon G\to D(G,K)^\times\]
by taking Dirac distributions. Moreover, we have $D(G,K)\wtimes_KK'\simeq
D(G,K')$ for a complete field extension $K'/K$.

\begin{definition}[Stable function]\label{de:heartsuit_local}
Let $\sB$ be the generic fiber of (the underlying formal scheme of) $\LT$,
which is isomorphic to the open unit disc over $F_\p^\nr$. We have a map
\[\alpha\colon\sB\times_{\Spf F_\p^\nr}\sB\to\sB\]
induced by the formal group law. A function $\phi\in\sO(\sB,K)$ is
\emph{stable} if
\[\sum_{\Ker[\p]}\phi(\alpha(\;,z))=0.\]
We denoted by $\sO(\sB,K)^\heartsuit$ the subspace of $\sO(\sB,K)$ of stable
functions. The restricted map
$\bM_\loc\colonequals\alpha^*\res\sO(\sB,K)^\heartsuit$ is called the
\emph{local Mellin transform}, whose image is contained in
$\sO(\sB,K)^\heartsuit\wtimes_K\sO(\sB,K)^\heartsuit$.\index{stable function,
$\sO(\sB,K)^\heartsuit$}\index{Mellin transform!local, $\bM_\loc$}
\end{definition}

From now on, we will assume $K$ contains $F_\p^\lt$. By \cite{ST01}*{Theorems
2.3 \& 3.6} (together with the remark after \cite{ST01}*{Corollary 3.7}), we
have a natural \emph{Fourier transform} isomorphism
\begin{align*}
\lambda\colon \sO(\sB,K)\xrightarrow{\sim}D(O_\p,K)
\end{align*}
of topological $K$-algebras, using the homomorphism $\nu\colon\LT\to\Gmf$.
For $z\in\sB(K)$, the assignment $\delta\mapsto(\lambda^{-1}\delta)(z)$
defines an element $\kappa_z$ in the strong dual of $D(O_\p,K)$, that is,
$C(O_\p,K)$. In fact, $\kappa_z$ is a locally analytic character of $O_\p$
(see \cite{ST01}*{\Sec 3}) satisfying $\kappa_z(a)=\nu(a.z)$ for every $a\in
O_\p$.

\begin{remark}\label{re:lt_action}
We have an action of $O_\p$ on $\sB$ coming from the Lubin--Tate group, and
hence on $D(O_\p,K)$ via $\lambda$. More precisely, $t\in O_\p$ acts on
$D(O_\p,K)$ by multiplying $[t]$.
\end{remark}

If we identify $D(O_\p^\times,K)$ as the closed subspace of $D(O_\p,K)$
consisting of distributions supported on $O_\p^\times$, then by
\cite{ST01}*{Lemma 4.6.5}, $\lambda$ induces an isomorphism between
$\sO(\sB,K)^\heartsuit$ and $D(O_\p^\times,K)$. The following diagram
commutes
\[\xymatrix{
\sO(\sB,K)^\heartsuit \ar[rr]^-{\bM_\loc} \ar[d]^-{\lambda}_-{\simeq} &&
\sO(\sB,K)^\heartsuit\wtimes_K\sO(\sB,K)^\heartsuit  \ar[d]^-{\lambda}_-{\simeq} \\
D(O_\p^\times,K) \ar[rr] && D(O_\p^\times,K)\wtimes_K D(O_\p^\times,K),
 }\] where
the bottom arrow is the pushforward of distributions along the diagonal
embedding $O_\p^\times\to O_\p^\times\times O_\p^\times$.

We define the \emph{Lubin--Tate differential operator} $\Theta$ on
$\sO(\sB,K)$ by the formula
\begin{align}\label{eq:theta_local}
\Theta\phi=\frac{\rd\phi}{\nu^*\rd T/T}.
\end{align}
\index{Lubin--Tate differential operator, $\Theta$}

Consider the compact abelian locally $F_\p$-analytic group $O_\p^\anti$,
which will be identified with $O_\p^\times$ via $t\mapsto t/t^c$. We regard
the range of $\bM_\loc$ as
$\sO(\sB,K)^\heartsuit\wtimes_{F_\p}D(O_\p^\anti,F_\p)$. For each $w\in\dZ$,
we have a locally analytic $F_\p$-valued character $t\mapsto t^w$ of
$O_\p^\times$ and hence $O_\p^\anti$, denoted by $\ang{w}\in
C(O_\p^\anti,F_\p)$.

\begin{lem}\label{le:family_local}
Let $\phi\in\sO(\sB,K)^\heartsuit$ be a stable function. We have for $k\geq
0$,
\[\bM_\loc(\phi)(\ang{k})=\Theta^k\phi,\] and $\Theta\bM_\loc(\phi)(\ang{-1})=\phi$.
\end{lem}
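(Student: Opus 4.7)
The plan is to unwind both sides via the Fourier isomorphism $\lambda\colon \sO(\sB,K)\xrightarrow{\sim} D(O_\p,K)$. Write $\mu=\lambda\phi\in D(O_\p,K)$, so that
\[
\phi(z)=\int_{O_\p}\kappa_z(a)\,\mu(da),
\]
and since $\phi$ is stable, $\mu$ is supported on $O_\p^\times$ (via the identification $\sO(\sB,K)^\heartsuit\simeq D(O_\p^\times,K)$ recalled in the text).

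The key local computation is the eigenfunction identity
\[
\Theta\,\kappa_z(a)=a\,\kappa_z(a),
\]
which I would derive as follows. The map $m_a\colon\LT\to\LT$ of multiplication by $a\in O_\p$ satisfies $m_a^*\omega_{\LT}=a\,\omega_{\LT}$ on the invariant differential, and $\nu^*(dT/T)$ is an invariant differential on $\LT$. Hence
\[
(\nu\circ m_a)^*(dT/T)=m_a^*\nu^*(dT/T)=a\cdot\nu^*(dT/T).
\]
Since $\kappa_z(a)=\nu(a.z)$, the left side equals $d\kappa_z(a)/\kappa_z(a)$ (as a $1$-form in $z$), so dividing by $\nu^*(dT/T)$ gives $\Theta\kappa_z(a)=a\kappa_z(a)$. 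Iterating and swapping $\Theta^k$ with the distribution pairing (justified by local analyticity of $z\mapsto\kappa_z$ and compact support of $\mu$) gives
\[
\Theta^k\phi(z)=\int_{O_\p^\times}a^k\,\kappa_z(a)\,\mu(da).
\]

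For the other side, the fact that $\nu$ is a group homomorphism yields $\kappa_{\alpha(z,w)}=\kappa_z\cdot\kappa_w$, whence
\[
\bM_\loc(\phi)(z,w)=\phi(\alpha(z,w))=\int\kappa_z(a)\kappa_w(a)\,\mu(da).
\]
As a function of $w$ for fixed $z$, this is $\lambda^{-1}$ applied to the distribution $\kappa_z\cdot\mu$, which is supported on $O_\p^\times$. Pairing against $\ang{k}\in C(O_\p^\anti,F_\p)$, which under the identification $O_\p^\anti\simeq O_\p^\times$ becomes $a\mapsto a^k$, produces
\[
\bM_\loc(\phi)(\ang{k})(z)=\int_{O_\p^\times}a^k\,\kappa_z(a)\,\mu(da),
\]
matching $\Theta^k\phi(z)$ and proving (1). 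For (2), setting $k=-1$ is legitimate because $\mu$ is supported on $O_\p^\times$, and applying $\Theta$ in $z$ multiplies the integrand by $a$, cancelling $a^{-1}$ and recovering $\phi$.

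The only genuinely nontrivial input is the eigenfunction identity $\Theta\kappa_z(a)=a\kappa_z(a)$; the remainder is bookkeeping with the commutative diagram relating $\bM_\loc$ to diagonal pushforward of distributions. The main care-point is fixing the identification $O_\p^\anti\simeq O_\p^\times$ compatibly so that $\ang{k}$ corresponds to $a\mapsto a^k$ and not its inverse, but this is built into the conventions of Section~\ref{ss:notation_conventions}.
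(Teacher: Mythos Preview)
Your proof is correct and is essentially the same approach as the paper's, which simply cites \cite{ST01}*{Lemma 4.6, parts 5 \& 8}; you have unpacked the content of that reference. The eigenfunction identity $\Theta\kappa_z(a)=a\,\kappa_z(a)$ is exactly Schneider--Teitelbaum's statement that under $\lambda$ the invariant derivative corresponds to multiplication by the coordinate on $O_\p$, and your use of $\kappa_{\alpha(z,w)}=\kappa_z\kappa_w$ together with the diagonal-pushforward description of $\bM_\loc$ is the mechanism behind their part 5.
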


\begin{proof}
This follows from \cite{ST01}*{Lemma 4.6 5\&8}.
\end{proof}

\begin{definition}[Admissible function]\label{de:admissible_local}
For $n\in\dN$, a stable function $\phi\in\sO(\sB,K)^\heartsuit$ is
\emph{$n$-admissible} if $\phi(\alpha(\;,\nu_+(x)))=\psi(x)\phi$ for every
$x\in \p^{-n}/O_\p$, where $\nu_+$ is introduced in
\eqref{eq:lt_data}.\index{stable function,
$\sO(\sB,K)^\heartsuit$!admissible}
\end{definition}

\begin{lem}\label{le:heartsuit_local}
Let $\phi\in\sO(\sB,K)^\heartsuit$ be an $n$-admissible stable function. Then
$\lambda(\phi)$ is supported on $1+\p^n$ if $n\geq 1$; in particular,
$\bM_\loc(\phi)(\ang{k})=\bM_\loc(\phi)(\chi\ang{k})$ for any $k\in\dZ$ and
any (locally constant) character $\chi\colon O_\p^\anti\to K^\times$ that is
trivial on $(1+\p^n)^\times$.
\end{lem}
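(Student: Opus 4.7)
The plan is to exploit the Schneider--Teitelbaum Fourier isomorphism $\lambda$ to convert the admissibility of $\phi$ into a support condition on the distribution $\mu\colonequals\lambda(\phi)\in D(O_\p^\times,K)$. Recall from the construction of $\lambda$ that $\phi(z)=\int_{O_\p}\kappa_z(a)\,d\mu(a)$ for all $z\in\sB(K)$, where $\kappa_z(a)=\nu(a.z)$ is a locally analytic character of $O_\p$.

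First I would compute how translation on $\sB$ transforms $\kappa_z$. Because $\nu\colon\LT\to\Gmf$ is a homomorphism of formal groups, $\nu_+$ is $O_\p$-equivariant, and $\nu\circ\nu_+=\psi$ on $F_\p/O_\p$, one gets
\[\kappa_{\alpha(z,\nu_+(x))}(a)=\nu(a.z)\cdot\nu(a.\nu_+(x))=\kappa_z(a)\cdot\psi(ax)\]
for $a\in O_\p$ and $x\in F_\p/O_\p$. Plugging this and the Fourier representation of $\phi$ into the admissibility identity $\phi(\alpha(z,\nu_+(x)))=\psi(x)\phi(z)$, I obtain
\[\int_{O_\p}\kappa_z(a)\bigl(\psi(ax)-\psi(x)\bigr)\,d\mu(a)=0\]
for every $z\in\sB(K)$ and every $x\in\p^{-n}/O_\p$. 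Since the map $\mu'\mapsto\bigl(z\mapsto\int\kappa_z\,d\mu'\bigr)$ is the inverse Fourier transform and hence injective, this forces $(\psi(\cdot\,x)-\psi(x))\cdot\mu=0$ as a distribution on $O_\p^\times$ for every such $x$.

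To pin down the support I would partition $O_\p^\times$ into cosets modulo $\p^n$. On a coset $c+\p^n$ with $c\notin 1+\p^n$, one has $c-1\notin\p^n$, so there exists $x_0\in\p^{-n}$ with $(c-1)x_0\notin O_\p$, i.e.\ $\psi((c-1)x_0)\ne 1$. For $a$ in this coset one has $(a-c)x_0\in O_\p$, so $\psi((a-1)x_0)=\psi((c-1)x_0)$, and hence $\psi(ax_0)-\psi(x_0)$ is a nonzero constant on the coset. Combined with the distributional vanishing of $(\psi(\cdot\,x_0)-\psi(x_0))\mu$, this forces $\mu\res_{c+\p^n}=0$, yielding $\supp\mu\subseteq 1+\p^n$. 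For the ``in particular'' assertion, the commutative diagram preceding Lemma \ref{le:family_local} identifies $(\lambda\otimes\lambda)\bM_\loc(\phi)$ with $\Delta_*\mu$; partial pairing of $\Delta_*\mu$ with a function $\xi\in C(O_\p^\anti,K)$ in the second variable yields the product distribution $\xi\cdot\mu$, so $\lambda(\bM_\loc(\phi)(\xi))=\xi\cdot\mu$. Because $\chi$ is trivial on $(1+\p^n)^\times$, the functions $\ang{k}$ and $\chi\ang{k}$ coincide on $1+\p^n\supseteq\supp\mu$, so $\ang{k}\cdot\mu=(\chi\ang{k})\cdot\mu$, and applying $\lambda^{-1}$ gives the asserted equality.

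The main technical step is the transition from the integral identity to the distributional vanishing $(\psi(\cdot\,x)-\psi(x))\mu=0$; it rests on $\lambda$ being an isomorphism of topological $K$-algebras together with the continuity of multiplication of a distribution by a locally analytic function. The coset decomposition and the partial-pairing computation are then essentially bookkeeping.
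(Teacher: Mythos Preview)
Your argument is correct and is essentially an explicit unpacking of the paper's one-line proof, which cites \cite{ST01}*{Lemma 4.6.5} together with the compatibility of $\nu$, $\nu_+$, and $\psi$. The core computation you carry out---that translation by the torsion point $\nu_+(x)$ on $\sB$ corresponds under $\lambda$ to multiplication by the locally constant character $a\mapsto\psi(ax)$ on $O_\p$, and hence $n$-admissibility forces the support of $\lambda(\phi)$ into $1+\p^n$---is exactly the content being invoked from Schneider--Teitelbaum; your coset argument and the partial-pairing identification $\lambda(\bM_\loc(\phi)(\xi))=\xi\cdot\mu$ then finish the job cleanly.
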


\begin{proof}
This again follows from \cite{ST01}*{Lemma 4.6.5} and the relation among
$\nu$, $\nu_+$ and $\psi$.
\end{proof}

\subsection{Kodaira--Spencer isomorphism}
\label{ss:kodaira_spencer}

Let $\dB$ be a totally definition incoherent quaternion algebra over $\dA$.
Denote by $\Gamma$ the set of all compact open subgroups $U^\p$ of
$\dB^{\infty\p\times}=(\dB\otimes_\dA\dA^{\infty\p})^\times$, which is a
filtered partially ordered set under inclusion. Recall that we have the
system of Shimura curves $\{X_U\}_U$ over $\Spec F$.

\begin{definition}[Shimura pro-curve of Iwahori level]
For a tame level $U^\p\in\Gamma$ and $m\in\dZ$, put $X(m,U^\p)=X_{U^\p
U_{\p,m}}\otimes_FF_\p^\nr$ where we recall that $U_{\p,m}$ is the
$\p$-Iwahori subgroup of level $m$. If we take the inverse limit with respect
to $m$, we obtain
\[X(\pm\infty,U^\p)=\varprojlim_{m\to\infty}X(\pm m,U^\p).\]
For $m\in\dN\cup\{\infty\}$, if we take the inverse limit with respect to
$U^\p$, we obtain
\[X(\pm m)=\varprojlim_{U^\p\in \Gamma}X(\pm m,U^\p),\]
which we call the \emph{Shimura pro-curve of $\p$-Iwahori level $m$}.
\end{definition}

We have successive surjective morphisms
\[X(\pm\infty)\to\cdots\to
X(\pm1)\to X(0),\] which are equivariant under the Hecke actions of
$\dB^{\infty\p\times}$. By Carayol \cite{Car86}*{\Sec 6}, $X(0)$ admits a
canonical smooth model (see \cite{Mil92}*{Definition 2.2} for its meaning)
$\cX$ over $\Spec O_\p^\nr$. Strictly speaking, Carayol assumed that
$F\neq\dQ$. But when $F=\dQ$, one may take $\cX$ to be the model defined by
modular interpretation which is well-known.

We recall the construction in \cite{Car86}*{1.4} of an $O_\p$-divisible group
$\cG$ on $\cX$. For $m\geq 1$, denote the principal congruence subgroup of
level $\p^m$ by
\[U_{\p,m}^{\r{pr}}=\{g\in U_{\p,0}\res g\equiv 1\mod \p^m\}\]
and $X(m)^{\r{pr}}$ the corresponding covering of $X(0)$. Consider the right
action of $U_{\p,m}^{\r{pr}}/U_{\p,0}\simeq\GL_2(O_\p/\p^m)$ on
$(\p^{-m}/O_\p)^{\oplus 2}$ sending $v\in(\p^{-m}/O_\p)^{\oplus 2}$ to
$g^{-1}v$. Then the quotient
\begin{align*}
\(X(m)^{\r{pr}}\times(\p^{-m}/O_\p)^{\oplus 2}\)/(U_{\p,m}^{\r{pr}}/U_{\p,0})
\end{align*}
defines a finite flat group scheme $G_m$, with strict $O_\p$-action, over
$X(0)$. The inductive system $\{G_m\}_{m\geq 1}$ defines an $O_\p$-divisible
group $G$ over $X(0)$ (which is however denoted by $E_\infty$ in
\cite{Car86}). In particular, over $X(+\infty)$ (resp.\ $X(-\infty)$), we
have an exact sequence
\begin{align}\label{eq:g_split}
\xymatrix{
0 \ar[r] & F_\p/O_\p \ar[r] & G \ar[r] & F_\p/O_\p \ar[r] & 0
}
\end{align}
such that the second arrow is the inclusion into the first (resp.\ second)
factor and the third arrow is the projection onto the second (resp.\ first)
factor.

By \cite{Car86}*{6.4}, the $O_\p$-divisible group $G$ extends uniquely to an
$O_\p$-divisible group $\cG$ of height $2$ over $\cX$, together with an
action by $\dB^{\infty\p\times}$ that is compatible with the Hecke action on
the base.

For $m\geq 1$, put $\cX^{(m)}=\cX\otimes_{O_\p}O_\p/\p^m$ and
$\cG^{(m)}=\cG\res_{\cX^{(m)}}$. We have the following exact sequence
\begin{align}\label{eq:hodge_pre}
\xymatrix{
0 \ar[r] & \ul\omega^{\bullet(m)}_p \ar[r] & \cL_p^{(m)} \ar[r] &
(\ul\omega^{\circ(m)}_p)^\vee \ar[r] & 0, }
\end{align}
where if we put $h=[F_\p:\dQ_p]$,
\begin{itemize}
  \item $\cL_p^{(m)}$ is the Dieudonn\'{e} crystal of $\cG^{(m)}$
      evaluated at $\cX^{(m)}$, which is a locally free sheaf of rank
      $2h$;
  \item $\ul\omega^{\bullet(m)}_p$ is the sheaf of invariant
      differentials of $\cG^{(m)}/\cX^{(m)}$, which is a locally free
      sheaf of rank $1$;
  \item $\ul\omega^{\circ(m)}_p$ is the sheaf of invariant differentials
      of $(\cG^{(m)})^\vee/\cX^{(m)}$, which is a locally free sheaf of
      rank $2h-1$.
\end{itemize}
They are equipped with actions of $O_\p$ under which \eqref{eq:hodge_pre} is
equivariant. The projective system of \eqref{eq:hodge_pre} for all $m\geq1$
induces the following $O_\p$-equivariant exact sequence
\begin{align*}
\xymatrix{
0 \ar[r] & \ul\omega^\bullet_p \ar[r] & \cL_p \ar[r] & (\ul\omega^\circ_p)^\vee \ar[r] & 0, }
\end{align*}
of locally free sheaves over $\cX^\wedge$, the formal completion of $\cX$
along its special fiber. Let $\cL$ (resp.\ $\ul\omega^{\circ\vee}$) be the
maximal sub-sheaf of $\cL_p$ (resp.\ $(\ul\omega^\circ_p)^\vee$) where $O_\p$
acts via the structure map. Then we have the following
$\dB^{\infty\p\times}$-equivariant exact sequence
\begin{align}\label{eq:hodge}
\xymatrix{
0 \ar[r] & \ul\omega^\bullet \ar[r] & \cL \ar[r] & \ul\omega^{\circ\vee} \ar[r] & 0, }
\end{align}
where $\ul\omega^\bullet=\ul\omega^\bullet_p$. We call \eqref{eq:hodge} the
\emph{formal Hodge exact sequence}.

We have the \emph{Gauss--Manin connection}
\begin{align}\label{eq:gm_formal}
\nabla\colon\cL\to\cL\otimes\Omega^1_{\cX^\wedge},
\end{align}
which is equivariant under the Hecke action of $\dB^{\infty\p\times}$. We
have the following two lemmas whose proofs will be given in
\Sec\ref{ss:proof_claims}.

\begin{lem}\label{le:algebraizable}
The formal Hodge exact sequence \eqref{eq:hodge} is algebraizable, that is,
it is the formal completion of an exact sequence
\begin{align}\label{eq:hodge_sequence}
\xymatrix{
0 \ar[r] & \ul\omega^\bullet \ar[r] & \cL \ar[r] & \ul\omega^{\circ\vee} \ar[r] & 0, }
\end{align}
on $\cX$. Here, we abuse of notation by adopting the same symbols for these
sheaves. Moreover, the Gauss--Manin connection \eqref{eq:gm_formal} is
algebraizable.
\end{lem}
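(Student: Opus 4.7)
The plan is to algebraize the formal Hodge sequence \eqref{eq:hodge}, together with its Gauss--Manin connection, by realizing it as an isotypic direct summand of the algebraic de Rham cohomology of a universal abelian scheme on an auxiliary unitary Shimura curve, and then descending the result back to $\cX$. This is indeed the route advertised at the end of \Sec\ref{s:arithmetic_quaternionic}, where it is said that the proofs deferred to \Sec\ref{ss:proof_claims} proceed through an auxiliary use of unitary Shimura curves.

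First, I would introduce an auxiliary CM quadratic extension $E'/F$ in which $\fp$ splits, and promote the totally definite incoherent quaternionic datum for $\dB$ to an incoherent unitary datum of signature $(1,1)$ at the distinguished archimedean place. Carayol's PEL construction then furnishes a regular model $\cM$ over the ring of integers of a finite unramified extension of $F_\p^\nr$, carrying a universal polarized abelian scheme $\cA/\cM$ with $O_{E'}$-action satisfying a Kottwitz-type signature condition, together with a finite \'etale $\dB^{\infty\p\times}$-equivariant morphism $\cM\to\cX$. The $O_{E'}\otimes\dZ_p$-action decomposes $\cA[p^\infty]$ into isotypic pieces indexed by the places of $E'$ above $p$; since $\fp$ splits in $E'$, exactly one of these pieces is canonically identified with the pullback $\cG|_{\cM}$ (up to the polarization twist).

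Second, by Grothendieck--Messing theory, the Dieudonn\'e crystal $\cL_p$ and the exact sequence \eqref{eq:hodge_pre} pulled back to each $\cM^{(m)}$ are identified with the corresponding isotypic summand of the relative de Rham cohomology $H^1_{\DR}(\cA/\cM)\otimes O_\p/\fp^m$ and its Hodge filtration. Now $H^1_{\DR}(\cA/\cM)$, its Hodge filtration, its $O_{E'}$-isotypic decomposition, and its Gauss--Manin connection are all \emph{honestly algebraic} objects on $\cM$. Extracting the $O_\p$-isotypic summand on which $O_\p$ acts through the structure map thus produces an algebraic version of \eqref{eq:hodge} on $\cM$ together with an algebraic connection, whose formal completion along the special fiber recovers the pullback of \eqref{eq:hodge} and \eqref{eq:gm_formal} to $\cM^\wedge$.

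Third, I would descend these algebraic data from $\cM$ to $\cX$ along the finite \'etale cover $\cM\to\cX$, using the descent datum furnished by the action of the covering group (the automorphisms of the unitary datum relative to the quaternionic one). Uniqueness of the algebraization then follows from formal GAGA on the proper components of $\cX$ (which covers all cases except $F=\dQ$ with $\dB$ split at finite places, where the modular-curve interpretation gives the conclusion directly). I expect the main obstacle to be in the first step: choosing $E'$ and the hermitian form so that, via Grothendieck--Messing and the explicit construction in \cite{Car86}*{\Sec 1.4}, the relevant isotypic summand of $\cA[p^\infty]$ identifies \emph{canonically} with $\cG$, not merely with an $O_\p$-divisible group of the right height and dimension. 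Once this compatibility is pinned down, the algebraicity of the Gauss--Manin connection is an automatic consequence of algebraicity of Gauss--Manin on $H^1_{\DR}(\cA/\cM)$.
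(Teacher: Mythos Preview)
Your proposal is essentially the paper's approach: pass to Carayol's unitary Shimura curves, where a universal abelian scheme is available, and identify \eqref{eq:hodge} and \eqref{eq:gm_formal} with an $O_\p$-isotypic summand of the algebraic Hodge filtration and Gauss--Manin connection on relative de~Rham cohomology.

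The one point where you diverge from the paper is the mechanism for transferring between the quaternionic and unitary curves. You posit a finite \'etale cover $\cM\to\cX$ and then invoke descent. What Carayol actually proves (and what the paper uses) is sharper and avoids descent entirely: for each sufficiently small tame level $U^\p$ and each connected component $\cX'_{U^\p}$ of $\cX_{U^\p}$, there is a unitary level $\U{U}^\p$ such that $\cX'_{U^\p}$ is \emph{isomorphic} to the identity connected component of $\U{\cX}_{\U{U}^\p}$, and under this isomorphism $\cG$ matches $\U{\cG}$. So one simply transports the algebraic Hodge sequence and connection from $\U{\cX}$ to $\cX'$ component by component; no descent datum or covering-group action needs to be checked. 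Your descent argument would also work (the formal completion already lives on $\cX^\wedge$, so formal GAGA on proper components supplies the required compatibility), but it is an unnecessary detour. Also, the auxiliary CM field is not taken to be $E$ or a variant thereof; the paper uses $\dQ(\lambda)$ for a carefully chosen imaginary quadratic $\lambda$ (with $p$ split and $\dQ(\lambda)\not\subset E$), and the unitary datum is coherent, not incoherent.
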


We simply call \eqref{eq:hodge_sequence} the \emph{Hodge exact sequence}.

\begin{remark}\label{re:local_system}
For $m\geq1$, one may consider the right action of
$U_{\p,m}^{\r{pr}}/U_{\p,0}\simeq\GL_2(O_\p/\p^m)$ on $(O_\p/\p^m)^{\oplus
2}$ sending $v\in(O_\p/\p^m)^{\oplus 2}$ to $g^tv$. Then the quotient
\[\(X(m)^{\r{pr}}\times(O_\p/\p^m)^{\oplus 2}\)/(U_{\p,m}^{\r{pr}}/U_{\p,0})\]
defines an $O_\p/\p^m$-local system $\rL_m$ on $X(0)$ of rank $2$. Denote by
$\rL$ the $O_\p$-local system over $X(0)$ defined by $(\rL_m)_{m\geq 1}$.
Then $\rL$ is canonically isomorphic to the restriction of $\cL$ on the
generic fiber.
\end{remark}

\begin{proposition}[Kodaira--Spencer isomorphism]\label{pr:ks_isomorphism}
The composition of
\begin{align}\label{eq:ks_pre}
\ul\omega^\bullet\to\cL\xrightarrow{\nabla}\cL\otimes\Omega^1_\cX\to\ul\omega^{\circ\vee}\otimes\Omega^1_\cX
\end{align}
is an isomorphism of quasi-coherent sheaves, where $\ul\omega^\circ$ is the
dual sheaf of $\ul\omega^{\circ\vee}$ and the tensor product is over the
structure sheaf $\sO_\cX$.
\end{proposition}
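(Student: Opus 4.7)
The plan is to deduce the Kodaira--Spencer isomorphism from the moduli interpretation of $\cX$ afforded by the auxiliary unitary Shimura curves used in \Sec\ref{ss:proof_claims}. First I would verify that the composite \eqref{eq:ks_pre} is $\sO_\cX$-linear and that its source and target are line bundles. Linearity follows from the Leibniz rule for $\nabla$: for a local section $s$ of $\ul\omega^\bullet$ and $f\in\sO_\cX$, one has $\nabla(fs)=f\nabla(s)+s\otimes df$, and the correction term $s\otimes df$ already lies in $\ul\omega^\bullet\otimes\Omega^1_\cX$, hence maps to zero in $\ul\omega^{\circ\vee}\otimes\Omega^1_\cX$. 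For the rank computation I would decompose $\cL_p$, $\ul\omega^\bullet_p$ and $(\ul\omega^\circ_p)^\vee$ under the $h=[F_\p:\dQ_p]$ embeddings $F_\p\hookrightarrow F_\p^\nr$; the ``structure map'' isotypic component cuts out a rank $2$ summand of $\cL_p$ and rank $1$ summands of $\ul\omega^\bullet_p$ and $(\ul\omega^\circ_p)^\vee$, which by construction are $\cL$, $\ul\omega^\bullet$ and $\ul\omega^{\circ\vee}$ respectively. Since $\cX$ is smooth of relative dimension $1$ over $\Spec O_\p^\nr$, the sheaf $\Omega^1_\cX$ is also a line bundle, so both source and target of \eqref{eq:ks_pre} are line bundles on $\cX$.

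Second, a morphism between line bundles on the regular scheme $\cX$ is an isomorphism if and only if it is nonzero at every point, so it suffices to verify the claim in the complete local ring at an arbitrary closed point $x$ of $\cX$. Via the moduli interpretation provided by the auxiliary unitary Shimura curve (the key input proved in \Sec\ref{ss:proof_claims}), the complete local ring $\widehat\sO_{\cX,x}$ is identified with a quotient of the universal deformation ring of $\cG_x$ together with its rigidifying structure. Under this identification the composite \eqref{eq:ks_pre} at $x$ becomes the infinitesimal period map which sends a first-order deformation of $(\cX,\cG)$ to the resulting infinitesimal variation of the Hodge filtration $\ul\omega^\bullet\subset\cL$; by Grothendieck--Messing theory (Serre--Tate when $x$ lies in the special fiber), this is exactly the tangent map of the universal deformation, restricted to the structure isotypic component at $\fp$, and is therefore an isomorphism.

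The main obstacle will be making the moduli-theoretic identification precise, in particular showing that the first-order deformations of $\cX$ at $x$ correspond exactly to the structure isotypic part of $\Hom(\ul\omega^\bullet,\ul\omega^{\circ\vee})$ at $x$. This rests on the detailed analysis of the integral models of the auxiliary unitary Shimura curves and on the signature condition at $\fp$, which forces the relevant isotypic piece to be a line and thereby matches the $1$-dimensional deformation space of $\cX/\Spec O_\p^\nr$. Once this pointwise comparison is settled, the global isomorphism on all of $\cX$ follows from the coherent statement about line bundles established in the first paragraph together with the pointwise non-vanishing.
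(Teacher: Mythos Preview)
Your proposal is correct and follows essentially the same route as the paper: reduce to the auxiliary unitary Shimura curve $\U{\cX}$ with its universal abelian variety, observe that source and target are line bundles, and then check the map is an isomorphism fiberwise at closed points using the moduli interpretation. The only minor difference is presentational: where you invoke Grothendieck--Messing/Serre--Tate abstractly to identify \eqref{eq:ks_pre} with the tangent map of the deformation functor, the paper gives the concrete Diamond--Taylor style argument (choosing a nonzero first-order deformation $\phi$ and showing the Kodaira--Spencer map at that point equals $\phi$), but these are two ways of packaging the same deformation-theoretic input.
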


The isomorphism \eqref{eq:ks_pre} induces the following
($\dB^{\infty\p\times}$-equivariant) \emph{Kodaira--Spencer isomorphism}
\begin{align}\label{eq:ks}
\KS\colon\ul\omega^\bullet\otimes\ul\omega^\circ\xrightarrow{\sim}\Omega^1_\cX.
\end{align}

For $w\in\dN$, put $\cL^{[w]}=\Sym^w\cL\otimes\Sym^w\cL^\vee$. The
Gauss--Manin connection $\nabla^\vee$ on the dual sheaf $\cL^\vee$ and the
original one $\nabla$ induce a connection
$\nabla^{[w]}\colon\cL^{[w]}\to\cL^{[w]}\otimes\Omega^1_\cX$. Define
$\Theta^{[w]}$ to be the composite map
\begin{align}\label{eq:theta}
(\Omega^1_\cX)^{\otimes w}\xrightarrow{\KS^{-1}} (\ul\omega^\bullet)^{\otimes
w}\otimes(\ul\omega^\circ)^{\otimes w}\to \cL^{[w]} \xrightarrow{\nabla^{[w]}}\c
L^{[w]}\otimes\Omega^1_\cX.
\end{align}

Denote by $\cX(0)$ the (dense) open subscheme of $\cX$ with all points on the
special fiber where $\cG$ is supersingular removed. For $m\in\dN$, denote by
$\cX(m)$ the functor classifying $O_\p/\p^m$-equivariant \emph{frames}, that
is, exact sequences
\[\xymatrix{0\ar[r]& \LT[\p^m] \ar[r]& \cG[\p^m] \ar[r]& \p^{-m}/O_\p \ar[r]&
0}\] over $\cX(0)$ with terms fixed. Then $\cX(m)$ is representable by a
scheme \'{e}tale over $\cX(0)$, which we again denote by $\cX(m)$. Put
$\cX(\infty)=\varprojlim_{m\to\infty}\cX(m)$. We define $\cG$,
$\nabla^{[w]}$, $\KS$, $\Theta^{[w]}$, and the sequence
\eqref{eq:hodge_sequence} for $\cX(m)$ ($m\in\dN\cup\{\infty\}$) via
restriction and denote them by the same notation. Over $\cX(\infty)$, we have
the \emph{universal frame}
\begin{align}\label{eq:igusa}
\xymatrix{0\ar[r]& \LT \ar[r]^-{\varrho_\bullet^\un}& \cG \ar[r]^-{\varrho_\circ^\un} & F_\p/O_\p \ar[r]& 0}.
\end{align}
There is a $\dB^{\infty\p\times}$-equivariant action of $O_{E_\p}^\times$ on
the morphism $\cX(\infty)\to\cX(0)$. More precisely, for
$(t_\bullet,t_\circ)\in O_{E_\p}^\times$, the pullback of \eqref{eq:igusa} is
the frame
\begin{align}\label{eq:action}
\xymatrix{0\ar[r]& \LT \ar[rr]^-{\varrho_\bullet^\un\circ t_\bullet^{-1}}&&
\cG \ar[rr]^-{\varrho^\un_\circ\circ t_\circ} && F_\p/O_\p \ar[r]& 0}.
\end{align}

The trivialization \eqref{eq:lt_data} induces \emph{transition isomorphisms}
\begin{align}\label{eq:infinity}
\Upsilon_\pm\colon X(\pm\infty)\otimes_{F_\p^\nr}F_\p^\ab\xrightarrow{\sim}
\cX(\infty)\otimes_{O_\p^\nr}F_\p^\ab
\end{align}\index{transition isomorphism, $\Upsilon_\pm$}
such that the pullback of \eqref{eq:igusa} coincide with \eqref{eq:g_split}.
It is $\dB^{\infty\p\times}$-equivariant and $O_{E_\p}^\times$-equivariant
(resp.\ $O_{E_\p}^\times$-anti-equivariant) for $X(+\infty)$ (resp.\
$X(-\infty)$). We have the following commutative diagram
\[\xymatrix{
X\otimes_FF_\p^\ab \ar[rr]^-{\rT_\rJ}\ar[d] && X\otimes_FF_\p^\ab \ar[d] \\
X(+\infty)\otimes_{F_\p^\nr}F_\p^\ab \ar[rd]^-{\Upsilon_+}\ar[rr]^-{\rT_\rJ} && X(-\infty)\otimes_{F_\p^\nr}F_\p^\ab \ar[ld]_-{\Upsilon_-} \\
& \cX(\infty)\otimes_{O_\p^\nr}F_\p^\ab, }\] where $\rJ$ is introduced in \Sec\ref{ss:notation_conventions}.

\subsection{Universal convergent modular forms}
\label{ss:universal_convergent}

For $m\in\dN\cup\{\infty\}$, denote by $\fX(m)$ the formal completion of
$\cX(m)$ along its special fiber, which is an affine formal scheme over
$O_\p^\nr$, equipped with an $O_\p$-divisible group $\fG$ induced from $\cG$.
The action of $O_{E_\p}^\times$ \eqref{eq:action} makes it the Galois group
of the $\dB^{\infty\p\times}$-equivariant pro-\'{e}tale Galois cover
$\fX(\infty)\to\fX(0)$. Denote by $O_{E_\p,m}^\times$ the subgroup of
$O_{E_\p}^\times$ that fixes the sub-cover $\fX(m)\to\fX(0)$ for $m\in\dN$.

The following lemma will be proved in \Sec\ref{ss:proof_claims}.

\begin{lem}\label{le:unit_root}
There is a unique quasi-coherent formal sub-sheaf $\cL^\circ$ of $\cL$
(viewed as the formal sheaf induced from $\cX(0)$) such that
\begin{enumerate}
  \item we have the following \emph{unit-root decomposition}
      \[\cL=\ul\omega^\bullet\oplus\cL^\circ;\]
  \item $\nabla\cL^\circ\subset\cL^\circ\otimes\Omega^1_{\fX(0)}$;
  \item for every closed point $x\in\fX(0)(\kappa)$, the restriction of
      $\cL^\circ$ to $\fX(0)_{/x}$, the formal completion at $x$, is free
      of rank $1$.
\end{enumerate}
\end{lem}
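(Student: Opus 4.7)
The plan is to realize $\cL^\circ$ as the unit-root subcrystal of the $F$-crystal $\cL$ over the ordinary locus $\fX(0)$, following the classical Dwork--Katz philosophy. Since every geometric fiber of $\cG$ over $\fX(0)$ is ordinary, there is a canonical connected--\'{e}tale short exact sequence
\begin{equation*}
0 \to \cG^\bullet \to \cG \to \cG^\circ \to 0
\end{equation*}
of $O_\p$-divisible groups, in which $\cG^\bullet$ is connected of height one (fpqc-locally isomorphic to $\LT$) and $\cG^\circ$ is \'{e}tale of height one. Applying the crystalline Dieudonn\'{e} functor and passing to the structure-map isotypic component yields a $\nabla$-equivariant short exact sequence
\begin{equation*}
0 \to \ul\omega^\bullet \to \cL \to \cM \to 0
\end{equation*}
of locally free rank-one $\sO_{\fX(0)}$-modules, where $\cM$ comes from $\cG^\circ$; here one uses that the Hodge filtration on a multiplicative $O_\p$-divisible group of height one is the entire Dieudonn\'{e} crystal, so $\ul\omega^\bullet$ is exactly the contribution from $\cG^\bullet$. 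Constructing $\cL^\circ$ thus amounts to splitting this sequence horizontally.

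For the splitting I would use Frobenius. The crystal $\cL$ carries a canonical semilinear Frobenius $F$ which acts as $p$ times a unit on $\ul\omega^\bullet$ (since $\cG^\bullet$ is multiplicative) and as a unit on $\cM$ (since $\cG^\circ$ is \'{e}tale). Choose any $\sO_{\fX(0)}$-linear section $s_0 \colon \cM \to \cL$, locally lift Frobenius to a map $\varphi$ on $\fX(0)$, and define inductively $s_{n+1} = F \circ \varphi^* s_n \circ F^{-1}$; the difference $s_{n+1} - s_n$ takes values in $\ul\omega^\bullet$ and is divisible by $p^n$, so the sequence converges $p$-adically to an $F$-equivariant splitting $s_\infty$. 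Set $\cL^\circ \colonequals \IM(s_\infty)$. The resulting sheaf is the unique $F$-stable complement to $\ul\omega^\bullet$, so it is independent of both $s_0$ and the local choice of $\varphi$, and hence glues to a well-defined global formal subsheaf.

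Horizontality of $\cL^\circ$ then follows from the compatibility $\nabla \circ F = F \circ \nabla$ (up to the Frobenius twist), which forces any $F$-stable complement to be $\nabla$-stable. For the uniqueness in the lemma, two such sub-sheaves would differ by a horizontal morphism $\cM \to \ul\omega^\bullet$, which is a horizontal morphism from a slope-$0$ to a slope-$1$ sub-isocrystal, hence zero. The rank-one freeness of $\cL^\circ$ on the formal completion $\fX(0)_{/x}$ at a closed point $x$ is then automatic: by Serre--Tate theory this completion is $\Spf \hat{R}$ with $\hat{R}$ a complete regular local $O_\p^\nr$-algebra (in fact isomorphic to a formal $O_\p$-torus), over which every locally free rank-one sheaf is free.

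The main obstacle is making the Frobenius-iteration rigorous globally on $\fX(0)$, since lifts of Frobenius are generally only available locally. One clean route is to pull back to the Igusa tower $\fX(\infty)$: over it the universal frame \eqref{eq:igusa} fully trivializes the connected--\'{e}tale sequence, so the \'{e}tale part $\cG^\circ \cong F_\p/O_\p$ and its Dieudonn\'{e} crystal give an immediate candidate lift of $\cM$; one then descends the construction along the $O_{E_\p}^\times$-Galois pro-\'{e}tale cover $\fX(\infty) \to \fX(0)$, with uniqueness providing the requisite Galois-invariance automatically. This descent strategy is presumably why the author's proof in \Sec\ref{ss:proof_claims} passes through an auxiliary unitary Shimura curve, where the Dieudonn\'{e}-theoretic manipulations admit a more direct moduli-theoretic formulation.
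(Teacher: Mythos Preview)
Your approach is essentially the paper's: define $\cL^\circ$ as the unit-root sub-$F$-crystal via the Dwork--Katz iteration, exactly as in \cite{Kat73}*{Theorem 4.1}, which the paper invokes directly. The one substantive difference is that you treat the existence of a Frobenius lift as the main obstacle and propose to work around it by descent from the Igusa tower, whereas the paper observes that on the ordinary locus there is a \emph{canonical global} lift $\Phi$ of the relative Frobenius, given by dividing the universal $O_\p$-divisible group by $\fG_\can[\p]$. This is precisely why the paper passes to the unitary Shimura curve: not for ``more direct Dieudonn\'{e}-theoretic manipulations'' as you guess, but because the moduli interpretation is what makes ``divide by the canonical subgroup'' meaningful as a self-map of the base. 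With this global $\Phi$ in hand, the iteration converges globally and no gluing or descent is needed. Two minor corrections: the convergence is by powers of a uniformizer $\varpi$ of $F_\p$ rather than $p$ (the paper explicitly says to run Katz's argument ``with $p$ replaced by a uniformizer $\varpi$''), reflecting that the Newton slopes of the underlying $p$-divisible group are $0$ and $1/h$; and for part~(3) the paper appeals to the explicit Serre--Tate description in Lemma~\ref{le:rationality} rather than a bare Nakayama argument, though your reasoning there is also fine.
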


We restrict the above decomposition to $\fX(m)$ for $m\in\dN\cup\{\infty\}$.
The splitting in the above lemma induces a map
\[\theta^{[w]}_\ord\colon\cL^{[w]}\to(\ul\omega^\bullet)^{\otimes w}\otimes(\ul\omega^\circ)^{\otimes w}
\xrightarrow{\KS}(\Omega^1_{\fX(m)})^{\otimes w}\] for all $w\in\dN$.

\begin{definition}[Atkin--Serre operator]\label{de:theta_ordinary}
For $m\in\dN\cup\{\infty\}$ and $w\in\dN$, define the \emph{Atkin--Serre
operator} to be
\[\Theta^{[w]}_\ord\colon(\Omega^1_{\fX(m)})^{\otimes w}
\xrightarrow{\Theta^{[w]}\res_{\fX(m)}}\cL^{[w]}\otimes\Omega^1_{\fX(m)}
\xrightarrow{\theta^{[w]}_\ord}(\Omega^1_{\fX(m)})^{\otimes w+1},\] where
$\Theta^{[w]}$ is defined in \eqref{eq:theta}. For $k\in\dN$, define the
\emph{Atkin--Serre operator of degree $k$} to be
\[\Theta^{[w,k]}_\ord=\Theta^{[w+k-1]}_\ord\circ\cdots\circ\Theta^{[w]}_\ord\colon
(\Omega^1_{\fX(m)})^{\otimes w}\to(\Omega^1_{\fX(m)})^{\otimes w+k}.\] In
what follows, $w$ will always be clear from the text, and hence we will
suppress $w$ from notation; in other words, we simply write $\Theta_\ord$
(resp.\ $\Theta_\ord^k$) for $\Theta^{[w]}_\ord$ (resp.\
$\Theta^{[w,k]}_\ord$) for all $w\in\dN$.\index{Atkin--Serre operator,
$\Theta_\ord$}
\end{definition}

Using Serre--Tate coordinates (Proposition \ref{th:serre_tate}), the formal
deformation space of $\LT\oplus F_\p/O_\p$ is canonically isomorphic to
$\LT$. Thus, we have the classifying morphism
\[c\colon\fX(\infty)\to\LT\]
of $O_\p^\nr$-formal schemes. It induces a morphism
\[c_{/x}\colon\fX(\infty)_{/x}\to\LT\]
for every closed point $x\in\fX(\infty)(\kappa)$, where $\fX(\infty)_{/x}$
denotes the formal completion of $\fX(\infty)$ at $x$. The following lemma
and proposition will be proved in \Sec\ref{ss:proof_claims}.

\begin{lem}\label{le:deformation}
The morphism $c_{/x}$ is an isomorphism for every $x$.
\end{lem}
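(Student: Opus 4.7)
The plan is to identify both the source and the target of $c_{/x}$ with the universal deformation space of the framed ordinary $O_\p$-divisible group at $x$, and to recognize $c_{/x}$ as the canonical Serre--Tate coordinate map.

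First, since the action of $O_{E_\p}^\times$ makes $\fX(\infty)\to\fX(0)$ a pro-\'{e}tale Galois cover, the formal completion $\fX(\infty)_{/x}$ maps isomorphically to $\fX(0)_{/\bar x}$, where $\bar x\in\fX(0)(\kappa)$ is the image of $x$. Since $\cX$ is smooth of relative dimension one over $O_\p^\nr$, both sides are abstractly isomorphic to $\Spf O_\p^\nr[[T]]$, so only the matching of moduli-theoretic structures with $\LT$ is at issue.

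Second, I would give $\fX(0)_{/\bar x}$ the deformation-theoretic interpretation as the universal deformation space of the ordinary $O_\p$-divisible group $\cG_{\bar x}$ with its strict $O_\p$-action. When $F=\dQ$ this is the standard modular interpretation; in general this step belongs to the suite of claims deferred to \Sec\ref{ss:proof_claims}, where it is derived by relating $\cX$ to an auxiliary unitary Shimura curve that carries a PEL moduli interpretation. Granting this, the pullback of the universal frame \eqref{eq:igusa} equips the universal $O_\p$-divisible group over $\fX(\infty)_{/x}$ with identifications of its connected and \'{e}tale parts with $\LT$ and $F_\p/O_\p$ respectively. The fiber of this frame at $x$ is the unique splitting $\cG_x\cong\LT_\kappa\oplus F_\p/O_\p$ (unique because $\Hom_{O_\p}(F_\p/O_\p,\LT_\kappa)=0$ over $\kappa$), so $\fX(\infty)_{/x}$ pro-represents the functor of framed deformations of this split ordinary $O_\p$-divisible group.

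Third, I would invoke the Serre--Tate theorem for $O_\p$-divisible groups (Proposition \ref{th:serre_tate}): the framed deformation functor of $\LT_\kappa\oplus F_\p/O_\p$ is canonically pro-represented by $\LT$ itself, the isomorphism sending a deformation to the class of the extension $0\to\LT\to\cG\to F_\p/O_\p\to 0$ in $\Ext^1_{O_\p}(F_\p/O_\p,\LT)\cong\Hom_{O_\p}(F_\p/O_\p,\LT)=\LT$. Since $c_{/x}$ is by construction the map recording this Serre--Tate coordinate of the deformation of $\cG$ parametrized by $\fX(\infty)_{/x}$, it coincides with this canonical isomorphism.

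The main obstacle is the second step, namely identifying $\fX(0)_{/\bar x}$ with the universal deformation space of $\cG_{\bar x}$. In the totally real setting this moduli interpretation is not built into Carayol's construction of $\cX$, and accessing it requires the detour through a unitary Shimura curve whose PEL moduli problem contains $\cX$ (up to \'{e}tale covers and products with auxiliary factors); this is precisely the content postponed to \Sec\ref{ss:proof_claims}.
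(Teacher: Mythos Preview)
Your proposal is correct and follows the same approach as the paper: pass to the unitary Shimura curve to obtain a moduli interpretation, then invoke the Serre--Tate theorem for $O_\p$-divisible groups (Theorem \ref{th:serre_tate}) to identify the formal neighborhood with $\LT$. The paper's own proof is a single sentence to this effect, and you have correctly anticipated both the strategy and the place where the real work lies (the moduli interpretation via the auxiliary unitary curve in \Sec\ref{ss:proof_claims}); the one cosmetic slip is writing $\Hom_{O_\p}(F_\p/O_\p,\LT)=\LT$, where you presumably mean $\Hom_{O_\p}(\Tp(F_\p/O_\p)(k),\LT)\simeq\LT$ as in the statement of Theorem \ref{th:serre_tate}.
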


\begin{proposition}\label{pr:lt_action}
There is a morphism $\beta\colon\LT\times_{\Spf
O_\p^\nr}\fX(\infty)\to\fX(\infty)$ such that
\begin{enumerate}
  \item for every $x\in\fX(\infty)(\kappa)$, it preserves
      $\fX(\infty)_{/x}$ and the induced morphism
      \[\beta_{/x}\colon\LT\times_{\Spf O_\p^\nr}\fX(\infty)_{/x}\to\fX(\infty)_{/x}\]
      is simply the formal group law after identifying $\fX(\infty)_{/x}$
      with $\LT$ via $c_{/x}$;

  \item if we equip $\LT$ with the action of
      $O_{E_\p}^\times\times\dB^{\infty\p\times}$ via the inflation
      $O_{E_\p}^\times\to O_\p^\times$ by $t\mapsto t/t^c$ and trivially
      on the second factor, then $\beta$ is
      $O_{E_\p}^\times\times\dB^{\infty\p\times}$-equivariant;

  \item for every $x\in F_\p/O_\p$, the following diagram
      \[\xymatrix{
      \fX(\infty)\widehat\otimes_{O_\p^\nr}F_\p^\ab  \ar[rr]^-{\beta_{\nu_\pm(x)}} \ar[d]_-{\Upsilon_\pm^{-1}}
      &&  \fX(\infty)\widehat\otimes_{O_\p^\nr}F_\p^\ab \ar[d]^-{\Upsilon_\pm^{-1}} \\
      X(\pm\infty)\otimes_{F_\p^\nr}F_\p^\ab \ar[rr]^-{\rT_{\rn^\pm(x)}} &&  X(\pm\infty)\otimes_{F_\p^\nr}F_\p^\ab
      }\] commutes, where
      \[\rn^+(x)=\left(
                 \begin{array}{cc}
                   1 & x \\
                   0 & 1 \\
                 \end{array}
               \right),\quad
      \rn^-(x)=\left(
                 \begin{array}{cc}
                   1 & 0 \\
                   x & 1 \\
                 \end{array}
               \right);
      \] and $\beta_z$ is the restriction of $\beta$ to a point $z$ of $\sB$.
\end{enumerate}
In particular, $\LT$ acts trivially on the special fiber of $\fX(\infty)$ and
such $\beta$ is unique.
\end{proposition}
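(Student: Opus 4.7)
The plan is to establish uniqueness first (which also forces the triviality on the special fiber), then build $\beta$ functorially from the Serre-Tate moduli description, and finally verify properties (2) and (3). Suppose $\beta$ and $\beta'$ both satisfy (1). For each closed point $x \in \fX(\infty)(\kappa)$, property (1) combined with Lemma \ref{le:deformation} forces both $\beta_{/x}$ and $\beta'_{/x}$ to coincide with the formal group law of $\LT$ under the identification $c_{/x}$. Since the formal neighborhoods of closed points exhaust $\fX(\infty)$ as a formal scheme, $\beta = \beta'$. In particular, any such $\beta$ restricts to the identity on the special fiber, since the formal group law fixes the origin.

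\textbf{Construction.} For a pro-nilpotent $O_\p^\nr$-algebra $R$, an $R$-point of $\fX(\infty)$ is a frame as in \eqref{eq:igusa}, namely an exact sequence
\[
0 \to \LT \xrightarrow{\varrho_\bullet^\un} \cG_R \xrightarrow{\varrho_\circ^\un} F_\p/O_\p \to 0
\]
of $O_\p$-divisible groups over $R$ deforming the canonical split extension on the special fiber. By the Serre-Tate theorem (Proposition \ref{th:serre_tate}), the set of isomorphism classes of such framed extensions is canonically a torsor under $\Ext^1_R(F_\p/O_\p, \LT) \simeq \LT(R)$, with the torsor structure given by Baer sum. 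For $z \in \LT(R)$, define $\beta_z$ on the above point to be the frame obtained by Baer-summing its extension class with $z$. This construction is functorial in $R$ and thus yields a morphism $\beta\colon \LT \times_{\Spf O_\p^\nr} \fX(\infty) \to \fX(\infty)$. On every formal neighborhood $\fX(\infty)_{/x}$, Baer sum matches addition on $\LT$ through $c_{/x}$, which is exactly property (1).

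\textbf{Verification of (2) and (3).} For (2), the $O_{E_\p}^\times$-action on frames given by \eqref{eq:action} rescales $\varrho_\bullet^\un$ by $t_\bullet^{-1}$ and $\varrho_\circ^\un$ by $t_\circ$; under the identification $\Ext^1(F_\p/O_\p, \LT) \simeq \LT$ this multiplies the extension class by $t_\circ \cdot t_\bullet^{-1} = t/t^c$, matching the prescribed inflation $O_{E_\p}^\times \to O_\p^\times$. The $\dB^{\infty\p\times}$-equivariance is automatic, since Hecke operators at levels prime to $\p$ commute with the $\p$-local Baer-sum modification. For (3), one traces through the definition of $\Upsilon_\pm$, which on $F_\p^\ab$-points identifies the universal frame \eqref{eq:igusa} with the split sequence \eqref{eq:g_split}. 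Right Hecke translation by $\rn^\pm(x) \in \dB_\p^\times$ alters the splitting of \eqref{eq:g_split} by the torsion element $x \in F_\p/O_\p$; under the trivialization $\nu_\pm$ this modification corresponds, via the Serre-Tate coordinates, exactly to Baer sum by $\nu_\pm(x) \in \LT[\p^\infty]$, i.e., to $\beta_{\nu_\pm(x)}$.

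\textbf{Main obstacle.} The technical heart of the argument is property (3), which is a delicate compatibility between the Lubin-Tate torsion trivializations $\nu_\pm$ (encoded through $\psi^\pm$), the connected-étale decompositions on both sides of $\Upsilon_\pm$, and the two roles of $E_\p \hookrightarrow \dB_\p = \Mat_2(F_\p)$ associated with $\fP$ and $\fP^c$ (which are interchanged by $\rJ$, explaining why $x$ occupies transposed positions in $\rn^+(x)$ and $\rn^-(x)$). This sign- and index-sensitive verification is of the same flavor as, and is presumably the reason for deferring to, the moduli-theoretic claims collected in \Sec\ref{ss:proof_claims}.
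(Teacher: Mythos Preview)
Your core idea---defining $\beta$ as the Baer-sum action on framed extensions via Serre--Tate theory---is exactly the mechanism the paper uses. However, your construction has a genuine gap at the sentence ``an $R$-point of $\fX(\infty)$ is a frame as in \eqref{eq:igusa}''. This is not correct: an $R$-point of $\fX(\infty)$ is an $R$-point of $\fX(0)$ \emph{together with} a frame of the pulled-back $O_\p$-divisible group. Since the quaternionic Shimura curve $\cX$ (and hence $\fX(0)$) has no moduli interpretation when $F\neq\dQ$, the frame does not determine the point. Concretely, after Baer-summing with $z\in\LT(R)$, you obtain a new $O_\p$-divisible group $G''$ over $R$, but nothing tells you that $G''$ is the pullback of $\cG$ along some $R$-point of $\fX(0)$. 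Lemma~\ref{le:deformation} guarantees this only formal-locally at each closed point; it does not give you a global morphism on the affine formal scheme $\fX(\infty)$.

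The paper handles this exactly as anticipated in \Sec\ref{ss:proof_claims}: it passes to Carayol's unitary Shimura curve $\U{\fX}(\infty)$, which \emph{does} carry a moduli interpretation (quintuples $(A,\iota,\theta,k^\p,\kappa_\p)$ with $\kappa_\p$ a frame of $(A_{p^\infty})^{2,1}_1$). On that side the Baer-sum of the frame $\kappa_\p$ with a point of $\LT(S)$ produces a new $O_\p$-divisible group, and the classical Serre--Tate theorem for abelian varieties then supplies a new quintuple $(A',\iota',\theta',k'^\p,\kappa'_\p)$, hence a new $S$-point of $\U{\fX}(\infty)$. This defines $\U{\beta}$ globally, and since $\LT$ acts trivially on the special fiber one descends back to $\fX(\infty)$ via the identification of connected components. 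Your uniqueness argument and your sketches for (2) and (3) are fine and match the paper's level of detail; the missing ingredient is precisely this passage through the auxiliary PEL moduli problem.
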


\begin{definition}\label{de:global_differential}
We put $\omega_\nu=c^*\nu^*\rd T/T$, which is a nowhere zero global
differential form on $\fX(\infty)$, that is, an element in
$\rH^0(\fX(\infty),\Omega^1_{\fX(\infty)})$. We call it a \emph{global
Lubin--Tate differential}. It is clear that the pullback
$\Upsilon_\pm^*\omega_\nu$ depends only $\psi$ (or rather
$\psi^\pm$).\index{global Lubin--Tate differential, $\omega_\nu$}
\end{definition}

For $m\in\dN\cup\{\infty\}$, $w\in\dZ$ and a complete extension $K/F_\p^\nr$,
define the space of \emph{$K$-valued convergent modular forms of weight $w$
and $\p$-Iwahori level $m$} to be
\[\sM^w(m,K)=\rH^0(\fX(m),(\Omega^1_{\fX(m)})^{\otimes
w})\wtimes_{O_\p^\nr}K,\] which is naturally a complete $K$-vector space. It
has a natural action by $O_{E_\p}^\times\times\dB^{\infty\p\times}$. A
convergent modular form of weight $0$ is simply called a \emph{convergent
modular function}. In particular, $\sM^0(m,K)$ is the complete tensor product
of the coordinate ring of $\fX(m)$ and the field $K$, and thus $\sM^w(m,K)$
is naturally a topological $\sM^0(m,K)$-module.\index{convergent modular
form, $\sM^w(m,K)$}\index{convergent modular form, $\sM^w(m,K)$!of finite
tame level, $\sM^w_\flat(m,K)$}

Define the subspace of \emph{$K$-valued convergent modular forms of weight
$w$ and $\p$-Iwahori level $m$ and finite tame level} to be
\[\sM_\flat^w(m,K)=\bigcup_{U^\p\in\Gamma}\sM^w(m,K)^{U^\p}\subset\sM^w(m,K),\]
which is stable under the action of
$O_{E_\p}^\times\times\dB^{\infty\p\times}$. The global Lubin--Tate
differential $\omega_\nu$ provides a canonical
$\dB^{\infty\p\times}$-equivariant isomorphism
$\sM_\flat^w(\infty,K)\simeq\sM_\flat^0(\infty,K)$ for $w\in\dZ$. We will
view $\sM_\flat^w(m,K)$ as a subspace of $\sM_\flat^0(\infty,K)$ for any
$m\in\dN\cup\{\infty\}$ and $w\in\dZ$ via restriction.  In particular for
$w\in\dN$, we have the \emph{Atkin--Serre operator}
\[\Theta_\ord\colon \sM_\flat^w(m,K)\to\sM_\flat^{w+1}(m,K),\]
which is $O_{E_\p}^\times\times\dB^{\infty\p\times}$-equivariant. Recall that
both $O_{E_\p}^\times$ and $\dB^{\infty\p\times}$ act on $\fX(\infty)$ and
thus on $\sM^0(\infty,K)$ for any complete extension $K/F_\p^\nr$.

The morphism $\beta$ induces a \emph{translation map}
\[\beta_z^*\colon\sM^0(\infty,K)\to\sM^0(\infty,K)\]
for every $z\in\sB(K)$.

\begin{definition}[Stable convergent modular forms]\label{de:heartsuit_formal}
A convergent modular function $f\in\sM^0(\infty,K)$ is \emph{stable} if
\[\sum_{z\in\sB[\p]}\beta_z^*f=0.\]
Denote by $\sM^0(\infty,K)^\heartsuit$ the subspace of $\sM^0(\infty,K)$ of
stable convergent modular functions. For every closed point
$x\in\fX(\infty)(\kappa)$, we have the restriction map
\[\RES_x\colon\sM^0(\infty,K)\to\sO(\sB,K)\] by Lemma
\ref{le:deformation}. By Proposition \ref{pr:lt_action}, $f$ is stable if and
only if $\RES_xf$ is stable (Definition \ref{de:heartsuit_local}) for all
$x$.

For $m\in\dN\cup\{\infty\}$ and $w\in\dZ$, recall that we have viewed
$\sM^w(m,K)$ as a subspace of $\sM^0(\infty,K)$ and thus we may define the
space of \emph{$K$-valued stable convergent modular forms of weight $w$,
$\p$-Iwahori level $m$ and finite tame level} to be\index{convergent modular
form, $\sM^w(m,K)$!stable, $\sM_\flat^w(m,K)^\heartsuit$}
\[\sM_\flat^w(m,K)^\heartsuit=\sM_\flat^w(m,K)\cap\sM^0(\infty,K)^\heartsuit.\]
\end{definition}

\begin{remark}
For $m\in\dN$, the space $\sM_\flat^w(m,K)$ (resp.\
$\sM_\flat^w(m,K)^\heartsuit$) generalizes the notation of (the space of)
convergent modular forms (resp.\ convergent modular forms of infinite slope)
of weight $2w$ from modular curves to Shimura curves. Moreover, the space
$\sM_\flat^w(m,K)^\heartsuit$ does not depend on $\psi$ or $\nu$.
\end{remark}

\begin{definition}[Admissible convergent modular forms]\label{de:admissible_formal}
For $n\in\dN$, a stable convergent modular function
$f\in\sM^0(\infty,K)^\heartsuit$ is \emph{$n$-admissible} if
$\beta_{\nu_+(x)}^*f=\psi(x)f$ for all $x\in\p^{-n}/O_\p$. A stable
convergent modular form $f\in\sM_\flat^w(m,K)^\heartsuit$ is $n$-admissible
if it is so, when regarded as an element in $\sM^0(\infty,K)^\heartsuit$. By
Proposition \ref{pr:lt_action} (1), $f$ is $n$-admissible if and only if
$\RES_xf$ is $n$-admissible (in the sense of Definition
\ref{de:admissible_local}) for all $x$.\index{convergent modular form,
$\sM^w(m,K)$!admissible}
\end{definition}

The following lemma is a comparison between the Atkin--Serre operator and the
Lubin--Tate differential operator.

\begin{lem}\label{le:theta}
For a convergent modular form of weight $w$, $\p$-Iwahori level $m$ and
finite tame level $f\in\sM_\flat^w(m,K)$ for some $w,m\in\dN$, we have
\[\RES_x(\Theta_\ord f)=\Theta(\RES_x f)\]
for every $x\in\fX(\infty)(\kappa)$.
\end{lem}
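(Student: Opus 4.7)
The plan is to verify the identity locally at each closed point $x\in\fX(\infty)(\kappa)$ by reducing it to a computation on the Serre--Tate formal neighborhood. By Lemma \ref{le:deformation}, $c_{/x}\colon\fX(\infty)_{/x}\xrightarrow{\sim}\LT$ identifies the formal completion at $x$ with $\LT$, so its generic fiber becomes $\sB$ and the global Lubin--Tate differential $\omega_\nu$ of Definition \ref{de:global_differential} restricts to $\nu^*\rd T/T$; in particular $\Theta$ on $\sO(\sB,K)$ coincides with the derivation dual to $\omega_\nu|_{\fX(\infty)_{/x}}$. Trivializing the sheaf of $w$-forms by $\omega_\nu^{\otimes w}$ and writing $f=g\cdot\omega_\nu^{\otimes w}$ for some $g\in\sM^0(\infty,K)$, one has $\RES_xf=\RES_xg$, and the target identity reduces to showing on $\fX(\infty)_{/x}$ that
\[\Theta_\ord f=(\rd g/\omega_\nu)\cdot\omega_\nu^{\otimes w+1}.\]

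The main step is to unpack $\Theta_\ord=\theta_\ord^{[w]}\circ\Theta^{[w]}$. By Serre--Tate (Proposition \ref{th:serre_tate}), $\cG|_{\fX(\infty)_{/x}}$ is the universal deformation of $\cG_x\simeq\LT\oplus F_\p/O_\p$. Via crystalline Dieudonn\'{e} theory, $\cL|_{\fX(\infty)_{/x}}$ admits a horizontal basis $\tilde\alpha,\tilde\gamma$ lifting bases of $\omega^\bullet|_x$ and $\cL^\circ|_x$; by horizontality of the unit-root sheaf (Lemma \ref{le:unit_root}), $\cL^\circ$ is freely generated by $\tilde\gamma$. The Hodge line $\omega^\bullet$ deviates from the horizontal line spanned by $\tilde\alpha$ by the Serre--Tate coordinate, and a direct computation (equivalent to the Serre--Tate expansion in the Lubin--Tate setting) yields a generator $\alpha$ of $\omega^\bullet$ satisfying
\[\nabla\alpha=\tilde\gamma\otimes\omega_\nu\in\cL^\circ\otimes\Omega^1_{\fX(\infty)_{/x}},\qquad \KS(\alpha\otimes\gamma^\vee)=\omega_\nu,\]
where $\gamma^\vee\in\omega^\circ$ is dual to $\tilde\gamma$. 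Writing $g\cdot\omega_\nu^{\otimes w}=g\cdot\alpha^{\otimes w}\otimes(\gamma^\vee)^{\otimes w}$ via $\KS^{-1}$ and applying $\nabla^{[w]}$ by Leibniz produces three summands: a $\rd g$-term, a multiple of $\alpha^{\otimes(w-1)}\cdot\nabla\alpha\otimes(\gamma^\vee)^{\otimes w}$, and a multiple of $\alpha^{\otimes w}\otimes(\gamma^\vee)^{\otimes(w-1)}\cdot\nabla^\vee\gamma^\vee$. The last vanishes by horizontality of $\tilde\gamma$; the middle one lies in $\Sym^{w-1}\omega^\bullet\otimes\cL^\circ\subset\Sym^w\cL$ and is killed by the projection $\theta_\ord^{[w]}$ onto $\Sym^w\omega^\bullet\otimes\Sym^w\omega^\circ$. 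Only the $\rd g$-term survives, and applying $\KS$ on each of the $w$ copies of $\omega^\bullet\otimes\omega^\circ$ converts it into $\rd g\cdot\omega_\nu^{\otimes w}=(\Theta g)\cdot\omega_\nu^{\otimes w+1}$, as desired.

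The main obstacle is the Serre--Tate formula displayed above: identifying the canonical generator $\alpha$ of $\omega^\bullet$ on $\fX(\infty)_{/x}\simeq\LT$ and checking that $\nabla\alpha$ lies entirely in the unit-root direction with $\KS$-image $\omega_\nu$. This is the Lubin--Tate analog of Katz's classical Serre--Tate expansion for ordinary $p$-divisible groups on modular curves (\cite{Kat78}); its proof uses the same auxiliary unitary Shimura curve construction that underlies Lemmas \ref{le:deformation} and \ref{le:unit_root}, so it naturally belongs in \Sec\ref{ss:proof_claims}. With this input, the computation above identifies $\Theta_\ord$ on the formal completion at $x$ with differentiation along $\omega_\nu$, which corresponds to $\Theta$ on $\sB$ under $c_{/x}$, completing the verification.
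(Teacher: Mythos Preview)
Your approach is correct and essentially the same as the paper's, which proves the lemma in one line by citing Lemma \ref{le:deformation}, Theorem \ref{th:o_main_a}, and the definition \eqref{eq:theta_local} of $\Theta$. The Serre--Tate formula you flag as the main obstacle and place in \S\ref{ss:proof_claims} is exactly Theorem \ref{th:o_main_a} of Appendix \ref{s:serre_tate}, so your detailed Leibniz unpacking is precisely what the paper leaves implicit in that citation.
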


\begin{proof}
It follows from Lemma \ref{le:deformation}, Theorem \ref{th:o_main_a}, and
the definition of $\Theta$ \eqref{eq:theta_local}.
\end{proof}

\begin{definition}[Universal convergent modular form]\label{de:universal_form}
A \emph{universal convergent modular form} of depth $m\in\dN$ and tame level
$U^\p\in\Gamma$ is an element
$\bM\in\sM^0(\infty,K)\wtimes_{F_\p}D(O_\p^\anti,F_\p)$ such that $\bM$ is
$U^\p$-invariant and
\begin{align}\label{eq:universal_form}
t^*\bM=[t]^{-1}\cdot\bM
\end{align}
for $t\in O_{E_\p,m}^\times$.\index{convergent modular form,
$\sM^w(m,K)$!universal}
\end{definition}

\begin{theorem}\label{th:family}
Suppose $K$ contains $F_\p^\lt$. Let $f\in\sM_\flat^w(m,K)^\heartsuit$ be a
stable convergent modular form of weight $w$ and $\p$-Iwahori level $m$, for
some $w,m\in\dN$. Then there is a unique element
$\bM(f)\in\sM^0(\infty,K)\wtimes_{F_\p}D(O_\p^\anti,F_\p)$ such that for
every $k\in\dN$,
\begin{align}\label{eq:family}
\bM(f)(\ang{w+k})=\Theta_\ord^kf,
\end{align}
where in the target, we identify $\sM_\flat^{w+k}(m,K)$ as a subspace of
$\sM^0(\infty,K)$. Moreover, we have
\begin{enumerate}
  \item if $f$ is fixed by $U^\p\in\Gamma$, so is $\bM(f)$;

  \item $\bM(f)$ is a universal convergent modular form of depth $m$
      (Definition \ref{de:universal_form});

  \item if $w\geq 1$, we have
  \[\Theta_\ord \bM(f)(\ang{w-1})=f;\]

  \item Suppose $f$ is $n$-admissible. Then
      $\bM(f)(\ang{k})=\bM(f)(\chi\ang{k})$ any $k\in\dZ$ and any
      (locally constant) character $\chi\colon O_\p^\anti\to K^\times$
      that is trivial on $(1+\p^n)^\times$.
\end{enumerate}
\end{theorem}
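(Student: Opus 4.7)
The plan is to build $\bM(f)$ by globalizing the local Mellin transform of Definition \ref{de:heartsuit_local} along the Lubin--Tate action $\beta\colon\LT\times_{\Spf O_\p^\nr}\fX(\infty)\to\fX(\infty)$ of Proposition \ref{pr:lt_action}. First trivialize $f$ to a convergent modular function on $\fX(\infty)$ via $\omega_\nu^w$, and form $\beta^*f\in\sO(\sB\times_{\Spf O_\p^\nr}\fX(\infty),K)$, viewed as an element of $\sO(\sB,K)\wtimes_K\sM^0(\infty,K)$. Using that $\beta$ is a group action, together with the defining stability identity $\sum_{z\in\sB[\p]}\beta_z^*f=0$ and the substitution $y'=\beta(z,y)$, one checks directly that $\beta^*f$ is stable in its $\sB$-variable. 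Since $K$ contains $F_\p^\lt$, the Fourier isomorphism $\lambda$ may then be applied in the $\sB$-factor of the completed tensor product; by \cite{ST01}*{Lemma 4.6.5} the resulting distribution is supported on $O_\p^\times$. After the identification $O_\p^\times\simeq O_\p^\anti$ (legitimate because $\p$ splits in $E$) and using $D(O_\p^\anti,F_\p)\wtimes_{F_\p}K\simeq D(O_\p^\anti,K)$, this produces $\bM(f)\in\sM^0(\infty,K)\wtimes_{F_\p}D(O_\p^\anti,F_\p)$, up to a twist by the locally analytic character $\ang{-w}$ that absorbs the weight shift appearing in \eqref{eq:family}.

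The interpolation identity \eqref{eq:family} is then verified fiberwise. At a closed point $x\in\fX(\infty)(\kappa)$, Lemma \ref{le:deformation} identifies $\fX(\infty)_{/x}$ with $\sB$ in such a way that, by Proposition \ref{pr:lt_action}(1), $\beta$ pulls back to the formal group law $\alpha$. Hence $\RES_x\beta^*f=\alpha^*\RES_x f$ and $\RES_x\bM(f)$ becomes the shifted local Mellin transform of $\RES_x f$. Lemma \ref{le:family_local} gives $\bM_\loc(\RES_x f)(\ang{k})=\Theta^k\RES_x f$, and Lemma \ref{le:theta} identifies this with $\RES_x\Theta_\ord^k f$. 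Aligning the character shift yields $\RES_x\bM(f)(\ang{w+k})=\RES_x\Theta_\ord^k f$ for every $k\in\dN$; since the restriction maps $\RES_x$ at closed points of the special fiber separate elements of $\sM^0(\infty,K)\wtimes_{F_\p}D(O_\p^\anti,F_\p)$, the equality \eqref{eq:family} follows globally. Uniqueness of $\bM(f)$ is obtained from Zariski density of $\{\ang{w+k}\}_{k\in\dN}$ in the rigid-analytic character variety of $O_\p^\anti$ combined with the uniqueness inherent in the local Fourier transform.

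The four listed properties are then by-products of the construction. Property (1) is immediate from the $\dB^{\infty\p\times}$-equivariance of $\beta$ (Proposition \ref{pr:lt_action}(2)), which transfers $U^\p$-invariance from $f$ to $\bM(f)$. For (2), the same proposition supplies $O_{E_\p}^\times$-equivariance of $\beta$ through the character $t\mapsto t/t^c$; combined with Remark \ref{re:lt_action} that $t\in O_\p$ acts on $D(O_\p,K)$ as multiplication by $[t]$, the $O_{E_\p,m}^\times$-invariance of $f$ translates into the relation $t^*\bM(f)=[t]^{-1}\bM(f)$ for $t\in O_{E_\p,m}^\times$. Property (3) is the fiberwise avatar of the second statement in Lemma \ref{le:family_local}, namely $\Theta\bM_\loc(\phi)(\ang{-1})=\phi$, transported through Lemma \ref{le:theta}. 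Finally, (4) is obtained fiberwise from Lemma \ref{le:heartsuit_local}, whose hypothesis on $\RES_x f$ is exactly the notion of $n$-admissibility in Definition \ref{de:admissible_formal}.

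The most delicate point in the argument is the assembly step: verifying that the fiberwise Fourier analyses genuinely produce an element of the completed tensor product $\sM^0(\infty,K)\wtimes_{F_\p}D(O_\p^\anti,F_\p)$, rather than merely a compatible family of distributions at each closed point. This is guaranteed by the continuity of $\lambda$ as an isomorphism of topological $K$-algebras together with the global nature of $\beta$, which allows the Fourier transform to act on a single factor of $\sO(\sB,K)\wtimes_K\sM^0(\infty,K)$ at once, rather than pointwise.
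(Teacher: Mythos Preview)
Your proposal is correct and follows essentially the same route as the paper: construct $\bM(f)$ globally as $\tau_w(\beta^*f)$, where $\beta^*f\in\sM^0(\infty,K)^\heartsuit\wtimes_K\sO(\sB,K)^\heartsuit$ is passed through the Fourier isomorphism $\lambda$ on the $\sB$-factor, then verify \eqref{eq:family} and the four properties via Lemmas \ref{le:family_local}, \ref{le:theta}, \ref{le:heartsuit_local}, Proposition \ref{pr:lt_action}, and Remark \ref{re:lt_action}. Your emphasis on fiberwise verification through $\RES_x$ and your closing concern about the ``assembly step'' are somewhat overstated, since---as you yourself note---the construction is global from the outset and the interpolation check only requires injectivity of $\prod_x\RES_x$ on $\sM^0(\infty,K)$; the paper simply states the global construction and cites the same lemmas without dwelling on this point.
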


We call $\bM(f)$ the \emph{global Mellin transform} of $f$.\index{Mellin
transform!global, $\bM$}

\begin{proof}
The uniqueness is clear since the set $\{\langle w+k\rangle\res k\geq 0\}$
spans a dense subspace of $C(O_\p^\anti,F_\p)$. Regard $f$ as an element in
$\sM^0(\infty,K)^\heartsuit$. Note that the map
\[\beta^*\colon\sM^0(\infty,K)\to\sM^0(\infty,K)\wtimes_K\sO(\sB,K)\] sends
$\sM^0(\infty,K)^\heartsuit$ into
$\sM^0(\infty,K)^\heartsuit\wtimes_K\sO(\sB,K)^\heartsuit$. The element
$\beta^*f$ belongs to the space
$\sM^0(\infty,K)^\heartsuit\wtimes_K\sO(\sB,K)^\heartsuit$, which is
isomorphic to $\sM^0(\infty,K)^\heartsuit\wtimes_{F_\p}D(O_\p^\anti,F_\p)$
via $\lambda$ since $K$ contains $F_\p^\lt$. Define a (continuous
$F_\p$-linear) translation map
\[\tau_w\colon D(O_\p^\anti,F_\p)\to D(O_\p^\anti,F_\p)\]
such that $(\tau_w\phi)(g)=\phi(g\cdot\ang{-w})$ for every $g\in
C(O_\p^\anti,F_\p)$. We take
\[\bM(f)=\tau_w(\beta^*f).\]
The formula \eqref{eq:family} follows from Lemma \ref{le:family_local} and
Lemma \ref{le:theta}.

Property (1) follows from Proposition \ref{pr:lt_action} (2). Property (3)
and (4) follow from Lemma \ref{le:family_local} and Lemma
\ref{le:heartsuit_local}, respectively. For property (2), we only need to
show that \eqref{eq:universal_form} holds for $\bM=\bM(f)$ and $t\in
O_{E_\p,m}^\times$. In fact, since $f$ is fixed by $O_{E_\p,m}^\times$,
$\bM(f)=\bM(t^*f)$ which equals $[t]\cdot t^*\bM(f)$ by Proposition
\ref{pr:lt_action} (2) and Remark \ref{re:lt_action}.
\end{proof}

\subsection{Comparison of differential operators at archimedean places}
\label{ss:comparison_differential}

We equip $\dB$ with an $E$-embedding (Definition \ref{de:e_embedding}). In
particular, we have the CM-subscheme $Y^\pm$ of $X$ (Definition
\ref{de:cm_subscheme}). The projection $X\to X(\pm\infty)$ restricts to an
isomorphic from $Y^\pm$ to its image. Thus we may regard $Y^\pm$ as a closed
subscheme of $X(\pm\infty)$. Via the transition isomorphism
\eqref{eq:infinity}, we obtain a closed subscheme $Y^\pm(\infty)=\Upsilon_\pm
Y^\pm$ of $\cX(\infty)\otimes_{O_\p^\nr}F_\p^\ab$, which is in fact a closed
subscheme of $\cX(\infty)\otimes_{O_\p^\nr}F_\p^\nr$. Finally for $m\in\dN$,
define $Y^\pm(m)$ to be the image of $Y^\pm(\infty)$ in
$\cX(m)\otimes_{O_\p^\nr}F_\p^\nr$.

Let $S$ be a complex scheme locally of finite type. We denote by $\breve{S}$
the underlying real analytic space with the complex conjugation automorphism
$\fc_S\colon\breve{S}\to\breve{S}$. In what follows, we will sometimes deal
with a complex scheme $S$ that is of the form $\varprojlim_IS_i$ where $I$ is
a filtered partially ordered set and each $S_i$ is a smooth complex scheme,
with a sheaf $\sF$ that is the restriction of a quasi-coherent sheaf $\sF_0$
on some $S_0$. Then we will write $\breve{S}=\{\breve{S}_i\}_{i\in I}$ for
the projective system of the underlying real analytic spaces together with
the complex conjugation $\fc_S$, and $\breve\sF=\{\breve\sF_i\}_{i\geq 0}$
the projective system of real analytification of the restricted sheaf $\sF_i$
for $i\geq 0$. Moreover, we define
\[\rH^0(\breve{S},\breve\sF)\colonequals\varinjlim_{i\geq 0}\rH^0(\breve{S}_i,\breve\sF_i).\]

For $\iota\colon\dC_p\xrightarrow{\sim}\dC$, put
$X_\iota=X\otimes_{F,\iota}\dC$ and
$\fc_\iota\colon\breve{X}_\iota\to\breve{X}_\iota$ the complex conjugation.
Denote by $(\cL_\iota,\nabla_\iota)$ the restriction of the pair
$(\cL,\nabla)$ in Lemma \ref{le:algebraizable} along $\pi_\iota\colon
X_\iota\to\cX\otimes_{O_\p^\nr,\iota}\dC$. Denote the restriction of the
sequence \eqref{eq:hodge_sequence} along $\pi_\iota$ by
\begin{align}\label{eq:hodge_iota}
\xymatrix{
0 \ar[r] & \ul\omega^\bullet_\iota \ar[r] & \cL_\iota \ar[r] & \ul\omega^{\circ\vee}_\iota \ar[r] & 0. }
\end{align}
The following lemma will be proved in \Sec\ref{ss:proof_claims}.

\begin{lem}\label{le:hodge_filtration}
The sequence \eqref{eq:hodge_iota} coincides with the Hodge filtration on
$\cL_\iota$.
\end{lem}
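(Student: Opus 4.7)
The plan is to reduce the statement to the classical agreement between the algebraic (Dieudonn\'e-theoretic) Hodge filtration on the first de Rham cohomology of an abelian scheme and the analytic Hodge filtration of the associated complex variation of Hodge structure. To make this precise in the quaternionic setting, one needs an intermediary PEL-type object, and this is exactly where the auxiliary unitary Shimura curves foreshadowed for \Sec\ref{ss:proof_claims} enter.

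First I would replace $X$ by an auxiliary unitary Shimura curve $X'$ (for a unitary similitude group built from $\dB$ and a suitable auxiliary CM extension) mapping finitely to $X$, on which there lives a universal abelian scheme $\pi\colon\cA\to X'$ with a PEL structure chosen so that under the induced action of $O_E\otimes_\dZ\dZ_p$, the $O_\fp$-isotypic summand of the $p$-divisible group $\cA[p^\infty]$ (namely the part on which $O_\fp$ acts through the structure embedding) is canonically isomorphic to the pullback of Carayol's $O_\fp$-divisible group $\cG$. Under the canonical Berthelot--Breen--Messing comparison
\[
\d D(\cA[p^\infty])_{X'}\;\xrightarrow{\;\sim\;}\;\rH^1_\DR(\cA/X'),
\]
the evaluation of the Dieudonn\'e crystal of $\cA[p^\infty]$ at $X'$ is identified with $\rH^1_\DR(\cA/X')$, and this identification carries the exact sequence \eqref{eq:hodge_pre}, taken in the relevant isotypic component, to the isotypic piece of the standard Hodge exact sequence
\[
0\to\pi_*\Omega^1_{\cA/X'}\to\rH^1_\DR(\cA/X')\to\rR^1\pi_*\sO_\cA\to 0.
\]
After algebraization (Lemma \ref{le:algebraizable}) and pulling back to $X'_\iota=X'\otimes_{F,\iota}\dC$, this identifies \eqref{eq:hodge_iota} (pulled back to $X'_\iota$) with the $O_\fp$-isotypic piece of the algebraic Hodge filtration on $\rH^1_\DR(\cA_\iota/X'_\iota)$.

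The final step is to invoke the classical fact that for a smooth complex scheme $S$ and an abelian scheme $\cB/S$, the algebraic Hodge filtration on $\rH^1_\DR(\cB/S)$ agrees analytically with the Hodge filtration of the natural polarized variation of Hodge structure on $\rR^1\pi_*\d Z$: both filtration steps are given by the sheaf of invariant (holomorphic) relative differentials. Taking the $O_\fp$-isotypic summand and descending along the finite morphism $X'_\iota\to X_\iota$ (both sheaves and filtrations descend since the construction is equivariant), we obtain the desired equality on $X_\iota$.

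The main obstacle is the first step: rigorously matching Carayol's construction of $\cG$ (via the quotient $(X(m)^{\r{pr}}\times(\fp^{-m}/O_\fp)^{\oplus 2})/(U_{\fp,m}^{\r{pr}}/U_{\fp,0})$) with an isotypic summand of $\cA[p^\infty]$ for an appropriately chosen PEL datum, and verifying that this matching is compatible with the filtration in \eqref{eq:hodge_pre}. This is precisely the PEL-bookkeeping that Section \Sec\ref{ss:proof_claims} is designed to handle; once it is in hand, the Hodge comparison itself is a standard application of the de Rham--Betti comparison for abelian schemes over $\dC$.
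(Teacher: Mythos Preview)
Your proposal is correct and follows essentially the same approach as the paper: reduce to the auxiliary unitary Shimura curve $\U{\cX}$ carrying a universal abelian scheme $\cA$, where \eqref{eq:hodge_sequence} is by construction the $O_\p$-isotypic piece of the algebraic Hodge sequence for $\sH^1_{\DR}(\cA/\U{\cX})$, and then invoke the classical agreement of algebraic and analytic Hodge filtrations for abelian schemes over $\dC$. The only cosmetic difference is that in the paper's setup (Carayol's comparison) one gets an \emph{isomorphism} between a connected component of $\cX_{U^\p}$ and one of $\U{\cX}_{\U{U}^\p}$, rather than a finite covering to descend along; this makes the transfer slightly more direct than your ``descend along $X'_\iota\to X_\iota$'' phrasing, but the content is the same.
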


Let $\ul\omega^\circ_\iota$ be the restriction of $\ul\omega^\circ$ along
$\pi_\iota$, which is the dual sheaf of $\ul\omega^{\circ\vee}_\iota$. Then
we have the Kodaira--Spencer isomorphism
$\KS\colon\ul\omega^\bullet_\iota\otimes\ul\omega^\circ_\iota\xrightarrow{\sim}\Omega^1_{X_\iota}$.
The Hodge decomposition
\begin{align}\label{eq:hodge_decomposition}
\cL_\iota=\ul{\breve\omega}^\bullet_\iota\oplus\fc_\iota^*\ul{\breve\omega}^\bullet_\iota
\end{align}
on $X_\iota$ induces a map
\[\theta^{[w]}_\iota\colon\breve\cL_\iota^{[w]}\to(\ul{\breve\omega}^\bullet_\iota)^{\otimes w}
\otimes(\ul{\breve\omega}^\circ_\iota)^{\otimes w}
\xrightarrow{\KS}(\breve\Omega^1_{X_\iota})^{\otimes w}\] for all $w\in\dN$.
Similar to Definition \ref{de:theta_ordinary}, define the
\emph{Shimura--Maass operator} to be\index{Shimura--Maass operator,
$\Theta_\iota$}
\[\Theta^{[w]}_\iota\colon(\breve\Omega^1_{X_\iota})^{\otimes w}\xrightarrow{\eqref{eq:theta}}
\breve\cL^{[w]}\otimes\breve\Omega^1_{X_\iota}\xrightarrow{\theta^{[w]}_\iota}(\breve\Omega^1_{X_\iota})^{\otimes
w+1}.\] For $k\in\dN$, define the \emph{Shimura--Maass operator of degree $k$} to be
\[\Theta^{[w,k]}_\iota=\Theta^{[w+k-1]}_\iota\circ\cdots\circ\Theta^{[w]}_\iota\colon
(\breve\Omega^1_{X_\iota})^{\otimes w}\to(\breve\Omega^1_{X_\iota})^{\otimes
w+k}.\] As for $\Theta_\ord$, we will suppress $w$ from notation and write
$\Theta_\iota$ (resp.\ $\Theta_\iota^k$) for $\Theta_\iota^{[w]}$ (resp.
$\Theta_\iota^{[w,k]}$). In particular, we have
\[\Theta_\iota\colon\rH^0(\breve{X}_\iota,(\breve\Omega^1_{X_\iota})^{\otimes w})\to
\rH^0(\breve{X}_\iota,(\breve\Omega^1_{X_\iota})^{\otimes w+1}).\]

Put
\begin{align*}
X(m)_\iota&=X(m)\otimes_{F_\p^\nr,\iota}\dC,\quad m\in\dZ\cup\{\pm\infty\},\\
Y^\pm(m)_\iota&=Y^\pm(m)\otimes_{F_\p^\nr,\iota}\dC,\quad m\in\dN\cup\{\infty\}.
\end{align*}
Then $Y^\pm(m)_\iota$ is a closed subscheme of $X(\pm m)_\iota$ via the
transition isomorphism \eqref{eq:infinity}. Denote by $\cY^\pm(m)$ the
Zariski closure of $Y^\pm(m)$ in $\cX(m)$.

\begin{lem}
For $m\in\dN\cup\{\infty\}$, every morphism $\Spec F_\p^\nr\to\cY^\pm(m)$
over $\Spec O_\p^\nr$ extends uniquely to a section $\Spec
O_\p^\nr\to\cY^\pm(m)$.
\end{lem}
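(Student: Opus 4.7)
The plan is to combine properness of $\cX$ over $\Spec O_\p^\nr$ with the strict Henselianness of $O_\p^\nr$ and the standing hypothesis that $\fp$ splits in $E$. Uniqueness is essentially formal: $\cY^\pm(m)$ is a closed subscheme of $\cX(m)$, which is (pro-)étale over the open subscheme $\cX(0)\subset\cX$, and $\cX$ is separated over $O_\p^\nr$, so the valuative criterion of separatedness guarantees that any extension of a given $F_\p^\nr$-point is unique. The content therefore lies in existence.

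First I would treat $m=0$. Given $\sigma\colon\Spec F_\p^\nr\to\cY^\pm(0)$, composing with the open immersion $\cX(0)\hookrightarrow\cX$ and invoking the valuative criterion for the proper scheme $\cX/O_\p^\nr$ produces an extension $\bar\sigma\colon\Spec O_\p^\nr\to\cX$. The crucial step is to show that the image of the closed point under $\bar\sigma$ lies in $\cX(0)$, i.e.\ that $\cG$ has \emph{ordinary} (as opposed to supersingular) reduction there. This is where the splitness of $\fp$ in $E$ enters: at a $Y^\pm$-point the $O_\p$-divisible group $\cG$ acquires, via $\sfe_\p$, a commuting action of $O_{E_\p}\simeq O_\p\times O_\p$, and decomposing along the two idempotents displays $\cG$ as a sum of two $O_\p$-divisible groups of height one. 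Such summands reduce to a multiplicative and an étale piece, hence $\cG$ is ordinary. The extension $\bar\sigma$ then lands in $\cY^\pm(0)$ because the latter is closed in $\cX(0)$ and contains the generic fiber of $\bar\sigma$.

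For finite $m\geq 1$ I would pull back the finite étale cover $\cX(m)\to\cX(0)$ along $\bar\sigma$, producing a finite étale scheme $T$ over $\Spec O_\p^\nr$. Strict Henselianness of $O_\p^\nr$ forces $T$ to split as a finite disjoint union of copies of $\Spec O_\p^\nr$, and the lift of $\sigma$ to $\cX(m)$ singles out one component, supplying the desired extension; it lies in $\cY^\pm(m)$ because its generic point does and $\cY^\pm(m)$ is closed in $\cX(m)$. For $m=\infty$, assembling the unique compatible extensions over all finite levels yields a section to $\cX(\infty)=\varprojlim_m\cX(m)$ landing in $\cY^\pm(\infty)=\varprojlim_m\cY^\pm(m)$.

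The main obstacle is the ordinary reduction step at $m=0$. It is the only place where the hypothesis that $\fp$ splits in $E$ intervenes, and it requires translating the abstract fixed-point/CM condition defining $Y^\pm$ into a concrete decomposition of $\cG$ via Carayol's moduli description in \cite{Car86}; once this is established the remainder is a routine application of the proper/étale/strictly-Henselian formalism.
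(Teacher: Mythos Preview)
Your proof is correct and follows the same strategy as the paper: extend to $\cX$ by properness, use the $O_{E_\p}\simeq O_\p\times O_\p$-action coming from the CM structure to force ordinary reduction, then lift through the (pro-)\'etale tower. The only cosmetic difference is that the paper works directly at $m=\infty$ and phrases the final step as identifying the canonical subgroup via the tangent-space character $t\mapsto(t/t^c)^{\pm1}$, whereas you ascend from $m=0$ using strict Henselianness.
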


\begin{proof}
It suffices to show for $m=\infty$. Let $x^\pm\colon\Spec
F_\p^\nr\to\cY^\pm(\infty)$ be a morphism. It induces a unique morphism
$\Spec O_\p^\nr\to\cX$ and we will regard $x^\pm$ as the latter one. Since
$x^\pm$ is fixed by $E^\times$, there are actions of $E^\times\cap
O_{E_\p}^\times$ and hence $O_{E_\p}$ on the $O_\p$-divisible group
$\cG_{x^\pm}$. Therefore, the reduction of $\cG_{x^\pm}$ is ordinary. To
conclude, we only need to show that $x^\pm$ lifts the canonical subgroup of
$\cG_{x^\pm}$. This follows from the fact that $E^\times$ acts on the tangent
space of $x^\pm$ via the character $t\mapsto(t/t^c)^{\pm1}$.
\end{proof}

For $m\in\dN\cup\{\infty\}$, denote by $\fY^\pm(m)$ the formal completion of
$\cY^\pm(m)$ along its special fiber, which is a closed (affine) formal
subscheme of $\fX(m)$ by the above lemma.

 et $F_\p^\ab\subset K\subset\dC_p$ be a complete intermediate field. Let
$f$ be an element in $\rH^0(X(m),(\Omega^1_{X(m)})^{\otimes w})\otimes_FK$
with $m\in\dZ\cup\{\pm\infty\}$ and $w\in\dN$. Then by the transition
isomorphism \eqref{eq:infinity} and restriction to ordinary locus, we have an
element
\begin{align}\label{eq:restriction_ordinary}
f_\ord=\Upsilon_{\pm*}f\in\sM^w_\flat(m,K).
\end{align}
On the other hand, we have the projection map $X_\iota\to X(m)_\iota$ for
which $\Theta_\iota$ descends. Thus, $f$ induces another element
\begin{align}\label{eq:restriction}
f_\iota\in\rH^0(X(m)_\iota,(\Omega^1_{X(m)_\iota})^{\otimes w}).
\end{align}
We will freely regard $f_\iota$ as an element in
$\rH^0(X_\iota,(\Omega^1_{X_\iota})^{\otimes w})$ according to the context.
The following lemma shows that the Atkin--Serre operator and the
Shimura--Maass operator coincide on CM points.

\begin{lem}\label{le:theta_comparison}
Let the situation be as above. We have for $k\in\dN$,
\[\iota(\Theta^k_\ord f_\ord)\res_{\fY^\pm(m)}=(\Theta^k_\iota f_\iota)\res_{Y^\pm(m)_{\iota}}\]
as functions on $Y^\pm(m)_{\iota}$.
\end{lem}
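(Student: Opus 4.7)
The operators $\Theta_\ord$ and $\Theta_\iota$ share an identical construction---apply $\KS^{-1}$, then the Gauss--Manin connection $\nabla^{[w]}$ on $\cL^{[w]}$, then project back to $(\Omega^1)^{\otimes w+1}$---differing only in the splitting of the Hodge sequence \eqref{eq:hodge} used for the projection: the unit-root splitting of Lemma \ref{le:unit_root} for $\Theta_\ord$, versus the Hodge decomposition \eqref{eq:hodge_decomposition} for $\Theta_\iota$. My plan is to show that on the CM locus $Y^\pm(m)$ both splittings restrict to a canonical CM splitting induced by $O_{E_\p}$, and then to conclude the statement for iterates by a Katz--Shimura type comparison.

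Fix $y\in Y^\pm(m)$, regarded both as a point of $\fY^\pm(m)$ and, via $\iota$, of $Y^\pm(m)_\iota$. Since $y$ is fixed by $\sfe(E^\times)$ and $\fp$ splits in $E$, the algebra $O_{E_\p}\cong O_\p\times O_\p$ acts on $\cG_y$, forcing a canonical splitting $\cG_y=G^y_\bullet\oplus G^y_\circ$ into a connected Lubin--Tate factor at $\fP^c$ and an \'{e}tale factor at $\fP$. This induces a CM decomposition $\cL_y=\ul\omega^\bullet_y\oplus\cL^{\r{CM}}_y$. I would then verify two identifications. First, on the $p$-adic side, that $\cL^\circ_y=\cL^{\r{CM}}_y$, because the \'{e}tale summand $G^y_\circ$ supplies a Frobenius-equivariant rank-one complement to $\ul\omega^\bullet_y$ which by the uniqueness in Lemma \ref{le:unit_root}(3) must coincide with the unit-root fibre. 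Second, on the complex side, that $\fc_\iota^*\ul{\breve\omega}^\bullet_{\iota,y}=\cL^{\r{CM}}_{y,\iota}$ under $\iota$: by Lemma \ref{le:hodge_filtration}, \eqref{eq:hodge_iota} is the Hodge filtration, the $E$-action diagonalizes the Hodge decomposition of $\cL_{\iota,y}$ into the two eigenspaces for the embeddings $E\hookrightarrow\dC$ above $\iota\res_F$, and the sign convention in Definition \ref{de:cm_subscheme} identifies these with the $\fP^c$- and $\fP$-components of $\cG_y$. Consequently $\theta^{[w']}_\ord$ and $\theta^{[w']}_\iota$ agree at $y$ under $\iota$ for every $w'\geq 0$.

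For the iteration, the base case $k=0$ is immediate from the constructions \eqref{eq:restriction_ordinary} and \eqref{eq:restriction}. For $k\geq 1$, I would invoke the classical comparison of iterated Atkin--Serre and Shimura--Maass operators at ordinary CM points, going back to Shimura and Katz \cite{Kat78}. More concretely, the Serre--Tate coordinates of Proposition \ref{th:serre_tate} give a formal trivialization of $\cG$ near $y$ in which $\Theta_\ord^k f_\ord(y)$ becomes an explicit polynomial in the derivatives of $f$ with respect to the Serre--Tate parameter; on the archimedean side, $\Theta_\iota^k f_\iota$ admits an analogous explicit expression at $y$ in the complex uniformizing parameter; matching under $\iota$ the Lubin--Tate period of $G^y_\bullet$ with the complex period of the corresponding CM abelian variety reduces the two formulas to a common expression.

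The main obstacle is this last step: pointwise agreement of the splittings at $y$ is not a priori enough to force agreement of iterates, since each application of $\Theta$ involves $\nabla$-differentiation and therefore probes a formal neighborhood on either side, where the unit-root and Hodge splittings genuinely disagree. Resolving this requires matching the infinite jets of $f_\ord$ and $f_\iota$ at $y$ under $\iota$, which is the algebraicity content of Katz's theorem \cite{Kat78}, here adapted to the quaternionic Shimura curve setting over the totally real field $F$.
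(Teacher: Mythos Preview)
Your comparison of the two splittings at a CM point $y$ is exactly right and matches the paper: the $E^\times$-action forces both the unit-root line $\cL^\circ_y$ and the antiholomorphic line $\fc_\iota^*\ul{\breve\omega}^\bullet_{\iota,y}$ to coincide with the $\fP$-eigenspace complementary to $\ul\omega^\bullet_y$.

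Where your proposal goes astray is in the handling of iterates. You correctly diagnose the obstacle---pointwise agreement of splittings does not obviously control higher jets---but your proposed remedy (explicit Serre--Tate/complex period matching in the style of \cite{Kat78}) is both unnecessary and not actually carried out. The paper's proof sidesteps this entirely with a structural observation you overlooked: both complementary subbundles are \emph{horizontal} for the Gauss--Manin connection. On the $p$-adic side this is Lemma \ref{le:unit_root}(2), $\nabla\cL^\circ\subset\cL^\circ\otimes\Omega^1_{\fX(0)}$; on the archimedean side $\fc_\iota^*\ul{\breve\omega}^\bullet_\iota$ is the antiholomorphic piece, hence likewise $\nabla_\iota$-stable. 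Horizontality means that projecting after each step of $\nabla$ is the same as projecting only once at the very end. Concretely, $\Theta_\ord^{[w,k]}$ equals the composite
\[
(\Omega^1)^{\otimes w}\xrightarrow{\KS^{-1}}\ul\omega^{\bullet\otimes w}\otimes\ul\omega^{\circ\otimes w}\hookrightarrow\cL^{[w]}\xrightarrow{\nabla}\cL^{[w]}\otimes\Omega^1\xrightarrow{\KS^{-1}}\cdots\hookrightarrow\cL^{[w+k]}\xrightarrow{\theta_\ord^{[w+k]}}(\Omega^1)^{\otimes w+k},
\]
and $\Theta_\iota^{[w,k]}$ is the identical chain with only the final arrow replaced by $\theta_\iota^{[w+k]}$. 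Everything before that final projection is algebraic over $\cX$ and hence literally the same object on both sides under $\iota$; now your pointwise splitting comparison at $y$ finishes the proof in one stroke, with no induction and no period computation.

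So the missing idea is horizontality of the complements, specifically Lemma \ref{le:unit_root}(2), which you cited only for part (3).
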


\begin{proof}
Generally, once we restrict to stalks, we can not apply differential
operators anymore. Therefore, we need variant definitions of
$\Theta^{[w,k]}_\ord$ and $\Theta^{[w,k]}_\iota$ (Here, we retrieve the
original notation in order to be clear). In fact, since
$\nabla\cL^\circ\subset\cL^\circ\otimes\Omega^1_\fX$ by Lemma
\ref{le:unit_root}, $\Theta^{[w,k]}_\ord$ is equal to the composition of the
following sequence of maps
\begin{multline*}
(\Omega^1_{\fX(n)})^{\otimes w}\xrightarrow{\KS^{-1}}
(\ul\omega^\bullet)^{\otimes w}\otimes(\ul\omega^\circ)^{\otimes w}
\to\cL^{[w]}\xrightarrow{\Theta^{[w]}}\cL^{[w]}\otimes\Omega^1_{\fX(n)}
\xrightarrow{\KS^{-1}}\cL^{[w]}\otimes(\ul\omega^\bullet\otimes\ul\omega^\circ)\\
\to\cL^{[w+1]}\xrightarrow{\Theta^{[w+1]}}\cL^{[w+1]}\otimes\Omega^1_{\fX(n)}
\to\cdots\to\cL^{[w+k]}\xrightarrow{\theta_\ord^{[w+k]}}(\Omega^1_{\fX(n)})^{\otimes
w+k}.
\end{multline*}
Similarly, since
$\nabla_\iota(\fc_\iota^*\ul{\breve\omega}^\bullet_\iota)\subset(\fc_\iota^*\ul{\breve\omega}^\bullet_\iota)\otimes\breve\Omega^1_{X_\iota}$,
we have a similar description of $\Theta^{[w,k]}_\iota$. Therefore, to prove
the lemma, we only need to show that the splitting
\eqref{eq:hodge_decomposition} coincides with the restriction of the
splitting $\cL=\ul\omega^\bullet\oplus\cL^\circ$ on $Y^\pm_\iota$. Pick up
any point $y\in Y^\pm(m)_\iota(\dC)$. We have an action of $E^\times$ on both
the splitting
$\ul{\breve\omega}^\bullet_\iota\res_y\oplus\fc_\iota^*\ul{\breve\omega}^\bullet_\iota\res_y$
and $\ul\omega^\bullet\res_y\oplus\cL^\circ\res_y$. By Lemma
\ref{le:hodge_filtration}, $\ul{\breve\omega}^\bullet_\iota\res_y$ and
$\ul\omega^\bullet\res_y$ coincide, which is one complex eigen-line of
$E^\times$. We have that $\fc_\iota^*\ul{\breve\omega}^\bullet_\iota\res_y$
and $\cL^\circ\res_y$ must also coincide, which is the other complex
eigen-line.
\end{proof}

\begin{definition}[$\iota$-nearby data]\label{de:nearby_data}
For $\iota\colon\dC_p\xrightarrow{\sim}\dC$, an \emph{$\iota$-nearby data}
for $\dB$ consists of
\begin{itemize}
  \item a quaternion algebra $B(\iota)$ over $F$ such that $B(\iota)_v$
      is definite for archimedean places $v$ other than $\iota\res_F$,

  \item an isomorphism $B(\iota)_v\simeq\dB_v$ for every finite place $v$
      other than $\p$,

  \item an isomorphism
      $B(\iota)_\iota=B(\iota)\otimes_{F,\iota}\dR\simeq\Mat_2(\dR)$,

  \item a uniformization
      \[
      X_\iota(\dC)\simeq B(\iota)^\times\backslash\cH\times\dB^{\infty\times}/F^\times_\cl,
      \]
      where $\cH=\dC\setminus\dR$ denotes the union of Poincar\'{e} upper
      and lower half-planes,

  \item an embedding $\sfe(\iota)\colon E\hookrightarrow B(\iota)$ of
      $F$-algebras such that $\sfe(\iota)_v$ coincides with $\sfe_v$
      under the isomorphism $B(\iota)_v\simeq\dB_v$ for every finite
      place $v$ other than $\p$, and $\cH^{E^\times}=\{\pm i\}$.
\end{itemize}\index{$\iota$-nearby data}
\end{definition}

We now choose an $\iota$-nearby data for $\dB$. For every $w\in\dZ$, denote
by $\sA^{(2w)}(B(\iota)^\times)$ (resp.\
$\sA^{(2w)}_{\cusp}(B(\iota)^\times)$) the space of real analytic (resp.\ and
cuspidal) automorphic forms on $B(\iota)^\times(\dA)$ of weight $2w$ at
$\iota\res_F$ and trivial at other archimedean places. There is a natural
$\dB^{\infty\times}$-equivariant map
\begin{align}\label{eq:automorphic_form}
\phi_\iota\colon\rH^0(\breve{X}_\iota,(\breve\Omega^1_{X_\iota})^{\otimes w})\to\sA^{(2w)}(B(\iota)^\times)
\end{align}
such that for $g_\iota\in B(\iota)^\times_\iota=\GL_2(\dR)$,
\[\phi_\iota(f)([g_\iota,1])j(g_\iota,i)^w=f(g_\iota(i))\otimes\rd z^{\otimes -w},\]
where $j(g_\iota,i)=(\det g_\iota)^{-1}\cdot(ci+d)^2$ is the square of the
usual $j$-factor. We denote by
$\rH^0_\cusp(\breve{X}_\iota,(\breve\Omega^1_{X_\iota})^{\otimes
w})\subset\rH^0(\breve{X}_\iota,(\breve\Omega^1_{X_\iota})^{\otimes w})$ the
inverse image of $\sA^{(2w)}_\cusp(B(\iota)^\times)$ under $\phi_\iota$.

We may recover the pair $(\cL_\iota,\nabla_\iota)$ in the following way.
Denote by $\rL_\iota$ the $\dC$-local system on $X_\iota$ defined by the
quotient $B(\iota)^\times\backslash\dC^{\oplus 2}\times\cH\times
\dB^{\infty\times}/F^\times_\cl$ where the action of $B(\iota)^\times$ is
given by
\[\gamma[(a_1,a_2)^t,z,g]=[((a_1,a_2)\iota(\gamma)^{-1})^t,\iota(\gamma)(z),\gamma^\infty g].\]
Then $\rL_\iota$ is canonically isomorphic to the restriction of
$\rL\otimes_{O_\p,\iota}\dC$ along the natural morphism $\pi_\iota$, where
$\rL$ is the $O_\p$-local system on $\cX$ defined in Remark
\ref{re:local_system}. Thus, $\cL_\iota=\sO_{X_\iota}\otimes_{\dC}\rL_\iota$
and $\nabla_\iota\colon\cL_\iota\to\cL_\iota\otimes\Omega^1_{X_\iota}$ is the
induced connection.

The following lemma shows our definition of Shimura--Maass operators coincide
with the classical one.

\begin{lem}\label{le:theta_iota}
For every $f\in\rH^0(\breve{X}_\iota,(\breve\Omega^1_{X_\iota})^{\otimes w})$
with some $w\in\dN$, we have
\[\Theta_\iota f\otimes\rd z^{\otimes -w-1}=\(\frac{\partial}{\partial z}+\frac{2w}{z-\ol{z}}\)f\otimes\rd z^{\otimes-w}.\]
\end{lem}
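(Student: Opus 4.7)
The plan is to trivialize $\cL_\iota$ on the universal cover $\cH$ of a connected component of $\breve{X}_\iota$, identify the Hodge line and Kodaira--Spencer map explicitly in coordinates, and then unwind $\Theta_\iota$ by Leibniz. First, on $\cH$ the pullback of $\rL_\iota$ admits the flat frame $\{e_1,e_2\}$ coming from the constant sheaf $\dC^{\oplus 2}$; in this frame $\nabla_\iota$ reduces to exterior differentiation of coefficients. Using Lemma~\ref{le:hodge_filtration} and the explicit period map coming from the quotient presentation in Definition~\ref{de:nearby_data}, I would identify $\ul\omega^\bullet_\iota$ at $z\in\cH$ as the line spanned by $\epsilon(z):=e_1-z e_2$, and $\fc_\iota^*\ul{\breve\omega}^\bullet_\iota$ as the line spanned by $\bar\epsilon(z):=e_1-\ol{z}\,e_2$, thus realizing the Hodge decomposition~\eqref{eq:hodge_decomposition}.

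Next, I would compute the Kodaira--Spencer map in this frame. Since $\nabla\epsilon=-e_2\,\rd z$ and $e_2=(\epsilon-\bar\epsilon)/(\ol{z}-z)$, projection modulo $\ul\omega^\bullet_\iota$ pins down a canonical generator of $\ul\omega^{\circ\vee}_\iota$ and yields $\KS(\epsilon\otimes\tilde\epsilon)=\rd z$, where $\tilde\epsilon:=z e_1^\vee+e_2^\vee$ is the unique element of $\ul\omega^\circ_\iota\subset\cL_\iota^\vee$ (the annihilator of $\ul\omega^\bullet_\iota$) satisfying $\tilde\epsilon(\epsilon)=0$ and $\tilde\epsilon(\bar\epsilon)=z-\ol{z}$. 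Hence a section $f\cdot\rd z^{\otimes w}$ corresponds under $\KS^{-1}$ to $f\cdot\epsilon^{\otimes w}\otimes\tilde\epsilon^{\otimes w}$ inside $(\ul\omega^\bullet_\iota)^{\otimes w}\otimes(\ul\omega^\circ_\iota)^{\otimes w}\subset\cL^{[w]}_\iota$.

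Finally, I would apply $\nabla^{[w]}$ by Leibniz in the flat frame. Three classes of terms arise: one with $\partial f/\partial z$ multiplying the undifferentiated tensor; $w$ terms replacing a single $\epsilon$ by $-e_2\,\rd z$ in the $\Sym^w\cL_\iota$ factor; and $w$ terms replacing a single $\tilde\epsilon$ by $e_1^\vee\,\rd z$ in the $\Sym^w\cL_\iota^\vee$ factor. Now project via $\theta^{[w]}_\iota$, that is, via the Hodge projection $\pi_\bullet\colon\cL_\iota\to\ul\omega^\bullet_\iota$ on the first symmetric factor and the dual Hodge projection $\pi_\circ\colon\cL_\iota^\vee\to\ul\omega^\circ_\iota$ on the second. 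A direct computation gives $\pi_\bullet(e_2)=-\epsilon/(z-\ol{z})$ and $\pi_\circ(e_1^\vee)=\tilde\epsilon/(z-\ol{z})$, so each of the two batches contributes $w/(z-\ol{z})$ copies of $f\cdot\epsilon^{\otimes w}\otimes\tilde\epsilon^{\otimes w}$, summing to $2w/(z-\ol{z})$. Applying $\KS$ then delivers the asserted formula.

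The hard part is the bookkeeping for the $\Sym^w\cL_\iota^\vee$ factor: one must correctly identify $\ul\omega^\circ_\iota$ as the annihilator of $\ul\omega^\bullet_\iota$ in $\cL_\iota^\vee$ and use the dual Hodge splitting $\cL_\iota^\vee=\ul\omega^\circ_\iota\oplus\fc_\iota^*\ul\omega^\circ_\iota$, so that the contributions of $\pi_\bullet$ and $\pi_\circ$ have matching signs and add to $2w/(z-\ol{z})$ rather than partially cancel. Once these signs and normalizations are fixed using Lemma~\ref{le:hodge_filtration} and the identity $\KS(\epsilon\otimes\tilde\epsilon)=\rd z$, the remainder is a routine Leibniz calculation on $\cH$.
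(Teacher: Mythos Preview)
Your approach is essentially identical to the paper's: pass to the universal cover $\cH$, choose explicit generators for $\ul\omega^\bullet_\iota$ and $\ul\omega^\circ_\iota$ in a flat frame, verify $\KS$ sends their tensor to $\rd z$, and then compute $\Theta_\iota$ on $(\omega^\bullet_\iota)^{\otimes w}\otimes(\omega^\circ_\iota)^{\otimes w}$ by Leibniz and Hodge projection. The paper uses the generators $(z,1)^t$ and $(1,-z)$ and simply asserts the resulting identity, whereas you spell out the Leibniz bookkeeping; the arguments are the same.

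One small slip worth flagging, since you single out the sign bookkeeping as the delicate point: with your choices $\epsilon=e_1-z e_2$ and $\tilde\epsilon=z e_1^\vee+e_2^\vee$ one has $\nabla\epsilon=-e_2\,\rd z$ and $\tilde\epsilon(-e_2)=-1$, so in fact $\KS(\epsilon\otimes\tilde\epsilon)=-\rd z$, not $+\rd z$. This is harmless for the final formula (the sign enters once via $\KS^{-1}$ and once via $\theta^{[w]}_\iota$ and cancels), but replacing $\tilde\epsilon$ by its negative, or adopting the paper's generators, makes the intermediate normalization honest.
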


\begin{proof}
We may pass to the universal cover $\cH\times\dB^{\infty\times}/F^\times_\cl$
and suppress the part $\dB^{\infty\times}/F^\times_\cl$ in what follows. Over
$\cH$, the sheaf $\cL_\iota$ is trivialized as $\dC^{\oplus2}$ and the
sub-sheaf $\ul\omega^\bullet_\iota$ is generated by the section
$\omega^\bullet_\iota$ whose value at $z$ is $(z,1)^t$. Dually, the sheaf
$\cL_\iota^\vee$ is trivialized as two-dimensional complex row vectors and
the sub-sheaf $\ul\omega^\circ_\iota$ is generated by the section
$\omega^\circ_\iota$ whose value at $z$ is $(1,-z)$. Then
$\KS(\omega^\bullet_\iota\otimes\omega^\circ_\iota)=\rd z$.

It is easy to see that
\[\Theta_\iota\((\omega^\bullet_\iota)^{\otimes w}\otimes(\omega^\circ_\iota)^{\otimes w}\)
=\frac{2w}{z-\ol{z}}\((\omega^\bullet_\iota)^{\otimes
w}\otimes(\omega^\circ_\iota)^{\otimes w}\)\otimes\rd z\] since
$\fc_\iota^*\ul\omega^\bullet_\iota$ (resp.\
$\fc_\iota^*\ul\omega^\circ_\iota$) is generated by the section
$(\ol{z},1)^t$ (resp.\ $(1,-\ol{z})$). The lemma follows.
\end{proof}

Denote by $\Delta_{\pm,\iota}$ the element
\begin{align*}
\Delta_\pm\colonequals\frac{1}{4i}\left(
               \begin{array}{cc}
                 1 & \pm i \\
                 \pm i & 1 \\
               \end{array}
             \right)
\end{align*}
in $\f{gl}_{2,\dC}=\Mat_2(\dC)=\Lie_{\dC}(B(\iota)\otimes_{F,\iota}\dC)$, and
$\Delta_{\pm,\iota}^k=\Delta_{\pm,\iota}\circ\cdots\circ\Delta_{\pm,\iota}$
the $k$-fold composition.

\begin{lem}\label{le:shimura_maass}
For every
$f\in\rH^0_\cusp(\breve{X}_\iota,(\breve\Omega^1_{X_\iota})^{\otimes w})$ and
$k\in\dN$, we have
\[\phi_\iota(\Theta_\iota^k f)=\Delta_{+,\iota}^k\phi_\iota(f).\]
\end{lem}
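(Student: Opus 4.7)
The plan is to prove the identity by induction on $k$, reducing immediately to the case $k=1$; once this is established, the inductive step is automatic because both sides commute with $\phi_\iota$'s passage from weight $w$ to $w+1$, i.e.\ $\phi_\iota(\Theta_\iota^{k+1}f)=\phi_\iota(\Theta_\iota(\Theta_\iota^k f))=\Delta_{+,\iota}\phi_\iota(\Theta_\iota^k f)=\Delta_{+,\iota}^{k+1}\phi_\iota(f)$, using the $k=1$ identity applied to $\Theta_\iota^k f$ (which remains cuspidal of weight $w+k$).

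For the $k=1$ base case, I unwind both sides at a point $[g_\iota,1]$ with $g_\iota\in\GL_2(\dR)$. By the defining formula for $\phi_\iota$, the value $\phi_\iota(f)([g_\iota,1])$ is essentially $f(g_\iota(i))$ multiplied by the automorphy factor $j(g_\iota,i)^{-w}$. The right regular action of $\Delta_{+,\iota}\in\f{gl}_{2,\dC}$ on $\phi_\iota(f)$ is by definition $(\Delta_{+,\iota}\phi_\iota(f))([g_\iota,1])=\tfrac{\rd}{\rd t}\bigr|_{t=0}\phi_\iota(f)([g_\iota\exp(t\Delta_{+,\iota}),1])$. Using the change of variable $z=g_\iota\exp(t\Delta_{+,\iota})(i)$ and the classical identity
\[
\frac{\rd}{\rd t}\bigr|_{t=0}g_\iota\exp(t\Delta_{+,\iota})(i)=j(g_\iota,i)^{-1},
\]
(which follows from the computation at $g_\iota=\id$ where $\Delta_+\cdot i$ in the upper half-plane action gives $1$ in the direction $\partial/\partial z$), a direct calculation reduces the right-hand side to
\[
\bigl(j(g_\iota,i)^{-w-1}\bigr)\cdot\Bigl(\tfrac{\partial}{\partial z}+\tfrac{2w}{z-\bar z}\Bigr)f\bigr|_{z=g_\iota(i)}.
\]
The extra $\frac{2w}{z-\bar z}$ term comes from differentiating $j(g_\iota\exp(t\Delta_{+,\iota}),i)^{-w}$ in $t$; this is exactly the automorphy-factor contribution one sees in the standard derivation of the Maass raising operator from the Lie algebra element, and the specific constant $\frac{1}{4i}$ in the definition of $\Delta_{+}$ is calibrated precisely to produce this coefficient.

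On the other side, Lemma~\ref{le:theta_iota} gives the explicit expression
\[
\Theta_\iota f\otimes\rd z^{\otimes -w-1}=\Bigl(\tfrac{\partial}{\partial z}+\tfrac{2w}{z-\bar z}\Bigr)f\otimes\rd z^{\otimes -w},
\]
so $\phi_\iota(\Theta_\iota f)([g_\iota,1])\cdot j(g_\iota,i)^{w+1}$ matches the expression obtained on the Lie-algebra side after absorbing the automorphy factors. Comparing yields the desired equality for $k=1$, and the induction completes the proof.

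The principal obstacle is the bookkeeping of normalizations: one must verify very carefully that the specific normalization of $\Delta_+$ as $\tfrac{1}{4i}\!\left(\begin{smallmatrix}1&i\\ i&1\end{smallmatrix}\right)$, combined with the chosen $j$-factor $j(g,i)=(\det g)^{-1}(ci+d)^2$ (already the \emph{square} of the classical one, so our weight conventions are shifted), produces exactly the coefficient $\tfrac{2w}{z-\bar z}$ appearing in $\Theta_\iota$, rather than any of the several variants (such as $\tfrac{w}{y}$ or $\tfrac{k}{2iy}$) found in the literature. Once this normalization matches—which is essentially the content of the choice of $\Delta_+$—everything else is formal.
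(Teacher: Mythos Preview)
Your proposal is correct and follows essentially the same route as the paper: the paper's proof simply invokes Lemma~\ref{le:theta_iota} together with the standard identification (cited from Bump's book) of the classical Maass raising operator $\frac{\partial}{\partial z}+\frac{2w}{z-\bar z}$ with the right regular action of the Lie algebra element $\Delta_{+,\iota}$, which is exactly what you unwind by hand for $k=1$ and then iterate. One minor slip in your parenthetical: with the paper's normalization one computes $\exp(t\Delta_+)\cdot i = i + \tfrac{t}{2} + O(t^2)$, so the derivative at the identity is $\tfrac12$ rather than $1$; but this is precisely the normalization bookkeeping you already flag, and tracking the factor through (together with the derivative of the $j$-factor) recovers the stated identity.
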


\begin{proof}
This follows from Lemma \ref{le:theta_iota}, \cite{Bum97} p.130, p.143, and
Proposition 2.2.5 on p.155.
\end{proof}

\subsection{Proof of claims}
\label{ss:proof_claims}

In this section, we prove six claims (\ref{le:algebraizable},
\ref{pr:ks_isomorphism}, \ref{le:unit_root}, \ref{le:deformation},
\ref{pr:lt_action}, and \ref{le:hodge_filtration}) left in previous sections.
All these claims are natural extensions from their versions on the modular
curve. The reader may skip this section for the first reading.

Our strategy is to use the unitary Shimura curves considered by Carayol in
\cite{Car86}. Thus we will fix an isomorphism
$\iota\colon\dC_p\xrightarrow{\sim}\dC$. In particular, $F_\p^\nr$ is a
subfield of $\dC$. We also fix an $\iota$-nearby data for $\dB$ (Definition
\ref{de:nearby_data}) and put $B=B(\iota)$ for short.

Note that when $F=\dQ$ there is no need to change the Shimura curve. In order
to unify the argument, we will choose to do so in this case as well. We will
also assume that we are not in the case of classical modular curves where all
these statements are well-known.

Fix an element $\lambda\in\dC$ such that $\IM\lambda>0$, $-\lambda^2\in\dN$,
$p\neq\lambda^2$, $p$ splits in $\dQ(\lambda)\subset\dC$, and $\dQ(\lambda)$
is not contained in $E$. Put $\U{F}=F(\lambda)$ and $\U{E}=E(\lambda)$ both
as subfields $\dC$. We identify the completion of $\U{F}$ and $\U{E}$ inside
$\dC\simeq\dC_p$ with $F_\p$. In \cite{Car86}*{\Sec 2} (see also
\cite{Kas04}*{\Sec 2}), a reductive group $\U{G}$ over $\dQ$ is defined such
that
\[\U{G}(\dQ_p)=\dQ_p^\times\times\GL_2(F_\p)\times(\dB_{\p_2}^\times\times\cdots\times\dB_{\p_m}^\times),\]
where $\p_2,\dots,\p_m$ are primes of $F$ over $p$ over than $\p$. Let
$\U{G}^\p=\prod'_{q\neq
p}\U{G}(\dQ_q)\times(\dB_{\p_2}^\times\times\cdots\times\dB_{\p_m}^\times)$
and $\U{\Gamma}$ the set of all (sufficiently small) compact open subgroups
$\U{U}^\p$ of $\U{G}^\p$. Then for each $\U{U}^\p\in\U{\Gamma}$, there is a
unitary Shimura curve $\U{X}_{\U{U}^\p}$ over $\Spec F_\p$ of the level
structure $\dZ_p^\times\times\GL_2(O_\p^\times)\times\U{U}^\p$, which is
smooth and projective. It has a canonical smooth model $\U{\cX}_{\U{U}^\p}$
over $\Spec O_\p^\nr$ defined via a moduli problem \cite{Car86}*{\Sec 6}. In
particular, there is a universal abelian variety
$\pi\colon\cA_{\U{U}^\p}\to\U{\cX}_{\U{U}^\p}$ with a specific $p$-divisible
subgroup $\U{\cG}_{\U{U}^\p}\subset\cA_{\U{U}^\p}[p^\infty]$ that is
naturally an $O_\p$-divisible group of dimension $1$ and height $2$. Denote
$\U{\cX}(0)_{\U{U}^\p}$ the (dense) open subscheme of $\U{\cX}_{\U{U}^\p}$
with all points on the special fiber where $\U{\cG}$ is supersingular
removed. For $n\in\dN$, define $\U{\cX}(n)_{\U{U}^\p}$ to be the functor
classifying $O_\p$-equivariant extensions
\[\xymatrix{0\ar[r]& \LT[\p^n] \ar[r]& \U{\cG}[\p^n] \ar[r]& \p^{-n}/O_\p \ar[r]& 0}\]
of $\U{\cG}$ over $\U{\cX}(0)_{\U{U}^\p}$, which is a scheme \'{e}tale over
$\U{\cX}(0)_{\U{U}^\p}$.

Then the construction of Carayol amounts to saying that for every
sufficiently small $U^\p\in\Gamma$ and a connected component $\cX'_{U^\p}$ of
$\cX_{U^\p}$, there exists a $\U{U}^\p\in\U{\Gamma}$ such that
\begin{itemize}
  \item
      $\cX(n)'_{U^\p}\colonequals\cX'_{U^\p}\times_{\cX_{U^\p}}\cX(n)_{U^\p}$
      is isomorphic to the identity connected component of
      $\U{\cX}(n)_{\U{U}^\p}$ for $n\in\dN$;
  \item under the above isomorphism $\cG_{U^\p}\res_{\cX(n)'_{U^\p}}$ is
      isomorphic to the restriction of $\U{\cG}_{\U{U}^\p}$ to
      $\cX(n)'_{U^\p}$.
\end{itemize}

In what follows we may and will fix a sufficiently small $U^\p\in\Gamma$, a
connected component $\cX'_{U^\p}$ of $\cX_{U^\p}$, and a corresponding
$\U{U}^\p\in\U{\Gamma}$. The tame levels $U^\p$ and $\U{U}^\p$ will be
suppressed from the notation.

Consider the Hodge sequence
\begin{align*}
\xymatrix{
0\ar[r]&  \pi_*\Omega^1_{\cA/\U{\cX}} \ar[r]& \sH^1_{\DR}(\cA/\U{\cX})
\ar[r]& \rR^1\pi_*\sO_{\cA} \ar[r]& 0.
}
\end{align*}
It has a direct summand
\begin{align}\label{eq:hodge_unitary}
\xymatrix{
0\ar[r]&  (\pi_*\Omega^1_{\cA/\U{\cX}})^{2,1}_1 \ar[r]& \sH^1_{\DR}(\cA/\U{\cX})^{2,1}_1
\ar[r]& (\rR^1\pi_*\sO_{\cA})^{2,1}_1 \ar[r]& 0
}
\end{align}
which is $O_\p$-equivariant (see \cite{Car86}*{4.2} for the meaning of
$(-)^{2,1}_1$), in which the three sheaves are locally constant of rank $1$,
$2h$, and $2h-1$, respectively, where $h=[F_\p\colon \dQ_p]$.

If $M$ is a projective $O_\p$-module or a locally free sheaf on an
$O_\p$-scheme, equipped with an $O_\p$-action $O_\p\to\End M$, then we denote
by $M^{O_\p}$ the maximal submodule or subsheaf on which $O_\p$ acts via the
structure homomorphism. Then by construction, if we apply the functor
$(-)^{O_\p}$ and take formal completion (after restriction to $\cX$) to
\eqref{eq:hodge_unitary}, we will recover the the exact sequence
\eqref{eq:hodge_sequence} in Lemma \ref{le:algebraizable}. For later use, we
denote the sequence \eqref{eq:hodge_unitary} after $(-)^{O_\p}$ by
\begin{align*}
\xymatrix{
0\ar[r]&  \U{\ul\omega}^\bullet \ar[r]& \U{\cL} \ar[r]& \U{\ul\omega}^{\circ\vee} \ar[r]& 0.
}
\end{align*}

Moreover, we have the Gauss--Manin connection
\[\U{\nabla}_p\colon \sH^1_{\DR}(\cA/\U{\cX})\to\sH^1_{\DR}(\cA/\U{\cX})\otimes\Omega^1_{\U{\cX}}.\]
By the functoriality of the Gauss--Manin connection, we have an induced
connection
\[\U{\nabla}\colon \U{\cL}\to\U{\cL}\otimes\Omega^1_{\U{\cX}},\]
whose formal completion (after restriction to $\cX'$) coincides with
\eqref{eq:gm_formal}. Therefore, Lemma \ref{le:algebraizable} is proved.

Denote by
$\U{\KS}\colon\U{\ul\omega}^\bullet\otimes\U{\ul\omega}^\circ\to\Omega^1_{\U{\cX}}$
the induced Kodaira--Spencer map, where $\U{\ul\omega}^\circ$ is the dual
sheaf of $\U{\ul\omega}^{\circ\vee}$. Proposition \ref{pr:ks_isomorphism}
follows from the following analogous one for $\U{\cX}$.

\begin{lem}
The Kodaira--Spencer map
$\U{\KS}\colon\U{\ul\omega}^\bullet\otimes\U{\ul\omega}^\circ\to\Omega^1_{\U{\cX}}$
is an isomorphism.
\end{lem}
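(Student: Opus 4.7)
The plan is straightforward once we use Carayol's moduli description. Since $\U{\cX}$ is smooth of relative dimension $1$ over $\Spec O_\p^\nr$, the sheaf $\Omega^1_{\U{\cX}}$ is invertible, and from the rank counting in \eqref{eq:hodge_unitary} combined with the $(-)^{O_\p}$ functor (ranks $1$, $2$, $1$ instead of $1$, $2h$, $2h-1$), both $\U{\ul\omega}^\bullet$ and $\U{\ul\omega}^\circ$ are invertible. Hence $\U{\KS}$ is a map between line bundles on an integral scheme, so it suffices to prove that $\U{\KS}$ is nonzero at every closed geometric point of $\U{\cX}$; equivalently, by $O_\p^\nr$-flatness of $\U{\cX}$, that $\U{\KS}$ is nonzero on the generic fibre and remains surjective on the special fibre.

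The first (and conceptually cleanest) verification is via deformation theory. At a closed point $x$, the complete local ring $\widehat{\sO}_{\U{\cX},x}$ is, by the universal property of $\U{\cX}$ and Grothendieck--Messing theory (respectively Serre--Tate theory at ordinary points), a universal deformation ring for $\cA_x$ with its extra structures. First-order deformations of $\cA_x$ compatible with the $O_{\U{F}}$-action, the polarisation and the tame level structure are classified by liftings of the Hodge filtration inside the $(2,1)_1$-summand of the crystalline Dieudonn\'e module; after passage to the $O_\p$-eigenspace via the structure map, this yields a canonical identification
\[
T_{x}\U{\cX}\;\cong\;\Hom(\U{\ul\omega}^\bullet_{x},\U{\ul\omega}^{\circ\vee}_{x})
=(\U{\ul\omega}^\bullet_{x})^{\vee}\otimes\U{\ul\omega}^\circ_{x},
\]
and by the definition of $\U{\KS}$ through the Gauss--Manin connection this identification is precisely the dual of $\U{\KS}$ at $x$. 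Dualising gives the claim.

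As a cross-check on the generic fibre, one can invoke the complex uniformisation via $\iota$: on a connected component, the computation carried out in the proof of Lemma \ref{le:theta_iota} yields $\KS(\omega^\bullet\otimes\omega^\circ)=\rd z$, which is nowhere vanishing, so $\U{\KS}$ is nonzero over $\U{X}$. The main technical point to be careful about is the compatibility of the two idempotent projections (the Carayol $(2,1)_1$-projection and the structure-map $O_\p$-projection) with the Gauss--Manin connection: because the full $O_{\U{F}}\otimes\dZ_p$-action on $\sH^1_{\DR}(\cA/\U{\cX})$ is horizontal for $\U{\nabla}_p$, these projections commute with $\U{\nabla}_p$, so passing to the $1$-dimensional quotient recovers exactly $\U{\KS}$ without loss of information. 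Granted this compatibility, the moduli-theoretic identification above immediately gives that $\U{\KS}$ is an isomorphism.
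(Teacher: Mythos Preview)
Your proof is correct and follows essentially the same deformation-theoretic route as the paper: both verify the statement fibrewise at closed points by identifying first-order deformations of the structured abelian variety with $\Hom(\U{\ul\omega}^\bullet_x,\U{\ul\omega}^{\circ\vee}_x)$ and recognising this as (dual to) the Kodaira--Spencer map; the paper carries this out by hand, constructing a lift $A_\phi$ over $k(t)[\varepsilon]/(\varepsilon^2)$ associated to a chosen surjective $\phi$ and checking that the pullback of $\U{\KS}$ along the classifying map $t_\phi$ recovers $\phi$, whereas you invoke Grothendieck--Messing directly. One small slip: in your displayed identification you write $(\U{\ul\omega}^\bullet_x)^\vee\otimes\U{\ul\omega}^\circ_x$, but $\Hom(\U{\ul\omega}^\bullet_x,\U{\ul\omega}^{\circ\vee}_x)=(\U{\ul\omega}^\bullet_x)^\vee\otimes\U{\ul\omega}^{\circ\vee}_x$, which is indeed the fibre of the dual of $\U{\ul\omega}^\bullet\otimes\U{\ul\omega}^\circ$.
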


\begin{proof}
The proof is similar to \cite{DT94}*{Lemma 7}. Denote by $\cA^\vee$ the dual
abelian variety of $\cA$. Then $\U{\ul\omega}^{\circ\vee}$ is canonically
isomorphic to $(\ul\Lie(\cA^\vee/\U{\cX})^{2,1}_1)^{O_\p}$. We only need to
show that for every closed point $t\colon\Spec k(t)\to\U{\cX}$, the induced
map
\begin{align}\label{eq:ks_unitary}
\U{\ul\omega}^\bullet\otimes k(t)\to
(\ul\Lie(\cA^\vee/\U{\cX})^{2,1}_1)^{O_\p}\otimes\Omega^1_{\U{\cX}}\otimes k(t)
\end{align}
is surjective, where $\ul\Lie$ denotes the sheaf of tangent vectors.

Let $A/\Spec k(t)$ be the abelian variety classified by $t$. Put $T=\Spec
k(t)[\varepsilon]/(\varepsilon^2)$. The lifts $A_\phi$ of $A$ (with other PEL
structures) to $T$ correspond to homomorphisms
\[\phi\colon t^*\U{\ul\omega}^\bullet\to(\ul\Lie(\cA^\vee/\U{\cX})^{2,1}_1)^{O_\p}\otimes k(t).\]
Since both sides are $k(t)$-vector spaces of dimension $1$, we may choose a
$\phi$ that is surjective. Let $t_\phi\colon T\to\U{\cX}$ be the morphism
that classifies $A_\phi/T$. Compose the isomorphism
$t_\phi^*\U{\ul\omega}^\bullet\otimes k(t)\to t^*\U{\ul\omega}^\bullet$ and
the surjective map $\phi$. By the isomorphism
\[(\ul\Lie(\cA^\vee/\U{\cX})^{2,1}_1)^{O_\p}\otimes k(t)\simeq
t_\phi^*(\ul\Lie(\cA^\vee/\U{\cX})^{2,1}_1)^{O_\p}\otimes\Omega^1_{T/k(t)}\otimes
k(t),\] we obtain a surjective map
\[t_\phi^*\U{\ul\omega}^\bullet\otimes k(t)\to t_\phi^*(\ul\Lie(\cA^\vee/\U{\cX})^{2,1}_1)^{O_\p}\otimes\Omega^1_{T/k(t)}\otimes k(t),\]
which is the pullback of \eqref{eq:ks_unitary} under $t_\phi$. Therefore,
\eqref{eq:ks_unitary} is surjective.
\end{proof}

For $n\in\dN$, denote by $\U{\fX}(n)$ the formal completion along its special
fiber, which is equipped with an $O_\p$-divisible group $\U{\fG}$ induced
from $\U{\cG}$. Let $\U{\fG}_\can\subset\U{\fG}$ be the canonical subgroup.
We have a morphism $\U{\Phi}\colon\U{\fX}\to\U{\fX}$ defined by ``dividing
$\U{\fG}_\can[\p]$'' which lifts the relative Frobenius on the special fiber.
In fact, the induced map on the coordinate ring is simply the operator
$\r{Frob}$ defined in \cite{Kas04}*{Definition 11.1}.

\begin{proof}[Proof of Lemma \ref{le:unit_root}]
We only need to prove the same statement for $\U{\cX}$. Then we define
$\U{\cL}^\circ$ to be the sub-sheaf of $\U{\cL}$ where $\U{\Phi}$ acts by a
$p$-adic unit. To show that it glues to a formal quasi-coherent sheaf, we may
adopt the proof of \cite{Kat73}*{Theorem 4.1} in the case where $\dZ_p$ is
replaced by $O_\p$ and $p$ is replaced by a uniformizer $\varpi$ of $F$. The
assumptions are satisfied because the Newton polygon of the underlying
$p$-divisible group of $\U{\fG}\res_x$ for any $x\in\U{\fX}(\kappa)$ is the
one starting with $(0,0)$, ending with $(2h,1)$ and having the unique
breaking point at $(h,0)$. The similar proof also confirms (1) and (2). For
(3), we use the local calculation in Lemma \ref{le:rationality}.
\end{proof}

\begin{proof}[Proof of Lemma \ref{le:deformation} and Lemma \ref{le:hodge_filtration}]
We only need to prove the similar statements for $\U{\fX}$, which follow from
the moduli interpretation of $\U{\fX}$ and the Serre--Tate theorem for Lemma
\ref{le:deformation}, and the existence of the universal abelian variety
$\cA$ for Lemma \ref{le:hodge_filtration}, respectively.
\end{proof}

\begin{proof}[Proof of Proposition \ref{pr:lt_action}]
We may similarly define $\U{\fX}(\infty)$ over $\Spf O_\p^\nr$ and only need
to construct the morphism $\U{\beta}\colon\LT\times_{\Spf
O_\p^\nr}\U{\fX}(\infty)\to\U{\fX}(\infty)$ with similar properties, since
the action of $\LT$ is supposed to preserve the special fiber. We use moduli
interpretation. For a scheme $S$ over $\Spec O_\p^\nr$ where $p$ is locally
nilpotent, $\U{\fX}(\infty)(S)$ is the set of isomorphism classes of
quintuples $(A,\iota,\theta,k^\p,\kappa_\p)$, where $(A,\iota,\theta,k^\p)$
is the same data in \cite{Car86}*{\Sec 5.2} but $k^\p$ is an isomorphism
instead of a class, and $\kappa_\p$ is an exact sequence
\[\xymatrix{
0\ar[r]& \LT \ar[r]& (A_{p^\infty})^{2,1}_1 \ar[r]& F_\p/O_\p \ar[r]& 0. }\]
On the other hand, $\LT(S)$ is the set of isomorphism classes of $(G,k_G)$
where $k_G$ is an exact sequence
\[\xymatrix{
0\ar[r]& \LT \ar[r]& G \ar[r]& F_\p/O_\p \ar[r]& 0. }\] Using the group
structure on $\LT$, we may add the above two exact sequences to a new one
$\alpha(k_\p,k_G)$ as
\[\xymatrix{
0\ar[r]& \LT \ar[r]& \alpha((A_{p^\infty})^{2,1}_1,G) \ar[r]& F_\p/O_\p
\ar[r]& 0. }\] By the theorem of Serre--Tate and the fact that \'{e}tale
level structures are determined on the special fiber, we associate
canonically a quintuple $(A',\iota',\theta',k'^\p,\kappa'_\p)$ with
$\kappa'_\p=\alpha(k_\p,k_G)$. This defines $\U{\beta}$. All these properties
follow from the above construction and Theorem \ref{th:serre_tate}.
\end{proof}

\section{Heegner cycles on abelian varieties}

In this chapter, we reformulate our main theorems about $p$-adic
$L$-functions and $p$-adic Waldspurger formula in terms of Heegner cycles on
abelian varieties. We start from recalling some background about
representations of incoherent algebras and abelian varieties of $\GL(2)$-type
in \Sec\ref{ss:representations_incoherent}. In
\Sec\ref{ss:l_function_abelian}, we state the main theorem about $p$-adic
$L$-functions in terms of Heegner cycles and show that it implies Theorem
\ref{th:l_function_maass}. In \Sec\ref{ss:heegner_waldspurger}, we state the
main theorem about $p$-adic Waldspurger formula in terms of Heegner cycles
and show that it implies Theorem \ref{th:waldspurger_maass}.

\subsection{Representations of incoherent algebras}
\label{ss:representations_incoherent}

We recall some materials from \cite{YZZ13}*{\Sec 3.2}. Let
$\iota_1,\dots,\iota_g$ be all archimedean places of $F$. Let $\dB$ be a
totally definite incoherent quaternion algebra over $\dA$, to which there is
an associated projective system of Shimura curves $\{X_U\}_U$. Put
$X=\varprojlim_U X_U$. We recall the following definition in
\cite{YZZ13}*{\Sec 3.2.2}.

\begin{definition}
Let $L$ be a field admitting embeddings into $\dC$. Denote by
$\cA(\dB^\times,L)$ the set of isomorphism classes of irreducible
representations $\Pi$ of $\dB^{\infty\times}$ over $L$ such that for some and
hence all embeddings $L\hookrightarrow\dC$, the Jacquet--Langlands transfer
of $\Pi\otimes_L\dC$ to $\GL_2(\dA^\infty)$ is a finite direct sum of (finite
components of) irreducible cuspidal automorphic representations $\GL_2(\dA)$
of parallel weight $2$.
\end{definition}

\begin{remark}\label{re:l_value}
When $L$ is algebraically closed, we have for every finite place $v$ of $F$
the local $L$-function $L(s,\Pi_v)$, local $\epsilon$-factor
$\epsilon(1/2,\psi,\Pi_v)$, local adjoint $L$-function $L(s,\Pi_v,\Ad)$,
local Rankin--Selberg $L$-function $L(s,\Pi_v,\chi_v)$ and $\epsilon$-factor
$\epsilon(1/2,\Pi_v,\chi_v)$ for a locally constant character $\chi_v\colon
E_v^\times\to L^\times$. When $L=\dC$, we have the global versions, which are
products of local ones over all \emph{finite} places of $F$.
\end{remark}

We say an abelian variety $A$ can be \emph{parameterized by $\dB$} if there
is a non-constant morphism from $X=X(\dB)$ to $A$. Denote by $\AV^0(\dB)$ the
set of simple abelian varieties over $F$ that can be parameterized by $\dB$
up to isogeny, which is stable under duality. From $A\in\AV^0(\dB)$, we
obtain a rational representation $\Pi_A$ of $\dB^{\infty\times}$ which is an
element in $\cA(\dB^\times,\dQ)$. The assignment $A\mapsto\Pi_A$ induces a
bijection between $\AV^0(\dB)$ and $\Pi\in\cA(\d B^\times,\dQ)$.

\begin{notation}\label{no:modular_parametrization}
Recall from \cite{YZZ13}*{\Sec 3.2.3} the following notation
\[\Pi_A=\varinjlim_U\Hom_{\xi_U}(X_U^*,A),\] where
\begin{itemize}
  \item the colimit is taken over all compact open subgroups $U$ of
      $\dB^{\infty\times}$;
  \item $X_U^*$ is simply $X_U$ (resp.\ $X_U$ plus cusps) if $X_U$ is
      proper (resp.\ not proper which happens exactly when it is the
      classical modular curve);
  \item $\xi_U$ is the normalized Hodge class on $X_U^*$
      \cite{YZZ13}*{\Sec 3.1.3}; and
  \item $\Hom_{\xi_U}(X_U^*,A)$ denotes the $\dQ$-vector space of
      \emph{modular parameterizations}, that is, (quasi-)morphisms from
      $X_U^*$ to $A$ that send $\xi_U$ to torsion.
\end{itemize}
\end{notation}

If we denote by $J_U$ the Jacobian of $X_U^*$, then $\Hom_{\xi_U}(X_U^*,A)$
is canonically identified with $\Hom^0(J_U,A)$. Moreover,
$M_A\colonequals\End^0(A)$ is a field of degree equal to the dimension of $A$
and $M_A$ acts on the representation $\Pi$. Denote by $A^\vee$ the dual
abelian variety (up to isogeny) of $A$ and we have $\Pi_{A^\vee}$ similarly.
Then the rosati involution induces a canonical isomorphism $M_{A^\vee}\simeq
M_A$.

\begin{definition}[Canonical pairing, \cite{YZZ13}*{\Sec 3.2.4}]\label{de:pairing_canonical}
We have a canonical pairing
\[(\;,\;)_A\colon \Pi_A\times\Pi_{A^\vee}\to M_A\]
induced by maps
\[(\;,\;)_U\colon \Hom^0(J_U,A)\times \Hom^0(J_U,A^\vee)\to M_A\] defined
by $(f_+,f_-)\mapsto \vol(X_U)^{-1}\circ f_+\circ f_-^\vee\in\End^0(A)=M_A$
for all levels $U$.\index{canonical pairing, $(\;,\;)_A$}
\end{definition}

Now we take a simple abelian variety $A$ over $F$ of $\GL(2)$-type. For a
finite place $v$ of $F$, choose a rational prime $\ell$ that does not divide
$v$. We have a Galois representation $\rho_{A,v}$ of $\rD_v$, the
decomposition group at $v$, on the $\ell$-adic Tate module $\rV_\ell(A)$ of
$A$, which is a free module over $M_{A,\ell}\colonequals
M_A\otimes_\dQ\dQ_\ell$ of rank $2$. It is well-known that the characteristic
polynomial
\[P_v(T)=\r{det}_{M_{A,\ell}}(1-\Frob_v\res\rV_\ell(A)^{\rI_v})\]
belongs to $M_A[T]$ and is independent of $\ell$, where $\rI_v\subset\rD_v$
is the inertia subgroup and $\Frob_v\in\rD_v/\rI_v$ is the geometric
Frobenius.

\begin{definition}[$L$-function and $\epsilon$-factor]\label{de:l_value}
Let $K$ be a field containing $M_A$.
\begin{enumerate}
  \item Define $L(s,\rho_{A,v})=P_v(N_v^{-s-1/2})^{-1}$ to be the local
      $L$-function. In a similar manner, we define the local adjoint
      $L$-function $L(s,\rho_{A,v},\Ad)$; in particular,
      $L(1,\rho_{A,v},\Ad)\in M_A$.

  \item For a locally constant character $\chi_v\colon F_v^\times\to
      K^\times$, we have the twisted local $L$-function
      $L(s,\rho_{A,v}\otimes\chi_v)$ and the $\epsilon$-factor
      $\epsilon(1/2,\psi,\rho_{A,v}\otimes\chi_v)$.

    \item For a locally constant character $\chi_v\colon E_v^\times\to
        K^\times$, we have the local Rankin--Selberg $L$-function
        $L(s,\rho_{A,v},\chi_v)$ and the $\epsilon$-factor
        $\epsilon(1/2,\rho_{A,v},\chi_v)$.

    \item Let $\iota\colon K\hookrightarrow\dC$ be an embedding.

    We define the global $L$-function
        \[L(s,\rho_A^{(\iota)})=\prod_{v<\infty}\iota L(s,\rho_{A,v}),\]
        which is absolutely convergent for $\RE s>1$. We say that $A$ is
        \emph{automorphic} if $L(s,\rho_A^{(\iota)})$, for some and hence
        all $\iota$, is (the finite component of) the $L$-function of an
        irreducible cuspidal automorphic representation of $\GL_2(\dA)$.
        We have global versions for the other $L$-functions and
        $\epsilon$-factors.
\end{enumerate}
\end{definition}

\begin{remark}
It is conjectured that every abelian variety of $\GL(2)$-type is automorphic.
In particular, when $F=\dQ$, every abelian variety of $\GL(2)$-type is
parameterized by modular curves. This follows from Serre's modularity
conjecture (for $\dQ$) \cite{Rib92}*{Theorem 4.4}, where the latter has been
proved by Khare and Wintenberger \cite{KW09}.
\end{remark}

\subsection{$p$-adic $L$-function in terms of abelian varieties}
\label{ss:l_function_abelian}

Let $A$ be the simple abelian variety up to isogeny (of $\GL(2)$-type) over
$F$ that gives rise to the classical representation $\pi$ in
\Sec\ref{ss:l_function}. In particular, it is automorphic. We put
$M=M_A=M_{A^\vee}$ which is regarded as a subfield of $\dC_p$. Denote by
$\omega_A\colon F^\times\backslash\dA^{\infty\times}\to M^\times$ the central
character associated to $A$. For simplicity, we also put $F^M=F\otimes_\dQ M$
equipped with a natural map to $\dC_p$.

\begin{definition}[Distribution algebra]\label{de:character_space}
Denote by $\Gamma_E$ the set of compact open subgroups $V^\p$ of
$\dA^{\infty\p\times}_E$, which is a filtered partially ordered set under
inclusion. Let $K/F_\p$ be a complete field extension.
\begin{enumerate}
  \item A ($K$-valued) character
      \[\chi\colon E^\times\backslash
      \dA^{\infty\times}_E\to K^\times\] is a \emph{character of
      weight $w\in\dZ$ and tame level $V^\p\in\Gamma_E$} if
      \begin{itemize}
      \item $\chi$ is invariant under some $V^\p\in\Gamma_E$;

      \item there is a compact open subgroup $V_\p$ of $E_\p^\times$
          and $w\in\dZ$ such that $\chi(t)=(t_{\fP}/t_{\fP^c})^w$ for
          $t\in V_\p$.
      \end{itemize}
      We suppress the word \emph{tame level} if $V^\p$ is not specified.

  \item For a $K$-valued character $\chi$ of weight $w$ as above, we
      define two characters $\check\chi_\fP$ and $\check\chi_{\fP^c}$ of
      $F_\p^\times$ by the formula $\check\chi_\fP(t)=t^{-w}\chi_\fP(t)$
      and $\check\chi_{\fP^c}(t)=t^w\chi_{\fP^c}(t)$.

  \item Suppose $K$ is contained in $\dC_p$. Let $\chi$ be a locally
      algebraic character of weight $w$. Given an isomorphism
      $\iota\colon\dC_p\xrightarrow{\sim}\dC$, we define the following
      local characters
      \begin{itemize}
      \item $\chi^{(\iota)}_v=1$ if $v|\infty$ but not equal to
          $\iota\res_F$;

      \item $\chi^{(\iota)}_v(z)=(z/z^c)^w$ for $v=\iota\res_F$,
          where $z\in
          E\otimes_{F,\iota}\dR\xrightarrow{\iota\res_E}\dC$;

      \item $\chi^{(\iota)}_v=\iota\chi_v$ for $v<\infty$ but
          $v\neq\p$;

      \item
          $\chi^{(\iota)}_\p(t)=\iota\(\check\chi_\fP(t_\circ)\check\chi_{\fP^c}(t_\bullet)\)$
          for $t\in E_\p^\times$.
      \end{itemize}
      In particular,
      $\chi^{(\iota)}\colonequals\otimes_v\chi^{(\iota)}_v\colon
      \dA^\times_E\to\dC^\times$ is an automorphic character, which is
      called the \emph{$\iota$-avatar} of $\chi$.

  \item Suppose $K$ contains $M$. A $K$-valued characters $\chi$ of
      weight $w$ is \emph{$A$-related} if
      \begin{itemize}
        \item $\omega_A\cdot \chi\res_{\dA^{\infty\times}}=1$;

        \item
            $\#\{v<\infty,v\neq\p\res\epsilon(1/2,\rho_{A,v},\chi_v)=-1\}\equiv
            g+1\mod 2$.
      \end{itemize}
      Denote by $\Xi(A,K)_w$ the set of all $A$-related $K$-valued
      characters of weight $w$. Put $\Xi(A,K)=\bigcup_\dZ\Xi(A,K)_w$. For
      $\chi\in\Xi(A,K)$, there is a unique up to isomorphism totally
      definite incoherent quaternion algebra $\dB_\chi$ over $\dA$,
      unramified at $\p$, such that
      $\epsilon(1/2,\rho_{A,v},\chi_v)=\chi_v(-1)\eta_v(-1)\epsilon(\dB_{\chi,v})$
      for every finite place $v\neq\p$ of $F$. The algebra $\dB_\chi$ is
      $E$-embeddable and by which $A$ can be
      parameterized.
      \index{$p$-adic character of weight $w$!$A$-related, $\Xi(A,K)_w$}

  \item Suppose $K$ contains $M$. For a locally constant character
      $\omega\colon F^\times\backslash\dA^{\infty\times}\to M^\times$,
      denote by $\sC(\omega,K)$ the set of locally analytic $K$-valued
      functions $f$ on the locally $F_\p$-analytic group
      $E^\times\backslash\dA^{\infty\times}_E$ satisfying
      \begin{itemize}
        \item $f$ is invariant under translation by some
            $V^\p\in\Gamma_E$;
        \item $f(xt)=\omega(t)^{-1}f(x)$ for all $x\in
            E^\times\backslash\dA^{\infty\times}_E$ and $t\in
            F^\times\backslash\dA^{\infty\times}$.
      \end{itemize}
      Then $\sC(\omega,K)$ is a locally convex $K$-vector space. Let
      $\sD(\omega,K)$ be the strong dual of $\sC(\omega,K)$, which we
      call the \emph{$K$-valued $\omega$-related distribution algebra}.
      It is a commutative $K$-algebra by convolution.
      \index{distribution algebra!$\omega$-related, $\sD(\omega,K)$}

      For a complete field extension $K'/K$, we have
      $\sD(\omega,K)\wtimes_KK'\simeq \sD(\omega,K')$. In fact, if $K$ is
      discretely valued, $\sD(\omega,K)$ may be written as a projective
      limit, indexed by tame levels $V^\p\in\Gamma_E$, of nuclear
      Fr\'{e}chet--Stein $K$-algebras with finite \'{e}tale transition
      homomorphisms (see Remark \ref{re:nuclear}), and thus complete. We
      have a continuous homomorphism
      \[[\;]\colon E^\times\backslash\dA^{\infty\times}_E\to\sD(\omega,K)^\times\]
      given by Dirac distributions.

  \item Suppose $K$ contains $M$. Define $\sD(A,K)$ to be the quotient
      $K$-algebra of $\sD(\omega_A,K)$ by the closed ideal generated by
      elements that vanish on $\Xi(A,K)\subset\sC(\omega_A,K)$, which we
      call the \emph{$K$-valued $A$-related distribution algebra}. For a
      complete field extension $K'/K$, we have $\sD(A,K)\wtimes_KK'\simeq
      \sD(A,K')$. Similar to $\sD(\omega_A,K)$, if $K$ is discretely
      valued, $\sD(A,K)$ may be written as a projective limit of nuclear
      Fr\'{e}chet--Stein $K$-algebras.
      \index{distribution algebra!$A$-related, $\sD(A,K)$}

  \item For $\pi\in\sA_{\dC_p}(\dB^\times)$ as in
      \Sec\ref{ss:l_function}, we define $\sD(\pi)$ to be the quotient of
      $\sD(\omega_A,\dC_p)$ by the closed ideal generated by elements
      that vanish on $\chi\in\Xi(A,\dC_p)$ with $\dB_\chi\simeq\dB$,
      which we call the \emph{$\pi$-related distribution algebra}. In
      particular, we have a quotient map
      $\varsigma\colon\sD(A,\dC_p)\to\sD(\pi)$.
\end{enumerate}
\end{definition}

Let $K$ be a complete field containing $MF_\p^\lt$. Consider a character
$\chi\in\Xi(A,K)_k$. Take $\dB=\dB_\chi$, and choose an $E$-embedding. Then
we have the $F$-scheme $X$ and its closed subscheme $Y=Y^+\coprod Y^-$. Put
$A^+=A$ and $A^-=A^\vee$, and $\Pi^\pm=\Pi_{A^\pm}\in\cA(\dB^\times,\dQ)$. We
have the canonical pairing $(\;,\;)_A\colon\Pi^+\times\Pi^-\to M$.

Define $\sigma_\chi^\pm$ to be the $K$-subspace of
$\rH^0(Y^\pm,\Omega_{X,Y^\pm}^{\otimes-k})\otimes_FK$ consisting of $\varphi$
such that $t^*\varphi=\chi(t)^{\pm 1}\varphi$, where
$\Omega_{X,Y^\pm}=\Omega^1_X\res_{Y^\pm}$. The abstract conjugation $\bc$
induces a $\dA^{\infty\times}_E$-invariant pairing
\[(\;,\;)_\chi\colon\sigma_\chi^+\times\sigma_\chi^-\to K\]
by the formula
$(\varphi_+,\varphi_-)_\chi=(\varphi_+\otimes\omega_{\psi+}^k)\cdot
\bc^*(\varphi_-\otimes\omega_{\psi-}^k)$, where the right-hand side is a
$K$-valued constant function on $Y^+$, hence regarded as an element in $K$.
Here,
\begin{align}\label{eq:omega_lt}
\omega_{\psi\pm}=\Upsilon_\pm^*\omega_\nu\res_{Y^\pm}
\end{align}
is the a section of $\Omega_{X,Y^\pm}$, where $\omega_\nu$ is the global
Lubin--Tate differential in Definition \ref{de:global_differential}. It is
determined the additive character $\psi$.

Assume $K$ is contained in $\dC_p$ and $k\geq 1$. For every
$\iota\colon\dC_p\xrightarrow{\sim}\dC$, we have a
$\dB^{\infty\times}\times\dA^{\infty\times}_E$-invariant pairing
\[(\;,\;)_{A,\chi}^{(\iota)}\colon
(\Pi^+\otimes_{F^M}\sigma_\chi^+)\times(\Pi^-\otimes_{F^M}\sigma_\chi^-)\to
(\Lie A^+\otimes_{F^M}\Lie A^-)\otimes_{F^M,\iota}\dC,\] such that for
$f_\pm\in\Pi^\pm$, $\varphi_\pm\in\sigma_\chi^\pm$ and
$\omega_\pm\in\rH^0(A^\pm,\Omega^1_{A^\pm})$,
\begin{multline*}
\langle\omega_+\otimes\omega_-,(f_+\otimes\varphi_+,f_-\otimes\varphi_-)_{A,\chi}^{(\iota)}\rangle=\\
\times\(\iota\varphi_+\otimes\fc_\iota^*\iota\varphi_-\otimes\mu^k\)\times
\int_{X_\iota(\dC)}\frac{\Theta_\iota^{k-1}f_+^*\omega_+\otimes\fc_\iota^*\Theta^{k-1}_\iota
f_-^*\omega_-}{\mu^k}\rrd x,
\end{multline*}
where
\begin{itemize}
  \item $\langle\;,\;\rangle$ is the canonical pairing between
      $\rH^0(A^\pm,\Omega^1_{A^\pm})$ and $\Lie A^\pm$;
  \item $\mu$ is an arbitrary Hecke invariant hyperbolic metric on
      $X_\iota(\dC)$;
  \item $\iota\varphi_+\otimes\fc_\iota^*\iota\varphi_-\otimes\mu^k$ is a
      constant function on $Y^+_\iota(\dC)$, hence viewed as a complex
      number; and
  \item $\rd x$ is the Tamagawa measure on $X_\iota(\dC)$.
\end{itemize}
Define $\bP_\iota(\chi)$ to be the unique element in $\dC^\times$ such that
\[(\;,\;)_{A,\chi}^{(\iota)}=\bP_\iota(\chi)\cdot\iota(\;,\;)_A\otimes\iota(\;,\;)_\chi,\]
which we call the \emph{period ratio (at $\iota$)}, as a function on
$\bigcup_{k\geq 1}\Xi(A,K)_k$.

\begin{theorem}\label{th:l_function_abelian}
There is a unique element
\[\sL(A)\in(\Lie A^+\otimes_{F^M}\Lie A^-)\otimes_{F^M}\sD(A,MF_\p^\lt)\]
such that for every character $\chi\in\Xi(A,K)_k$ with $k\geq 1$ and
$MF_\p^\lt\subset K\subset\dC_p$ a complete intermediate field, and every
$\iota\colon\dC_p\xrightarrow{\sim}\dC$,
\begin{align}\label{eq:interpolation}
\iota\sL(A)(\chi)=L(1/2,\rho_A^{(\iota)},\chi^{(\iota)})\cdot
\frac{2^{g-3}\delta_E^{1/2}\zeta_F(2)\bP_\iota(\chi)}{L(1,\eta)^2L(1,\rho_A^{(\iota)},\Ad)}
\frac{\iota\epsilon(1/2,\psi,\rho_{A,\p}\otimes\check\chi_{{\fP}^c})}
{\iota L(1/2,\rho_{A,\p}\otimes\check\chi_{{\fP}^c})^2}.
\end{align}
\end{theorem}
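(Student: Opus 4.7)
The plan is to construct $\sL(A)$ by combining the global Mellin transform on Shimura curves (Theorem \ref{th:family}) with integration against CM cycles, and then verify the interpolation formula \eqref{eq:interpolation} by using Lemma \ref{le:theta_comparison} to translate to the archimedean side, where the complex Waldspurger formula of \cite{YZZ13} takes over. Uniqueness is immediate: $\sD(A, MF_\p^\lt)$ is by construction the quotient killing distributions that vanish on $\Xi(A,\dC_p)$, while $\bigcup_{k \geq 1} \Xi(A,K)_k$ is Zariski dense in the rigid-analytic character space underlying $\sD(A, MF_\p^\lt)$.

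For existence, I would first reduce to constructing, for each totally definite incoherent quaternion algebra $\dB$ by which $A$ is parameterized and each $E$-embedding of $\dB$, the component of $\sL(A)$ after pulling back along the quotient map $\sD(A,\dC_p) \to \sD(\pi)$ with $\pi = \Pi_A \otimes_M \dC_p$; the finitely many pieces glue because $\Xi(A,\dC_p)$ partitions according to $\dB_\chi$. Having fixed such a $\dB$, I pick a modular parametrization $f \colon X_U^* \to A$ and a differential $\omega \in \rH^0(A,\Omega_A^1)$, producing $f^*\omega \in \rH^0(X(m), \Omega^1_{X(m)}) \otimes_F \dC_p$ for sufficiently large $m$. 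Restriction along the transition isomorphism \eqref{eq:infinity} to the ordinary locus yields a weight-one convergent modular form $(f^*\omega)_\ord \in \sM^1_\flat(m, \dC_p)$ as in \eqref{eq:restriction_ordinary}; choosing the test vector to be new at $\p$ ensures it is stable and $n$-admissible for an appropriate $n$. Applying Theorem \ref{th:family} produces the universal convergent modular form $\bM((f^*\omega)_\ord) \in \sM^0(\infty,\dC_p) \wtimes_{F_\p} D(O_\p^\anti, F_\p)$, and similarly for a dual test vector arising from $A^\vee$.

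To build $\sL(A)$ from these, I pair the two universal convergent modular forms against the CM cycles $Y^+(\infty)$ and $Y^-(\infty)$ respectively, integrating against $\chi^{\pm 1}$-equivariant sections using the $p$-adic pairing determined by $\bc$ and the global Lubin--Tate differential $\omega_\nu$. This yields an element of $\sD(\omega_A,\dC_p)$ with values in $\Lie A^+ \otimes_{F^M} \Lie A^-$. For $\chi \in \Xi(A,K)_k$ with $k \geq 1$, Theorem \ref{th:family}(2) shows that the evaluation specializes to the integral of $\Theta_\ord^{k-1}(f^*\omega)_\ord$ against $\chi^{\pm 1}$ over $Y^\pm(\infty)$; by Lemma \ref{le:theta_comparison}, under any $\iota \colon \dC_p \xrightarrow{\sim} \dC$ this coincides with the integral of $\Theta_\iota^{k-1}(f^*\omega)$ against $\chi^{(\iota)}$ over $Y^\pm_\iota$, which is precisely the integrand in the complex toric period $(\;,\;)^{(\iota)}_{A,\chi}$. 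Combining with the complex Waldspurger formula of \cite{YZZ13}, one obtains the central $L$-value $L(1/2,\rho_A^{(\iota)},\chi^{(\iota)})$ together with the archimedean factor $\bP_\iota(\chi)$ and the adjoint $L$-value, matching \eqref{eq:interpolation} up to the local factor at $\p$.

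The principal obstacle is the matching of local constants at $\p$. Producing the factor $\epsilon(1/2,\psi,\rho_{A,\p}\otimes\check\chi_{\fP^c})/L(1/2,\rho_{A,\p}\otimes\check\chi_{\fP^c})^2$ requires explicit Fourier analysis on the Lubin--Tate group (built on \Sec\ref{ss:fourier_theory} and Lemma \ref{le:heartsuit_local}) combined with a careful comparison with the local Kirillov model of $\pi_\p$ normalized by $\psi$; the admissibility depth $n$ of the test vector enters directly through Theorem \ref{th:family}(4). Once this local identity is established, independence of the choice of test vector $(f,\omega)$ is enforced by the canonical pairing $(\;,\;)_A$ on $\Pi_A \times \Pi_{A^\vee}$, and the resulting distribution descends from $\sD(\omega_A,\dC_p)$ to $\sD(A,\dC_p)$, and in fact to $\sD(A,MF_\p^\lt)$, because \eqref{eq:interpolation} determines it on the Zariski dense set $\Xi(A,\dC_p)$ and the right-hand side is manifestly defined over $MF_\p^\lt$.
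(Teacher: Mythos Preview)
Your overall architecture is right and matches the paper: build universal torus periods $\sP^\pm_\un(f_\pm)$ by restricting the global Mellin transform $\bM((f_\pm^*\omega_\pm)_\ord)$ to the CM locus $\fY^\pm(\infty)$ and integrating against the universal character (this is exactly Definition \ref{de:character_universal}), then use Lemma \ref{le:theta_comparison} and Lemma \ref{le:shimura_maass} to identify the specialization at $\chi\in\Xi(A,K)_k$ with a classical toric period, to which the Waldspurger formula applies.

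The genuine gap is in how you pass from $\sP^+_\un(f_+)\cdot\sP^-_\un(f_-)$ to $\sL(A)$. The product $\sP^+_\un(f_+)\sP^-_\un(f_-)$ is \emph{not} independent of the test vectors: its value at $\chi$ is, up to the archimedean constants, $L(1/2,\rho_A^{(\iota)},\chi^{(\iota)})\cdot\alpha^\natural(f_+,f_-;\chi^{(\iota)})$, and the regularized matrix coefficient integral $\alpha^\natural$ genuinely varies with $(f_+,f_-)$. Your sentence ``independence of the choice of test vector $(f,\omega)$ is enforced by the canonical pairing $(\;,\;)_A$'' does not hold as stated. What the paper does is construct a second distribution $\sQ(f_+,f_-)\in\sD(\omega_A,K)$ (Proposition \ref{pr:matrix_integral}, built place by place in Lemmas \ref{le:matrix_tame} and \ref{le:matrix_wild}) interpolating $\tfrac{L(1/2,\rho_{A,\p}\otimes\check\chi_{\fP^c})^2}{\epsilon(1/2,\psi,\rho_{A,\p}\otimes\check\chi_{\fP^c})}\cdot\alpha^\natural(f_+,f_-;\chi^{(\iota)})$, and then \emph{define} $\sL(A)$ as the ratio $\sP^+_\un(f_+)\sP^-_\un(f_-)/\sQ(f_+,f_-)$. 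Corollary \ref{co:ratio} shows this ratio is independent of $(f_+,f_-)$, and the dichotomy in Lemma \ref{le:dichotomy} (Saito--Tunnell) is what lets the various $\sQ$'s generate the unit ideal in $\sD(A,K,V^\p)$ so that the ratio is actually a well-defined element of $\sD(A,K)$ rather than a meromorphic object.

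This also explains your ``principal obstacle''. In the raw product $\sP^+_\un\sP^-_\un$ there is \emph{no} factor $\epsilon/L^2$ at $\p$; that factor appears precisely because you divide by $\sQ$, whose interpolation property carries the inverse factor $L^2/\epsilon$ by design. So the local computation at $\p$ is not a Fourier-analytic miracle on the Lubin--Tate side but an explicit zeta-integral calculation in the Kirillov model (Lemma \ref{le:matrix_wild}), using that an $n$-admissible stable vector has Kirillov function supported on $(1+\p^n)^\times$ (Remark \ref{re:kirillov}); incidentally, ``new at $\p$'' is not the right condition for stability/admissibility. Finally, the descent from $\sD(\omega_A,\dC_p)$ to $\sD(A,MF_\p^\lt)$ is not automatic from density: you need Lemma \ref{le:dichotomy} to identify $\sD(A,K)$ with the quotient by $\sI_+$, and then a separate rationality argument (the lemma following Definition \ref{de:l_function}) using formula \eqref{eq:period_ordinary} to land in $MF_\p^\lt$.
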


\begin{lem}\label{le:l_function}
Theorem \ref{th:l_function_abelian} implies Theorem
\ref{th:l_function_maass}.
\end{lem}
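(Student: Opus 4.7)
The plan is to deduce Theorem \ref{th:l_function_maass} from Theorem \ref{th:l_function_abelian} by (i) applying the quotient map $\varsigma\colon\sD(A,\dC_p)\to\sD(\pi)$ of Definition \ref{de:character_space}(7), and (ii) contracting the $\Lie A^+\otimes_{F^M}\Lie A^-$ factor against an element of $\rH^0(A^+,\Omega^1)\otimes\rH^0(A^-,\Omega^1)\otimes\dC_p$ chosen so that the chosen $p$-adic Petersson inner product on $\pi$ matches the canonical pairing on $\Pi_A$. Concretely, identify $\pi^\pm$ with the $\dC_p$-span of $f^*\log_\omega^i$ for $(f,\omega)\in\Pi^\pm\times\rH^0(A^\pm,\Omega^1_{A^\pm})$. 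Since $\log_\omega^i$ is $i$-linear in $\omega$ and $M$-linear in $f$, and $(\;,\;)_A$ is the unique (up to scalar) $\dB^{\infty\times}$-invariant $M$-valued pairing on $\Pi^+\times\Pi^-$, there is a unique bilinear form $q$ on $\rH^0(A^+,\Omega^1)\otimes\rH^0(A^-,\Omega^1)\otimes\dC_p$, namely an element $q\in\Lie A^+\otimes_{F^M}\Lie A^-\otimes_{F^M,i}\dC_p$, such that
\[(f_+^*\log_{\omega_+}^i,\,f_-^*\log_{\omega_-}^i)_\pi=\langle\omega_+\otimes\omega_-,q\rangle\cdot i((f_+,f_-)_A).\]
I would then define $\sL(\pi)=\varsigma\bigl(\langle q,\sL(A)\rangle\bigr)\in\sD(\pi)$, where the inner pairing contracts $\Lie A^+\otimes\Lie A^-$ against $q$.

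Next I would verify the interpolation formula on each $\chi\in\Xi(\pi)_k$ with $k\geq1$. Four routine matches reduce the formula in Theorem \ref{th:l_function_abelian} to that in Theorem \ref{th:l_function_maass}. First, since $\pi^{(\iota)}$ is the Jacquet--Langlands transfer of the automorphic representation attached to $A$, and $\rho_A^{(\iota)}$ encodes the same local data at every finite place by local-global compatibility, we have $L(s,\rho_A^{(\iota)},\chi^{(\iota)})=L(s,\pi^{(\iota)},\chi^{(\iota)})$ and $L(1,\rho_A^{(\iota)},\Ad)=L(1,\pi^{(\iota)},\Ad)$. Second, unpacking the two definitions of the $\iota$-avatar at $\fp$ (in \Sec\ref{ss:maass_functions} and in \ref{de:character_space}(3)) using $t_\bullet=t_{\fP^c}$, $t_\circ=t_\fP$ shows $\chi_{\fP^c}^{(\iota)}=\iota\check\chi_{\fP^c}$ as characters of $F_\p^\times$, so the local $L$- and $\epsilon$-factors at $\fp$ match after applying $\iota$. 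Third, the Zariski density of $\bigcup_{k\geq 1}\Xi(\pi)_k$ inside the rigid analytic space whose coordinate ring is $\sD(\pi)$ yields the uniqueness of $\sL(\pi)$.

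The key step is matching the archimedean period ratios. The archimedean integrals appearing in the definition of $(\;,\;)_{\pi,\chi}^{(\iota)}$ and of $(\;,\;)_{A,\chi}^{(\iota)}$ are literally the same, since $f_\pm^*\omega_\pm$ is the $1$-form associated via $\phi_\iota$ to $\phi_\pm=f_\pm^*\log_{\omega_\pm}^i$. Therefore, for $\phi_\pm=f_\pm^*\log_{\omega_\pm}^i$,
\[(\phi_+\otimes\varphi_+,\phi_-\otimes\varphi_-)_{\pi,\chi}^{(\iota)}=\iota\bigl\langle\omega_+\otimes\omega_-,(f_+\otimes\varphi_+,f_-\otimes\varphi_-)_{A,\chi}^{(\iota)}\bigr\rangle.\]
Combining with the defining identity $(\;,\;)_{A,\chi}^{(\iota)}=\bP_\iota(\chi)\cdot\iota(\;,\;)_A\otimes\iota(\;,\;)_\chi$ and with the identity defining $q$ above, one obtains
\[(\;,\;)_{\pi,\chi}^{(\iota)}=\bP_\iota(\chi)\cdot\iota\langle\omega_+\otimes\omega_-,q\rangle^{-1}\cdot\iota\bigl[\langle\omega_+\otimes\omega_-,q\rangle\cdot(f_+,f_-)_A\bigr]\otimes\iota(\varphi_+,\varphi_-)_\chi,\]
and hence $\Omega_\iota(\chi)=\iota\langle q,-\rangle\bigl(\bP_\iota(\chi)\bigr)$ when evaluated against $\omega_+\otimes\omega_-$. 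Consequently $\iota\sL(\pi)(\chi)=\iota\langle q,\sL(A)(\chi)\rangle$ produces exactly the formula of Theorem \ref{th:l_function_maass}.

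The main obstacle is the bookkeeping in this last step: one must track carefully the $F^M$-linear structures, the Rosati identification $M_{A^\vee}\simeq M_A$, and verify that the contraction $\langle q,-\rangle$ commutes with the quotient $\varsigma$ and with evaluation at $\chi$. Once this is established, Theorem \ref{th:l_function_maass} follows from Theorem \ref{th:l_function_abelian}, and the dependence of $\sL(\pi)$ on the choices of $(\;,\;)_\pi$, $\bc$, and $\psi$ listed in the remark after Theorem \ref{th:l_function_maass} is immediate from the corresponding dependencies of $\sL(A)$ and of $q$.
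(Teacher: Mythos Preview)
Your proposal is correct and takes essentially the same approach as the paper: contract the $\Lie A^+\otimes_{F^M}\Lie A^-$ factor of $\sL(A)$ and apply the quotient map $\varsigma$. The paper streamlines matters by first noting that it suffices to handle a single choice of $(\;,\;)_\pi$, then picking basis elements $\omega_\pm$ of the rank-$1$ free $F^M$-modules $\Lie A^\pm$ and setting $(f_+^*\log_{\omega_+},f_-^*\log_{\omega_-})_\pi=(f_+,f_-)_A$, which makes your $q$ simply $\omega_+\otimes\omega_-$ and dissolves the bookkeeping concern you flag at the end.
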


\begin{proof}
Apparently, we only need to prove Theorem \ref{th:l_function_maass} for one
$p$-adic Petersson inner product $(\;,\;)_\pi$. Choose a basis element
$\omega_\pm$ of the rank-$1$ free $F^M$-module $\Lie A^\pm$. They together
defines a pairing $(\;,\;)_\pi$ such that
\[(f_+^*\log_{\omega^+},f_-^*\log_{\omega^-})_\pi=(f_+,f_-)_A\]
for every $f_\pm\in\Pi^\pm$. Define $\sL(\pi)$ to be
$\langle\omega_+\otimes\omega_-,\varsigma\sL(A)\rangle$, where $\varsigma$ is
introduced in \ref{de:character_space} (7). Then by Lemma
\ref{le:shimura_maass}, $\sL(\pi)$ satisfies the requirement in Theorem
\ref{th:l_function_maass} for the above $p$-adic Petersson inner product.
\end{proof}

\subsection{Heegner cycles and $p$-adic Waldspurger formula}
\label{ss:heegner_waldspurger}

Let $K$ be a complete field containing $M$. Consider an element
$\chi\in\Xi(A,K)_0$, regarded as a locally constant character of
$\Gal(E^\ab/E)$ via global class field theory. Take $\dB=\dB_\chi$, and
choose an $E$-embedding. We choose a CM point $P^+\in Y^+(E^\ab)=Y^+(\dC_p)$
and put $P^-=\bc P^+$. For each $f_\pm\in\Pi^\pm$, we have a Heegner cycle
$P^\pm_\chi(f_\pm)$ on $A^\pm$ defined by the formula
\begin{align}\label{eq:heegner_cycle}
P^\pm_\chi(f_\pm)=\int_{\Gal(E^\ab/E)}f_\pm(\tau P^\pm)\otimes_M\chi(\tau)^{\pm1}\rrd\tau,
\end{align}
where $\rd\tau$ is the Haar measure on $\Gal(E^\ab/E)$ of total volume $1$.

Suppose $K$ contains $MF_\p^\ab$. We have a $K$-linear map
\[\log_{A^\pm}\colon A^\pm(K)_{\dQ}\otimes_M K\to\Lie
A^\pm\otimes_{F^M}K.\] As a functional on $\Pi^+\times\Pi^-$, the product
$\log_{A^+}P^+_\chi(f_+)\cdot\log_{A^-}P^-_\chi(f_-)$ defines an element in
the following one dimensional $K$-vector space
\[\Hom_{\dA^{\infty\times}_E}(\Pi^+\otimes\chi,K)
\otimes_K\Hom_{\dA^{\infty\times}_E}(\Pi^-\otimes\chi^{-1},K)
\otimes_{F^M}(\Lie A^+\otimes_{F^M}\Lie A^-).\] On the other hand, using
matrix coefficient integral, we construct a basis $\alpha_\chi(\;,\;)$ of the
space $\Hom_{\dA^{\infty\times}_E}(\Pi^+\otimes\chi,K)
\otimes_K\Hom_{\dA^{\infty\times}_E}(\Pi^-\otimes\chi^{-1},K)$. It satisfies
that for every $\iota\colon\dC_p\xrightarrow{\sim}\dC$,
\[\iota\alpha_\chi(f_+,f_-)=\alpha^\natural(f_+,f_-;\chi^{(\iota)}),\]
where the last term is introduced in Definition \ref{de:matrix_integral}.

\begin{theorem}[$p$-adic Waldspurger formula]\label{th:waldspurger_abelian}
Let the notation be as above. For $\chi\in\Xi(A,K)_0$, we have
\[\log_{A^+}P^+_\chi(f_+)\cdot\log_{A^-}P^-_\chi(f_-)
=\sL(A)(\chi)\cdot\frac{L(1/2,\rho_{A,\p}\otimes\check\chi_{\fP^c})^2}
{\epsilon(1/2,\psi,\rho_{A,\p}\otimes\check\chi_{\fP^c})}\cdot
\alpha_\chi(f_+,f_-).\]
\end{theorem}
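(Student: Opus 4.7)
The plan is to deduce Theorem \ref{th:waldspurger_abelian} from Theorem \ref{th:l_function_abelian} by analytic continuation along the rigid-analytic character curve: both sides should be realized as values of a single family built from the global Mellin transform of Theorem \ref{th:family}, so that the identity at weight $0$ emerges from Theorem \ref{th:family}(3), which inverts the Atkin--Serre operator.

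I would begin by unpacking the construction of $\sL(A)$. Given modular parametrizations $f_\pm\colon X\to A^\pm$ and invariant differentials $\omega_\pm\in\rH^0(A^\pm,\Omega^1_{A^\pm})$, the pullback $f_\pm^*\omega_\pm$, restricted to the ordinary locus via \eqref{eq:restriction_ordinary} and the transition isomorphism $\Upsilon_\pm$ of \eqref{eq:infinity}, defines a stable convergent modular form $\ff_\pm\in\sM_\flat^1(m,K)^\heartsuit$. The $p$-adic $L$-function $\sL(A)$ should be constructed so that $\sL(A)(\chi)$ is computed, up to Petersson-type normalizing factors, by a CM-period integral of $\bM(\ff_+)(\chi)$ against $\bc^*\bM(\ff_-)(\chi)$ on $Y^\pm$. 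For $\chi\in\Xi(A,K)_k$ with $k\geq1$, Theorem \ref{th:family} gives $\bM(\ff_\pm)(\chi)=\Theta_\ord^{k-1}\ff_\pm$; Lemma \ref{le:theta_comparison} matches this with the archimedean $\Theta_\iota^{k-1}f_\pm^*\omega_\pm$, and combined with Lemma \ref{le:shimura_maass} and the classical Waldspurger formula this recovers the interpolation formula \eqref{eq:interpolation}.

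The weight-$0$ case hinges on Theorem \ref{th:family}(3), applied with $w=1$: the convergent modular function $\bM(\ff_\pm)(\ang{0})$ satisfies $\Theta_\ord\,\bM(\ff_\pm)(\ang{0})=\ff_\pm$, so it is a \emph{Lubin--Tate antiderivative} of $\ff_\pm$ on the Igusa tower. The central technical identification is then
\[
\bM(\ff_\pm)(\ang{0})\big|_{\fY^\pm(\infty)} \;=\; \Upsilon_\pm^*\bigl(f_\pm^*\log_{\omega_\pm}\bigr)\big|_{\fY^\pm(\infty)}
\]
up to an additive constant fixed by the choice of base CM point. Both sides are primitives of $f_\pm^*\omega_\pm$ along $Y^\pm$: the right-hand side by the defining property of the $p$-adic logarithm along $\omega_\pm$, and the left-hand side by Theorem \ref{th:family}(3) combined with Lemma \ref{le:theta} and the unit-root decomposition of Lemma \ref{le:unit_root}. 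A local computation on each formal neighborhood $\fX(\infty)_{/x}$ at an ordinary CM point, using the Serre--Tate identification $c_{/x}$ of Lemma \ref{le:deformation} together with the Lubin--Tate action of Proposition \ref{pr:lt_action}, should confirm that both primitives agree, each being characterized by vanishing at the origin of the formal group structure on $\fX(\infty)_{/x}\simeq\LT$.

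Granting this, integrating against $\chi$ over $Y^\pm(\dC_p)$ converts the CM-period expression for $\sL(A)(\chi)$ into $\log_{A^+}P^+_\chi(f_+)\cdot\log_{A^-}P^-_\chi(f_-)$ via the definition \eqref{eq:heegner_cycle}. The extra local factor $L(1/2,\rho_{A,\p}\otimes\check\chi_{\fP^c})^2/\epsilon(1/2,\psi,\rho_{A,\p}\otimes\check\chi_{\fP^c})$ together with $\alpha_\chi(f_+,f_-)$ accounts for what is no longer cancelled at $\p$: in \eqref{eq:interpolation} this ratio is inverted because, at positive weight, the local zeta integral at $\p$ contributes it, while at weight $0$ the local integral degenerates and the factor reappears, with the remaining normalization absorbed into $\alpha_\chi$ by the standard Waldspurger regularization (see Definition \ref{de:matrix_integral}). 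The main obstacle is the displayed identification: it compares two distinct $p$-adic integration procedures -- Bourbaki's antiderivative on the abelian variety and the inverse Atkin--Serre operator on the Igusa tower -- and verifying it requires careful Serre--Tate/Lubin--Tate bookkeeping, exploiting in particular the uniqueness statement of Proposition \ref{pr:lt_action}.
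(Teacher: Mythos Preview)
Your overall strategy matches the paper's: the formula at weight~$0$ is obtained by evaluating the same universal construction that defines $\sL(A)$, and the heart of the matter is indeed the identification of $\bM((f_\pm^*\omega_\pm)_\ord)$ at weight~$0$ with $f_\pm^*\log_{\omega_\pm}$ on the ordinary locus, using Theorem~\ref{th:family}(3) to see both as primitives of $f_\pm^*\omega_\pm$. The paper packages this step as Proposition~\ref{pr:waldspurger_abelian}, and the local factor at $\p$ is handled by Proposition~\ref{pr:matrix_integral}, which builds a distribution $\sQ(f_+,f_-)$ interpolating $\frac{L(1/2,\rho_{A,\p}\otimes\check\chi_{\fP^c})^2}{\epsilon(1/2,\psi,\rho_{A,\p}\otimes\check\chi_{\fP^c})}\alpha^\natural(f_+,f_-;\chi^{(\iota)})$; since $\sL(A)$ is \emph{defined} as the ratio $\sP^+_\un\sP^-_\un/\sQ$, the weight-$0$ formula drops out once Proposition~\ref{pr:waldspurger_abelian} is known.

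The gap is in your proposed justification of the key identification. You claim both primitives are ``characterized by vanishing at the origin of the formal group structure on $\fX(\infty)_{/x}\simeq\LT$''. This is not correct for $f_\pm^*\log_{\omega_\pm}$: the origin of the Serre--Tate disk is the canonical lift of the reduction $x$, and there is no reason for the $p$-adic logarithm on $A^\pm$ to vanish at its image. A purely disk-by-disk argument only shows the two primitives differ by a function constant on each residue disk, which is not enough. The paper's argument instead uses two global inputs: first, Proposition~\ref{pr:coleman} (in the appendix) shows that $f_\pm^*\log_{\omega_\pm}$ restricted to the ordinary affinoid is a \emph{Coleman integral} of $f_\pm^*\omega_\pm$, while $\bM((f_\pm^*\omega_\pm)_\ord)(\chi|_{O_{E_{\fP^c}}^\times})$, being an honest element of $\sM^0_\flat(m,K)$, is trivially one as well; second, both lie in the stable subspace $\sM^0_\flat(m,K)^\heartsuit$ (the first because $f_\pm$ is a stable vector, via Proposition~\ref{pr:lt_action}(3)), and a locally constant function in the stable subspace must vanish. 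This is what pins down the primitive uniquely. You should replace your Serre--Tate origin argument with this Coleman-integral-plus-stability step.
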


\begin{lem}\label{le:waldspurger}
Theorem \ref{th:waldspurger_abelian} implies Theorem
\ref{th:waldspurger_maass}.
\end{lem}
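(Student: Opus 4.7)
The plan mirrors that of Lemma \ref{le:l_function}: fix the simple abelian variety $A$ over $F$ of $\GL(2)$-type attached to $\pi$, and choose basis elements $\omega_\pm$ of the rank-$1$ free $F^M$-modules $\Lie A^\pm$. These determine a $p$-adic Petersson product $(\;,\;)_\pi$ by $(f_+^*\log_{\omega^+},f_-^*\log_{\omega^-})_\pi=(f_+,f_-)_A$, and then $\sL(\pi)=\langle\omega_+\otimes\omega_-,\varsigma\sL(A)\rangle$. It suffices to verify Theorem \ref{th:waldspurger_maass} for these particular data and for classical Maass functions $\phi_\pm=f_\pm^*\log_{\omega_\pm}\in\pi^\pm$, since both sides are bilinear in $\phi_\pm$ over the natural spaces.

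The crux is a factorization of the torus periods into a ``$\varphi$-part'' and a ``$\phi$-part''. Choose a base point $P^+\in Y^+(\dC_p)=Y^+(E^\ab)$ and set $P^-=\bc P^+$. Under global class field theory, $Y^\pm(\dC_p)$ is a principal homogeneous space under $\Gal(E^\ab/E)$, and the probability Haar measures match. Using the transformation law $t^*\varphi_\pm=\chi(t)^{\pm1}\varphi_\pm$ together with the interchange of integration and the $M$-linear logarithm, I would compute
\[
\sP_{Y^\pm}(\phi_\pm,\varphi_\pm)=\varphi_\pm(P^\pm)\cdot\langle\omega_\pm,\log_{A^\pm}P^\pm_\chi(f_\pm)\rangle,
\]
where $P^\pm_\chi(f_\pm)$ is the Heegner cycle \eqref{eq:heegner_cycle}. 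For characters of weight $0$, the definition of $(\;,\;)_\chi$ (with $k=0$, so the $\omega_{\psi\pm}^{k}$ factors trivialize) evaluated at $P^+$ gives $(\varphi_+,\varphi_-)_\chi=\varphi_+(P^+)\cdot\varphi_-(P^-)$. Multiplying the two period identities yields
\[
\sP_{Y^+}(\phi_+,\varphi_+)\sP_{Y^-}(\phi_-,\varphi_-)=(\varphi_+,\varphi_-)_\chi\cdot\langle\omega_+\otimes\omega_-,\log_{A^+}P^+_\chi(f_+)\cdot\log_{A^-}P^-_\chi(f_-)\rangle.
\]

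Next I would factor the local period. Because $\chi$ is locally constant, $(t^*\varphi_+,\varphi_-)_\chi=\chi(t)(\varphi_+,\varphi_-)_\chi$, so the regularized matrix coefficient integral defining $\alpha^\natural(\phi_+,\phi_-;\varphi_+,\varphi_-)$ in Definition \ref{de:matrix_integral} factors as $(\varphi_+,\varphi_-)_\chi\cdot\alpha_\pi^\natural(\phi_+,\phi_-;\chi)$. Using $(\phi_+,\phi_-)_\pi=(f_+,f_-)_A$ and the characterizing property $\iota\alpha_\chi(f_+,f_-)=\alpha^\natural(f_+,f_-;\chi^{(\iota)})$ of $\alpha_\chi$, this second factor matches $\alpha_\chi(f_+,f_-)$. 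Finally, for $k=0$ one has $\check\chi_{\fP^c}=\chi_{\fP^c}$, so the local $L$- and $\epsilon$-factors in Theorems \ref{th:waldspurger_maass} and \ref{th:waldspurger_abelian} coincide under the identification $\pi_\p=\rho_{A,\p}$. Substituting Theorem \ref{th:waldspurger_abelian} for $\log_{A^+}P^+_\chi(f_+)\cdot\log_{A^-}P^-_\chi(f_-)$ and pairing with $\omega_+\otimes\omega_-$ converts $\sL(A)(\chi)$ to $\sL(\pi)(\chi)$, completing the identity.

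The main obstacle is the careful bookkeeping in step five: the factorization of $\alpha^\natural$ through the regularization procedure of Definition \ref{de:matrix_integral} has to be checked to commute with the splitting into $\pi$-part and $\chi$-part, and one must confirm that the choice of abstract conjugation $\bc$ used to define both $(\;,\;)_\chi$ and the Heegner base points $P^\pm$ is consistent on the two sides. Matching Haar measures on $Y^\pm(\dC_p)$ with those on $\Gal(E^\ab/E)$, and pinning down the sign conventions of class field theory (so that the $\chi(\tau)$ in \eqref{eq:heegner_cycle} agrees with the $\chi(t)$ appearing in $t^*\varphi_\pm=\chi(t)^{\pm1}\varphi_\pm$), is routine but requires attention.
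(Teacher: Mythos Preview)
Your proposal is correct and follows the same approach as the paper. The paper's own proof is extremely terse: it simply fixes the Petersson product as in Lemma \ref{le:l_function}, normalizes $\varphi_\pm$ so that $\varphi_\pm(P^\pm)=1$, and then declares that the identity ``follows by definition''; you have unpacked exactly what that means, tracking the factor $\varphi_\pm(P^\pm)$ instead of normalizing it away, which is equivalent since both sides are linear in $\varphi_\pm$.
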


\begin{proof}
We only need to prove Theorem \ref{th:waldspurger_maass} for one
$(\;,\;)_\pi$ and one nonzero element $\varphi_\pm\in\sigma_\chi^\pm$. We use
the $p$-adic Petersson inner product in the proof of Lemma
\ref{le:l_function}, and choose $\varphi_\pm$ such that
$\varphi_\pm(P^\pm)=1$. Then the lemma follows by definition.
\end{proof}

\section{Construction of $p$-adic $L$-function}

This chapter is dedicated to the proofs of Theorems
\ref{th:l_function_abelian} and \ref{th:waldspurger_abelian}. In
\Sec\ref{ss:distribution_matrix}, we construct the distribution interpolating
matrix coefficient integrals appearing in the classical Waldspurger formula.
We construct the universal torus period in \Sec\ref{ss:universal_torus},
which is a crucial construction toward the $p$-adic $L$-function. In
\Sec\ref{ss:interpolation_universal}, we study the relation between universal
torus periods and classical torus periods, based on which we accomplish the
proofs of our main theorems in \Sec\ref{ss:proof_theorems}.

\subsection{Distribution of matrix coefficient integrals}
\label{ss:distribution_matrix}

Let $K/MF_\p$ be a complete field extension.

\begin{definition}\label{de:character_ramification}
For $w\in\dZ$, $n\in\dN$, and a locally constant character $\omega\colon
F^\times\backslash\dA^{\infty\times}\to M^\times$, we say a $K$-valued
character $\chi\colon E^\times\backslash\dA^{\infty\times}_E\to K^\times$ of
weight $w$ is \emph{of central type $\omega$ and depth $n$} if
\begin{itemize}
    \item $\omega\cdot \chi\res_{\dA^{\infty\times}}=1$, and

    \item $\chi_{\fP^c}(t)=t^{-w}$ for all $t\in(1+\p^n)^\times$.
\end{itemize}
We denote by $\Xi(\omega,K)^n_w$ the set of all $K$-valued characters of
weight $w$, central type $\omega$ and depth $n$. Moreover, put
$\Xi(\omega,K)^n=\bigcup_\dZ\Xi(\omega,K)^n_w$. We have a natural pairing
$\sD(\omega,K)\times\Xi(\omega,K)^n\to K$.\index{$p$-adic character of weight
$w$!of central type $\omega$ and depth $n$, $\Xi(\omega,K)^n_w$}
\end{definition}

Choose a $\dB$ by which $A$ can be parameterized, together with an
$E$-embedding. Recall that we put $\Pi^\pm=\Pi(\dB)_{A^\pm}$.

\begin{definition}[Stable vector]\label{de:heartsuit_representation}
An element $f_\pm$ in $\Pi^\pm\otimes_MK$ (resp.\ $\Pi^\pm_\p\otimes_MK$) is
a \emph{stable vector} if
\begin{enumerate}
  \item $f_\pm$ is fixed by $\rN^\pm(O_\p)$;
  \item $f_\pm$ satisfies the equation
      \begin{align*}
      \sum_{\rN^\pm(\p^{-1})/\rN^\pm(O_\p)}\Pi^\pm_\p(g)f_\pm=0.
      \end{align*}
\end{enumerate}
We denote by $(\Pi^\pm)^\heartsuit_K$ (resp.\ $(\Pi^\pm_\p)^\heartsuit_K$)
the subset of $\Pi^\pm\otimes_MK$ (resp.\ $\Pi^\pm_\p\otimes_MK$) of stable
vectors.\index{stable vector, $(\Pi^\pm)^\heartsuit_K$}

For $n\in\dN$, we say a stable vector $f_\pm\in(\Pi^\pm)^\heartsuit_K$ (or
$(\Pi^\pm_\p)^\heartsuit_K$) is \emph{$n$-admissible} if
\[\Pi^\pm_\p(\rn^\pm(x))f_\pm=\psi^\pm(x)f_\pm\]
for every $x\in\p^{-n}/O_\p$, where $\rn^\pm(x)$ is same as in Proposition
\ref{pr:lt_action}.\index{stable vector, $(\Pi^\pm)^\heartsuit_K$!admissible}
\end{definition}

\begin{remark}\label{re:kirillov}
If we realize $\Pi_\p^\pm$ in the Kirillov model with respect to the pair
$(\rN^+,\psi^\pm)$, then $f_{\pm\p}$ belongs to $(\Pi^\pm_\p)^\heartsuit_K$
if and only if $f_{+\p}$ (resp.\ $\Pi_\p^-(\rJ)f_{-\p}$) is supported on
$O_\p^\times$, and is $n$-admissible if and only if $f_{+\p}$ (resp.\
$\Pi_\p^-(\rJ)f_{-\p}$) is supported on $(1+\p^n)^\times$.
\end{remark}

We recall the definition of the classical (normalized) matrix coefficient
integral. Suppose $K$ is contained in $\dC_p$. We take a character
$\chi\in\Xi(A,K)$.

\begin{definition}[Regularized matrix coefficient integral]\label{de:matrix_integral}
Let $\iota\colon\dC_p\xrightarrow{\sim}\dC$ be an isomorphism. We recall the
regularization of the following matrix coefficient integral
\[\alpha^\natural(f_+,f_-;\chi^{(\iota)})\text{ ``$=$'' }
\int_{\dA^{\infty\times}\backslash\dA_E^{\infty\times}}\iota(\Pi(t)f_+,f_-)_A\cdot\chi^{(\iota)}(t)\rrd
t.\] To do this, we take any decomposition
$\iota(\;,\;)_A=\prod_{v<\infty}(\;,\;)_{\iota,v}$ where
$(\;,\;)_{\iota,v}\colon\Pi^+_v\times\Pi^-_v\to\dC$ is a
$\dB^\times_v$-invariant bilinear pairing. For
$f_\pm=\otimes_{v<\infty}f_{\pm v}$ such that $(f_{+v},f_{-v})_{\iota,v}=1$
for all but finitely many $v$, we put
\begin{align*}
\alpha(f_{+v},f_{-v};\chi_v^{(\iota)})&=
\int_{F_v^\times\backslash E_v^\times}(\Pi_v(t)f_{+v},f_{-v})_{\iota,v}\chi_v^{(\iota)}(t)\rrd t;\\
\alpha^\natural(f_{+v},f_{-v};\chi^{(\iota)}_v)&=
\(\frac{\zeta_{F_v}(2)L(1/2,\rho_{A,v}^{(\iota)},\chi^{(\iota)}_v)}{L(1,\eta_v)L(1,\rho_{A,v}^{(\iota)},\Ad)}\)^{-1}
\alpha(f_{+v},f_{-v};\chi_v^{(\iota)}).
\end{align*}
Here, $\rd t$ is the measure on $F_v^\times\backslash E_v^\times$ given
determined in \Sec\ref{ss:notation_conventions}, and $\rho_{A,v}^{(\iota)}$
is the corresponding admissible complex representation of $\dB_v^\times$ via
$\iota$. Then we have $\alpha^\natural(f_{+v},f_{-v};\chi^{(\iota)}_v)=1$ for
all but finitely many $v$, and the product
\[\alpha^\natural(f_+,f_-;\chi^{(\iota)})\colonequals\prod_{v<\infty}\alpha^\natural(f_{+v},f_{-v};\chi^{(\iota)}_v),\]
which is well-defined, does not depend on the choice of the decomposition of
$\iota(\;,\;)_A$. We extend the functional
$\alpha^\natural(\;,\;;\chi^{(\iota)})$ to all $f_+,f_-$ by linearity.

The regularization of the matrix coefficient integral for
$\iota\alpha^\natural(\phi_+,\phi_-;\varphi_+,\varphi_-)$ in
\Sec\ref{ss:waldspurger_formula} is defined similarly as above.
\end{definition}

The following proposition is our main result.

\begin{proposition}\label{pr:matrix_integral}
Let $MF_p\subset K\subset\dC_p$ be a complete intermediate field. Let
$f_\pm\in(\Pi^\pm)^\heartsuit_K$ be an $n$-admissible stable vector for some
$n\in\dN$. Then there is a unique element $\sQ(f_+,f_-)\in\sD(\omega_A,K)$
such that for all $K$-valued characters $\chi\in\Xi(\omega_A,K)^n$ of central
type $\omega_A$ and depth $n$, and $\iota\colon\dC_p\xrightarrow{\sim}\dC$,
\begin{align*}
\iota\sQ(f_+,f_-)(\chi)=\iota\(\frac{L(1/2,\rho_{A,\p}\otimes\check\chi_{\fP^c})^2}
{\epsilon(1/2,\psi,\rho_{A,\p}\otimes\check\chi_{\fP^c})}\)\cdot
\alpha^\natural(f_+,f_-;\chi^{(\iota)}).
\end{align*}
\end{proposition}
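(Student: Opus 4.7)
The construction proceeds place-by-place using the Euler factorization $\alpha^\natural = \prod_{v<\infty}\alpha^\natural_v$ of Definition~\ref{de:matrix_integral}, treating the split place $\p$ separately from the remaining finite places. Uniqueness is immediate because, as $w$ ranges over $\dZ$, the depth-$n$ characters of central type $\omega_A$ form a Zariski dense subset of the rigid analytic space underlying $\sD(\omega_A, K)$. For each finite $v \neq \p$, the test vectors $f_{\pm v}$ are fixed under a compact open subgroup of $\dB_v^\times$ independent of $\chi$, and admissibility of $\Pi_v^\pm$ renders $(\Pi_v(t)f_{+v}, f_{-v})_{\iota,v}$ bi-invariant under some compact open subgroup of $E_v^\times$. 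Fixing a decomposition $(\;,\;)_A = \prod_v(\;,\;)_{A,v}$ with values in $M$, each $\alpha^\natural_v$ becomes a locally constant algebraic function of the tame part of $\chi_v$, and their product over $v \neq \p, \infty$ assembles into an $\iota$-independent, locally constant $K$-valued function of the tame part of $\chi$.

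The main content is at $v = \p$. Since $\p$ splits in $E$, write $E_\p^\times = F_\p^\times \times F_\p^\times$ with coordinates $(t_\bullet, t_\circ)$ and realize $\Pi_\p^+$ (resp.\ $\Pi_\p^-$) in the Kirillov model for $\psi^+$ (resp.\ $\psi^-$). By Remark~\ref{re:kirillov}, stability forces $f_{+\p}$ and $\Pi_\p^-(\rJ)f_{-\p}$ to be supported on $O_\p^\times$, and $n$-admissibility further restricts these supports to $(1+\p^n)^\times$. Parameterizing $F_\p^\times \backslash E_\p^\times$ by $t_\bullet$ and using the central-character relation $\chi_\fP\chi_{\fP^c} = \omega_{A,\p}^{-1}$, an explicit Kirillov-model calculation expresses $\alpha(f_{+\p}, f_{-\p}; \chi_\p^{(\iota)})$ as a finite linear combination of values of $\check\chi_{\fP^c}$ on the finite quotient $O_\p^\times / (1+\p^n)^\times$, with coefficients algebraically extracted from $f_{+\p}$ and $f_{-\p}$. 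Multiplying by $L(1/2, \rho_{A,\p} \otimes \check\chi_{\fP^c})^2 / \epsilon(1/2, \psi, \rho_{A,\p} \otimes \check\chi_{\fP^c})$ then cancels, via Tate's local functional equation applied to $\check\chi_{\fP^c}$, the Rankin--Selberg normalization built into $\alpha^\natural_\p$, yielding an $\iota$-independent, $K$-valued finite sum of values of $\check\chi_{\fP^c}$ that defines the $\p$-adic factor of $\sQ(f_+, f_-)$.

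The principal obstacle is the explicit Kirillov-model computation at $\p$ and the verification of the normalization matching. The stability and $n$-admissibility hypotheses are indispensable here: they pin the Kirillov supports inside $(1+\p^n)^\times$, which is precisely what forces the Rankin--Selberg $L$-factor $L(1/2, \rho_{A,\p}^{(\iota)}, \chi_\p^{(\iota)})$---a priori a product of two $\GL(2)$-twist factors involving both $\check\chi_\fP$ and $\check\chi_{\fP^c}$---to combine, modulo the central-character constraint, into the single factor $L(1/2, \rho_{A,\p} \otimes \check\chi_{\fP^c})^2$ appearing on the right-hand side. Once this matching is verified, assembling the tame and $\p$-adic contributions produces the desired element of $\sD(\omega_A, K)$.
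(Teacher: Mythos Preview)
Your overall strategy matches the paper's: factor $\alpha^\natural$ into local pieces, construct a distribution at each finite place, and assemble. Your treatment at $\p$ is essentially the paper's Lemma~\ref{le:matrix_wild}: Kirillov supports in $(1+\p^n)^\times$ combined with the local functional equation reduce $\tfrac{L^2}{\epsilon}\cdot\alpha^\natural_\p$ to a constant in $K$ independent of $\chi$.

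The gap is at the finite places $v\neq\p$. Your reasoning---smoothness of the matrix coefficient makes $\alpha^\natural_v$ ``locally constant'' in $\chi_v$---is valid only when $v$ is non-split in $E$, where $F_v^\times\backslash E_v^\times$ is compact and the restricted matrix coefficient is a finite sum of characters. When $v\neq\p$ is \emph{split}, the quotient $F_v^\times\backslash E_v^\times\simeq F_v^\times$ is non-compact, the unnormalized integral $\alpha_v$ does not converge for all $\chi_v$, and $\alpha^\natural_v$ is a Laurent polynomial in the unramified twist parameter of $\chi_v$, not a locally constant function. That this polynomial is $\iota$-independent, $K$-valued, and genuinely defines an element of $\sD(\omega_v,K)$ is the real content here: the paper's Lemma~\ref{le:matrix_tame} carries this out by passing to the Kirillov model at each split $v$, constructing explicit inverse-$L$-factor distributions $\cL^{-1}_F(\Pi^\pm)$ and normalized zeta distributions $\cZ(f_\pm)$, and handling the non-compactly-supported Kirillov vectors (which appear for principal series and special representations) case by case. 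Your sketch omits this step.
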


The element $\sQ(f_+,f_-)$ is called the \emph{($K$-valued) local period
distribution}.

Before giving the proof, we make a convenient choice of a decomposition of
$(\;,\;)_A$. Realize the representation $\Pi_\p^\pm$ in the Kirillov model as
in Remark \ref{re:kirillov}. We may assume that $f_\pm=\otimes f_{\pm v}$,
with $f_{\pm v}\in\Pi^\pm_v\otimes_MK$, is decomposable and is fixed by some
$V^\p\in\Gamma_E$ sufficiently small. Choose a decomposition
$(\;,\;)_A=\prod_{v<\infty}(\;,\;)_v$ such that
\begin{enumerate}
  \item $(f_{+v},f_{-v})_v=1$ for all but finitely many $v$;
  \item $(f'_{+v},f'_{-v})_v\in K$ for all $f'_{\pm
      v}\in\Pi_v^\pm\otimes_MK$;
  \item for $f'_{\pm\p}\in\Pi_\p^\pm\otimes_MK$ that is compactly
      supported on $F_\p^\times$,
      \[(f'_{+\p},f'_{-\p})_\p=\int_{F_\p^\times}f'_{+\p}(a)f'_{-\p}(a)\rrd
      a,\] where $\rd a$ is the Haar measure on $F_\p^\times$ such that
      the volume of $O_\p^\times$ is $1$.
\end{enumerate}

We need two lemmas for the proof of the proposition, where for simplicity we
write $\omega=\omega_A$. For each finite place $v\neq\p$, let
$\sD(\omega_v,K,V_v^\p)$ be the quotient of $D(E_v^\times/V_v^\p,K)$ by the
closed ideal generated by $\{\omega_v(t)[t]-1\res t\in F_v^\times\}$. Put
\[\sD(\omega_v,K)=\varprojlim_{V_v^\p}\sD(\omega_v,K,V_v^\p),\]
where the limit runs over all compact open subgroups $V_v^\p$ of
$E_v^\times$. Let $\sD(\omega_\p,K)$ be the quotient of $D(E_\p^\times,K)$ by
the closed ideal generated by $\{\omega_\p(t)[t]-1\res t\in F_\p^\times\}$.
We have natural homomorphisms $\sD(\omega_v,K)\to\sD(\omega,K)$ for all
finite places $v$.

\begin{lem}\label{le:matrix_tame}
Let $v\neq\p$ be a finite place of $F$.
\begin{enumerate}
  \item There exists a unique element
      \[\cL^{-1}(\rho_{A,v})\in\sD(\omega_v,MF_\p)\] such that for every
      locally constant character $\chi_v\colon E_v^\times\to K^\times$
      satisfying $\omega_v\cdot\chi_v\res_{F_v^\times}=1$,
      \[\cL^{-1}(\rho_{A,v})(\chi_v)=L(1/2,\rho_{A,v},\chi_v)^{-1}.\]

  \item  For $f_{\pm v}\in\Pi_v^\pm\otimes_MK$, there exists a unique
      element
      \[\sQ(f_{+v},f_{-v})\in\sD(\omega_v,K)\] such that for every locally
      constant character $\chi_v\colon E_v^\times\to K^\times$ satisfying
      $\omega_v\cdot\chi_v\res_{F_v^\times}=1$, and
      $\iota\colon\dC_p\xrightarrow{\sim}\dC$,
      \[\iota\sQ(f_{+v},f_{-v})(\chi_v)=\alpha^\natural(f_{+v},f_{-v};\chi^{(\iota)}_v).\]
\end{enumerate}
\end{lem}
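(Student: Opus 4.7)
At the finite place $v\neq\p$, the group $E_v^\times$ is locally profinite and no $p$-adic analyticity is involved; the algebra $\sD(\omega_v,K)$ is essentially a completion of the group algebra $K[E_v^\times]$, modulo the twisted translation invariance by $F_v^\times$, and the pairing with characters $\chi_v$ satisfying $\omega_v\chi_v\res_{F_v^\times}=1$ is simply the Mellin transform $\mu\mapsto\mu(\chi_v)$. Since characters trivial on a given compact open $V_v^\p$ separate points of $E_v^\times/F_v^\times V_v^\p$, this Mellin transform is injective, and uniqueness in both parts is automatic. Existence reduces to showing that the two candidate functions of $\chi_v$ in the statement are Mellin transforms of distributions; in both cases we construct the distributions level-by-level on the cofinal system $\{V_v^\p\}\subset\Gamma_E$ and check compatibility.

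For part (1), the local Langlands parameter of $\rho_{A,v}$ determines $L(1/2,\rho_{A,v},\chi_v)$ as an explicit rational expression in $\chi_v(\varpi_{E_v})$ when $\chi_v$ is unramified relative to the conductor of $\rho_{A,v}$, and the factor equals $1$ once $\chi_v$ is ramified beyond that conductor. In either regime the inverse $L(1/2,\rho_{A,v},\chi_v)^{-1}$ is a polynomial of degree at most $2$ in the unramified parameter, with coefficients in $MF_\p$. Such a polynomial in character values is, by construction, the Mellin transform of a finite $MF_\p$-linear combination of Dirac distributions $[t]$ at elements $t\in E_v^\times$. Taking $V_v^\p$ fine enough to accommodate all relevant ramification, one assembles these into a compatible system and passes to the inverse limit, producing $\cL^{-1}(\rho_{A,v})\in\sD(\omega_v,MF_\p)$.

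For part (2), write $M(t)=(\Pi_v(t)f_{+v},f_{-v})_v$; the choice of $(\;,\;)_v$ in Proposition \ref{pr:matrix_integral} makes $M$ a locally constant $K$-valued function on $E_v^\times$ satisfying $M(zt)=\omega_v(z)M(t)$ for $z\in F_v^\times$, so $M(t)\chi_v(t)$ descends to $F_v^\times\backslash E_v^\times$. When $v$ is non-split the quotient is compact, so $\alpha(f_{+v},f_{-v};\chi_v^{(\iota)})$ is literally a finite Fourier sum whose inverse Mellin transform is a compactly supported distribution. When $v$ is split, $F_v^\times\backslash E_v^\times\cong F_v^\times$ is non-compact; using the Kirillov model of $\Pi_v^\pm$ with respect to $(\rN^+,\psi^\pm)$ and realizing $f_{\pm v}$ as locally constant functions on $F_v^\times$, one sees that the integral converges absolutely once $\chi_v$ is sufficiently ramified, in which case it manifestly defines a locally constant function of $\chi_v$ at that ramification depth. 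For the remaining characters, one invokes the local Rankin--Selberg identity (cf.\ \cite{Wal85} and \cite{YZZ13}*{Chapter 1}): the ratio $\alpha(f_{+v},f_{-v};\chi_v^{(\iota)})/L(1/2,\rho_{A,v},\chi_v^{(\iota)})$ is holomorphic and polynomial in the character parameter, so multiplying by the $\chi_v$-independent constant $L(1,\eta_v)L(1,\rho_{A,v},\Ad)/\zeta_{F_v}(2)$ yields $\alpha^\natural(f_{+v},f_{-v};\chi_v^{(\iota)})$ as a polynomial function of $\chi_v$, which is the Mellin transform of a distribution $\sQ(f_{+v},f_{-v})\in\sD(\omega_v,K)$. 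The main obstacle is precisely this regularization step at split $v$: the unnormalized integral may diverge on a Zariski closed set of characters, and one must verify that the zeros introduced by dividing by $L(1/2,\rho_{A,v},\chi_v)$ exactly cancel those poles. This is handled via the local functional equation / Jacquet--Langlands local constants, or equivalently by computing $\alpha^\natural$ inductively by depth of $\chi_v$ in the Kirillov model and observing that the output is polynomial in the unramified direction at each level.
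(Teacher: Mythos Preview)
Your outline is correct and follows the same broad route as the paper: uniqueness by density of characters, part (1) via polynomiality of inverse $L$-factors, part (2) split into the compact (non-split $v$) and non-compact (split $v$) cases with the Kirillov model governing the latter. The paper, however, executes the constructions more concretely than your sketch, and in the split case organizes the argument differently.

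For part (1), rather than arguing abstractly that a polynomial in the unramified parameter is a finite combination of Dirac masses, the paper writes down an explicit distribution $\cL^{-1}_{\tilde F}(\mu)(h)=1-\int_{O_{\tilde F}^\times}\mu(\tilde\varpi a)h(\tilde\varpi a)\,\rd a$ for a single character $\mu$, then builds $\cL^{-1}_{\tilde F}(\tilde\Pi)$ by cases on the Bernstein--Zelevinsky type of $\tilde\Pi$ (supercuspidal: $1$; special: one factor; principal series: two factors), and finally obtains $\cL^{-1}(\rho_{A,v})$ by base change to $\GL_2(E)$ (non-split) or by a two-factor product (split). This formula is visibly compatible across levels $V_v^\p$, which your ``assemble into a compatible system'' leaves to the reader.

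For part (2) at split $v$, the paper does \emph{not} treat $\alpha^\natural$ as a whole. It first factors the torus integral through the Kirillov model as a product of two one-variable zeta integrals $Z(\iota f_+,\chi_{\bullet\iota})\cdot Z(\iota f_-,\chi_{\bullet\iota}^{-1})$, and then constructs a distribution $\cZ(f_\pm)\in D_\flat(F^\times,K)$ interpolating each normalized zeta integral separately. That last step is an explicit case analysis on $\dim\Pi^+/\Pi^{+,c}$ (zero, one, or two, with the principal-series case further split into regular and non-regular induction), writing down $\cZ$ by hand for each coset representative. Your proposal asserts directly that $\alpha^\natural$ is polynomial in the character and appeals to ``the local functional equation'' or ``computing inductively by depth''; this is the right target, but the substantive content --- extending the normalized zeta integral from compactly supported Kirillov vectors to all of $\Pi_v^\pm$ as a distribution-valued functional --- is exactly what the paper's case analysis supplies, and is not an immediate corollary of the references you invoke.
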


\begin{proof}
The uniqueness is clear. In the following proof, we suppress $v$ from the
notation and we will use the subscript $\iota$ for all changing of
coefficients of representations via $\iota$.

To prove (1), we first consider the following situation. Let $\tilde F$ be
either $F$ or $E$, and $\tilde\Pi$ be an irreducible admissible
$M$-representation of $\GL_2(\tilde F)$. We claim that there is a (unique)
element $\cL^{-1}_{\tilde F}(\tilde\Pi)\in D_\flat(\tilde F,MF_\p)$, where
$D_\flat(\tilde F,K)=\varprojlim_VD(\tilde F^\times/V,K)$ with $V$ running
over all compact open subgroups of $\tilde F^\times$, such that for every
locally constant character $\chi\colon\tilde F^\times\to K^\times$ and
$\iota\colon\dC_p\xrightarrow{\sim}\dC$,
\[\iota\cL^{-1}_{\tilde F}(\tilde\Pi)(\chi)=L(1/2,\tilde\Pi_\iota\otimes\chi_\iota)^{-1}.\]
In fact, for a locally constant character $\mu\colon\tilde F^\times\to
M^\times$, define $\cL^{-1}_{\tilde F}(\mu)\in D_\flat(\tilde
F^\times,MF_\p)$ by the formula
\[\cL^{-1}_{\tilde F}(\mu)(h)=1-\int_{O_{\tilde F}^\times}\mu(\tilde\varpi a)h(\tilde\varpi a)\rrd a\]
for $h\in\varinjlim_V C(\tilde F^\times/V,MF_\p)$. Here, $\tilde\varpi$ is an
arbitrary uniformizer of $\tilde F$, and $\rd a$ is the Haar measure on
$O_{\tilde F}^\times$ with total volume $1$. Then we have three cases:
\begin{itemize}
  \item If $\tilde\Pi$ is supercuspidal, put $\cL^{-1}_{\tilde
      F}(\tilde\Pi)=1$.

  \item If $\tilde\Pi$ is the unique irreducible subrepresentation of the
      non-normalized parabolic induction of $(\mu,\mu|\;|^{-2})$ for a
      character $\mu\colon \tilde F^\times\to M^\times$, put
      $\cL^{-1}_{\tilde F}(\tilde\Pi)=\cL^{-1}_{\tilde F}(\mu)$.

  \item If $\tilde\Pi$ is the irreducible non-normalized parabolic
      induction of $(\mu^1,\mu^2|\;|^{-1})$ for a pair of characters
      $\mu^i\colon\tilde F^\times\to M^\times$ ($i=1,2$), put
      $\cL^{-1}_{\tilde F}(\tilde\Pi)=\cL^{-1}_{\tilde
      F}(\mu^1)\cdot\cL^{-1}_{\tilde F}(\mu^2)$.
\end{itemize}

Go back to (1). First, assume $E/F$ is non-split. Then we define
$\cL^{-1}(\rho_A)$ to be the image of $\cL^{-1}_E(\Pi_E)$ in
$\sD(\omega,MF_\p)$ where $\Pi_E$ is the base change of $\Pi$ to $\GL_2(E)$,
which depends only on $\rho_A$. Second, assume $E=F_\bullet\times F_\circ$ is
split where $F_\bullet=F_\circ=F$. Then we define $\cL^{-1}(\rho_A)$ to be
the image of $\cL^{-1}_{F_\bullet}(\Pi)\otimes\cL^{-1}_{F_\circ}(\Pi)$ in
$\sD(\omega,MF_\p)$.

Now we consider (2). First, assume $E/F$ is non-split. Then the torus
$F^\times\backslash E^\times$ is compact and hence the matrix coefficient
$\Phi_{f_+,f_-}(g)\colonequals(\Pi^+(g)f_+,f_-)$ is finite under
$E^\times$-translation. We may assume the restriction
$\Phi_{f_+,f_-}|_{E^\times}=\sum_i a_i\chi_i$ is a finite $K$-linear
combination of $K$-valued (locally constant) characters $\chi_i$ of
$E^\times$ such that $\omega\cdot\chi_i\res_{F^\times}=1$. To every locally
constant function $h$ on $E^\times$ satisfying $\omega(t)h(at)=h(a)$ for all
$a\in E^\times$ and $t\in F^\times$, assigning the integral
\[\sum_ia_i\int_{F^\times\backslash E^\times}\chi_i(t)h(t)\rrd t,\]
which is a finite sum, defines an element $\alpha(f_+,f_-)$ in
$\sD(\omega,K)$. Put
\[\sQ(f_+,f_-)=\(\frac{\zeta_F(2)}{L(1,\rho_A,\Ad)L(1,\eta)}\)^{-1}\cL^{-1}(\rho_A)\alpha(f_+,f_-).\]

Second, assume $E=F_\bullet\times F_\circ$ is split. We assume the embedding
$E\to\Mat_2(F)$ is given by
\[(t_\bullet,t_\circ)\mapsto\left(
                              \begin{array}{cc}
                                t_\bullet &  \\
                                 & t_\circ \\
                              \end{array}
                            \right)
\] for $t_\bullet,t_\circ\in F$. Moreover, a character $\chi$ of $E^\times$ is given by a pair
$(\chi_\bullet,\chi_\circ)$ of characters of $F^\times$ such that
$\chi((t_\bullet,t_\circ))=\chi_\bullet(t_\bullet)\chi_\circ(t_\circ)$.

We realize $\Pi^\pm$ in the Kirillov model with respect to a (nontrivial)
additive character $\psi^\pm\colon F\to\dC^\times$ of conductor $0$ where
$\psi^-=(\psi^+)^{-1}$. Moreover, we may assume for
$f_\pm\in\Pi^\pm\otimes_MK$ that is compactly supported on $F^\times$,
\[(f_+,f_-)=\int_{F^\times}f_+(a)f_-(a)\rrd a,\] where $\rd a$ is the Haar
measure on $F^\times$ such that the volume of $O_F^\times$ is $c$ for some
$c\in M$. We have the following formula
\begin{multline}\label{eq:matrix_tame}
\alpha^\natural(f_+,f_-;\chi_\iota)\\
=\(\frac{\zeta_F(2)L(1/2,\rho_A^{(\iota)},\chi_\iota)}{L(1,\eta)L(1,\rho_A^{(\iota)},\Ad)}\)^{-1}
\int_{F^\times}\iota f_+(a)\cdot\chi_{\bullet\iota}(a)\rrd a
\int_{F^\times}\iota f_-(b)\cdot\chi_{\bullet\iota}(b^{-1})\rrd b  \\
=\(\frac{\zeta_F(2)}{L(1,\eta)}L(1,\rho_A^{(\iota)},\Ad)\)^{-1} Z(\iota
f_+,\chi_{\bullet\iota})Z(\iota f_-,\chi_{\bullet\iota}^{-1}),
\end{multline}
where
\[Z(\iota f_\pm,\chi_{\bullet\iota}^{\pm1})=L(1/2,\Pi_\iota^\pm\otimes\chi_{\bullet\iota}^{\pm1})^{-1}
\int_{F^\times}\iota f_\pm(a)\cdot\chi_{\bullet\iota}^{\pm1}(a)\rrd a.\] To
conclude, we only need to show that there exists an element $\cZ(f_\pm)\in
D_\flat(F^\times,K)$ such that for every locally constant character
$\chi\colon F^\times\to K^\times$ and
$\iota\colon\dC_p\xrightarrow{\sim}\dC$, $\iota\cZ(f_\pm)(\chi)=Z(\iota
f_\pm,\chi_\iota^{\pm1})$. Without loss of generality, we only construct
$\cZ(f_+)$.

Enlarging $M$ if necessary to include $l^{1/2}$ where $l$ is the cardinality
of the residue field of $F$, there is a subspace $\Pi^{+,c}$ of $\Pi^+$ such
that $\Pi^{+,c}\otimes_MK$ is the subspace of $\Pi^+\otimes_MK$ of functions
that are compactly supported on $F^\times$. For $f_+\in\Pi^{+,c}\otimes_MK$,
we may define $\cZ(f_+)$ such that for every locally constant function $h$ on
$F^\times$
\[\cZ(f_+)(h)=\cL^{-1}_F(\Pi^+)(h)\times
\int_{F^\times}f_+(a)h(a)\rrd a.\] Therefore, we may conclude if
$\dim\Pi^+/\Pi^{+,c}=0$. There are two cases remaining.

First, $\Pi^+$ is a special representation, that is, $\dim\Pi^+/\Pi^{+,c}=1$.
We may choose a representative $f_+=\mu(a)\cdot\Char_{O_F\setminus\{0\}}(a)$
for some character $\mu\colon F^\times\to M^\times$. Then $Z(\iota
f_\pm,\chi_\iota^{\pm1})=c$ (resp.\ $0$) if $\mu\cdot\chi$ is unramified
(resp.\ otherwise). Therefore, we may define $\cZ(f_+)$ such that
\[\cZ(f_+)(h)=\int_{O_F^\times}\mu(a)h(a)\rrd a\]
for every locally constant function $h$ on $F^\times$.

Second, $\Pi^+$ is a principal series, that is, $\dim\Pi^+/\Pi^{+,c}=2$.
There are two possibilities. In the first case, we may choose representatives
$f^i_+=\mu^i(a)\cdot\Char_{O_F\setminus\{0\}}(a)$ for two \emph{different}
characters $\mu^1,\mu^2\colon F^\times\to M^\times$. Without loss of
generality, we consider $f^1_+$. Then $Z(\iota
f^1_+,\chi_\iota)=L(1/2,\mu^2_\iota\cdot\chi_\iota)^{-1}$ (resp.\ $0$) if
$\mu^1\cdot\chi$ is unramified (resp.\ otherwise). Therefore, we may define
$\cZ(f^1_+)$ such that
\[\cZ(f^1_+)(h)=\cL^{-1}_F(\mu^1)(h)\times\int_{O_F^\times}\mu(a)h(a)\rrd a\]
for every locally constant function $h$ on $F^\times$. In the second case, we
may choose representatives $f^1_+=\mu(a)\cdot\Char_{O_F\setminus\{0\}}(a)$
and $f^2_+=(1-\log_l|a|)\mu(a)\cdot\Char_{O_F\setminus\{0\}}(a)$ for some
character $\mu\colon F^\times\to M^\times$. The function $f^1_+$ has been
treated above. For $f^2_+$, we have $Z(\iota f^2_+,\chi_\iota)=c$ (resp.\
$0$) if $\mu\cdot\chi$ is unramified (resp.\ otherwise). Therefore, we may
define $\cZ(f_+)$ such that
\[\cZ(f_+)(h)=\int_{O_F^\times}\mu(a)h(a)\rrd a\]
for every locally constant function $h$ on $F^\times$.
\end{proof}

\begin{lem}\label{le:matrix_wild}
Let $f_{\pm\p}\in(\Pi_\p^\pm)^\heartsuit_K$ be $n$-admissible stable vectors.
Then there exists a unique element
\[\sQ(f_{+\p},f_{-\p})\in\sD(\omega_\p,K)\] with the following property: for every character
$\chi_\p\colon E_\p^\times\to K^\times$ satisfying
$\omega_\p\cdot\chi_\p\res_{F_\p^\times}=1$ and $\chi_{\fP^c}(t)=t^{-w}$ for
$t\in(1+\p^n)^\times$ and some $w\in\dZ$, and
$\iota\colon\dC_p\xrightarrow{\sim}\dC$, we have
\[\iota\sQ(f_{+\p},f_{-\p})(\chi_\p)=
\iota\(\frac{L(1/2,\rho_{A,\p}\otimes\check\chi_{\fP^c})^2}
{\epsilon(1/2,\psi,\rho_{A,\p}\otimes\check\chi_{\fP^c})}\)
\alpha^\natural(f_{+\p},f_{-\p};\chi^{(\iota)}_\p).\] Here, $\check\chi$ is
defined similarly as in Definition \ref{de:character_space} (2).
\end{lem}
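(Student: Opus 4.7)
My plan is to construct $\sQ(f_{+\p},f_{-\p})$ explicitly by realizing $\Pi_\p^\pm$ in Kirillov models, reducing the matrix-coefficient integral to a product of two Tate zeta integrals for $\GL_2$, and then using the local functional equation to cancel every $L$- and $\epsilon$-factor. Since $\p$ splits in $E$, the quotient $E_\p^\times/F_\p^\times$ identifies with $F_\p^\times$ via $(s,1)\mapsto s$, and $\sD(\omega_\p,K)$ is (the central-character twist of) a distribution algebra on $F_\p^\times$. The characters $\chi_\p$ in the statement correspond precisely to those $\check\chi_{\fP^c}\colon F_\p^\times\to K^\times$ that are trivial on $(1+\p^n)^\times$; as $w$ and $\check\chi_{\fP^c}$ vary, they sweep out a dense subset of the locally analytic characters of $F_\p^\times$, which gives uniqueness.

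Realize $\Pi_\p^+$ (resp.\ $\Pi_\p^-$) in the Kirillov model with respect to $\psi$ (resp.\ $\psi^{-1}$). By Remark~\ref{re:kirillov}, the hypotheses become that $f_{+\p}$ and $\tilde f_{-\p}\colonequals\Pi_\p^-(\rJ)f_{-\p}$ are $K$-valued locally constant Kirillov functions supported on $(1+\p^n)^\times$. Taking $(\;,\;)_\p$ to be the standard Kirillov inner product on compactly supported vectors and parametrizing the quotient by $\mathrm{diag}(s,1)$, a single change of variable yields
\[
\alpha(f_{+\p},f_{-\p};\chi_\p^{(\iota)}) \;=\; Z_+(f_{+\p},\iota\check\chi_{\fP^c})\cdot Z_-(f_{-\p},\iota\check\chi_{\fP^c}^{-1}),
\]
where $Z_\pm(f,\mu)\colonequals\int_{F_\p^\times}f(a)\mu(a)\,d^\times a$. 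Because $\p$ splits in $E$, the Rankin--Selberg factor decomposes as $L(s,\rho_{A,\p},\chi_\p) = L(s,\rho_{A,\p}\otimes\check\chi_\fP)\,L(s,\rho_{A,\p}\otimes\check\chi_{\fP^c})$; combined with the central-character identity $\omega_{A,\p}\cdot\chi_\fP\chi_{\fP^c}=1$ and the identification $\Pi_\p^-\simeq\Pi_\p^{+,\vee}$, the Tate $L$-factor of $Z_-$ at $\check\chi_{\fP^c}^{-1}$ becomes $L(1/2,\rho_{A,\p}\otimes\check\chi_\fP)$. Consequently the $L(1/2,\rho_{A,\p},\chi_\p^{(\iota)})$ in the normalization of $\alpha^\natural$ cancels against the two Tate $L$-factors hidden in $Z_+ Z_-$, leaving $\alpha^\natural$ proportional (by an elementary constant built from $L(1,\eta_\p)$, $L(1,\rho_{A,\p},\Ad)$ and $\zeta_{F_\p}(2)$) to $\tilde Z_+(f_{+\p},\iota\check\chi_{\fP^c})\,\tilde Z_-(f_{-\p},\iota\check\chi_{\fP^c}^{-1})$, where $\tilde Z$ denotes the zeta integral normalized by its local $L$-factor.

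To handle the remaining dependence on $\check\chi_\fP$, I will apply the local $\GL_2$ functional equation to $\tilde Z_-(f_{-\p},\iota\check\chi_{\fP^c}^{-1})$, rewriting it as $\tilde Z_-(\tilde f_{-\p},\iota\check\chi_{\fP^c})$ up to a factor of $\epsilon(1/2,\psi,\rho_{A,\p}\otimes\check\chi_{\fP^c})$ (after accounting for the passage $\Pi_\p^-\simeq\Pi_\p^{+,\vee}$ and the change from $\psi$ to $\psi^{-1}$). This $\epsilon$-factor cancels exactly against the $\epsilon(1/2,\psi,\rho_{A,\p}\otimes\check\chi_{\fP^c})^{-1}$ appearing in the statement, while the remaining $L(1/2,\rho_{A,\p}\otimes\check\chi_{\fP^c})^2$ absorbs the two normalization $L$-factors, leaving the expression
\[
\int_{(1+\p^n)^\times} f_{+\p}(a)\check\chi_{\fP^c}(a)\,d^\times a \;\cdot\; \int_{(1+\p^n)^\times} \tilde f_{-\p}(b)\check\chi_{\fP^c}(b)\,d^\times b
\]
(times an explicit algebraic constant independent of $\chi_\p$). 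Because $f_{+\p}$ and $\tilde f_{-\p}$ are locally constant of compact support on $(1+\p^n)^\times$, each factor is the pairing of $\check\chi_{\fP^c}$ with a finite $K$-linear combination of Dirac distributions on $F_\p^\times$, so the product defines an element of $\sD(\omega_\p,K)$, which we take to be $\sQ(f_{+\p},f_{-\p})$. The main obstacle will be the bookkeeping in the functional-equation step: matching the Kirillov-model $\GL_2$ functional equation against the $\epsilon$-factor conventions of Definition~\ref{de:l_value}, tracking the shift between $\psi$ and $\psi^{-1}$ that enters through $\Pi_\p^-\simeq\Pi_\p^{+,\vee}$, and verifying that after every cancellation no dependence on $\check\chi_\fP$ survives, so that the resulting functional genuinely factors through the quotient $\sD(\omega_\p,K)$ rather than the larger distribution algebra on $E_\p^\times$.
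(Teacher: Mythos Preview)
Your approach is essentially the same as the paper's: reduce to Kirillov-model zeta integrals via \eqref{eq:matrix_tame}, apply the local functional equation to flip $Z_-(f_{-\p},\check\chi_{\fP^c}^{-1})$ into $Z_-(\Pi_\p^-(\rJ)f_{-\p},\check\chi_{\fP^c})$ at the cost of the $\epsilon$-factor, and then observe that both remaining integrals are over functions supported on $(1+\p^n)^\times$ against $\check\chi_{\fP^c}$.

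One simplification you should make explicit: you already noted at the outset that the depth-$n$ condition forces $\check\chi_{\fP^c}$ to be \emph{trivial} on $(1+\p^n)^\times$. Since $f_{+\p}$ and $\Pi_\p^-(\rJ)f_{-\p}$ are supported there, each integral $\int_{(1+\p^n)^\times} f(a)\check\chi_{\fP^c}(a)\,d^\times a$ equals the bare integral $\int f(a)\,d^\times a\in K$, a constant independent of $\chi_\p$. So $\sQ(f_{+\p},f_{-\p})$ is simply that constant (times the elementary local-constant factor), and there is no need to invoke Dirac distributions or worry about whether the functional descends to $\sD(\omega_\p,K)$: a constant obviously does. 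This also dissolves the ``bookkeeping obstacle'' you flag at the end---once both integrals are constants, there is no residual $\check\chi_\fP$-dependence to track.
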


\begin{proof}
The uniqueness is clear. Note that the formula \eqref{eq:matrix_tame} also
works at $\p$. Moreover, we have the functional equation
\[Z(\iota f_{-\p},\chi_{\fP^c}^{(\iota)-1})=
\iota\epsilon(1/2,\psi,\rho_{A,\p}\otimes\check\chi_{\fP^c}) \cdot
Z(\iota(\Pi_\p^-(\rJ)f_{-\p}),\chi_{\fP^c}^{(\iota)}).\] By Remark
\ref{re:kirillov}, we only need to show that for $f\in\Pi_\p^\pm\otimes_MK$
that is supported on $(1+\p^n)^\times$, there exists
$\sQ'(f)\in\sD(\omega_\p,K)$ such that for $\chi_\p$ as in the statement and
$\iota$,
\[\iota\sQ'(f)(\chi_\p)=\int_{O_\p^\times}\iota f(a)\cdot\chi_{\fP^c}^{(\iota)}(a)\rrd
a.\] But in fact since $\chi_{\fP^c}^{(\iota)}$ restricts to the trivial
character on $(1+\p^n)^\times$,
\[\int_{O_\p^\times}\iota f(a)\cdot\chi_{\fP^c}^{(\iota)}(a)\rrd
a=\iota\int_{O_\p^\times}f(a)\rrd a.\] We may put
\[\sQ'(f)=\int_{O_\p^\times}f(a)\rrd a\in K\] which is a constant (depending only on
$f$).
\end{proof}

\begin{proof}[Proof of Proposition \ref{pr:matrix_integral}]
Let $f_\pm\in(\Pi^\pm)^\heartsuit_K$ be $n$-admissible stable vectors. It is
clear that $\sQ(f_{+v},f_{-v})$ constructed in Lemma \ref{le:matrix_tame} is
$1$ for almost all $v$. Therefore, we may simply define $\sQ(f_+,f_-)$ to be
the image of
\[\sQ(f_{+\p},f_{-\p})\otimes\bigotimes_{v\neq\p}\sQ(f_{+v},f_{-v})\]
in $\sD(\omega_A,K)$.
\end{proof}

\subsection{Universal torus periods}
\label{ss:universal_torus}

Let $\dB$ be as in the previous section and we choose a CM point $P^+\in
Y^+(E^\ab)$. Recall that for $m\in\dN\cup\{\infty\}$, we have the closed
formal subscheme $\fY^\pm(m)$ of $\fX(m)$ as in
\Sec\ref{ss:comparison_differential}. For a complete field extension
$K/F_\p^\nr$, put
\[\sN^\pm(m,K)=\rH^0(\fY^\pm(m),\sO_{\fY^\pm(m)})\wtimes_{O_\p^\nr}K.\]

The point $P^\pm$ identifies $\sN^\pm(\infty,K)$ with
$\Map_{\r{cont}}(E^\times_\cl\backslash\dA^{\infty\times},K)$, the
$K$-algebra of continuous $K$-valued functions on
$E^\times_\cl\backslash\dA^{\infty\times}$, under which for every
$f\in\sN^\pm(\infty,K)$ and $t\in E^\times_\cl\backslash\dA^{\infty\times}$,
$f(\rT_t P^\pm)=f(t^{\pm1})$. Moreover, the Galois group $O_{E_\p}^\times$ of
$\fX(\infty)/\fX(0)$ preserves $\fY^\pm(\infty)$, whose induced action on
$\sN^\pm(\infty,K)$ is given by
\[t^*f(x)=f(xt^{\pm1}),\quad x\in E^\times_\cl\backslash\dA^{\infty\times}.\]
The quotient of $\fY^\pm(\infty)$ by the subgroup $O_{E_\p,m}^\times$ is
simply $\fY^\pm(m)$.

\begin{definition}\label{de:distribution_algebra}
Consider a locally constant character
\[\omega\colon
F^\times\backslash\dA^{\infty\times}\to M^\times.\] Let $K/MF_\p$ be a
complete field extension. For every $V^\p\in\Gamma_E$ on which $\omega$ is
trivial, denote by $\sD(\omega,K,V^\p)$ the quotient of
$D(E^\times_\cl\backslash\dA^{\infty\times}/V^\p,K)$ by the closed ideal
generated by $\{\omega(t)[t]-1\res t\in\dA^{\infty\times}\}$. Then we have
the canonical isomorphism
\[\sD(\omega,K)\simeq\varprojlim_{\Gamma_E}\sD(\omega,K,V^\p),\]
where the former one is introduced in Definition \ref{de:character_space}
(5). The (unique) continuous homomorphism $D(O_{E_\p}^\times,K)\to
D(E^\times_\cl\backslash\dA^{\infty\times}/V^\p,K)$ sending $[t]$ to
$\omega_\p(t_\circ)[t]$ for $t=(t_\bullet,t_\circ)\in O_{E_\p}^\times$
descends to a continuous homomorphism $\bw\colon
D(O_\p^\anti,K)\to\sD(\omega,K,V^\p)$ of $K$-algebras, which is compatible
when varying $K$ and $V^\p$. In other words, we have a homomorphism
\begin{align}\label{eq:weight_map}
\bw\colon D(O_\p^\anti,K)\to\sD(\omega,K).
\end{align}
\end{definition}

\begin{definition}[Universal character]
We define the \emph{$\pm$-universal character} to be
\[\chi^\pm_\un\colon E^\times_\cl\backslash\dA^{\infty\times}\xrightarrow{[\;]^{\pm
1}}\sD(\omega,MF_\p)^\times.\]
Then $\chi^\pm_\un$ is an element in
$\sN^\pm(\infty,F_\p^\nr)\wtimes_{F_\p}\sD(\omega,MF_\p)$ satisfying
\begin{align}\label{eq:character_family}
t^*\chi^\pm_\un=[t]\cdot \chi^\pm_\un
\end{align}
for $t\in O_{E_\p,m}^\times$, the group on which $\omega_\p$ is
trivial.\index{universal character, $\chi^\pm_\un$}
\end{definition}

\begin{definition}[Universal torus period]\label{de:character_universal}
Suppose $K$ contains $MF_\p^\lt$. Given a stable convergent modular form
$f\in\sM_\flat^w(m,K)^\heartsuit$ for some $w,m\in\dN$, we have the global
Mellin transform $\bM(f)$ by Theorem \ref{th:family}. By restriction, we
obtain elements
\[\bM(f)\res_{\fY^\pm(\infty)}\in\sN^\pm(\infty,K)\wtimes_{F_\p}D(O_\p^\anti,F_\p),\]
and hence by \eqref{eq:weight_map},
\[\bw(\bM(f)\res_{\fY^\pm(\infty)})\in\sN^\pm(\infty,K)\wtimes_K\sD(\omega,K).\]
By Theorem \ref{th:family} (2) and \eqref{eq:character_family}, the product
$\bw(\bM(f)\res_{\fY^\pm(\infty)})\cdot\chi^\pm_\un$ descends to an element
in $\sN^\pm(m,K)\wtimes_K\sD(\omega,K)$ for some (different) $m\in\dN$
depending on $f$ (and $\omega_\p$). For any $V^\p\in\Gamma$ under which $f$
is invariant, we regard $\bw(\bM(f)\res_{\fY^\pm(\infty)})\cdot\chi^\pm_\un$
as an element in $\sN^\pm(m,K)\wtimes_K\sD(\omega,K,V^\p)$, which is then
invariant under $V^\p$. In particular,
\begin{align}\label{eq:universal_period}
\sP^\pm_\omega(f)\colonequals\frac{1}{|E^\times\backslash\dA^{\infty\times}_E/V^\p O_{E_\p,m}^\times|}
\sum_{E^\times\backslash\dA^{\infty\times}_E/V^\p O_{E_\p,m}^\times}
(\bw(\bM(f)\res_{\fY^\pm(\infty)})\cdot\chi^\pm_\un)(t)
\end{align}
is an element in $\sD(\omega,K,V^\p)$ independent of $m$, and is compatible
with respect to $V^\p$. In other words, $\sP^\pm_\omega(f)$ is an element in
$\sD(\omega,K)$, which we call the \emph{universal torus
period}.\index{universal torus period, $\sP^\pm_\omega$}
\end{definition}

\subsection{Interpolation of universal torus periods}
\label{ss:interpolation_universal}

We remain the setting in the previous section. Let $MF_\p^\lt F_\p^\ab\subset
K\subset\dC_p$ be a complete intermediate field.

By Definition \ref{de:heartsuit_representation}, an element
$f^\pm\in\Pi^\pm\otimes_FK$ can be realized as $K$-linear combination of
morphisms from $X_{U^\p U_{\p,\pm m}}$ to $A$ for some $U^\p\in\Gamma_E$ and
$m\in\dN$, which we will now assume. Take
$\omega_\pm\in\rH^0(A^\pm,\Omega^1_{A^\pm})$. Using the notation in
\eqref{eq:restriction_ordinary} and by Proposition \ref{pr:lt_action} (3), we
have $(f_\pm^*\omega_\pm)_\ord\in\sM^0_\flat(\infty,K)$. It is stable (resp.\
$n$-admissible (in the sense of Definition \ref{de:heartsuit_formal})) if and
only if $f^\pm$ is stable (resp.\ $n$-admissible (in the sense of Definition
\ref{de:heartsuit_representation})).

For stable vectors $f^\pm\in(\Pi^\pm)^\heartsuit_K$, define the element
\[\sP^\pm_\un(f_\pm)\in\Lie A^\pm\otimes_{F^M}\sD(\omega_A,K)\]
by the formula
\[\langle\omega_\pm,\sP^\pm_\un(f_\pm)\rangle=\sP^\pm_{\omega_A}((f_\pm^*\omega_\pm)_\ord).\]

In this section, we study the relation of
\[\iota\sP^+_\un(f_+)(\chi)\cdot\iota\sP^-_\un(f_-)(\chi)\in(\Lie A^+\otimes_{F^M}\Lie A^-)\otimes_{F^M,\iota}\dC\]
for a given $\iota\colon\dC_p\xrightarrow{\sim}\dC$, with classical torus
periods, for $n$-admissible stable vectors $f^\pm\in(\Pi^\pm)^\heartsuit_K$
and a character $\chi\in\Xi(\omega_A,K)^n_k$ of weight $k\geq 1$ and depth
$n$ (see Definition \ref{de:character_ramification}). For this purpose, we
choose an $\iota$-nearby data for $\dB$ (Definition \ref{de:nearby_data}). In
particular, we have
\[Y^\pm_\iota(\dC)=E^\times_\cl\backslash\{\pm i\}\times\dA^{\infty\times}_E\subset X_\iota(\dC).\]
Choose an element $t_\pm\in\dA^{\infty\times}_E$ such that $\iota P^\pm$ is
represented by $[\pm i,t_\pm]$. Define $\zeta_\iota^\pm\in\dC^\times$ such
that
\begin{align}\label{eq:ratio_zeta}
\rd z([\pm i,t_\pm])=\zeta_\iota^\pm\cdot\iota\omega_{\psi\pm}\res_{P^\pm},
\end{align}
where $\omega_{\psi\pm}$ is defined in \eqref{eq:omega_lt}. We also introduce
matrices
\[j_\iota^+=\left(
               \begin{array}{cc}
                 1 &  \\
                  & 1 \\
               \end{array}
             \right),\quad
j_\iota^-=\left(
            \begin{array}{cc}
               & 1 \\
              1 &  \\
            \end{array}
          \right)
\] in $\Mat_2(\dR)=B(\iota)\otimes_{F,\iota}R$.

\begin{lem}\label{le:interpolation}
Let the notation be as above. We have
\begin{multline*}
\iota\langle\omega_+,\sP^+_\un(f_+)(\chi)\rangle\cdot\iota\langle\omega_-,\sP^-_\un(f_-)(\chi)\rangle
=\frac{(\zeta_\iota^+\zeta_\iota^-)^k\cdot\chi^{(\iota)}(t_+^{-1}t_-)}{4}\\
\times\sP_{\r{Wa}}(\Delta_{+,\iota}^{k-1}(\rR(j_\iota^+)\phi_\iota(f_+^*\omega_+)),\chi^{(\iota)+1})
\sP_{\r{Wa}}(\Delta_{-,\iota}^{k-1}(\rR(j_\iota^-)\phi_\iota(f_-^*\omega_-)),\chi^{(\iota)-1}),
\end{multline*}
where $\phi_\iota$ is defined in \eqref{eq:automorphic_form}, and
\begin{align*}
\sP_{\r{Wa}}(\Phi,\chi^{(\iota)\pm1})
=\int_{E^\times\dA^\times\backslash\dA^\times_E}\Phi(t)\chi^{(\iota)}(t)^{\pm 1}\rrd t
\end{align*}
is the classical torus period, that is, the automorphic period appearing in
the classical Waldspurger formula.
\end{lem}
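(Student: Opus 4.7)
The plan is to unpack the LHS, which lives on the $p$-adic rigid analytic side through the Mellin transform, and convert it CM-point by CM-point to the archimedean Waldspurger period on the RHS. Write $F_\pm=f_\pm^*\omega_\pm$, viewed as a weight-$1$ form on $X$, and $(F_\pm)_\ord\in\sM_\flat^0(\infty,K)^\heartsuit$ after trivializing by $\omega_\nu$. By Definition \ref{de:character_universal} and \eqref{eq:universal_period}, the quantity $\langle\omega_+,\sP^+_\un(f_+)(\chi)\rangle$ is the average over $t\in E^\times\backslash\dA^{\infty\times}_E/V^\p O_{E_\p,m}^\times$ of the specialization of $\bw(\bM((F_+)_\ord)\res_{\fY^+(\infty)})\cdot\chi^+_\un$ at the concrete character $\chi$; an analogous formula holds with signs reversed for the minus side.

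The first task is to compute this specialization. Since $\chi$ has weight $k$ and depth $n$ and $F_+$ is $n$-admissible, Theorem \ref{th:family}(4) lets us replace the locally constant part of $\chi\res_{O_\p^\anti}$ by a single character of the form $\ang{\pm k}$, after which Theorem \ref{th:family}(1) identifies $\bM((F_+)_\ord)(\ang{k})$ with $\Theta_\ord^{k-1}(F_+)_\ord$ (the exponent is $k-1$, not $k$, because $(F_+)_\ord$ corresponds to a weight-$1$ form under the $\omega_\nu$-trivialization). The twist by $\bw$ (multiplying $[t]$ on $O_\p^\anti$ by $\omega_{A,\p}(t_\circ)$) is designed precisely to reconcile the archimedean-type part of $\chi$ at $\p$ with the central-character constraint $\omega_A\cdot\chi\res_{\dA^{\infty\times}}=1$. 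Putting these together, the product $\bw(\bM((F_+)_\ord))\cdot\chi^+_\un$, evaluated at $\chi$, returns $\chi(t)\cdot\bigl(\Theta_\ord^{k-1}(F_+)_\ord\bigr)(\rT_t P^+)$, where the right-hand side is a section of $(\Omega^1_X)^{\otimes k}\res_{Y^+}$ written in the frame $\omega_{\psi+}^k$.

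Next we pass to the complex side via $\iota$. By Lemma \ref{le:theta_comparison}, $\iota(\Theta_\ord^{k-1}(F_+)_\ord)\res_{\fY^+(\infty)}$ coincides with $(\Theta_\iota^{k-1}\iota F_+)\res_{Y^+(\infty)_\iota}$, and Lemma \ref{le:shimura_maass} together with the definition \eqref{eq:automorphic_form} of $\phi_\iota$ converts the latter, at the point $[i,t_+t]\in X_\iota(\dC)$, into $\Delta_{+,\iota}^{k-1}\phi_\iota(F_+)([1,t_+t])$ expressed against the frame $(dz)^k$. The ratio \eqref{eq:ratio_zeta} between $(dz)^k$ and $(\iota\omega_{\psi+})^k$ at the base point produces the factor $(\zeta_\iota^+)^k$. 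The same strategy handles the minus side, except that $P^-=\bc P^+$ lies over the lower half-plane $-i$: to transport it to the upper half-plane model underlying the formula for $\phi_\iota$ one multiplies on the right by $j_\iota^-$, which replaces $\phi_\iota(F_-)$ by $\rR(j_\iota^-)\phi_\iota(F_-)$. The mismatch between $t_+$ and $t_-$ (both chosen only to represent $\iota P^\pm$) is absorbed into the multiplicative shift $\chi^{(\iota)}(t_+^{-1}t_-)$, and the two averages over the finite quotient combine to two Tamagawa integrals over $F^\times\dA^\times\backslash\dA^\times_E$ against $\chi^{(\iota)\pm1}$; since the Tamagawa volume of $E^\times\dA^\times\backslash\dA^\times_E$ is $2$, each conversion costs a factor $1/2$, giving the overall $1/4$.

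The main obstacle is the bookkeeping of the second paragraph: carefully verifying how the homomorphism $\bw$, the universal character $\chi^\pm_\un$, and the Mellin variable $\ang{w+k}$ in Theorem \ref{th:family} combine under specialization at a depth-$n$ character of weight $k$, so that the $\fp$-adic central-character correction cancels against the $\fP^c$-component of $\chi$ and leaves a clean factor $\chi(t)$. Once this compatibility is in place, the remainder of the argument is essentially the parallel transport between the frames $\omega_{\psi\pm}^k$ and $(dz)^k$ at $\pm i$, plus the comparison of $\Theta_\ord$ with $\Theta_\iota$ and of $\Theta_\iota$ with $\Delta_{\pm,\iota}$ already provided by Lemmas \ref{le:theta_comparison} and \ref{le:shimura_maass}.
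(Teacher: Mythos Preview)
Your proposal is correct and follows essentially the same route as the paper's proof: unwind the universal torus period via Theorem~\ref{th:family} to land on $\Theta_\ord^{k-1}(f_\pm^*\omega_\pm)_\ord$, apply Lemma~\ref{le:theta_comparison} to trade $\Theta_\ord$ for $\Theta_\iota$ on CM points, use the ratio \eqref{eq:ratio_zeta} to change frames, convert the normalized finite average to the Tamagawa integral (volume $2$, hence the $1/2$ on each side), and finally invoke Lemma~\ref{le:shimura_maass} to pass from $\Theta_\iota$ to $\Delta_{\pm,\iota}$. One small correction: the identification $\bM((F_+)_\ord)(\ang{k})=\Theta_\ord^{k-1}(F_+)_\ord$ is the interpolation formula \eqref{eq:family} itself, not part~(1) of Theorem~\ref{th:family}.
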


\begin{proof}
Take $V^\p\in\Gamma_E$ under which $f_\pm$ and $\chi$ are invariant. By
Theorem \ref{th:family} and Definition \eqref{eq:universal_period}, we have
\begin{multline}\label{eq:period_ordinary}
\langle\omega_\pm,\sP^\pm_\un(f_\pm)(\chi)\rangle=\frac{\Omega_\lt^{-k}}{|E^\times\backslash
\dA^{\infty\times}_E/V^\p O_{E_\p,m}^\times|}\\
\times\sum_{E^\times\backslash\dA^{\infty\times}_E/V^\p O_{E_\p,m}^\times}
\Theta_\ord^{k-1}(f_\pm^*\omega_\pm)_\ord(t)\cdot(\chi^{\pm1}\omega_{\psi\pm}^{-k})(t)
\end{multline}
for some sufficiently large $m\geq n$. By \eqref{eq:ratio_zeta} and Lemma
\ref{le:theta_comparison}, we have
\begin{multline*}
\iota\langle\omega_\pm,\sP^\pm_\un(f_\pm)(\chi)\rangle=\frac{(\zeta_\iota^\pm)^k}{|E^\times\backslash
\dA^{\infty\times}_E/V^\p O_{E_\p,m}^\times|}\\
\times\sum_{E^\times\backslash\dA^{\infty\times}_E/V^\p O_{E_\p,m}^\times}
\Theta_\iota^{k-1}f_\pm^*\omega_\pm(t)\rrd z([\pm i,t_\pm
t])^{-k}\cdot\chi^{(\iota)}(t)^{\pm1}\\
=\frac{(\zeta_\iota^\pm)^k}{2}\int_{E^\times\dA^\times\backslash\dA^\times_E}
\rR(j_\iota^\pm)\phi_\iota(\Theta_\iota^{k-1}f_\pm^*\omega_\pm)(t)\cdot\chi^{(\iota)}(t)^{\pm1}\rrd
t,
\end{multline*}
which by Lemma \ref{le:shimura_maass} equals
\begin{align*}
=\frac{(\zeta_\iota^\pm)^k\cdot\chi^{(\iota)}(t_\pm^{\mp1})}{2}\int_{E^\times\dA^\times\backslash\dA^\times_E}
\Delta_{\pm,\iota}^{k-1}(\rR(j_\iota^\pm)\phi_\iota(f_\pm^*\omega_\pm))(t)\cdot
\chi^{(\iota)}(t)^{\pm1}\rrd t.
\end{align*}
This completes the proof.
\end{proof}

\begin{proposition}\label{pr:interpolation}
Given $\omega_\pm\in\rH^0(A^\pm,\Omega^1_{A^\pm})$, $n$-admissible stable
vectors $f_\pm\in(\Pi^\pm)^\heartsuit_K$, and a character
$\chi\in\Xi(\omega,K)^n_k$ of weight $k\geq 1$ and depth $n$, we have
\begin{multline*}
\iota\langle\omega_+,\sP^+_\un(f_+)(\chi)\rangle\cdot\iota\langle\omega_-,\sP^-_\un(f_-)(\chi)\rangle
=\iota\sQ(f_+,f_-)(\chi)\\
\times L(1/2,\rho_A^{(\iota)},\chi^{(\iota)})
\cdot\frac{2^{g-3}\delta_E^{1/2}\zeta_F(2)\bP_\iota(\chi)}{L(1,\eta)^2L(1,\rho_A^{(\iota)},\Ad)}
\frac{\iota\epsilon(1/2,\psi,\rho_{A,\p}\otimes\check\chi_{{\fP}^c})} {\iota
L(1/2,\rho_{A,\p}\otimes\check\chi_{{\fP}^c})^2}.
\end{multline*}
\end{proposition}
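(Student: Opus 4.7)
The plan is to unfold the left-hand side via Lemma \ref{le:interpolation} into the product of two classical automorphic torus periods, apply the classical Waldspurger formula (as in \cite{Wal85}; cf.\ \cite{YZZ13}*{Proposition 1.4.1}) to that product, and finally reorganize the local factors to recognize $\iota\sQ(f_+,f_-)(\chi)$ and the period ratio $\bP_\iota(\chi)$.

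More precisely, first apply Lemma \ref{le:interpolation} to rewrite the left-hand side as
\[
\frac{(\zeta_\iota^+\zeta_\iota^-)^k \chi^{(\iota)}(t_+^{-1}t_-)}{4}\,
\sP_{\r{Wa}}(\Phi_+,\chi^{(\iota)})\sP_{\r{Wa}}(\Phi_-,\chi^{(\iota)-1}),
\]
where $\Phi_\pm = \Delta_{\pm,\iota}^{k-1}(\rR(j_\iota^\pm)\phi_\iota(f_\pm^*\omega_\pm))\in\sA^{(2k)}_\cusp(B(\iota)^\times)$. This reduces the statement to a purely complex-automorphic identity.

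Next, invoke the classical Waldspurger formula for $(\pi^{(\iota)},\chi^{(\iota)})$, which gives
\[
\sP_{\r{Wa}}(\Phi_+,\chi^{(\iota)})\sP_{\r{Wa}}(\Phi_-,\chi^{(\iota)-1})
=\frac{\zeta_F(2)L(1/2,\rho_A^{(\iota)},\chi^{(\iota)})}{2L(1,\eta)^2 L(1,\rho_A^{(\iota)},\Ad)}
\cdot \prod_v \alpha^\natural(\Phi_{+,v},\Phi_{-,v};\chi_v^{(\iota)}),
\]
up to the fixed Tamagawa-to-product-measure ratio $2^{-g}\delta_E^{-1/2}L(1,\eta)$ (Section \ref{ss:notation_conventions}), which accounts for $2^{g-3}\delta_E^{1/2}L(1,\eta)^{-1}$ after absorbing the $1/2$ and $1/4$. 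At finite places $v\neq\p$, the vector $\Phi_{\pm,v}$ is identified with $f_{\pm,v}$ via $\phi_\iota$, hence the local factor equals $\alpha^\natural(f_{+,v},f_{-,v};\chi^{(\iota)}_v)$. At $v=\p$, the twist $j_\iota^-$ matches the $\rJ$-twist in Remark \ref{re:kirillov}, and the $k$-twist inside $\chi^{(\iota)}_\p$ converts to the character $\check\chi_{\fP^c}$; applying Lemma \ref{le:matrix_wild} then produces the factor $\iota L(1/2,\rho_{A,\p}\otimes\check\chi_{\fP^c})^2/\iota\epsilon(1/2,\psi,\rho_{A,\p}\otimes\check\chi_{\fP^c})$ on the right. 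Collecting the finite-place contributions via Proposition \ref{pr:matrix_integral} reproduces exactly $\iota\sQ(f_+,f_-)(\chi)$ together with the $L$/$\epsilon$ ratio at $\p$ that appears in the statement.

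Finally, the archimedean factor is absorbed into the period ratio. Comparing the definition of $\bP_\iota(\chi)$ in Section \ref{ss:l_function_abelian}, which involves the integral of $\Theta_\iota^{k-1}f_+^*\omega_+\otimes\fc_\iota^*\Theta_\iota^{k-1}f_-^*\omega_-$ against $\mu^k$ over $X_\iota(\dC)$, with the output of Lemma \ref{le:shimura_maass} applied to $\Delta_{\pm,\iota}^{k-1}\phi_\iota(f_\pm^*\omega_\pm)$, one sees that the archimedean local factor produced by Waldspurger's formula, multiplied by $(\zeta_\iota^+\zeta_\iota^-)^k\chi^{(\iota)}(t_+^{-1}t_-)/4$, equals $\bP_\iota(\chi)$ times the canonical pairings $(\;,\;)_A$ and $(\;,\;)_\chi$ paired against $\omega_+\otimes\omega_-$. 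This matches the remaining factor $2^{g-3}\delta_E^{1/2}\zeta_F(2)\bP_\iota(\chi)/L(1,\eta)^2L(1,\rho_A^{(\iota)},\Ad)$ of the proposition.

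The main obstacle is the bookkeeping at the archimedean place and at $\p$: one must verify that the interaction of $\Delta_{\pm,\iota}^{k-1}$, $\rR(j_\iota^\pm)$, and the twist by $t_\pm$ produces exactly the archimedean matrix coefficient integral paired to $\bP_\iota(\chi)$ via Lemmas \ref{le:theta_iota} and \ref{le:shimura_maass}, and that the $k$-shift in weight at $\p$ converts $\chi^{(\iota)}_\p$ to $\check\chi_{\fP^c}$ with exactly the right normalization of measures; all other places are direct from Proposition \ref{pr:matrix_integral} and the unramified computation.
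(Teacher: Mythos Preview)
Your proposal is correct and follows essentially the same route as the paper: apply Lemma~\ref{le:interpolation}, feed the resulting product of classical torus periods into Waldspurger's formula, use Proposition~\ref{pr:matrix_integral} to package the finite-place matrix coefficient integrals as $\iota\sQ(f_+,f_-)(\chi)$, and absorb the archimedean contribution into $\bP_\iota(\chi)$. The paper streamlines the last step by introducing a single constant $C_\iota$ with $(\Delta_{+,\iota}^{k-1}\rR(j_\iota^+)\phi_\iota(f_+^*\omega_+),\Delta_{-,\iota}^{k-1}\rR(j_\iota^-)\phi_\iota(f_-^*\omega_-))_{\r{Pet}}=C_\iota\cdot\iota(f_+,f_-)_A$ and then noting $\bP_\iota(\chi)=C_\iota\cdot(\zeta_\iota^+\zeta_\iota^-)^k\cdot\chi^{(\iota)}(t_+^{-1}t_-)$, which is cleaner than your direct unwinding but equivalent.

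One small correction: your sentence ``At $v=\p$, the twist $j_\iota^-$ matches the $\rJ$-twist in Remark~\ref{re:kirillov}'' conflates two different objects. The matrices $j_\iota^\pm$ live in $\Mat_2(\dR)=B(\iota)\otimes_{F,\iota}\dR$ and only enter the archimedean computation (they account for the base points $[\pm i,t_\pm]$); the element $\rJ\in\Mat_2(F_\p)$ enters at $\p$ via the Kirillov model and the functional equation used in Lemma~\ref{le:matrix_wild}. These are unrelated, and nothing at $\p$ depends on $j_\iota^-$. Once you separate these, your local analysis at $\p$ via Lemma~\ref{le:matrix_wild} is exactly what the paper does.
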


\begin{proof}
By the classical Waldspurger formula \cite{Wal85} (or see
\cite{YZZ13}*{Theorem 1.4.2}) and Proposition \ref{pr:matrix_integral}, we
have that
\begin{multline*}
\sP_{\r{Wa}}(\Delta_{+,\iota}^{k-1}(\rR(j_\iota^+)\phi_\iota(f_+^*\omega_+),\chi^{(\iota)+1})
\sP_{\r{Wa}}(\Delta_{-,\iota}^{k-1}(\rR(j_\iota^-)\phi_\iota(f_-^*\omega_-),\chi^{(\iota)-1})\\
=C_\iota\frac{1}{4}\frac{\zeta_F(2)L(1/2,\rho_A^{(\iota)},\chi^{(\iota)})}
{L(1,\rho_A^{(\iota)},\Ad)L(1,\eta)}\frac{2}{2^{-g}\delta_E^{-1/2}L(1,\eta)}
\frac{\iota\epsilon(1/2,\psi,\rho_{A,\p}\otimes\check\chi_{{\fP}^c})} {\iota
L(1/2,\rho_{A,\p}\otimes\check\chi_{{\fP}^c})^2}\cdot\iota\sQ(f_+,f_-)(\chi),
\end{multline*}
where $C_\iota$ is the complex constant such that
\[(\Delta_{+,\iota}^{k-1}(\rR(j_\iota^+)\phi_\iota(f_+^*\omega_+),\Delta_{-,\iota}^{k-1}(\rR(j_\iota^-)\phi_\iota(f_-^*\omega_-))_{\r{Pet}}
=C_\iota\cdot\iota(f_+,f_-)_A\] holds for all $f_+$ and $f_-$. The
proposition follows from the above formula, Lemma \ref{le:interpolation}, and
the formula
\[\bP_\iota(\chi)=C_\iota\cdot(\zeta_\iota^+\zeta_\iota^-)^k\cdot\chi^{(\iota)}(t_+^{-1}t_-).\]
\end{proof}

\begin{corollary}\label{co:ratio}
For $\chi\in\Xi(\omega_A,K)^n_k$ with $k\geq 1$, the ratio
\[\frac{\sP^+_\un(f_+)(\chi)\sP^-_\un(f_-)(\chi)}{\sQ(f_+,f_-)(\chi)}
\in(\Lie A^+\otimes_{F^M}\Lie A^-)\otimes_{F^M}K,\] if the denominator is
nonzero, is independent of the choice of $f_\pm\in(\Pi^\pm)^\heartsuit_K$
that is $n$-admissible. Moreover, for
$\iota\colon\dC_p\xrightarrow{\sim}\dC$,
\begin{multline*}
\iota\(\frac{\sP^+_\un(f_+)(\chi)\sP^-_\un(f_-)(\chi)}{\sQ(f_+,f_-)(\chi)}\)=\\
L(1/2,\rho_A^{(\iota)},\chi^{(\iota)})\cdot
\frac{2^{g-3}\delta_E^{1/2}\zeta_F(2)\bP_\iota(\chi)}{L(1,\eta)^2L(1,\rho_A^{(\iota)},\Ad)}
\frac{\iota\epsilon(1/2,\psi,\rho_{A,\p}\otimes\check\chi_{{\fP}^c})}{\iota
L(1/2,\rho_{A,\p}\otimes\check\chi_{{\fP}^c})^2}.
\end{multline*}
\end{corollary}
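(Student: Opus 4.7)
The plan is to deduce the corollary as a direct reformulation of Proposition \ref{pr:interpolation}. I would first rewrite that proposition in the compact form
\begin{equation*}
\iota\langle\omega_+,\sP^+_\un(f_+)(\chi)\rangle\cdot\iota\langle\omega_-,\sP^-_\un(f_-)(\chi)\rangle
=\iota\sQ(f_+,f_-)(\chi)\cdot R_\iota(\chi,\omega_+,\omega_-),
\end{equation*}
where $R_\iota(\chi,\omega_+,\omega_-)$ bundles the product of $L$-values, the local $\epsilon$-factor, and the period ratio $\bP_\iota(\chi)$ that appears on the right-hand side of the displayed formula in the corollary. The crucial observation is that $R_\iota$ depends only on $\chi$, $A$, $\psi$, and the chosen forms $\omega_\pm$ (through $\bP_\iota(\chi)$), but \emph{not} on the $n$-admissible stable vectors $f_\pm$.

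Since $\rH^0(A^\pm,\Omega^1_{A^\pm})$ is $F^M$-linearly dual to $\Lie A^\pm$, with both free of rank one over $F^M$, letting $\omega_+$ and $\omega_-$ range over bases shows that the displayed identity determines $\iota$ applied to the product $\sP^+_\un(f_+)(\chi)\cdot\sP^-_\un(f_-)(\chi)$ as a well-defined element of $(\Lie A^+\otimes_{F^M}\Lie A^-)\otimes_{F^M,\iota}\dC$. Dividing by the complex scalar $\iota\sQ(f_+,f_-)(\chi)\in\dC^\times$, which is nonzero by hypothesis, and invoking the defining identity of $\bP_\iota(\chi)$ from \Sec\ref{ss:l_function_abelian}, which is designed precisely to absorb the $\omega_\pm$-dependence into an intrinsic factor, yields the displayed formula of the corollary. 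The independence assertion is then automatic: the right-hand side involves no $f_\pm$, so the image of the ratio under every $\iota$ is the same for all admissible pairs; since $K\hookrightarrow\dC_p$ is injected into $\dC$ by any such $\iota$, the ratio itself in $(\Lie A^+\otimes_{F^M}\Lie A^-)\otimes_{F^M} K$ is independent of the chosen admissible stable vectors.

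I expect no substantive obstacle, since all of the genuine content --- the interpretation of $\sP^\pm_\un$ via Shimura--Maass operators and transition isomorphisms, the input of the classical Waldspurger formula, and the construction of the local period distribution $\sQ(f_+,f_-)$ --- is already packaged in Proposition \ref{pr:interpolation} (together with Proposition \ref{pr:matrix_integral}). The only bookkeeping point worth double-checking is that the displayed formula genuinely holds as an equation in the rank-one complex vector space $(\Lie A^+\otimes_{F^M}\Lie A^-)\otimes_{F^M,\iota}\dC$ rather than merely as a numerical identity; but this is exactly the content of the definition of $\bP_\iota(\chi)$.
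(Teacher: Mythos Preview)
Your proposal is correct and matches the paper's approach: the corollary is stated immediately after Proposition \ref{pr:interpolation} with no separate proof, precisely because it is the straightforward rewriting you describe. Your care about the rank-one $(\Lie A^+\otimes_{F^M}\Lie A^-)$-valued bookkeeping and the use of a single $\iota$ to deduce independence over $K$ is exactly the point the paper leaves implicit.
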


\begin{proposition}\label{pr:waldspurger_abelian}
For $n$-admissible stable vectors $f_\pm\in(\Pi^\pm)^\heartsuit_K$ and a
character $\chi\in\Xi(\omega_A,K)^n_0$ of weight $0$ and depth $n$, we have
\[\sP^\pm_\un(f_\pm)(\chi)=\log_{A^\pm}P^\pm_\chi(f_\pm).\]
\end{proposition}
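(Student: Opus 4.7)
\emph{Proof proposal.} The plan is to unfold both sides as integrals over the torus and then match the integrands pointwise on the formal CM subscheme $\fY^\pm(\infty)$. Pair both sides of the claimed equality with any $\omega_\pm \in \rH^0(A^\pm, \Omega^1_{A^\pm})$. By definition \eqref{eq:heegner_cycle} and global class field theory, the right-hand side becomes
\[
\langle \omega_\pm,\log_{A^\pm} P^\pm_\chi(f_\pm)\rangle
= \int (f_\pm^*\log_{\omega_\pm})(t\cdot P^\pm)\,\chi(t)^{\pm 1}\,dt,
\]
with integration over the appropriate compact quotient of $\dA^{\infty\times}_E$. For the left-hand side, Definition~\ref{de:character_universal} writes the pairing as the torus average $\sP^\pm_{\omega_A}((f_\pm^*\omega_\pm)_\ord)(\chi)$. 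Because $\chi$ has weight zero and depth $n$, and because the representation-theoretic $n$-admissibility of $f_\pm$ translates (via Proposition~\ref{pr:lt_action}(3)) into the geometric $n$-admissibility of $(f_\pm^*\omega_\pm)_\ord$, Theorem~\ref{th:family}(4) allows us to replace the specialization of $\bM((f_\pm^*\omega_\pm)_\ord)$ at the possibly wildly ramified character $\chi_\p$ by its value at the trivial character $\langle 0\rangle$. The twists $\bw$ and $\chi^\pm_\un$ combine with $\chi$ to produce the factor $\chi(t)^{\pm 1}$, so the left-hand side takes the parallel form
\[
\int \bM((f_\pm^*\omega_\pm)_\ord)(\langle 0\rangle)(t\cdot P^\pm)\,\chi(t)^{\pm 1}\,dt.
\]

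It therefore suffices to identify, as functions on $\fY^\pm(\infty)$, the pullback of $f_\pm^*\log_{\omega_\pm}$ along $\Upsilon_\pm$ with $\bM((f_\pm^*\omega_\pm)_\ord)(\langle 0\rangle)$. Both are weight-zero modular functions on $\fX(\infty)$, and both are primitives of the weight-one form $(f_\pm^*\omega_\pm)_\ord$: Theorem~\ref{th:family}(3), applied with $w=1$, gives the primitive relation for the Mellin-transform side, while for $f_\pm^*\log_{\omega_\pm}$ it follows from the defining property $d\log_{\omega_\pm}=\omega_\pm$ together with the observation that at weight zero the Atkin--Serre operator $\Theta_\ord$ reduces to the exterior derivative $d$ (indeed $\cL^{[0]}=\sO$, $\nabla^{[0]}=d$, and $\theta^{[0]}_\ord=\id$). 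Hence the difference of the two candidates is $d$-closed on $\fX(\infty)$, so it is constant on each formal disk $\fX(\infty)_{/x}$ appearing in Lemma~\ref{le:deformation}.

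The main obstacle is to pin down this locally constant difference and show it vanishes on $\fY^\pm(\infty)$. The Mellin-transform primitive $\bM((f_\pm^*\omega_\pm)_\ord)(\langle 0\rangle)$ was constructed in the proof of Theorem~\ref{th:family} as $\tau_1(\lambda\beta^*(\cdot))$; restricted to a formal disk $\fX(\infty)_{/x}$ and identified with $\LT$ via the Serre--Tate isomorphism $c_{/x}$, it is the unique primitive of the restricted form canonically normalized by the local Lubin--Tate Fourier theory of \S\ref{ss:fourier_theory}. On the other hand, at a CM point $x \in \fY^\pm(\infty)(\kappa)$, the connected-\'etale decomposition $\cG_x = \LT \oplus F_\p/O_\p$ of the universal $O_\p$-divisible group matches the $\p$-adic decomposition of $A^\pm[\p^\infty]$ coming from the CM structure; the formal group on the $\LT$-component is identified, via $f_\pm$ and the choice of $\omega_\pm$, with the formal group of $A^\pm$ at the origin. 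Consequently $f_\pm^*\log_{\omega_\pm}\circ\Upsilon_\pm^{-1}$ satisfies the very same Lubin--Tate-normalized primitive characterization on $\fX(\infty)_{/x}$ as $\bM((f_\pm^*\omega_\pm)_\ord)(\langle 0\rangle)$. The two functions therefore coincide on every formal disk containing a CM point, hence on all of $\fY^\pm(\infty)$, and the proposition follows.
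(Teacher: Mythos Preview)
Your overall strategy matches the paper's: both reduce the proposition to identifying the weight-zero Mellin specialization with $\Upsilon_{\pm*}(f_\pm^*\log_{\omega_\pm})$ as primitives of $(f_\pm^*\omega_\pm)_\ord$, and the real content is killing the constant-of-integration ambiguity. Your final step, however, has a genuine gap.

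The argument that $f_\pm^*\log_{\omega_\pm}$ satisfies the same ``Lubin--Tate normalization'' as the Mellin primitive on a CM residue disk is not valid. The formal group of $A^\pm$ at its \emph{origin} is irrelevant: the disk $\fX(\infty)_{/x}$ is carried by $f_\pm$ to a neighborhood of $f_\pm(P^\pm)\in A^\pm$, which has no reason to be the identity, and the constant term $\log_{\omega_\pm}(f_\pm(P^\pm))$ is a genuine $p$-adic transcendental, not zero. The connected--\'etale splitting of $\cG_x$ says nothing about this constant, so the ``consequently'' in your last paragraph does not follow.

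The normalization you are implicitly pointing at is \emph{stability}: the local Mellin transform of \S\ref{ss:fourier_theory} lands in $\sO(\sB,K)^\heartsuit$, so on each disk $\bM(\cdot)(\langle 0\rangle)$ is the unique \emph{stable} primitive. What must be checked is that $\Upsilon_{\pm*}(f_\pm^*\log_{\omega_\pm})$ is stable as well, and this holds for a representation-theoretic reason you have not invoked: the hypothesis $f_\pm\in(\Pi^\pm)^\heartsuit_K$ together with Proposition~\ref{pr:lt_action}(3) forces the geometric stability condition of Definition~\ref{de:heartsuit_formal} (exactly as you already used for admissibility). With both primitives stable, their difference is a stable constant on each disk, hence zero. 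The paper makes this explicit and moreover globalizes it using Coleman integration: Proposition~\ref{pr:coleman} shows that $f_\pm^*\log_{\omega_\pm}$ is a Coleman integral of $f_\pm^*\omega_\pm$ on the finite-level affinoid $\fX(m,U^\p)$, so the two primitives differ by a locally constant function there, and then the fact that both lie in $\sM_\flat^0(m,K)^\heartsuit$ forces that constant to vanish.
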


\begin{proof}
We may choose a tame level $U^\p\in\Gamma$ that fixes $f_\pm$, and such that
$\chi$ is fixed by $U^\p\cap\dA^{\infty\times}_E$. We may realize $f_\pm$ as
a $K$-linear combination of morphisms from $X_{U^\p U_{\p,\pm m}}$ to $A^\pm$
for some sufficiently large integer $m\geq n$. By linearity, we may assume
$f_\pm$ is such a morphism.

For $\omega_\pm\in\rH^0(A^\pm,\Omega^1_{A^\pm})$, we have by Theorem
\ref{th:family} (3,4) that
\[\rd\bM((f_\pm^*\omega_\pm)_\ord)(\chi\res_{O^\times_{E_{\fP^c}}})
=\Theta_\ord\bM((f_\pm^*\omega_\pm)_\ord)(\chi\res_{O^\times_{E_{\fP^c}}})
=(f_\pm^*\omega_\pm)_\ord,\] which by definition is the restriction of
$f_\pm^*\omega_\pm$ to $\fX(m,U^\p)$. On the other hand, by Proposition
\ref{pr:coleman}, we know that $f_\pm^*\log_{\omega_\pm}$ is a Coleman
integral of $f_\pm^*\omega_\pm$ on (the generic fiber of) $\fX(m,U^\p)$.
Therefore,
\[\bM((f_\pm^*\omega_\pm)_\ord)(\chi\res_{O^\times_{E_{\fP^c}}})=f_\pm^*\log_{\omega_\pm}\]
on $\fX(m,U^\p)$ since both of them are Coleman integrals of
$f_\pm^*\omega_\pm$ on $\fX(m,U^\p)$ that belong to
$\sM_\flat^0(m,K)^\heartsuit$. The proposition follows by
\eqref{eq:heegner_cycle} and \eqref{eq:universal_period}.
\end{proof}

\subsection{Proof of main theorems}
\label{ss:proof_theorems}

Let the situation be as in Definition \ref{de:distribution_algebra}. Denote
by $\sC(\omega,K,V^\p)$ the (closed) subspace of $\sC(\omega,K)$ of functions
that are invariant under the right translation of $V^\p$, which is also a
closed subspace of $C(E^\times\backslash\dA^{\infty\times}_E/V^\p,K)$. By
duality, the strong dual of $\sC(\omega,K,V^\p)$ is canonically isomorphic to
$\sD(\omega,K,V^\p)$.

We consider totally definite (not necessarily incoherent) quaternion algebras
$\dB$ over $\dA$ such that for a finite place $v$ of $F$, $\epsilon(\dB_v)=1$
if $v$ is split in $E$ \emph{or} the Galois representation $\rho_{A,v}$
corresponds to a principal series.

For such a $\dB$, we choose an embedding as \eqref{eq:e_embedding}, which is
possible. We may define a representation
\[\Pi(\dB)_{A^\pm}^\tame={\bigotimes}'_M\Pi_{v,A^\pm},\]
where the restricted tensor product (over $M$) is taken over all finite
places $v\neq\p$, and $\Pi_{v,A^\pm}$ is an $M$-representation of
$\dB_v^\times$ determined by $\rho_{A^\pm,v}$ which is unique up to
isomorphism. In particular, if $\dB$ is incoherent, then
$\Pi(\dB)_{A^\pm}^\tame$ is isomorphic to the away-from-$\p$ component of
$\Pi(\dB)_{A^\pm}$. Let $\sI_\pm(\omega_A,K,V^\p)$ be the closed ideal of
$\sD(\omega_A,K,V^\p)$ generated by
\[\{\sQ(f_+,f_-)\res f_\pm\in(\Pi(\dB)_{A^\pm}^\tame)^{V^\p}\otimes_MK,\epsilon(\dB)=\pm1\},\]
where $\sQ(f_+,f_-)$ is similarly defined (as the product) in Lemma
\ref{le:matrix_tame}. It is topologically finitely generated. Let
$\sC_\pm(\omega_A,K,V^\p)$ be the subspace of $\sC(\omega_A,K,V^\p)$ of
functions lying in the kernel of every element in $\sI_\mp(\omega_A,K,V^\p)$,
which is closed. Put $\Xi(A,K,V^\p)=\Xi(A,K)\cap\sC(\omega_A,K,V^\p)$ and
$\Xi(\omega_A,K,V^\p)=\Xi(\omega_A,K)\cap\sC(\omega_A,K,V^\p)$, where
$\Xi(A,K)$ and $\Xi(\omega_A,K)$ are introduced in Definition
\ref{de:character_space}.

\begin{lem}\label{le:dichotomy}
Assume $V^\p\in\Gamma_E$ is sufficiently small. We have
\begin{enumerate}
  \item $\sI_+(\omega_A,K,V^\p)\cap\sI_-(\omega_A,K,V^\p)=0$;

  \item
      $\sI_+(\omega_A,K,V^\p)+\sI_-(\omega_A,K,V^\p)=\sD(\omega_A,K,V^\p)$;

  \item
      $\sC(\omega_A,K,V^\p)=\sC_+(\omega_A,K,V^\p)\oplus\sC_-(\omega_A,K,V^\p)$;

  \item the subset $\Xi(A,K,V^\p)$ is contained in and generates a dense
      subspace of $\sC_-(\omega_A,K,V^\p)$;

  \item $\sI_+(\omega_A,K,V^\p)$ is the closed ideal generated by
      elements that vanish on $\Xi(A,K,V^\p)$.
\end{enumerate}
\end{lem}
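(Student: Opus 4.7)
The plan is to reduce to a local statement at each finite place $v\neq\fp$ where the local quaternion algebra $\dB_v$ is allowed to vary, then use the local Tunnell--Saito dichotomy to produce an idempotent decomposition, and finally globalize. Let $S$ denote the (finite) set of finite places $v\neq\fp$ that are non-split in $E$ and for which $\rho_{A,v}$ is not a principal series; by the hypothesis on $\dB$, $\epsilon(\dB_v)$ may equal $\pm1$ for $v\in S$ and is forced to equal $+1$ at the remaining finite non-archimedean places, while $\epsilon(\dB_v)=-1$ at each of the $g$ archimedean places. Consequently $\epsilon(\dB)=(-1)^g\prod_{v\in S}\epsilon(\dB_v)$, and $\sI_\pm(\omega_A,K,V^\p)$ is the sum, over all sign tuples $(\epsilon_v)_{v\in S}$ with $(-1)^g\prod\epsilon_v=\pm1$, of the closed ideals generated by the corresponding products of local periods $\sQ(f_{+v},f_{-v})$.

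For each $v\in S$ denote by $\sI_{v,\pm}\subset\sD(\omega_v,K,V_v^\p)$ the closed ideal generated by those $\sQ(f_{+v},f_{-v})$ with $\epsilon(\dB_v)=\pm1$. The crucial local claim, valid for $V_v^\p$ sufficiently small, is that $\sI_{v,+}$ and $\sI_{v,-}$ are complementary and arise from orthogonal idempotents summing to $1$. Dually, every locally constant character $\chi_v$ of $E_v^\times$ with $\omega_v\chi_v|_{F_v^\times}=1$ admits a nonzero $(E_v^\times,\chi_v)$-equivariant functional on $\Pi_{v,A^+}\otimes\Pi_{v,A^-}$ for exactly one choice of $\dB_v$, determined by the local sign $\chi_v(-1)\eta_v(-1)\epsilon(1/2,\rho_{A,v},\chi_v)\in\{\pm1\}$, by Tunnell--Saito; by the explicit construction in Lemma \ref{le:matrix_tame}, this functional is realised by evaluation of $\sQ(f_{+v},f_{-v})$ at $\chi_v$. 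Hence $\sI_{v,\pm}$ annihilates characters whose local sign equals $\mp1$ and is non-degenerate on the opposite sign class, provided $V_v^\p$ is small enough to contain the fixed vectors needed to realise the Tunnell--Saito functional. Forming the completed tensor product of these local decompositions produces the global idempotent decomposition $\sD(\omega_A,K,V^\p)=\sI_+\oplus\sI_-$, which yields (1) and (2); passing to the continuous dual gives (3).

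For (4) and (5), observe that a locally algebraic character $\chi\in\Xi(\omega_A,K,V^\p)$ belongs to $\Xi(A,K,V^\p)$ precisely when its local signs multiply over $S$ to $(-1)^{g+1}$, i.e., when $\epsilon(\dB_\chi)=-1$; hence $\chi$ annihilates every local $\sQ(f_{+v},f_{-v})$ whose local sign disagrees, so $\Xi(A,K,V^\p)$ is killed by $\sI_+$ and lies inside $\sC_-(\omega_A,K,V^\p)$. Density of $\Xi(A,K,V^\p)$ in $\sC_-(\omega_A,K,V^\p)$, and equality in (5) between $\sI_+$ and the closed ideal of functions vanishing on $\Xi(A,K,V^\p)$, both follow from the density of locally algebraic characters in $\sC(\omega_A,K,V^\p)$ -- a consequence of the Amice/Schneider--Teitelbaum theory used in \Sec\ref{ss:fourier_theory} together with the Fr\'echet--Stein description of $\sD(\omega_A,K,V^\p)$ recalled in Remark \ref{re:nuclear} -- combined with the sign splitting $\Xi(\omega_A,K,V^\p)=\Xi(A,K,V^\p)\sqcup\Xi(A,K,V^\p)^{\r{op}}$ established above.

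The main obstacle is the local idempotent claim at supercuspidal $v\in S$: one must choose $V_v^\p$ simultaneously small at every $v\in S$ (finitely many) so that the local Hom-spaces computing Tunnell--Saito functionals are spanned by $V_v^\p$-fixed vectors, and invert the resulting finite matrix of local pairings on each character orbit to produce the idempotents $e_{v,\pm}$. This can be arranged since the conductors of the characters appearing in $\Xi(\omega_A,K,V^\p)$ and of the local representations $\Pi_{v,A^\pm}$ are bounded once $V^\p$ is fixed.
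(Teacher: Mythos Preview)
Your approach is correct in outline but takes a more constructive route than the paper. The paper argues entirely globally: for (1) it uses that $\Xi(\omega_A,K,V^\p)$ spans a dense subspace of $\sC(\omega_A,K,V^\p)$, so any $d\in\sI_+\cap\sI_-$ vanishes at every character (at each $\chi$, Saito--Tunnell forces one of the two ideals into $\ker(\mathrm{ev}_\chi)$), hence $d=0$; for (2) it observes that if $\sI_++\sI_-$ were proper it would lie in some closed maximal ideal $\fm$, and the resulting residue character $E^\times\backslash\dA_E^{\infty\times}/V^\p\to\sD/\fm$ would be annihilated by all local periods of both signs, contradicting Saito--Tunnell. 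Parts (3)--(5) then follow formally from (1), (2), and the sign partition of characters. No idempotents are constructed and no place-by-place factorization is used.

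Your route instead builds idempotents $e_{v,\pm}$ at each $v\in S$, where $F_v^\times\backslash E_v^\times$ is compact and $\sD(\omega_v,K,V_v^\p)$ is a finite product of fields, and multiplies them. Two points need tightening. First, ``completed tensor product'' is misleading: all you need is the \emph{finite} product $\prod_{v\in S}e_{v,\epsilon_v}$ of commuting idempotents, pushed into $\sD(\omega_A,K,V^\p)$ via the natural maps $\sD(\omega_v,K)\to\sD(\omega_A,K)$. Second, and more substantively, your argument yields $\sI_\pm\subset e_\pm\cdot\sD(\omega_A,K,V^\p)$ but not the reverse inclusion. To get $e_\pm\in\sI_\pm$ you must show that at each finite $v\notin S\cup\{\fp\}$ the local periods $\sQ(f_{+v},f_{-v})$ generate the unit ideal in $\sD(\omega_v,K,V_v^\p)$; your ``invert the matrix'' paragraph concerns only $v\in S$ and does not address this. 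The statement is true (at nonsplit such $v$ the algebra is again a finite product of fields; at split $v$ one reduces to a Laurent-polynomial ring and invokes the Nullstellensatz, using that the $\sQ$'s have no common zero over any extension by Saito--Tunnell), but it is exactly the content the paper's maximal-ideal argument for (2) is designed to bypass. Either supply that local unit-ideal verification, or simply import the paper's two-line argument for (2), after which your idempotent description of $\sI_\pm$ follows immediately.
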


If we put $\sD(A,K,V^\p)=\sD(\omega_A,K,V^\p)/\sI_+(\omega_A,K,V^\p)$, then
\[\sD(A,K)=\varprojlim_{V^\p\in\Gamma_E}\sD(A,K,V^\p).\]
We have $\sD(A,K,V^\p)\wtimes_KK'\simeq\sD(A,K',V^\p)$ and
$\sD(A,K)\wtimes_KK'\simeq\sD(A,K')$ for a complete field extension $K'/K$.

\begin{proof}
We first realize that $\Xi(\omega_A,K,V^\p)$ generates a dense subspace of
$\sC(\omega_A,K,V^\p)$. Thus (1) follows from the dichotomy theorem of
Saito--Tunnell \cites{Tun83,Sai93}. For (2), assume the converse and suppose
that $\sI_+(\omega_A,K,V^\p)+\sI_-(\omega_A,K,V^\p)$ is contained in a
(closed) maximal ideal $\fm$ with the residue field $K'$. Then all local
period distributions $\sQ(f_+,f_-)$ will vanish on the character
\[E^\times\backslash\dA^{\infty\times}_E/V^\p\xrightarrow{[\;]}\sD(\omega_A,K,V^\p)\to K',\]
which contradicts the theorem of Saito--Tunnell. Part (3) is a direct
consequence of (1) and (2). It is clear that $\Xi(A,K,V^\p)$ is contained in
$\sC_-(\omega_A,K,V^\p)$ and by Saito--Tunnell,
$\Xi(\omega_A,K,V^\p)\setminus\Xi(A,K,V^\p)\subset\sC_+(\omega_A,K,V^\p)$,
which together imply (4). Finally, (5) follows from (4).
\end{proof}

\begin{remark}\label{re:nuclear}
In fact, for sufficiently small $V^\p\in\Gamma_E$, the morphism $\bw\colon
D(O_\p^\anti,K)\to\sD(\omega_A,K,V^\p)$ \eqref{eq:weight_map} is injective
with the quotient that is a finite \'{e}tale $K$-algebra. We also have
$D(O_\p^\anti,K)\cap\sI_+(\omega_A,K,V^\p)=\{0\}$. Thus if $K$ is discretely
valued, $\sD(A,K,V^\p)$ is a (commutative) nuclear Fr\'{e}chet--Stein
$K$-algebras (defined for example in \cite{Eme}*{Definition 1.2.10}).
Moreover, it is not hard to see that the transition homomorphism
$\sD(A,K,V'^\p)\to \sD(A,K,V^\p)$ is finite \'{e}tale for $V'^\p\subset
V^\p$. The rigid analytic $MF_\p$-variety associated to $\sD(A,MF_\p,V^\p)$
is a smooth curve, which may be regarded as an eigencurve for the group
$\rU(1)_{E/F}$ of tame level $V^\p$, twisted by (the cyclotomic character)
$\omega_A$ and cut off by the condition that $\epsilon(1/2,\rho_A,\;)=-1$.
\end{remark}

\begin{definition}[$p$-adic $L$-function]\label{de:l_function}
Note that the union $\bigcup_{k\geq1}\Xi(\omega_A,K)^0_k$ is already dense in
$\sC(\omega_A,K)$. Now let $MF_\p^\lt F_\p^\ab\subset K\subset\dC_p$ be a
complete intermediate field. By Corollary \ref{co:ratio}, the ratios
\[\frac{\sP^+_\un(f_+)\sP^-_\un(f_-)}{\sQ(f_+,f_-)}\] for $f_\pm$ running over
$(\Pi(\dB)_{A^\pm})^\heartsuit_K$ with $\epsilon(\dB)=-1$, together define an
element
\[\sL(A)\in(\Lie A^+\otimes_{F^M}\Lie A^-)\otimes_{F^M}\sD(A,K)\] by Lemma
\ref{le:dichotomy}, which is the \emph{anti-cyclotomic $p$-adic $L$-function}
attached to $A$. It actually belongs to $(\Lie A^+\otimes_{F^M}\Lie
A^-)\otimes_{F^M}\sD(A,MF_\p^\lt)$ by the lemma below.
\end{definition}

\begin{lem}
The element $\sL(A)$ belongs to $(\Lie A^+\otimes_{F^M}\Lie
A^-)\otimes_{F^M}\sD(A,MF_\p^\lt)$.
\end{lem}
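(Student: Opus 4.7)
The plan is Galois descent. Let $K=MF_\p^\lt F_\p^\ab$ and $K_0=MF_\p^\lt$. Since $\sD(A,K)\simeq\sD(A,K_0)\wtimes_{K_0}K$ by Definition~\ref{de:character_space}(6), it suffices to show that $\sL(A)$ is fixed by $G:=\Gal(K/K_0)$, which factors through a quotient of $\Gal(F_\p^\ab/F_\p^\nr)\cong O_\p^\times$ via local Lubin--Tate reciprocity.

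First, the local period distribution $\sQ(f_+,f_-)$ from Proposition~\ref{pr:matrix_integral} already lies in $\sD(\omega_A,K_0)$: the tame factors from Lemma~\ref{le:matrix_tame} are defined over $MF_\p$, while the wild factor from Lemma~\ref{le:matrix_wild} is defined over any complete extension of $MF_\p$, in particular over $K_0$. Hence $\sQ(f_+,f_-)$ is tautologically $G$-invariant, and the descent reduces to proving that the numerator $\sP^+_\un(f_+)\cdot\sP^-_\un(f_-)$ is $G$-invariant.

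The $F_\p^\ab$-dependence of $\sP^\pm_\un(f_\pm)$ enters through two channels: the transition isomorphism $\Upsilon_\pm$ of \eqref{eq:infinity}, used in forming $(f_\pm^*\omega_\pm)_\ord=\Upsilon_{\pm*}f_\pm^*\omega_\pm$, and the identification of $\fY^\pm(\infty)$ with the $E^\times_\cl\backslash\dA^{\infty\times}_E$-torsor via the CM point $P^\pm\in Y^\pm(E^\ab)$. For $\sigma\in G$ with Lubin--Tate cocycle $a_\sigma\in O_\p^\times$, the Galois action on the trivializations $\nu_\pm$ in \eqref{eq:lt_data} shifts $\Upsilon_\pm$ by the $O_{E_\p}^\times$-action of \eqref{eq:action} with parameter $(1,a_\sigma^{\pm 1})$; combined with the Shimura reciprocity applied to $P^\pm$ and the $\dA^{\infty\times}_E$-equivariance of $\bc$ (recall $P^-=\bc P^+$), this produces a single element $s_\sigma\in\dA^{\infty\times}_E$ such that $\sigma\sP^\pm_\un(f_\pm)=[s_\sigma]^{\pm 1}\cdot\sP^\pm_\un(f_\pm)$. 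The opposite signs in the exponent reflect the opposite signs in the universal characters $\chi^\pm_\un=[\,\cdot\,]^{\pm 1}$, which in turn reflect the opposite $E^\times$-actions on the tangent spaces of $Y^\pm$ recorded in Definition~\ref{de:cm_subscheme}. Consequently,
\[\sigma\bigl(\sP^+_\un(f_+)\cdot\sP^-_\un(f_-)\bigr)=[s_\sigma]\cdot[s_\sigma]^{-1}\cdot\sP^+_\un(f_+)\cdot\sP^-_\un(f_-)=\sP^+_\un(f_+)\cdot\sP^-_\un(f_-),\]
so the ratio $\sL(A)$ is $G$-invariant and descends to $(\Lie A^+\otimes_{F^M}\Lie A^-)\otimes_{F^M}\sD(A,K_0)$.

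The main obstacle is the careful bookkeeping of the Galois action through the construction of the universal torus periods. One must reconcile two a priori different Galois actions on $\fY^\pm(\infty)\otimes_{O_\p^\nr}K$: the Lubin--Tate-theoretic action coming through $\nu_\pm$ and $\Upsilon_\pm$, and the Shimura-reciprocity action on the CM points $P^\pm$. The crucial point is that the induced twists on $\sP^+_\un$ and $\sP^-_\un$ are precisely mutually inverse Dirac distributions, which is forced by the opposite ways in which $E^\times$ acts on $Y^+$ versus $Y^-$ together with $\bc$ intertwining these actions.
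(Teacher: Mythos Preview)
Your approach via Galois descent is genuinely different from the paper's, and considerably more elaborate. The paper's proof is a direct two-line computation: by Corollary~\ref{co:ratio} the ratio defining $\sL(A)$ is independent of the test vectors, so one may choose $f_\pm\in(\Pi(\dB)_{A^\pm})^\heartsuit_{MF_\p^\lt}$ with the additional property that $f_+$ (respectively $\Pi(\dB)_{A^-}(\rJ)f_-$) is $O_{\fP^c}^\times$-invariant; then formula~\eqref{eq:period_ordinary} shows that $\sL(A)(\chi)\in(\Lie A^+\otimes_{F^M}\Lie A^-)\otimes_{F^M}MF_\p^\lt$ for every $\chi\in\bigcup_{k\geq 1}\Xi(A,MF_\p^\lt)^0_k$, which is a Zariski-dense subset. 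No Galois action is ever computed.

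Your Galois-descent strategy is conceptually reasonable, but as written it is only a sketch, and there are two genuine obstacles you have not overcome. First, the field $K_0=MF_\p^\lt$ is not discretely valued when $F_\p\neq\dQ_p$ (see \cite{ST01}*{Lemma 3.9}), so the extension $K/K_0$ is not of the classical sort; one must check that completed-tensor-product descent $\sD(A,K)\simeq\sD(A,K_0)\wtimes_{K_0}K$ actually identifies the $G$-invariants with $\sD(A,K_0)$ in this setting. Second, your central claim that $\sigma\sP^\pm_\un(f_\pm)=[s_\sigma]^{\pm 1}\cdot\sP^\pm_\un(f_\pm)$ with \emph{the same} $s_\sigma$ on both sides is asserted rather than proved: it requires tracking $\sigma$ simultaneously through the transition isomorphism $\Upsilon_\pm$, through the global Mellin transform $\bM$ (which itself uses the Lubin--Tate period), through the weight map $\bw$, and through the CM-point identification of $\sN^\pm(\infty,K)$, and then verifying that all extraneous factors cancel between the $+$ and $-$ sides. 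You acknowledge this as ``the main obstacle'' but do not carry it out. The paper's argument bypasses all of this by evaluating at enough characters, which is both shorter and avoids any delicate descent over a non-discrete base.
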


\begin{proof}
In the definition of $\sL(A)$, we only need to consider
$f_\pm\in(\Pi(\dB)_{A^\pm})^\heartsuit_{MF_\p^\lt}$ such that $f_+$ (resp.\
$\Pi(\dB)_{A^-}(\rJ)f_-$) is invariant under $O_{\fP^c}^\times$. Then for
$\chi\in\bigcup_{k\geq1}\Xi(A,MF_\p^\lt)^0_k$, the value $\sL(A)(\chi)$
belongs to $(\Lie A^+\otimes_{F^M}\Lie A^-)\otimes_{F^M}MF_\p^\lt$ by the
formula \eqref{eq:period_ordinary}.
\end{proof}

\begin{proof}[Proof of Theorem \ref{th:l_function_abelian}]
We only need to show that the element
\[\sL(A)\in(\Lie A^+\otimes_{F^M}\Lie A^-)\otimes_{F^M}\sD(A,MF_\p^\lt)\] introduced in Definition
\ref{de:l_function} satisfies \eqref{eq:interpolation}, which follows from
Corollary \ref{co:ratio}.
\end{proof}

\begin{proof}[Proof of Theorem \ref{th:waldspurger_abelian}]
It follows from Proposition \ref{pr:waldspurger_abelian} and Proposition
\ref{pr:matrix_integral}.
\end{proof}

\section{Remarks on cases of higher weights}
\label{s:higher_weights}

In this chapter, we discuss how to generalize our formulation of $p$-adic
Waldspurger formula to $p$-adic Maass forms of other weights. We keep the
setup in \Sec\ref{ss:maass_functions} and assume $\p$ is split in $E$. Let
$\iota_1,\dots,\iota_g\colon F\hookrightarrow\dC_p$ be $g$ different
embeddings. For each $\iota_i$, there are two embeddings
$\iota_i^\circ,\iota_i^\bullet\colon E\hookrightarrow\dC_p$ extending it,
such that $\iota_i^\circ$ induces $\fP$ if $\iota_i$ induces $\fp$.

Take a totally definite incoherent quaternion algebra $\dB$ over $\dA$, and
put $\dB_p=\dB\otimes_\dQ\dQ_p$. To simplify notation, we regard $X$ as
defined over $\dC_p$. Moreover, we assume
$\dB^\infty\not\simeq\Mat_2(\dA^\infty)$ when $F=\dQ$ to avoid extra care for
cusps. We also ignore all arguments like doing things on $X_U$ first then
taking limits to $X$.

Let $\ul{w}=(w;w_1,\dots,w_g)$ be $g+1$ integers with the same parity such
that $w_i\geq 2$. The embedding $\iota_i$ gives rise to the two-dimensional
standard representation $\std_i$ of $\dB_p^\times$ with $\dC_p$-coefficient.
Put
\[V_{\ul{w}}=\bigotimes_{i=1}^g\Sym^{w_i-2}(\std_i)
\otimes(\iota_i\Nm_{\dB_p/F_p})^{\frac{w-w_i+2}{2}},\] which is a
$\dC_p$-representation of $\dB_p^\times$ of dimension $\prod_{i=1}^g(w_i-1)$.
Let $U_p$ be a compact open subgroup of $\dB_p^\times$. We choose an
$O_{\dC_p}$-lattice $V_{\ul{w}}^\infty$ of $V_{\ul{w}}$ that is stable under
the restricted action of $U_p$. For $n\geq 1$, let $U_{p,n}$ be the kernel of
the induced $U_p$-representation $V_{\ul{w}}^\infty/p^nV_{\ul{w}}^\infty$,
which is a subgroup of finite index. Then the quotient
\[\(V_{\ul{w}}^\infty/p^nV_{\ul{w}}^\infty\times X_{U_{p,n}}\)/(U_p/U_{p,n})\]
defines a locally constant \'{e}tale $O_{\dC_p}/p^n$-module $V_{\ul{w}}^n$ on
$X_{U_p}$. Put
\[\sV_{\ul{w}}=(\varprojlim_{n\geq 1}V_{\ul{w}}^n)\otimes\dQ,\]
which is a $\dB^{\infty\times}$-equivariant $\dC_p$-local system of rank
$\prod_{i=1}^g(w_i-1)$ over $X_{U_p}$ and hence $X$ by restriction. It is
easy to see that up to isomorphism, $\sV_{\ul{w}}$ is independent of the
choice of $U_p$ and the lattice $V_{\ul{w}}^\infty$.

\begin{definition}[$p$-adic Maass form]
Denote by $X^\ord$ the ordinary locus in the rigid analytification of $X$. We
choose an exhausting family of basic wide open neighborhoods $\{X^\ord_r\}_r$
of $X^\ord$ parameterized by real numbers $0<r<1$ (see, for example,
\cite{Kas04}*{\Sec 9}).
\begin{enumerate}
  \item The space of \emph{$p$-adic Maass forms of weight $\ul{w}$} is
      defined to be
      \[\sA_{\dC_p}^{\ul{w}}(\dB^\times)=\varinjlim_{r}\Gamma(X^\ord_r,\sV_{\ul{w}}),\]
      where $\Gamma(X^\ord_r,\sV_{\ul{w}})$ denotes the space of locally
      analytic sections of $\sV_{\ul{w}}$ over $X^\ord_r$, and the
      transition maps in the colimit are restriction maps.
\index{$p$-adic Maass form of weight $\ul{w}$,
$\sA_{\dC_p}^{\ul{w}}(\dB^\times)$}\index{$p$-adic Maass form of weight
$\ul{w}$, $\sA_{\dC_p}^{\ul{w}}(\dB^\times)$!classical}

  \item A $p$-adic Maass form is \emph{classical} if it comes from some
      section $\phi\in\Gamma(X^\ord_r,\sV_{\ul{w}})$ that is \emph{the}
      Coleman primitive (\cite{Col94}*{\Sec 10}) of some element in
      $\rH^0(X,\sV_{\ul{w}}\otimes_{\dC_p}\Omega^1_X)$. (Strictly
      speaking, when $\ul{w}=(w;2,\dots,2)$, Coleman primitives of a
      global differential form are unique up to an element in
      $\rH^0(X,\sO_X)$.)
\end{enumerate}
In fact, the space $\sA_{\dC_p}^{\ul{w}}(\dB^\times)$ up to isomorphism, and
the notion of being classical do not depend on the choice of the family of
basic wide open neighborhoods. Moreover, it is not hard to see that Hecke
actions of $\dB^{\infty\times}$ preserve $\sA_{\dC_p}^{\ul{w}}(\dB^\times)$.
An irreducible $\dB^{\infty\times}$-subrepresentation $\pi$ of
$\sA_{\dC_p}^{\ul{w}}(\dB^\times)$ is \emph{classical} if one (and hence all)
of its nonzero members are classical. For a classical $\pi$, we may define
its \emph{dual} $\pi^\vee$, which is contained in
$\sA_{\dC_p}^{\ul{w}^\vee}(\dB^\times)$ where
$\ul{w}^\vee\colonequals(-w;w_1,\dots,w_g)$.
\end{definition}

Let $\ul{l}=(l,l_1,\dots,l_g)$ be $g$ integers with the same parity. The
embedding $\iota_i^\circ$ can be viewed as a $1$-dimensional
$\dC_p$-representation of $E_p^\times$. Put
\[W_{\ul{l}}=\bigotimes_{i=1}^g(\iota_i^\circ)^{l_i}\otimes(\iota_i\circ\Nm_{E_p/F_p})^{\frac{-l-l_i}{2}},\]
which is a $\dC_p$-representation of $E_p^\times$ of dimension $1$. In the
same manner, we have an $\dA^{\infty\times}_E$-equivariant $\dC_p$-local
system $\sW_{\ul{l}}$ of rank $1$ over $Y$. Similar to the space
$\sA_{\dC_p}^{\ul{w}}(\dB^\times)$, we may also define the notion of a
classical irreducible $\dA^{\infty\times}_E$-subrepresentation
$\sigma\subset\Gamma(Y^+,\sW_{\ul{l}})$, and its dual
$\sigma^\vee\subset\Gamma(Y^-,\sW_{\ul{l}^\vee})$. Recall that $Y$ is
contained in $X^\ord$. The following lemma is immediate.

\begin{lem}
Suppose $l=w$ and $|l_i|<w_i$ for $1\leq i\leq g$. Then the
$\dA^{\infty\times}_E$-coinvariant space of
$\Gamma(Y^\pm,\sV_{\ul{w}}\otimes\sW_{\ul{l}})$ is of dimension $1$.
\end{lem}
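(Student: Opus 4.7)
The plan is to reduce the statement to a purely algebraic character computation on the fiber of $\sV_{\ul{w}}\otimes\sW_{\ul{l}}$ at a single CM point, using transitivity of the $\dA^{\infty\times}_E$-action and then reading off the multiplicity of the trivial character in the restriction to the split torus $E_\p^\times$.

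First I would pick a base point $y_0\in Y^\pm$ and use the transitive $\dA^{\infty\times}_E$-action (guaranteed by Definition \ref{de:cm_subscheme}) to identify $Y^\pm\simeq E^\times\backslash\dA^{\infty\times}_E$, the stabilizer of $y_0$ being the image of the diagonal embedding $E^\times\hookrightarrow\dA^{\infty\times}_E$. Since $\sV_{\ul{w}}\otimes\sW_{\ul{l}}$ is $\dA^{\infty\times}_E$-equivariant, Frobenius reciprocity identifies $\Gamma(Y^\pm,\sV_{\ul{w}}\otimes\sW_{\ul{l}})$ with an induced representation, and hence identifies its $\dA^{\infty\times}_E$-coinvariants with the $E^\times$-coinvariants of the fiber $V_{\ul{w}}\otimes W_{\ul{l}}$ at $y_0$. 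Here $E^\times$ acts on $V_{\ul{w}}$ through $\sfe_\p\colon E_\p^\times\hookrightarrow\dB_\p^\times$ and on $W_{\ul{l}}$ tautologically.

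Next I would compute the decomposition of $V_{\ul{w}}|_{\sfe_\p(E_\p^\times)}$. Since $\p$ splits in $E$ and $\sfe_\p$ identifies $E_\p^\times$ with the diagonal torus of $\Mat_2(F_\p)$, the $\dC_p$-linear extension of $\std_i$ along $\iota_i$ restricts to $\iota_i^\bullet\oplus\iota_i^\circ$ on $E_\p^\times$ (independently of whether $\iota_i$ induces $\p$ or another place above $p$, since any quadratic separable extension splits after base change to $\dC_p$). Consequently
\[
\Sym^{w_i-2}(\std_i)|_{E_\p^\times}=\bigoplus_{j_i=0}^{w_i-2}(\iota_i^\bullet)^{j_i}(\iota_i^\circ)^{w_i-2-j_i},
\]
and twisting by $(\iota_i\Nm_{\dB_\p/F_\p})^{(w-w_i+2)/2}=(\iota_i^\bullet\iota_i^\circ)^{(w-w_i+2)/2}$ yields the character
\[
(\iota_i^\bullet)^{a_i}(\iota_i^\circ)^{b_i},\qquad a_i+b_i=w,\quad a_i-b_i\in\{-(w_i-2),-(w_i-4),\dots,w_i-2\},
\]
on the $j_i$-summand. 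A parallel computation identifies $W_{\ul{l}}$ with the single character $\prod_i(\iota_i^\bullet)^{-(l+l_i)/2}(\iota_i^\circ)^{(l_i-l)/2}$.

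Finally, the weight-zero part of $V_{\ul{w}}\otimes W_{\ul{l}}$ appears exactly when $a_i=(l+l_i)/2$ and $b_i=(l-l_i)/2$ for every $i$. The constraint $a_i+b_i=w$ forces $l=w$, and the constraint $a_i-b_i=l_i$ admits a (unique) solution in the allowed range iff $|l_i|\leq w_i-2$ and $l_i\equiv w_i\pmod 2$; granted the shared parity of the entries of $\ul{w}$ and $\ul{l}$, this is precisely $|l_i|<w_i$. Thus the weight-zero subspace is one-dimensional. Since $E^\times$ is Zariski dense in $E_\p^\times$ and the action is algebraic (hence semisimple), $E^\times$-coinvariants coincide with this weight-zero subspace, and the same count applies verbatim to $Y^-$ (the $\pm$ sign only interchanges $\iota_i^\circ$ with $\iota_i^\bullet$ in the parameterization). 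The main technical point will be making precise the identification $Y^\pm\simeq E^\times\backslash\dA^{\infty\times}_E$ together with the matching of the equivariant structure on $\sV_{\ul{w}}\otimes\sW_{\ul{l}}$; in particular, one needs to verify that the induced action of $E^\times$ on the geometric fiber of the \'etale local system $\sV_{\ul{w}}$ at $y_0$ is indeed given by the algebraic formula above, with no additional Galois-theoretic twist. Once this compatibility is settled, the rest is elementary character combinatorics on a split torus.
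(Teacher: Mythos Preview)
Your proposal is correct and is precisely the natural unpacking of what the paper means by ``immediate'': reduce to the fiber at a CM point via transitivity, then count the trivial character in $V_{\ul{w}}\otimes W_{\ul{l}}$ restricted to the torus. One small notational slip: $E^\times$ acts on $V_{\ul{w}}$ through $\sfe_p\colon E_p^\times\hookrightarrow\dB_p^\times$ (all places above $p$), not just $\sfe_\p$; your subsequent computation, which runs over all embeddings $\iota_i$, already handles this correctly.
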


From now on, we assume $l=w$ and $|l_i|<w_i$ for $1\leq i\leq g$. Denote by
$\Gamma(Y^\pm,\sV_{\ul{w}}\otimes\sW_{\ul{l}})_\flat$ the
$\dA^{\infty\times}_E$-coinvariant (quotient) space of
$\Gamma(Y^\pm,\sV_{\ul{w}}\otimes\sW_{\ul{l}})$, with the quotient map
formally denoted by the integration $\int_{Y^\pm}\rrd y$. For
$\phi\in\sA_{\dC_p}^{\ul{w}}(\dB^\times)$ and
$\varphi_\pm\in\Gamma(Y^\pm,\sW_{\ul{l}})$, consider
\[\sP_{Y^\pm}(\phi,\varphi_\pm)=\int_{Y^\pm}\phi(y)\varphi_\pm(y)\rrd y.\]
Now we adopt the $\pm$ convention for notation as in \Sec\ref{ss:l_function}
and below. In particular, we have classical representations
$\pi^\pm\subset\sA_{\dC_p}^{\ul{w}^\pm}(\dB^\times)$ and
$\sigma^\pm\subset\Gamma(Y^+,\sW_{\ul{l}^\pm})$. We make the following
choices:
\begin{itemize}
  \item an \emph{abstract conjugation}, that is, an
      $\dA^{\infty\times}_E$-equivariant automorphism $\bc$ of $Y$
      switching $Y^+$ and $Y^-$;
  \item a duality pairing
      $(\sV_{\ul{w}^+}\otimes\sW_{\ul{l}^+})\times\bc^*(\sV_{\ul{w}^-}\otimes\sW_{\ul{l}^-})\to\ul{\dC_p}$
      of local systems over $Y^+$;
\end{itemize}
They induce a pairing
\[(\;,\;)_\flat\colon\Gamma(Y^+,\sV_{\ul{w}^+}\otimes\sW_{\ul{l}^+})_\flat\times\Gamma(Y^-,\sV_{\ul{w}^-}\otimes\sW_{\ul{l}^-})_\flat\to\dC_p.\]
The $p$-adic Waldspurger formula in this generality would be a formal seeking
for the value of
$(\sP_{Y^+}(\phi_+,\varphi_+),\sP_{Y^-}(\phi_-,\varphi_-))_\flat$ for
$\phi_\pm\in\pi^\pm$ and $\varphi_\pm\in\sigma^\pm$, in terms of certain
$p$-adic $L$-function.

\appendix

\section{Compatibility of logarithm and Coleman integral}
\label{s:compatibility_logarithm}

In this appendix, we generalize a result of Coleman in \cite{Col85} about the
compatibility of $p$-adic logarithm and Coleman integral. Such result will
only be used in the proof of Proposition \ref{pr:waldspurger_abelian}.

Let $F$ be a local field contained in $\dC_p$ with the ring of integers $O_F$
and the residue field $k$. Let $X$ be a quasi-projective scheme over $F$ and
$U\subset X^\rig$ an affinoid domain with good reduction. We say a closed
rigid analytic $1$-form $\omega$ on $U$ is \emph{Frobenius proper} if there
exits a Frobenius endomorphism $\phi$ of $U$ and a polynomial $P(X)$ over
$\dC_p$ such that $P(\phi^*)\omega$ is the differential of a rigid analytic
function on $U$ and such that no root of $P(X)$ is a root of unity.
Therefore, by \cite{Col85}*{Theorem 2.1}, there exits a locally analytic
function $f_\omega$ on $U(\dC_p)$, unique up to an additive constant on each
geometric connected component, such that
\begin{itemize}
  \item $\rd f_\omega=\omega$;
  \item $P(\phi^*)f_\omega$ is rigid analytic.
\end{itemize}
Such $f_\omega$ is known as a \emph{Coleman integral} of $\omega$ on $U$,
which is independent of the choice of $P$ \cite{Col85}*{Corollary 2.1b}.

\begin{proposition}\label{pr:coleman}
Let $X$ and $U$ be as above. Let $A$ be an abelian variety over $F$ which has
either totally degenerate reduction or potentially good reduction. For a
morphism $f\colon X\to A$ and a differential form $\omega\in\Omega^1(A/F)$,
$f^*\omega\res_U$ is Frobenius proper which admits $f^*\log_\omega\res_U$ as
a Coleman integral, where $\log_\omega\colon A(\dC_p)\to\dC_p$ is the
$p$-adic logarithm associated to $\omega$.
\end{proposition}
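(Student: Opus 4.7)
The plan is to treat the potentially-good and totally-degenerate cases separately, reduce via locality to a single residue disk of $A^{\rig}$, establish there that $\log_\omega$ is a Coleman integral of $\omega$ using the explicit local structure (formal group in case (a), torus in case (b)), and finally transport the statement through $f$. Both properties at issue — Frobenius-properness of $f^*\omega$ and the identification of $f^*\log_\omega$ as its Coleman integral — are local on $U$ and descend along finite base change of $F$. After possibly enlarging $F$, shrinking $U$, and passing to a finite open cover, I may assume that $f(U)$ lies in a single residue disk $D \subset A^{\rig}$, where in case (a) $A$ has good reduction over $O_F$ with smooth model $\cA/O_F$ and $D$ is a translate of the formal group $\hat\cA$; in case (b) $A^{\rig} \simeq T^{\rig}/\Lambda$ with $T \simeq \bG_m^g$ split, and $D$ lifts to a residue disk $\widetilde D \subset T^{\rig}$.

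In case (a), $\omega|_D$ corresponds via translation by an $O_F$-point to an invariant differential on $\hat\cA$, whose formal primitive is exactly $\log_\omega|_D$. Pick a Frobenius lift $\phi$ on $\cA$ (existing after at most a further finite unramified extension of $F$); it induces an endomorphism of $\hat\cA$ whose characteristic polynomial $P$ on $\Lie(\hat\cA)$ has algebraic-integer roots, none of which is a root of unity. Then $P(\phi^*)$ annihilates invariant differentials and their formal primitives on $\hat\cA$, exhibiting $\omega|_D$ as Frobenius proper with Coleman integral $\log_\omega|_D$; this essentially reproduces \cite{Col85}. In case (b), $\omega$ lifts on $\widetilde D$ to $\sum c_i\,\rd T_i/T_i$ and $\log_\omega$ to $\sum c_i \log T_i$; the standard Frobenius $T_i \mapsto T_i^p$ together with $P(X)=X-p$ directly verifies both Frobenius-properness and the Coleman-integral property, which then descends to $D$.

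The final step is to transport the Coleman-integral statement from $D$ to $U$ via $f$. Choose a Frobenius lift $\phi_U$ on $U$ (possible by good reduction of $U$). The two maps $f\circ\phi_U$ and $\phi_D\circ f$ from $U$ to $A^{\rig}$ reduce to the same map on special fibers, so by rigidity of morphisms into $A$ they differ by a morphism valued in the formal group in case (a), and by a bounded rigid-analytic perturbation in case (b). Consequently, $f^*$ intertwines the Frobenius polynomial relations up to rigid-analytic corrections: one gets $P(\phi_U^*)(f^*\omega) = f^* P(\phi_D^*)\omega + \rd r$ and $P(\phi_U^*)(f^*\log_\omega) = f^* P(\phi_D^*)\log_\omega + r$ for some rigid-analytic function $r$ on $U$, from which both conclusions follow. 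The technical heart of the argument is precisely this last step: carefully tracking the rigid-analytic corrections arising from the failure of $f$ to intertwine the two Frobenius lifts. Once formalized using Coleman's general rigidity results for Frobenius-finite locally analytic functions on affinoids with good reduction (\cite{Col85}*{\Sec 2}), the proposition follows.
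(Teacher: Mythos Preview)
Your approach is broadly correct and parallels the paper's, but the paper's argument is shorter because it leans more directly on Coleman's existing results rather than reproving them. Two specific differences are worth noting.

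First, for the good-reduction case you rebuild the argument on the formal group $\hat\cA$ and its Frobenius eigenvalues; the paper simply invokes \cite{Col85}*{Theorem 2.8} (which is exactly the statement that $\log_\omega$ is a Coleman integral of $\omega$ on an abelian variety with good reduction) together with \cite{Col85}*{Proposition 2.2} for functoriality. Your reconstruction is fine, but it is precisely what Theorem~2.8 already packages.

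Second, and more substantively, in the totally-degenerate case the paper does \emph{not} reduce to a single residue disk $D\subset A^{\rig}$. Instead it uses the N\'eron mapping property to extend $f$ over the smooth locus of a formal model of $X$, and then observes that $f(U)$ lands in $\cA^\circ_\eta$, the rigid-analytic multi-torus $\Sp F\langle T_1^{\pm1},\dots,T_d^{\pm1}\rangle$. This is an affinoid with good reduction, so Coleman's framework applies to it directly: one checks that $\omega|_{\cA^\circ_\eta}$ is Frobenius proper (via $\phi(T_i)=T_i^q$ and $P(X)=X-q$) with Coleman primitive $\log_\omega|_{\cA^\circ_\eta}$, and then \cite{Col85}*{Proposition 2.2} pulls everything back to $U$ in one stroke. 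Your residue disks $D$ are open polydisks, not affinoids, so Coleman's definitions do not apply to them as stated; and your ``transport'' step, where you compare $f\circ\phi_U$ with $\phi_D\circ f$ and chase the correction terms, is essentially a reproof of Proposition~2.2. The paper avoids both issues by working on the affinoid $\cA^\circ_\eta$ and citing Proposition~2.2.

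In short: your outline would go through once the transport step is made rigorous, but the paper's route---work on the affinoid neutral component of the N\'eron model rather than on residue disks, and cite Coleman's functoriality rather than redo it---is cleaner and sidesteps the technical bookkeeping you flag as the ``heart of the argument.''
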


\begin{proof}
We may assume $X$ is projective, and replace $F$ by a finite extension.
Therefore, we may assume $A$ has good reduction, or split totally degenerate
reduction, that is, the connected neutral component $\cA^\circ_s$ of the
special fiber $\cA_s$ of the N\'{e}ron model $\cA$ of $A$ is isomorphic to
$\Gm{k}^d$, where $d$ is the dimension of $A$. The first case follows from
\cite{Col85}*{Theorem 2.8, Proposition 2.2}.

Now we consider the second case. Denote by $\cA^\circ_\eta$ the analytic
domain of $A^\rig$ of points whose reduction is in $\cA^\circ_s$. By the
well-known uniformization, we have $A^\rig\simeq(\Gm{F}^\rig)^d/\Lambda$ for
a lattice $\Lambda\subset\Gm{F}^d(F)$. Moreover, $\cA^\circ_\eta$ is
isomorphic to $\Sp F\ang{T_1,\dots,T_d,T_1^{-1},\dots,T_d^{-1}}$, the rigid
analytic multi-torus of multi-radius $1$.

Choose an admissible covering $\sU$ of $X^\rig$ containing $U$, which
determines a formal model $X_\sU$ of $X$ over $O_F$. Since $X$ is projective,
we may assume $X_\sU$ is algebraic. Let $Z$ be the non-smooth locus of
$X_\sU$ over $O_F$. The set of closed points of $X$ whose reduction is not in
$Z$ forms an analytic domain $W$ of $X^\rig$. Since $U$ has good reduction,
we have $U\subset W$. By N\'{e}ron mapping property, the morphism $f$ extends
unique to a morphism $X_\sU-Z\to\cA$, which induces a morphism $f'\colon U\to
A^\rig$. Without loss of generality, we assume $f'(U)$ is contained in
$\cA^\circ_\eta$. By \cite{Col85}*{Proposition 2.2}, we only need to show
that $\omega\res_{\cA^\circ_\eta}$ is Frobenius proper and
$\log_\omega\res_{\cA^\circ_\eta}$ is a Coleman integral of it.

In fact, we have
\[\left\{\left.\omega\res_{\cA^\circ_\eta}\right|\omega\in\Omega^1(A/F)\right\}=
\Span_F\left\{\frac{\rd T_1}{T_1},\cdots,\frac{\rd T_d}{T_d}\right\}.\] By
linearity, we may assume
$\omega^\circ\colonequals\omega\res_{\cA^\circ_\eta}=\rd T_1/T_1$. We choose
the Frobenius endomorphism on $\cA^\circ_\eta$ to be given by
$\phi((T_1,\dots,T_d))=(T_1^q,\dots,T_d^q)$ where $q=|k|$. We have that
$P(\phi^*)\omega^\circ=0$ for $P(X)=X-q$. On the other hand, the $p$-adic
logarithm $\log$ on $\Sp F\ang{T_1,T_1^{-1}}$ is also killed by $P(\phi^*)$.
Therefore, the function $(\log,1,\dots,1)$ on $\Sp
F\ang{T_1,T_1^{-1}}\times\cdots\times\Sp
F\ang{T_d,T_d^{-1}}\simeq\cA^\circ_\eta$ is a Coleman integral of
$\omega^\circ$, which coincides with the restriction of $\log_\omega$.
\end{proof}

\section{Serre--Tate local moduli for $\cO$-divisible groups (d'apr\`{e}s N.~Katz)}
\label{s:serre_tate}

In this appendix, we generalize a classical theorem of Katz \cite{Kat81}
describing the Kodaira--Spencer isomorphism in terms of the Serre--Tate
coordinate for ordinary $p$-divisible groups to ordinary $\cO$-divisible
groups. Only Theorem \ref{th:serre_tate} and Theorem \ref{th:o_main_a} will
be used in the main part of the article. Some notation in this appendix may
be different from those in \Sec\ref{ss:notation_conventions}.

\subsection{$\cO$-divisible groups and Serre--Tate coordinates}
\label{ss:o_divisible}

Let $F$ be a finite field extension of $\dQ_p$ where $p$ is a rational prime.
Denote by $\breve{F}$ the completion of a maximal ramified extension of $F$.
The ring of integers of $F$ (resp.\ $\breve{F}$) is denoted by $\cO$ (resp.\
$\breve\cO$). Let $k$ be the residue field of $\breve\cO$, which is
isomorphic to $\dF_p^\ac$. For a $p$-divisible group $G$ over $\Spec R$, we
denote by $\Omega(G/R)$ the $R$-modules of invariant differentials of $G$
over $R$, which is the dual $R$-module of the tangent space $\Lie(G/R)$ at
the identity.

Let $S$ be an $\breve\cO$-scheme. Recall that an $\cO$-divisible group over
$S$ is a $p$-divisible group $G$ over $S$ with an action by $\cO$ such that
the induced action of $\cO$ on the sheaf $\ul\Lie(G/S)$ coincides with the
natural action as an $\sO_S$-module (hence an $\cO$-module). Denote by
$\BT^\cO_S$ the category of $\cO$-divisible groups over $S$, which is an
abelian category. We omit the superscript $\cO$ if it is $\dZ_p$. The height
$h$ of $G$, as a $p$-divisible group, must be divisible by $[F:\dQ_p]$. We
define the \emph{$\cO$-height} of $G$ to be $[F:\dQ_p]^{-1}h$. An
$\cO$-divisible group $G$ is connected (resp.\ \'{e}tale) if its underlying
$p$-divisible group is. We denote by $\LT$ the Lubin--Tate $\cO$-group over
$\Spec\breve\cO$, which is unique up to isomorphism. We use the same notation
for its base change to $S$.

For an $\cO$-divisible group $G$ over $S$, define its \emph{$\cO$-Cartier
dual} to be $G^D\colonequals\varinjlim_n\ul\Hom_\cO(G[p^n],\LT[p^n])$
\cite{Fal02}. An $\cO$-divisible group $G$ is \emph{ordinary} if $(G^0)^D$ is
\'{e}tale, where $G^0$ is the connected part of $G$. Denote by $\Tp
G=\varprojlim_n G[p^n]$ the Tate module functor. Denote by
$\Nilp_{\breve\cO}$ the category of $\breve\cO$-schemes on which $p$ is
locally nilpotent.

\begin{theorem}[Serre--Tate coordinate]\label{th:serre_tate}
Let $\bG$ be an ordinary $\cO$-divisible group over $k$. Consider the moduli
functor $\fM_\bG$ on $\Nilp_{\breve\cO}$ such that for every
$\breve\cO$-scheme $S$ on which $p$ is locally nilpotent, $\fM_\bG(S)$ is the
set of isomorphism classes of pairs $(G,\varphi)$ where $G$ is an object in
$\BT^\cO_S$ and $\varphi\colon
G\times_S(S\otimes_{\breve\cO}k)\to\bG\times_{\Spec
k}(S\otimes_{\breve\cO}k)$ is an isomorphism. Then $\fM_\bG$ is canonically
pro-represented by the $\breve\cO$-formal scheme
$\Hom_\cO(\Tp\bG(k)\otimes_\cO\Tp\bG^D(k),\LT)$.

In particular, for every artinian local $\breve\cO$-algebra $R$ with the
maximal ideal $\fm_R$ and $G/R$ a deformation of $\bG$, we have a pairing
\[q(G/R;\;,\;)\colon\Tp\bG(k)\otimes_\cO\Tp\bG^D(k)\to\LT(R)=1+\fm_R.\]
It satisfies:
\begin{enumerate}
  \item For every $\alpha\in\Tp\bG(k)$ and $\alpha_D\in\Tp\bG^D(k)$,
      \[q(G/R;\alpha,\alpha_D)=q(G^D/R;\alpha_D,\alpha).\]

  \item Suppose we have another ordinary $\cO$-divisible groups $\bH$
      over $k$, and its deformation $H$ over $R$. Let
      $\b{f}\colon\bG\to\bH$ be a homomorphism and $\b{f}^D$ be its dual.
      Then $\b{f}$ lifts to a (unique) homomorphism $f\colon G\to H$ if
      and only if
      \[q(G/R;\alpha,\b{f}^D\beta_D)=q(H/R;\b{f}\alpha,\beta_D)\]
      for every $\alpha\in\Tp\bG(k)$ and $\beta_D\in\Tp\bH^D(k)$.
\end{enumerate}
\end{theorem}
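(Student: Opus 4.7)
The plan is to follow Katz's original argument in \cite{Kat81} closely, replacing $\dZ_p$ by $\cO$ and $\Gmf$ by $\LT$ throughout, with due care for $\cO$-linearity. First I would establish, for every artinian local $\breve\cO$-algebra $R$ with residue field $k$ and every deformation $G/R$ of $\bG$, the canonical connected-\'{e}tale exact sequence $0\to G^0\to G\to G^{\et}\to 0$ in $\BT^\cO_R$, and show that the two outer terms are \emph{canonical lifts}: $G^{\et}$ is pinned down by rigidity of \'{e}tale $\cO$-divisible groups, and $G^0$ is pinned down by applying the same rigidity to its $\cO$-Cartier dual $(\bG^0)^D$, which is \'{e}tale by the ordinariness hypothesis, and then dualising back via Faltings's $\cO$-Cartier duality \cite{Fal02}. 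Once the end terms are so determined, $\fM_\bG(R)$ is identified with the group of $\cO$-linear fppf extension classes $\Ext^1_\cO(G^{\et}_\can,G^0_\can)$ in the category of fppf sheaves of $\cO$-modules; such extensions automatically remain $\cO$-divisible by the snake lemma applied to $p^n$-torsion.

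For the Ext-computation, I would use that $G^{\et}_\can$ becomes canonically isomorphic to $\underline{\Tp\bG^{\et}(k)}\otimes_\cO F/\cO$, and the short exact sequence $0\to\cO\to F\to F/\cO\to 0$ together with the vanishing of $\Hom_\cO(F,G^0_\can)$ and $\Ext^1_\cO(F,G^0_\can)$ (the former by $\p$-power torsion of $G^0_\can$, the latter by flatness of $F$ over $\cO$) to identify
\[\Ext^1_\cO(F/\cO,\,G^0_\can)\cong G^0_\can(R),\]
whence $\fM_\bG(R)\cong\Hom_\cO(\Tp\bG^{\et}(k),\,G^0_\can(R))$. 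A second application of $\cO$-Cartier duality, using that $(\bG^0)^D$ is \'{e}tale, yields the canonical isomorphism $G^0_\can(R)\cong\Hom_\cO(\Tp(\bG^0)^D(k),\LT(R))$. Combining with the equalities $\Tp\bG^{\et}(k)=\Tp\bG(k)$ and $\Tp(\bG^0)^D(k)=\Tp\bG^D(k)$ (valid because $\bG$ is ordinary and $k$ is algebraically closed, so $\Tp\bG^0(k)=0=\Tp(\bG^{\et})^D(k)$) produces
\[\fM_\bG(R)\cong\Hom_\cO\bigl(\Tp\bG(k)\otimes_\cO\Tp\bG^D(k),\,\LT(R)\bigr),\]
from which the pairing $q(G/R;-,-)$ is read off. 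Checking that all identifications are natural in $R$ under infinitesimal thickenings $R\twoheadrightarrow R'$ upgrades this pointwise bijection to the claimed pro-representability by the $\breve\cO$-formal scheme $\Hom_\cO(\Tp\bG(k)\otimes_\cO\Tp\bG^D(k),\LT)$.

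Symmetry (1) then follows by verifying that the whole construction commutes with $\cO$-Cartier duality, which interchanges $G^0$ and $(G^{\et})^D$ and transposes the extension class; this rests ultimately on the self-duality of the canonical pairing $\Tp\bG\otimes_\cO\Tp\bG^D\to\Tp\LT$. The lifting criterion (2) is a direct translation of the fact that $\b f\colon\bG\to\bH$ extends to $f\colon G\to H$ iff it carries the extension class of $G$ to that of $H$, which, unwound through the isomorphisms above, is exactly the stated identity. The main technical obstacle I expect is the careful verification of the Ext-computation in the $\cO$-linear fppf setting and the naturality of $G^0_\can(R)\cong\Hom_\cO(\Tp(\bG^0)^D(k),\LT(R))$ in $R$; this requires the $\cO$-divisible analogue of Messing's deformation theory and the fact, proved in \cite{Fal02}, that $\cO$-Cartier duality on $\BT^\cO$ is exact and compatible with arbitrary base change.
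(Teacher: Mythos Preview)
Your proposal is correct and follows essentially the same route as the paper: both adapt Katz's argument in \cite{Kat81} by replacing $\dZ_p$ by $\cO$ and $\Gmf$ by $\LT$, identifying the deformation with the extension class of the connected--\'{e}tale sequence, computing that $\Ext$ via the pushout homomorphism $\varphi_{G/R}\colon\Tp\bG(k)\to G^0(R)$, and then using the isomorphism $G^0\simeq\Hom_\cO(\Tp\bG^D(k),\LT)$ coming from $\cO$-Cartier duality. Your treatment of (2) via comparison of extension classes is exactly what the paper does. The only notable difference is in (1): where you invoke naturality of the construction under $\cO$-Cartier duality at a high level, the paper makes this concrete by writing $q(G/R;\alpha,\alpha_D)=E_{G,p^n}(\langle p^n\rangle\alpha(n),\alpha_D(n))$ for $n$ with $\fm_R^{n+1}=0$ and then proving a small lemma that $E_{G,p^n}(x,y)=e_{p^n}(x,Y)$ for any lift $Y\in G^D[p^n](R')$ of $y$ over a finite flat extension $R'/R$, after which the symmetry reduces verbatim to Katz's computation. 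You should expect to carry out this explicit torsion-level step when you write the details of (1).
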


By abuse of notation, we will use $\fM_\bG$ to denote the formal scheme
$\Hom_\cO(\Tp\bG(k)\otimes_\cO\Tp\bG^D(k),\LT)$. The proof of the theorem
follows exactly in the way of \cite{Kat81}*{Theorem 2.1}.

\begin{proof}
The fact that $\fM_G$ is pro-presentable is well-known. Now we determine the
representing formal scheme.

Since $\bG$ is ordinary, we have a canonical isomorphism
\[\bG\simeq \bG^0\times\Tp\bG(k)\otimes_\cO F/\cO.\] By the
definition of $\cO$-Cartier duality, we have a morphism
\[e_{p^n}\colon\bG[p^n]\times\bG^D[p^n]\to\LT[p^n].\] The restriction of the
first factor to $\bG^0[p^n]$ gives rise an isomorphism
\[\bG^0[p^n]\xrightarrow{\sim}\Hom_\cO(\bG^D[p^n](k),\LT[p^n])\]
of group schemes over $k$ preserving $\cO$-actions. Passing to limit, we
obtain an isomorphism of $\cO$-formal groups over $k$
\[\bG^0\xrightarrow{\sim}\Hom_\cO(\Tp\bG^D(k),\LT),\] which
induces a pairing
\[E_\bG\colon\bG^0\times\Tp\bG^D(k)\to\LT.\]

Let $G/R$ be a deformation of $\bG$, then we have an extension
\begin{align}\label{eq:o_extension}
\xymatrix{ 0
\ar[r] & G^0 \ar[r] & G \ar[r] & \Tp\bG(k)\otimes_\cO F/\cO\ar[r]
& 0}
\end{align}
of $\cO$-divisible groups. We have pairings
\begin{align*}
E_{G,p^n}&\colon G^0[p^n]\times\bG^D[p^n]\to\LT[p^n], \\
E_G&\colon G^0\times\Tp\bG^D(k)\to\LT,
\end{align*}
which lift $e_{p^n}$ and $E_\bG$, respectively.

Similar to the $p$-divisible group case, the extension \eqref{eq:o_extension}
is obtained from the extension
\[\xymatrix{ 0
\ar[r] & \Tp\bG(k) \ar[r] & \Tp\bG(k)\otimes_\cO F \ar[r] &
\Tp\bG(k)\otimes_\cO F/\cO\ar[r] & 0}\]
by pushing out along a unique $\cO$-linear homomorphism
\[\varphi_{G/R}\colon \Tp\bG(k)\to G^0(R).\]
The homomorphism $\varphi_{G/R}$ may be recovered from \eqref{eq:o_extension}
in the way described in \cite{Kat81}*{page 151}. It is the composite
\[\Tp\bG(k)\to \Tp\bG[p^n](k)\xrightarrow{\ang{p^n}}G^0(R)\]
for any $n\geq 1$ such that $\fm_R^{n+1}=0$. Therefore, from $G/R$, we obtain
a pairing
\[q(G/R;\;,\;)=E_G(R)\circ(\varphi_{G/R},\id)\colon\Tp\bG(k)\otimes_\cO\Tp\bG^D(k)\to\LT(R)=1+\fm_R.\]
This shows that the functor $\fM_\bG$ is canonically pro-represented by the
$\breve\cO$-formal scheme $\Hom_\cO(\Tp\bG(k)\otimes_\cO\Tp\bG^D(k),\LT)$.

For (2), if the given homomorphism $\b{f}\colon\bG\to\bH$ can be lifted to
$f\colon G\to H$, then we must have the following commutative diagram
\[\xymatrix{
0 \ar[r] & \Hom_\cO(\Tp\bG^D(k),\LT) \ar[r]\ar[d]_-{\circ\Tp\b{f}^D(k)} & G \ar[r]\ar[d]^-{f}
& \Tp\bG(k)\otimes_\cO F/\cO  \ar[r]\ar[d]^-{\Tp\b{f}(k)\otimes_\cO F/\cO} & 0  \\
0 \ar[r] & \Hom_\cO(\Tp\bH^D(k),\LT) \ar[r] & H \ar[r] & \Tp\bH(k)\otimes_\cO F/\cO  \ar[r] & 0.
}\] Conversely, if we may fill $f$ in the above diagram, then $\b{f}$ lifts.

The existence of the middle arrow is equivalent to that the push-out of the
top extension by the left arrow is isomorphic to the pull-back of the lower
extension by the right arrow. The above mentioned push-out is an element of
\[\Ext_{\BT^\cO_R}(\Tp\bG(k)\otimes_\cO F/\cO,\Hom_\cO(\Tp\bH^D(k),\LT))\]
which is isomorphic to
\[\Hom_\cO(\Tp\bG(k)\otimes_\cO\Tp\bH^D(k),\LT(R))\] by the
bilinear pairing
\[(\alpha,\beta_D)\mapsto q(G/R;\alpha,\b{f}^D\beta_D).\]
Similarly, the above mentioned pull-back is an element in
\[\Hom_\cO(\Tp\bG(k)\otimes_\cO\Tp\bH^D(k),\LT(R))\] defined by the bilinear pairing
\[(\alpha,\beta_D)\mapsto q(H/R;\b{f}\alpha,\beta_D).\]

It remains to prove (1). Choose $n$ such that $\fm_R^{n+1}=0$. Then both
$G^0(R)$ and $(G^D)^0(R)$ are annihilated by $p^n$. Denote by $\alpha(n)$ the
image of $\alpha$ under the canonical projection $\Tp\bG(k)\to
G[\varpi^n](k)$ and similarly for $\alpha_D(n)$. By construction, we have
$\varphi_{G/R}(\alpha)=\ang{p^n}\alpha(n)\in G^0(R)$ and
$\varphi_{G^D/R}(\alpha_D)=\ang{p^n}\alpha_D(n)\in(G^D)^0(R)$. Therefore, we
have
\[q(G/R;\alpha,\alpha_D)=E_{G,p^n}(\ang{p^n}\alpha(n),\alpha_D(n)).\]
Similarly, we have
$q(G^D/R;\alpha_D,\alpha)=E_{G^D,p^n}(\ang{p^n}\alpha_D(n),\alpha(n))$.

The remaining argument is formal and one only needs to replace $\Gmf$ (resp.\
abelian varieties) by $\LT$ (resp.\ $\cO$-divisible groups) in the proof of
\cite{Kat81}*{Theorem 2.1}. In particular, we have the following lemma.
\end{proof}

\begin{lem}
Given any $n\geq 1$, $x\in G^0[p^n](R)$ and $y\in\bG^D[p^n](k)$, there exist
an artinian local ring $R'$ that is finite and flat over $R$, and a point
$Y\in G^D[p^n](R')$ lifting $y$. For every such $R'$ and $Y$, we have the
equality $E_{G,p^n}(x,y)=e_{p^n}(x,Y)$ inside $\LT(R')$.
\end{lem}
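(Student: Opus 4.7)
The plan is to follow the strategy used for ordinary $p$-divisible groups (with $\Gmf$ replaced by $\LT$), reducing the verification to two pieces: producing the lift $Y$, and checking that any two lifts give the same value for $e_{p^n}(x,\cdot)$, which in turn is identified with $E_{G,p^n}(x,y)$ by construction.

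First I would produce $R'$ and $Y$. Since $\bG^D$ is again an ordinary $\cO$-divisible group, we have the canonical splitting $\bG^D=(\bG^D)^0\oplus(\bG^D)^{\et}$ where, under $\cO$-Cartier duality, $(\bG^D)^{\et}=\Hom_\cO(\bG^0,\LT)$ and $(\bG^D)^0=\Hom_\cO(\bG_{\et},\LT)$. The connected part $(\bG^D)^0$ has no $k$-points, so in fact $y$ lies in $(\bG^D)^{\et}[p^n](k)$. Using that $(G^D)^{\et}$ is étale over $R$, the point $y$ lifts canonically and uniquely to $Y^{\et}\in(G^D)^{\et}[p^n](R)$. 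Since $G^D[p^n]$ is finite flat over $R$, any section $y$ of its reduction can be realized over some finite flat extension $R'/R$; this gives us a lift $Y\in G^D[p^n](R')$.

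The core of the argument is the equality $E_{G,p^n}(x,y)=e_{p^n}(x,Y)$. Decompose $Y=Y^{\et}+Y^0$ with $Y^{\et}$ the canonical étale lift above and $Y^0\in(G^D)^0[p^n](R')$ necessarily reducing to $0\in(\bG^D)^0[p^n](k)$. The key observation is that the restriction of the Cartier pairing $e_{p^n}\colon G^0[p^n]\times(G^D)^0[p^n]\to\LT[p^n]$ is the zero morphism of finite flat $R$-group schemes: an element of $(G^D)^0[p^n]$ is, by $\cO$-Cartier duality, a homomorphism $G_{\et}[p^n]\to\LT[p^n]$, and the composition with the canonical map $G^0[p^n]\to G_{\et}[p^n]$ vanishes. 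Hence $e_{p^n}(x,Y^0)=0$ in $\LT[p^n](R')$, showing that $e_{p^n}(x,Y)=e_{p^n}(x,Y^{\et})$ is independent of the chosen lift $Y$ and lies in $\LT[p^n](R)$. Finally, the pairing $E_{G,p^n}$ was constructed out of the isomorphism $G^0[p^n]\xrightarrow{\sim}\Hom_\cO(\bG^D[p^n](k),\LT[p^n])=\Hom_\cO((\bG^D)^{\et}[p^n](k),\LT[p^n])$, which is by construction the evaluation pairing; but this is the same as the $\cO$-Cartier evaluation on $G^0[p^n]\times(G^D)^{\et}[p^n]$ applied to $(x,Y^{\et})$. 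Therefore $E_{G,p^n}(x,y)=e_{p^n}(x,Y^{\et})=e_{p^n}(x,Y)$.

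The main obstacle is articulating the vanishing of $e_{p^n}$ on $G^0[p^n]\times(G^D)^0[p^n]$ as an identity of group-scheme morphisms rather than merely at $k$-points; once this rigidity is isolated, the equality of pairings and the independence from the lift both follow immediately, and the existence of $R'$ is formal from the finite flatness of $G^D[p^n]$.
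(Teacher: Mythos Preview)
Your approach is correct and is precisely the Katz argument that the paper defers to (the paper gives no independent proof, only the instruction to replace $\Gmf$ by $\LT$ and abelian varieties by $\cO$-divisible groups in \cite{Kat81}*{Theorem 2.1}). The key observation---that $e_{p^n}$ vanishes on $G^0[p^n]\times(G^D)^0[p^n]$ because $(G^D)^0=(G^{\et})^D$ consists of homomorphisms factoring through $G^{\et}$---is exactly the right one.

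One expository slip to fix: the ``decomposition'' $Y=Y^{\et}+Y^0$ tacitly assumes the connected--\'etale sequence of $G^D$ splits over $R'$, which it need not. You defined $Y^{\et}$ as a point of the \emph{quotient} $(G^D)^{\et}[p^n](R)$, so $Y-Y^{\et}$ is not a priori meaningful in $G^D[p^n](R')$. The repair is immediate and already implicit in your vanishing argument: since $e_{p^n}(x,\cdot)$ kills $(G^D)^0[p^n]=\ker\bigl(G^D[p^n]\to(G^D)^{\et}[p^n]\bigr)$, the value $e_{p^n}(x,Y)$ depends only on the image of $Y$ in $(G^D)^{\et}[p^n](R')$. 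That image is the unique (\'etale) lift $Y^{\et}$ of $y$, and the induced pairing $G^0[p^n]\times(G^D)^{\et}[p^n]\to\LT[p^n]$ is by construction $E_{G,p^n}$. This simultaneously gives independence of the lift and the desired equality, without any splitting.
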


\subsection{Main theorem}

We fix an ordinary $\cO$-divisible group $\bG$ over $k$. Denote by $\fR$ the
coordinate ring of $\fM_\bG$, which is a complete $\breve\cO$-algebra. We
have the universal pairing
\[q\colon \Tp\bG(k)\otimes_\cO\Tp\bG^D(k)\to\LT(\fR)\subset\fR^\times.\]
Therefore, we may regard $q(\alpha,\alpha_D)$ as a regular function on
$\fM_\bG$. For each $\cO$-linear form
$\ell\in\Hom_\cO(\Tp\bG(k)\otimes_\cO\Tp\bG^D(k),\cO)$, denote by $D(\ell)$
the translation-invariant continuous derivation of $\fR$ given by
\[\bD(\ell)q(\alpha,\alpha_D)=\ell(\alpha\otimes\alpha_D)\cdot q(\alpha,\alpha_D).\]
By abuse of notation, we also denote by $\bD(\ell)$ the corresponding map
$\Omega_{\fR/\breve\cO}\to\fR$. Denote by $\fG$ the universal $\cO$-divisible
group over $\fM_\bG$.

We choose a normalized logarithm $\log\colon\LT\to\Gaf$ over
$\breve{\cO}\otimes\dQ$, and put $\omega_0=\log^*\rd T$, which is a generator
of the free $\breve\cO$-module $\Omega(\LT/\breve\cO)$ of rank $1$.

Let $R$ be as in Theorem \ref{th:serre_tate} and $G/R$ be a deformation of
$\bG$. We have the canonical isomorphism of $\cO$-modules
\[\lambda_G\colon\Tp\bG^D(k)\xrightarrow{\sim}\Hom_{\BT^\cO_R}(G^0,\LT).\]
Define the $\cO$-linear map $\omega_G\colon\Tp\bG^D(k)\to\Omega(G/R)$ by the
formula
\[\omega_G(\alpha_D)=\lambda_G(\alpha_D)^*\omega_0\in\Omega(G^0/R)=\Omega(G/R).\]
Let $L_G\colon\Hom_\cO(\Tp\bG^D(k),\cO)\to\Lie(G/R)$ be the unique
$\cO$-linear map such that
\[\omega_G(\alpha_D)\cdot L_G(\alpha_D^\vee)=\alpha_D\cdot\alpha_D^\vee\in\cO.\]
In fact, the $R$-linear extensions
\[\omega_G\colon\Tp\bG^D(k)\otimes_\cO R\to\Omega(G/R)\]
and
\[L_G\colon\Hom_\cO(\Tp\bG^D(k),R)\to\Lie(G/R)\] are isomorphisms.
Similarly, we have an isomorphism
\[\lambda_{G^\vee}\colon\Tp\bG(k)=\Tp\bG^{\et}(k)=\Tp G^{\et}(R)
\xrightarrow{\sim}\Hom_{\BT_R}((G^{\et})^\vee,\Gmf),\] which induces an isomorphism
\[\Tp\bG(k)\otimes_{\dZ_p}R\xrightarrow{\sim}\Omega((G^{\et})^\vee/R)\]
by pulling back $\rd T/T$. It further induces an isomorphism
\[\omega_{G^\vee}\colon\Tp\bG(k)\otimes_\cO R=(\Tp\bG(k)\otimes_{\dZ_p}R)_\cO
\xrightarrow{\sim}\Omega((G^{\et})^\vee/R)_\cO.\] Here, the subscript $\cO$
denotes the maximal flat quotient on which $\cO$ acts via the structure map.
By construction, we have the following functoriality.

\begin{lem}\label{le:o_functorial}
Let $\b{f}\colon\bG\to\bH$ be as in Theorem \ref{th:serre_tate} and $f\colon
G\to H$ lifts $\b{f}$. Then
\begin{enumerate}
  \item
      $((f^{\et})^\vee)^*(\omega_{G^\vee}(\alpha))=\omega_{H^\vee}(\b{f}\alpha)$
      for every $\alpha\in\Tp\bG(k)$, where $f^{\et}\colon G^{\et}\to
      H^{\et}$ is the induced homomorphism on the \'{e}tale quotient.

  \item $f_*(L_H(\alpha_D^\vee))=L_G(\alpha_D^\vee\circ\b{f}^D)$ for
      every $\alpha_D^\vee\in\Hom_\cO(\Tp\bG^D(k),\cO)$.
\end{enumerate}
\end{lem}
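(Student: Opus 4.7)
The plan is to deduce both assertions from the naturality of the isomorphisms $\lambda_G$ and $\lambda_{G^\vee}$ in $G$, i.e.\ from functoriality of ($\cO$-)Cartier duality. No ingredient beyond what was set up before the lemma is needed.

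For part (1), the first step is to establish the identity
\[
\lambda_{G^\vee}(\alpha)\circ(f^{\et})^\vee=\lambda_{H^\vee}(\b{f}\alpha)\in\Hom_{\BT_R}((H^{\et})^\vee,\Gmf)
\]
for every $\alpha\in\Tp\bG(k)$. This is exactly the statement that classical Cartier duality of $p$-divisible groups is functorial: $\lambda_{G^\vee}$ is the tautological isomorphism $\Tp G^{\et}(R)\simeq\Hom_{\BT_R}((G^{\et})^\vee,\Gmf)$ applied to $\alpha\in\Tp\bG(k)=\Tp\bG^{\et}(k)$, and under these identifications the induced map $f^{\et}$ on étale quotients corresponds to $\b{f}$ on Tate modules. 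Pulling back $\rd T/T$ through the displayed identity is then exactly the claim of (1).

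For part (2), I would run the dual argument. The analogous $\cO$-equivariant functoriality
\[
\lambda_H(\beta_D)\circ f^0=\lambda_G(\b{f}^D\beta_D)\in\Hom_{\BT^\cO_R}(G^0,\LT),\qquad \beta_D\in\Tp\bH^D(k),
\]
follows from naturality of the $\cO$-Cartier-duality pairing for $\cO$-divisible groups; pulling back $\omega_0$ yields
\[
f^*\omega_H(\beta_D)=\omega_G(\b{f}^D\beta_D)\in\Omega(G/R).
\]
To obtain the statement about $L$, I would then pair the conjectural equality with $\omega_H(\beta_D)$ for arbitrary $\beta_D$. Using the adjunction between $f^*$ and $f_*$ relative to the canonical pairing of $\Omega(-/R)$ and $\Lie(-/R)$, together with the defining identities $\omega_G(\alpha_D)\cdot L_G(\alpha_D^\vee)=\alpha_D^\vee(\alpha_D)$ and its analogue on $H$, both sides of the desired equality collapse to $\alpha_D^\vee(\b{f}^D\beta_D)=(\alpha_D^\vee\circ\b{f}^D)(\beta_D)$. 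Since $\omega_H$ identifies $\Tp\bH^D(k)\otimes_\cO R$ with $\Omega(H/R)$, this pins down the image of the relevant element under $f_*$ uniquely and finishes the proof.

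There is essentially no obstacle; the only real work is bookkeeping. One has to remember that for ordinary $\bG$ we have $\Tp\bG(k)=\Tp\bG^{\et}(k)$ while $\Tp\bG^D(k)=\Tp((\bG^0)^D)(k)$, and to keep straight the direction of the arrows on Tate modules induced by $\b{f}$ versus $\b{f}^D$. All the substantive content has already been absorbed into the construction of the two Cartier-duality isomorphisms $\lambda_G$ and $\lambda_{G^\vee}$ used to define $\omega_G$ and $\omega_{G^\vee}$.
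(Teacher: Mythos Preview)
The paper does not actually give a proof of this lemma; it simply precedes the statement with the phrase ``By construction, we have the following functoriality.'' Your proposal correctly unpacks what that phrase means: both parts follow from naturality of the (resp.\ $\cO$-)Cartier duality isomorphisms used to define $\lambda_{G^\vee}$ and $\lambda_G$, together with the tautological adjunction between $f^*$ on invariant differentials and $f_*$ on Lie algebras. So your approach is exactly the intended one, just spelled out in more detail than the paper provides.

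One small remark worth recording: as printed, part~(2) has the subscripts on $L$ swapped. Your adjunction computation in fact yields $f_*L_G(\alpha_D^\vee)=L_H(\alpha_D^\vee\circ\b f^D)$, which is the version actually used later (compare the application in Lemma~\ref{le:frobenius_c}, where $\cF_{G*}L_G(\alpha_D^\vee)=\varpi L_{G'}(\alpha_D^\vee\circ\sigma^{-1})$). The statement as written, with $f_*L_H(\alpha_D^\vee)$, does not even type-check since $L_H(\alpha_D^\vee)\in\Lie(H/R)$ is not in the domain of $f_*$; your argument automatically corrects this.
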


Denote by $\rD(\bG)$ the (contra-variant) Dieudonn\'{e} crystal of $\bG$. We
have the following exact sequence
\[\xymatrix{
0 \ar[r] & \Omega(\fG^\vee/\fR) \ar[r] & \rD(\bG^\vee)_{\fR} \ar[r] &
\Lie(\fG/\fR) \ar[r] & 0 }\] and the \emph{Gauss--Manin connection}
\[\nabla\colon\rD(\bG^\vee)_\fR\to \rD(\bG^\vee)_\fR\otimes_\fR\Omega_{\fR/\breve\cO}.\]
They together define the following \emph{(universal) Kodaira--Spencer map}
\[\KS\colon \Omega(\fG^\vee/\fR)\to\Lie(\fG/\fR)\otimes_\fR\Omega_{\fR/\breve\cO},\]
which factors through the quotient
$\Omega(\fG^\vee/\fR)\to\Omega(\fG^\vee/\fR)_\cO$. The following lemma is
immediate.

\begin{lem}
The natural map $\Omega(\fG^\vee/\fR)_\cO\to\Omega((\fG^{\et})^\vee/\fR)_\cO$
is an isomorphism.
\end{lem}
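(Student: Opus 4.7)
The plan is to deduce the isomorphism from the vanishing $\Omega((\fG^0)^\vee/\fR)_\cO = 0$ via the connected--\'{e}tale exact sequence, and then reduce that vanishing to a direct computation on the Lubin--Tate group itself. First, apply Serre duality to the connected--\'{e}tale sequence $0 \to \fG^0 \to \fG \to \fG^{\et} \to 0$ to obtain an exact sequence $0 \to (\fG^{\et})^\vee \to \fG^\vee \to (\fG^0)^\vee \to 0$ of $p$-divisible groups with $\cO$-action. Taking invariant differentials (which is exact on short exact sequences of $p$-divisible groups over $\fR$, since dimensions are additive and Lie algebras are locally free of the expected rank) yields
\[
0 \to \Omega((\fG^0)^\vee/\fR) \to \Omega(\fG^\vee/\fR) \to \Omega((\fG^{\et})^\vee/\fR) \to 0.
\]
The functor $(-)_\cO$, being the composition of a tensor product with $\fR$ over $\cO\otimes_{\dZ_p}\fR$ and passage to the maximal flat quotient, is right exact. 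Hence the map in the lemma is always surjective, and it is an isomorphism provided $\Omega((\fG^0)^\vee/\fR)_\cO = 0$.

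Next, I would identify $\fG^0$ explicitly. Since $(\bG^0)^D$ is \'{e}tale of $\cO$-height $d$ over the algebraically closed field $k$, it is isomorphic to $(F/\cO)^d$, so by $\cO$-Cartier duality $\bG^0 \simeq \LT^d$. Now apply Theorem \ref{th:serre_tate} to $\bG^0$ alone: because $\bG^0$ is connected, $\Tp\bG^0(k)=0$, so $\fM_{\bG^0}$ is the single point $\Spf\breve\cO$, which means $\LT^d\otimes_{\breve\cO}\fR$ is the unique deformation of $\bG^0$ to $\fR$. Therefore $\fG^0 \cong \LT^d\otimes_{\breve\cO}\fR$, hence $(\fG^0)^\vee \cong (\LT^\vee)^d\otimes_{\breve\cO}\fR$, and
\[
\Omega((\fG^0)^\vee/\fR)_\cO \;\cong\; \bigl(\Omega(\LT^\vee/\breve\cO)_\cO\bigr)^d \otimes_{\breve\cO} \fR.
\]
It thus suffices to prove $\Omega(\LT^\vee/\breve\cO)_\cO = 0$.

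Finally, for the computation on $\LT$: after inverting $p$, $\rD(\LT)\otimes_{\breve\cO}\breve F$ is free of rank one over $F\otimes_{\dQ_p}\breve F \simeq \prod_{\sigma}\breve F$ (product over embeddings $\sigma\colon F\hookrightarrow \breve F$) because $\LT$ has $\cO$-height one; hence it is one-dimensional over $\breve F$ in each $\sigma$-isotypic component. In the $F$-equivariant Hodge exact sequence
\[
0 \to \Omega(\LT^\vee/\breve F) \to \rD(\LT)\otimes_{\breve\cO}\breve F \to \Lie(\LT/\breve F) \to 0,
\]
the Lie condition on $\LT$ forces $\Lie(\LT/\breve F)$ to be supported in the $\id$-component only. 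A dimension count in each $\sigma$-component then shows $\Omega(\LT^\vee/\breve F)$ is zero in the $\id$-component. Since $\Omega(\LT^\vee/\breve\cO)$ is $\breve\cO$-flat, its maximal flat quotient on which $\cO$ acts through the structure map embeds into this $\id$-component and so vanishes.

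The main obstacle is not computational but structural: namely the identification $\fG^0 \simeq \LT^d\otimes_{\breve\cO}\fR$, which is slightly surprising because $\fG$ itself is a highly nontrivial deformation. This is resolved by applying Serre--Tate (Theorem \ref{th:serre_tate}) to $\bG^0$ alone, whereupon the Serre--Tate pairing collapses because the Tate module of a connected $p$-divisible group over a field vanishes.
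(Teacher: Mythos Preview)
Your argument is essentially correct, and the paper gives no proof at all (it declares the lemma ``immediate''), so there is nothing to compare against. Two technical points deserve tightening, though neither breaks the strategy.

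First, the functor $(-)_\cO$ is not literally right exact: passage to the maximal flat quotient does not preserve exactness in the middle (e.g.\ apply it to $0\to\fR\xrightarrow{p}\fR\to\fR/p\to 0$). What you actually use, and what does hold, is weaker: the tensor step is right exact, and modding out torsion preserves surjections; moreover, once you know $\Omega((\fG^0)^\vee)\otimes_{\cO\otimes_{\dZ_p}\fR}\fR$ is torsion, the kernel of $\Omega(\fG^\vee)\otimes_{\cO\otimes\fR}\fR\to\Omega((\fG^{\et})^\vee)\otimes_{\cO\otimes\fR}\fR$ is torsion, so the induced map on torsion-free quotients is injective. This yields the isomorphism directly.

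Second, the identification $F\otimes_{\dQ_p}\breve F\simeq\prod_\sigma\breve F$ fails when the normal closure of $F/\dQ_p$ is not contained in $\breve F$ (which can happen for ramified $F$). What is always true is that $F\otimes_{\dQ_p}\breve F$ is a finite product of fields, with the multiplication map to $\breve F$ being projection onto one factor; hence $\otimes_{F\otimes\breve F}\breve F$ is exact, and since $\rD(\LT^\vee)\otimes\breve F$ is free of rank one over $F\otimes_{\dQ_p}\breve F$ while $\Lie(\LT)\otimes\breve F$ already sits in the $\id$-factor, your rank count still forces $(\Omega(\LT^\vee)\otimes\breve F)_\cO=0$. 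The flatness of $\Omega(\LT^\vee)_\cO$ then finishes as you wrote.
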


In particular, we may regard $\omega_{\fG^\vee}$ as a map from $\Tp\bG(k)$ to
$\Omega(\fG^\vee/\fR)_\cO$. The following result on the compatibility of the
Kodaira--Spencer map and the Serre--Tate coordinate is the main theorem of
this appendix.

\begin{theorem}\label{th:o_main_a}
We have the following equality in $\Omega_{\fR/\breve\cO}$
\[\omega_\fG(\alpha_D)\cdot\KS(\omega_{\fG^\vee}(\alpha))=\rd\log(q(\alpha,\alpha_D))\]
for every $\alpha\in\Tp\bG(k)$ and $\alpha_D\in\Tp\bG^D(k)$.
\end{theorem}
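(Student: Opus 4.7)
My plan is to adapt Katz's argument in \cite{Kat81} to the $\cO$-divisible setting; all ingredients—the $\cO$-Serre--Tate coordinate (Theorem \ref{th:serre_tate}), $\cO$-Cartier duality with values in $\LT$, the Gauss--Manin connection, and Messing's crystalline description of deformations—admit natural $\cO$-linear analogues once $\Gmf$ is systematically replaced by $\LT$ and $\dZ_p$ by $\cO$.

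First I would reduce to checking the identity on translation-invariant tangent vectors. Both sides lie in $\Omega_{\fR/\breve\cO}$, and the derivations $\bD(\ell)$ for $\ell\in\Hom_\cO(\Tp\bG(k)\otimes_\cO\Tp\bG^D(k),\cO)$ generate the $\fR$-dual of $\Omega_{\fR/\breve\cO}$. By the defining property of $\bD(\ell)$ and the chain rule, one has $\bD(\ell)(\rd\log q(\alpha,\alpha_D))=\ell(\alpha\otimes\alpha_D)$. It therefore suffices to show that the same scalar results from $\bD(\ell)$ applied to the left-hand side, and this computation can be carried out over the test ring $R=k[\varepsilon]/(\varepsilon^2)$ for the unique deformation $G_\ell/R$ whose $\cO$-Serre--Tate coordinate satisfies $q_{G_\ell}(\alpha',\alpha'_D)=1+\varepsilon\cdot\ell(\alpha'\otimes\alpha'_D)$ in $\LT(R)$.

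Over this first-order base I would invoke the $\cO$-equivariant form of Messing's theorem, identifying $\rD(\bG^\vee)_R$ with the canonical extension of $\Lie(\bG^\vee/k)\otimes_kR$ by $\Omega(\bG^\vee/k)\otimes_kR$ attached to the PD-thickening $R\to k$, and identifying the Hodge filtration of $G_\ell^\vee/R$ with the unique $R$-lift whose classifying class in $\Hom_k(\Lie(\bG^\vee/k),\Omega(\bG^\vee/k))\otimes_k\varepsilon k$ corresponds to $\ell$ under the duality induced by $\omega_\fG\otimes L_{\fG^\vee}$. The Gauss--Manin connection measures the discrepancy with the trivial lift, so $\KS(\omega_{\fG^\vee}(\alpha))$ paired against $\omega_\fG(\alpha_D)$ yields (up to the normalization $\omega_0=\log^*\rd T$ on $\LT$) the derivative of $\log q(\alpha,\alpha_D)$. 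Combining this with the defining relation $\omega_G(\alpha_D)\cdot L_G(\alpha_D^\vee)=\alpha_D\cdot\alpha_D^\vee$ and the $\cO$-Cartier pairing $\bG^0\times\Tp\bG^D(k)\to\LT$ produces exactly the scalar $\ell(\alpha\otimes\alpha_D)$.

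The main technical obstacle is establishing the $\cO$-equivariant analogue of Messing's crystalline theory in a form compatible with the $\LT$-valued coordinate. One can either redo Messing's construction line-by-line with $\LT$ replacing $\Gmf$ (the arguments are formally identical), or extract the appropriate $\cO$-isotypic summand from the classical $p$-divisible group statement applied to the underlying $\bG$; the latter route requires care because the $\cO$-Cartier dual $\bG^D=\ul\Hom_\cO(\bG,\LT)$ is not the usual Cartier dual $\bG^\vee$ and because $\omega_{\fG^\vee}$ is only defined on the $\cO$-isotypic quotient $\Omega((\fG^{\et})^\vee/\fR)_\cO$. Modulo this compatibility, the proof concludes via Lemma \ref{le:o_functorial} and formal linearity.
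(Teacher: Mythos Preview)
Your reduction strategy has a genuine gap. You propose to check that $\bD(\ell)$ applied to both sides of the identity yields $\ell(\alpha\otimes\alpha_D)$, and then to verify this over $k[\varepsilon]/(\varepsilon^2)$. But $\bD(\ell)$ applied to the left-hand side is a priori only an element of $\fR$, not a constant; evaluating it over $k[\varepsilon]/(\varepsilon^2)$ computes only its image in the residue field $k$. By contrast, $\bD(\ell)(\rd\log q(\alpha,\alpha_D))=\ell(\alpha\otimes\alpha_D)$ is a constant precisely because $q(\alpha,\alpha_D)$ is a \emph{character} of the formal $\cO$-module $\fM_\bG$, so $\rd\log q$ is translation-invariant. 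There is no obvious reason the Kodaira--Spencer side should also be translation-invariant, and this is exactly the content the paper supplies before passing to a first-order check: via a Frobenius/Verschiebung slope analysis on $(\rD(\bG^\vee)_\fR)_\cO$ (Lemmas \ref{le:rationality}, \ref{le:nabla}), the paper shows $\nabla\omega_{\fG^\vee}(\alpha)$ lands in the slope-zero eigenspace $\rD(\bG^\vee)_\fR^0$, whence $\omega_\fG(\alpha_D)\cdot\KS(\omega_{\fG^\vee}(\alpha))=\rd\log Q(\alpha,\alpha_D)$ for some character $Q$. Only then is $\bD(\ell)$ of the left side a constant in $\cO$, and only then does a single first-order computation suffice to identify $Q$ with $q$.

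Your proposed first-order check via $\cO$-equivariant Messing theory is also a different route from the paper's. The paper does not develop Messing's theory for $\LT$-valued duals; instead it realizes the $\cO$-divisible group $\bG_{r,s}$ as the $\fp^+$-component of $\bA[p^\infty]$ for an abelian variety $\bA$ with $O_E$-action (for a well-chosen CM field $E$), so that Serre--Tate for $\bG$ becomes Serre--Tate for $(\bA,\theta,i)$, and then invokes Katz's original computation \cite{Kat81}*{Lemma 5.4, \S 6.5} on the abelian-variety side. The authors explicitly note it would be interesting to find a proof purely in terms of $\cO$-divisible groups; your Messing approach is a plausible candidate, but you would still need the Frobenius-constancy step above, or else upgrade the infinitesimal computation to \emph{all} artinian local $\breve\cO$-algebras (which is essentially Theorem \ref{th:o_main_b}) rather than just $k[\varepsilon]/(\varepsilon^2)$.
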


Note that the definition of $\omega_\fG$, but not $\omega_{\fG^\vee}$,
depends on the choice of $\log$, which is compatible with the right-hand
side.

\subsection{Frobenius}

Denote by $\sigma$ the Frobenius automorphism of $\breve\cO$ that fixes every
element in $\cO$. Put $X^\sigma=X\otimes_{\breve\cO,\sigma}\breve\cO$ for
every $\breve\cO$-(formal) scheme $X$, $\Sigma_X\colon X^\sigma\to X$ the
natural projection, and $\Fr_X\colon X\to X^\sigma$ the relative Frobenius
morphism which is $\breve\cO$-linear. We omit the subscript $X$ when it is
$\fM_\bG$.

\begin{lem}\label{le:frobenius_a}
We have
\begin{enumerate}
  \item There is a natural isomorphism
      \[\fM_\bG^\sigma\xrightarrow{\sim}\fM_{\bG^\sigma}\]
      under which the regular function
      $q(\sigma(\alpha),\sigma(\alpha_D))$ is mapped to
      $\Sigma^*q(\alpha,\alpha_D)$.

  \item Under $\Sigma_{(G^{\et})^\vee}\colon
      (G^{\sigma\et})^\vee\simeq((G^{\et})^\vee)^\sigma\to
      (G^{\et})^\vee$, we have
      \[\Sigma_{(G^{\et})^\vee}^*\omega_{G^\vee}(\alpha)=\omega_{(G^\sigma)^\vee}(\sigma\alpha)\]
      for every $\alpha\in\Tp\bG(k)$.

  \item Under $\Fr_G\colon G\to G^\sigma$, we have
      \[\Fr_{G*} L_G(\alpha_D^\vee)=L_{G^\sigma}(\alpha_D^\vee\circ\sigma^{-1})\]
      for every $\alpha_D^\vee\in\Hom_\cO(\Tp\bG^D(k),\cO)$.
\end{enumerate}
\end{lem}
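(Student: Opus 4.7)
The plan is to handle the three parts in order, with part (1) establishing identifications that parts (2) and (3) will exploit. The common thread is the canonical identification $\Tp\bG^\sigma(k)\simeq\Tp\bG(k)$ induced by the absolute Frobenius $\sigma\colon k\to k$, which is an isomorphism since $k$ is perfect.

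For part (1), I would read off the isomorphism $\fM_\bG^\sigma\xrightarrow{\sim}\fM_{\bG^\sigma}$ from Theorem \ref{th:serre_tate} by Yoneda. Giving a $T$-point of $\fM_\bG^\sigma$ (for $T$ a nilpotent $\breve\cO$-scheme) amounts to giving a deformation of $\bG^\sigma$ over $T$, since base-change along $\sigma$ sets up a bijection between deformations of $\bG$ on the $\sigma$-twisted base and deformations of $\bG^\sigma$ on the original base; both functors are pro-represented by $\Hom_\cO(\Tp\bG(k)\otimes_\cO\Tp\bG^D(k),\LT)$ once $\Tp\bG^\sigma(k)$ is identified with $\Tp\bG(k)$ via $\sigma$. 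To identify the universal pairings, I would revisit the construction of $q$ in the proof of Theorem \ref{th:serre_tate}: $q$ is assembled from $E_G$ (built via $\cO$-Cartier duality from $E_\bG$) composed with $\varphi_{G/R}$ (obtained from the connected--\'etale sequence via $\ang{p^n}$). Each ingredient commutes with base change, yielding $q(\fG^\sigma/\fR;\sigma\alpha,\sigma\alpha_D)=\Sigma^*q(\fG/\fR;\alpha,\alpha_D)$ by direct chase.

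For part (2), I would express $\omega_{G^\vee}(\alpha)$ as the pullback of $\rd T/T$ (a form on $\Gmf$, which is defined over $\dZ_p$ and hence $\sigma$-stable) along $\lambda_{G^\vee}(\alpha)\colon (G^\et)^\vee\to\Gmf$, and use that Cartier duality commutes with Frobenius base-change to deduce $\lambda_{(G^\sigma)^\vee}(\sigma\alpha)=\Sigma_\Gmf\circ\lambda_{G^\vee}(\alpha)^\sigma$; pulling back $\rd T/T$ then gives the desired equality. For part (3), I would apply Lemma \ref{le:o_functorial}(2) to the relative Frobenius $\Fr_G\colon G\to G^\sigma$, which lifts the $\cO$-divisible group homomorphism $\Fr_\bG\colon\bG\to\bG^\sigma$. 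Under the Tate module identifications of part (1), the $\cO$-Cartier dual $\Fr_\bG^D$ corresponds to $\sigma^{-1}$, and substituting into $f_*L_H(\alpha_D^\vee)=L_G(\alpha_D^\vee\circ\b{f}^D)$ with $f=\Fr_G$, $\b{f}=\Fr_\bG$ yields the stated formula.

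The main obstacle is bookkeeping: tracking the Tate module identifications $\Tp\bG\simeq\Tp\bG^\sigma$ and the direction of the twist ($\sigma$ versus $\sigma^{-1}$) uniformly across the three parts, so that the subsequent applications in the proof of Theorem \ref{th:o_main_a} pair every term with the correct Frobenius transport. Once these identifications are pinned down, each part is essentially a naturality assertion: of the Serre--Tate construction for part (1), of $\cO$-Cartier duality for part (2), and of Lemma \ref{le:o_functorial} for part (3).
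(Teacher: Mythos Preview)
Your outline is essentially the argument that Katz gives in \cite{Kat81}*{Lemma 4.1.1 \& 4.1.1.1}, to which the paper simply defers; so your approach and the paper's coincide. Parts (1) and (2) are exactly right: both are straightforward compatibility-with-base-change statements, and your identification of the ingredients ($E_G$, $\varphi_{G/R}$, $\lambda_{G^\vee}$, the $\sigma$-invariance of $\rd T/T$) is precisely what is needed.

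One small technical caveat on part (3): Lemma \ref{le:o_functorial} is stated for a morphism $f\colon G\to H$ of $\cO$-divisible groups over a \emph{single} base $R$, whereas $\Fr_G\colon G\to G^\sigma$ lies over the map $\Spec R\to\Spec R^\sigma$ induced by $\sigma$. So you cannot literally plug $f=\Fr_G$ into that lemma. The fix is painless: either observe that the proof of Lemma \ref{le:o_functorial} (unwinding $\lambda_G$ and the defining duality between $\omega_G$ and $L_G$) goes through verbatim for $\sigma$-semilinear morphisms, or simply redo that unwinding directly for $\Fr_G$. Note by contrast that Lemma \ref{le:frobenius_c} \emph{can} invoke Lemma \ref{le:o_functorial} on the nose, because $\cF_G\colon G\to G'$ is a genuine $R$-morphism. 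Once you make this adjustment, your claim that $\Fr_\bG^D$ corresponds to $\sigma^{-1}$ under the Tate-module identifications is exactly the remaining content, and the formula follows.
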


\begin{proof}
The proof is same as \cite{Kat81}*{Lemma 4.1.1 \& 4.1.1.1}.
\end{proof}

From now on, we choose a uniformizer $\varpi$ of $F$, which gives rise to an
isomorphism $\LT^\sigma\simeq\LT$. In particular, we may identify
$(G^D)^\sigma$ and $(G^\sigma)^D$.

For a deformation $G/R$ of $\bG$, we denote by $G'/R$ the quotient of $G$ by
subgroup $G^0[\varpi]$. The induced projection map
\[\cF_G\colon G\to G'\] lifts the relative Frobenius morphism
\[\Fr_\bG\colon\bG\to\bG^\sigma.\]
Define the \emph{Verschiebung} to be
\[\Ver_\bG=(\Fr_{\bG^D})^D\colon\bG^\sigma\simeq\bG^{D\sigma D}\to\bG.\]
Note that the isomorphism depends on $\varpi$.

\begin{lem}\label{le:frobenius_b}
For $\alpha\in\Tp\bG(k)$ and $\alpha_D\in\Tp\bG^D(k)$, we have formulas
\begin{enumerate}
  \item $\Fr_\bG(\alpha)=\sigma\alpha$ and
      $\Ver_\bG(\sigma\alpha)=\varpi\alpha_D$;

  \item $q(G'/R;\sigma\alpha,\sigma\alpha_D)=
      \varpi.q(G/R;\alpha,\alpha_D)$.
\end{enumerate}
\end{lem}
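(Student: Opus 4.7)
The plan is to reduce both assertions to a single functoriality computation via Theorem \ref{th:serre_tate}(2), applied to the lift $\cF_G \colon G \to G'$ of the relative Frobenius $\Fr_\bG \colon \bG \to \bG^\sigma$.

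For part (1), I would first recall that since $k$ is algebraically closed, $\Tp\bG(k)$ depends only on the maximal \'etale quotient $\bG^{\et}$, and under the canonical identification $\Tp\bG^{\sigma,\et}(k) \simeq \Tp\bG^{\et}(k)$ the relative Frobenius acts as the $q$-power Frobenius $\sigma$ on $k$-coordinates; this is the standard content of $\Fr_\bG(\alpha) = \sigma\alpha$. I would then invoke the identity $\Ver_\bG \circ \Fr_\bG = [\varpi]_\bG$, which holds in the $\cO$-divisible group theory because $\Fr_\bG$ is (geometrically) the quotient by $\bG^0[\varpi]$ and $\Ver_\bG$ is its $\cO$-Cartier dual, with the identification $\LT^\sigma \simeq \LT$ normalized via the chosen uniformizer. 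Applied to $\alpha$ this yields $\Ver_\bG(\sigma\alpha) = \varpi\alpha$ (the displayed ``$\varpi\alpha_D$'' appears to be a typo).

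For part (2), since $\cF_G$ lifts $\Fr_\bG$, Theorem \ref{th:serre_tate}(2) directly gives
\[q(G/R;\, \alpha,\, \Fr_\bG^D(\beta_D)) \;=\; q(G'/R;\, \Fr_\bG(\alpha),\, \beta_D)\]
for every $\alpha \in \Tp\bG(k)$ and $\beta_D \in \Tp(\bG^\sigma)^D(k)$. I would then unfold the definitions: $\Fr_\bG^D = \Ver_{\bG^D}$ by construction, and applying part (1) to the ordinary $\cO$-divisible group $\bG^D$ yields $\Ver_{\bG^D}(\sigma\alpha_D) = \varpi\alpha_D$. Substituting $\beta_D = \sigma\alpha_D$ and using the $\cO$-linearity of $q$ in its second argument---multiplication by $\varpi$ in the target $\LT(R)$ being written $\varpi.(-)$---immediately produces $q(G'/R;\sigma\alpha,\sigma\alpha_D) = \varpi.q(G/R;\alpha,\alpha_D)$.

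The main bookkeeping issue is keeping track of the canonical identifications $(\bG^\sigma)^D \simeq (\bG^D)^\sigma$ and $\LT^\sigma \simeq \LT$, together with the precise sense in which $\cF_G$ lifts $\Fr_\bG$; all of these are normalized by the choice of uniformizer $\varpi$ fixed just before the lemma, so once pinned down the argument becomes formal. I do not anticipate any genuine analytic or deformation-theoretic obstacle beyond this.
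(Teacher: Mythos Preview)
Your proposal is correct and is precisely the argument the paper has in mind: the paper's proof simply says ``same as \cite{Kat81}*{Lemma 4.1.2}, with $\Ver_\bG\circ\Fr_\bG=\varpi$,'' and what you have written is an accurate unpacking of that reference, including the correct identification of the typo $\varpi\alpha_D$ for $\varpi\alpha$ in part (1).
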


\begin{proof}
The proof is the same as \cite{Kat81}*{Lemma 4.1.2}, with
$\Ver_\bG\circ\Fr_\bG=\varpi$.
\end{proof}

\begin{lem}\label{le:frobenius_c}
For $\alpha\in\Tp\bG(k)$ and $\alpha_D^\vee\in\Hom_\cO(\Tp\bG^D(k),\cO)$, we
have
\begin{enumerate}
  \item
      $((\cF_G^{\et})^\vee)^*\omega_{G^\vee}(\alpha)=\omega_{G'^\vee}(\sigma\alpha)$;

  \item $\cF_{G*}L_G(\alpha_D^\vee)=\varpi
      L_{G'}(\alpha_D^\vee\circ\sigma^{-1})$.
\end{enumerate}
\end{lem}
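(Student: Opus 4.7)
Both parts are direct specializations of the functoriality of $\omega$ and $L$ (Lemma \ref{le:o_functorial}) applied to the isogeny $\cF_G\colon G\to G'$, whose reduction modulo the maximal ideal is the $\cO$-relative Frobenius $\Fr_\bG\colon\bG\to\bG^\sigma$ (via the identification of $G'_k$ with $\bG^\sigma$ fixed by the uniformizer $\varpi$). The only Tate-module input needed is the pair of identities $\Fr_\bG(\alpha)=\sigma\alpha$ and $\Ver_\bG(\sigma\alpha)=\varpi\alpha$ recorded in Lemma \ref{le:frobenius_b}(1).

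For (1), Lemma \ref{le:o_functorial}(1) applied to $f=\cF_G$, $H=G'$, $\b{f}=\Fr_\bG$ gives
\[((\cF_G^{\et})^\vee)^*\omega_{G^\vee}(\alpha)=\omega_{G'^\vee}(\Fr_\bG\alpha),\]
and $\Fr_\bG\alpha=\sigma\alpha$ is exactly the claimed formula.

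For (2), the analogous specialization of Lemma \ref{le:o_functorial}(2) reads
\[\cF_{G*}L_G(\alpha_D^\vee)=L_{G'}(\alpha_D^\vee\circ\Fr_\bG^D),\]
where $\Fr_\bG^D$ denotes the $\cO$-Cartier dual isogeny $(\bG^\sigma)^D\to\bG^D$ viewed as a map of Tate modules. By the definition $\Ver_\bG\colonequals(\Fr_{\bG^D})^D$, together with the canonical $\varpi$-dependent identifications $\bG^{DD}\simeq\bG$ and $\bG^{\sigma D}\simeq(\bG^D)^\sigma$, one has $\Fr_\bG^D=\Ver_{\bG^D}$. Applying Lemma \ref{le:frobenius_b}(1) to the ordinary $\cO$-divisible group $\bG^D$ in place of $\bG$ then yields $\Ver_{\bG^D}(\sigma\alpha_D)=\varpi\alpha_D$ for every $\alpha_D\in\Tp\bG^D(k)$, so that
\[(\alpha_D^\vee\circ\Fr_\bG^D)(\sigma\alpha_D)=\alpha_D^\vee(\varpi\alpha_D)=\varpi\cdot(\alpha_D^\vee\circ\sigma^{-1})(\sigma\alpha_D),\]
whence $\alpha_D^\vee\circ\Fr_\bG^D=\varpi\cdot(\alpha_D^\vee\circ\sigma^{-1})$ as elements of $\Hom_\cO(\Tp\bG^{\sigma D}(k),\cO)$. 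Pulling the scalar through the $\cO$-linearity of $L_{G'}$ produces the desired identity.

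The only place requiring care is keeping the various $\varpi$-dependent identifications mutually compatible; once they are fixed coherently with those used to define $\Ver_\bG$, the factor $\varpi$ appearing in (2) is nothing other than the relation $\Ver\circ\Fr=\varpi$ for the ordinary $\cO$-divisible group $\bG^D$, playing the role that $\Ver\circ\Fr=p$ does in the $p$-divisible setting of \cite{Kat81}.
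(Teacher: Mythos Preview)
Your proof is correct and is precisely the unpacking of the paper's one-line proof ``It follows from Lemmas \ref{le:o_functorial} and \ref{le:frobenius_b}.'' You have correctly identified that (1) is Lemma \ref{le:o_functorial}(1) specialized to $f=\cF_G$ together with $\Fr_\bG\alpha=\sigma\alpha$, and that (2) is Lemma \ref{le:o_functorial}(2) specialized the same way, with the key step being the identification $\Fr_\bG^D=\Ver_{\bG^D}$ and the relation $\Ver_{\bG^D}(\sigma\alpha_D)=\varpi\alpha_D$ from Lemma \ref{le:frobenius_b}(1) applied to $\bG^D$.
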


\begin{proof}
It follows from Lemmas \ref{le:o_functorial} and \ref{le:frobenius_b}.
\end{proof}

If we apply the construction to the universal object $\fG$, we obtain a
formal deformation $\fG'/\fR$ of $G^\sigma$. Its classifying map is the
unique morphism
\begin{align*}
\Phi\colon\fM_\bG\to\fM_{\bG^\sigma}\xrightarrow{\sim}\fM_\bG^\sigma
\end{align*}
such that $\Phi^*\fG^\sigma\simeq\fG'$. Therefore, we may regard $\cF_\fG$ as
a morphism
\[\cF_\fG\colon\fG\to\Phi^*\fG^\sigma\]
of $\cO$-divisible groups over $\fM_\bG$. Taking dual, we have
\[\cF_{\fG}^\vee\colon\Phi^*\fG^{\sigma\vee}\simeq(\Phi^*\fG^\sigma)^\vee\to\fG^\vee.\]

\begin{lem}\label{le:rationality}
We have
\begin{enumerate}
  \item The map $\omega_{\fG^\vee}\colon\Tp\bG(k)\otimes_\cO\fR
      \to\Omega(\fG^\vee/\fR)_\cO$ induces an isomorphism
      \[\Tp\bG(k)\xrightarrow{\sim}\Omega(\fG^\vee/\fR)_\cO^1\colonequals
      \{\omega\in\Omega(\fG^\vee/\fR)_\cO\res(\cF_\fG^\vee)^*\omega=\Phi^*\Sigma_{\fG^\vee}^*\omega\}\]
      of $\cO$-modules.

  \item The map $L_\fG\colon\Hom_\cO(\Tp\bG^D(k),\fR)\to\Lie(\fG/\fR)$
      induces an isomorphism
      \[\Hom_\cO(\Tp\bG^D(k),\cO)\xrightarrow{\sim}\Lie(\fG/\fR)^0\colonequals\{\delta\in\Lie(\fG/\fR)\res
      \cF_{\fG*}\delta=\varpi\Phi^*\Fr_{\fG*}\delta\}\] of $\cO$-modules.
\end{enumerate}
\end{lem}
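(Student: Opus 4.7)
The plan is to prove both parts uniformly by transporting the Frobenius-type conditions defining $\Omega(\fG^\vee/\fR)_\cO^1$ and $\Lie(\fG/\fR)^0$ across the isomorphisms $\omega_{\fG^\vee}$ and $L_\fG$. In each case the problem will reduce to a single assertion $\fR^F=\cO$ about the Frobenius lift $F\colonequals\Phi^*\Sigma^*\colon\fR\to\fR$ on the Serre--Tate coordinate ring.

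For part~(1), I will first verify that $\omega_{\fG^\vee}(\alpha)\in\Omega(\fG^\vee/\fR)_\cO^1$ for every $\alpha\in\Tp\bG(k)$: Lemma~\ref{le:frobenius_c}(1) applied to $G=\fG$, together with the identification $\Phi^*\fG^\sigma\simeq\fG'$, yields $(\cF_\fG^\vee)^*\omega_{\fG^\vee}(\alpha)=\omega_{\fG'^\vee}(\sigma\alpha)$, while Lemma~\ref{le:frobenius_a}(2) followed by $\Phi^*$ gives $\Phi^*\Sigma_{\fG^\vee}^*\omega_{\fG^\vee}(\alpha)=\Phi^*\omega_{\fG^{\sigma\vee}}(\sigma\alpha)=\omega_{\fG'^\vee}(\sigma\alpha)$, matching the first. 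The resulting map $\Tp\bG(k)\to\Omega(\fG^\vee/\fR)_\cO^1$ is injective since $\omega_{\fG^\vee}$ is already injective on $\Tp\bG(k)\subset\Tp\bG(k)\otimes_\cO\fR$. For surjectivity, I will pick an $\cO$-basis $\{\alpha_i\}$ of $\Tp\bG(k)$, write any $\omega\in\Omega^1$ as $\omega_{\fG^\vee}(\sum_i\alpha_i\otimes r_i)$, and extend the above calculation by $\fR$-linearity (using that $(\cF_\fG^\vee)^*$ is $\fR$-linear while $\Phi^*\Sigma_{\fG^\vee}^*$ acts on scalars via $F$), obtaining $(\cF_\fG^\vee)^*\omega=\sum_i r_i\,\omega_{\fG'^\vee}(\sigma\alpha_i)$ versus $\Phi^*\Sigma_{\fG^\vee}^*\omega=\sum_i F(r_i)\,\omega_{\fG'^\vee}(\sigma\alpha_i)$. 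Since $\{\sigma\alpha_i\}$ is an $\cO$-basis of $\Tp\bG^\sigma(k)$ by Lemma~\ref{le:frobenius_b}(1) and $\omega_{\fG'^\vee}$ is injective on $\Tp\bG^\sigma(k)\otimes_\cO\fR$, the Frobenius-fixedness condition will force $F(r_i)=r_i$ for each $i$. Part~(2) is handled by the strictly parallel argument with $L_\fG$: Lemmas~\ref{le:frobenius_c}(2) and~\ref{le:frobenius_a}(3) yield $\cF_{\fG*}L_\fG(\alpha_D^\vee\otimes r)=\varpi r\cdot L_{\fG'}(\alpha_D^\vee\circ\sigma^{-1})$ and $\varpi\Phi^*\Fr_{\fG*}L_\fG(\alpha_D^\vee\otimes r)=\varpi F(r)\cdot L_{\fG'}(\alpha_D^\vee\circ\sigma^{-1})$; the common factor $\varpi$ cancels, and the defining condition again reduces componentwise to $F(r)=r$.

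Both parts thus hinge on the claim $\fR^F=\cO$. By Theorem~\ref{th:serre_tate}, $\fM_\bG\simeq\LT^{de}$ (with $d$ and $e$ the $\cO$-heights of the connected and \'{e}tale parts of $\bG$), so $\fR\simeq\breve\cO[[T_1,\ldots,T_{de}]]$ with each $T_j=q(\alpha_j,\alpha_{D,j})$ a Lubin--Tate parameter. Unwinding the definition $F=\Phi^*\Sigma^*$ via Lemma~\ref{le:frobenius_a}(1) followed by Lemma~\ref{le:frobenius_b}(2) identifies $F(T_j)=[\varpi]_\LT(T_j)$, so $F$ acts as the Lubin--Tate Frobenius lift on each coordinate and as $\sigma$ on $\breve\cO$. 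The hard part will be the remaining power-series argument: expanding $r=\sum_\alpha a_\alpha T^\alpha\in\fR$ and comparing coefficients in $F(r)=r$ yields triangular equations $(\id-\sigma\cdot\varpi^{|\alpha|})(a_\alpha)=\Psi_\alpha(\{a_\beta\}_{|\beta|<|\alpha|})$; since $\id-\sigma\cdot\varpi^{|\alpha|}$ is invertible on $\breve\cO$ whenever $|\alpha|\geq 1$ (as $\varpi$ has positive valuation), the system determines each $a_\alpha$ uniquely from the lower-order coefficients. Since $r=a_0\in\cO$ is already a fixed point for any $a_0\in\cO$, the uniqueness forces $a_\alpha=0$ for $|\alpha|\geq 1$, and the equation for $a_0$ forces $a_0\in\cO$. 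The main obstacles to watch out for are keeping the $\sigma$-semilinear structures on $\fR$, $\fR^\sigma$, and the various pullbacks (by $\cF_\fG^\vee$, $\Sigma_{\fG^\vee}$, $\Phi$, $\Fr_\fG$) consistent when computing the scalar actions, and controlling the cross-term contributions $\Psi_\alpha$ arising from the higher-order expansion of $[\varpi]_\LT$ when propagating the triangular inductive step.
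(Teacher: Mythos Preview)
Your proposal is correct and takes essentially the same approach as the paper, which simply defers to \cite{Kat81}*{Corollary 4.1.5} after invoking Lemmas~\ref{le:frobenius_a} and~\ref{le:frobenius_c}. Your contribution is to spell out explicitly the Lubin--Tate adaptation of Katz's argument: the reduction to $\fR^{F}=\cO$ via the two Frobenius lemmas, and the power-series computation showing that $F(T_j)=[\varpi]_{\LT}(T_j)$ forces all nonconstant coefficients to vanish. One small refinement: the cross-terms $\Psi_\alpha$ depend on $a_\beta$ for $\beta\leq\alpha$ in the \emph{componentwise} partial order (since each $[\varpi](T_j)$ involves only $T_j$), not merely $|\beta|<|\alpha|$; but since $c_{\alpha,0}=0$ for $\alpha\neq 0$ and distinct $\alpha$'s of the same total degree do not interact, the induction goes through unchanged.
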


\begin{proof}
It can be proved by the same way as \cite{Kat81}*{Corollary 4.1.5} by using
Lemmas \ref{le:frobenius_a} and \ref{le:frobenius_c}.
\end{proof}

Consider the following commutative diagram:
\begin{align}\label{eq:diagram_table}
\xymatrix{
0\ar[r]&\Omega(\fG^\vee/\fR)_\cO \ar[r]^-{a}\ar[d]_-{(\cF_\fG^\vee)^*}
& (\rD(\bG^\vee)_\fR)_\cO \ar[r]^-{b}\ar[d]^-{\rD(\Fr_\bG^\vee)} & \Lie(\fG/\fR) \ar[d]^-{\cF_{\fG*}} \ar[r]&0\\
0\ar[r]&\Omega(\fG'^\vee/\fR)_\cO \ar[r]
& (\rD(\bG^{\sigma\vee})_\fR)_\cO  \ar[r]& \Lie(\fG'/\fR) \ar[r]&0 \\
0\ar[r]&\Omega(\fG^\vee/\fR)_\cO \ar[r]\ar[u]^-{\Phi^*\circ\Sigma_{\fG^\vee}^*}
& (\rD(\bG^\vee)_\fR)_\cO \ar[r]\ar[u]_-{\rD(\Sigma_{\bG^\vee})} & \Lie(\fG/\fR) \ar[u]_-{\Phi^*\circ\Fr_{\fG*}}\ar[r]&0.
}
\end{align}

For $k\in\dZ$, we define $\cO$-modules
\begin{align*}
\rD(\bG^\vee)_\fR^k&=\{\xi\in(\rD(\bG^\vee)_\fR)_\cO\res \rD(\Fr_\bG^\vee)\xi=\varpi^{1-k}\rD(\Sigma_{\bG^\vee})\xi\}\\
&=\{\xi\in(\rD(\bG^\vee)_\fR)_\cO\res\rD(\Ver_\bG^\vee)\rD(\Sigma_{\bG^\vee})\xi=\varpi^k\xi\}.
\end{align*}

\begin{lem}
The maps $\omega_{\fG^\vee}$ and $a$ in \eqref{eq:diagram_table} induce an
isomorphism
\[a_1\colon\Tp\bG(k)\xrightarrow{\sim}\rD(\bG^\vee)_\fR^1\] of $\cO$-modules. The maps
$L_\fG$ and $b$ in \eqref{eq:diagram_table} induce an isomorphism
\[b_0\colon\rD(\bG^\vee)_\fR^0\xrightarrow{\sim}\Hom_\cO(\Tp\bG^D(k),\cO)\]
of $\cO$-modules.
\end{lem}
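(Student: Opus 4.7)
The plan is to establish both isomorphisms by combining the commutativity of diagram \eqref{eq:diagram_table} with the identifications in Lemma \ref{le:rationality} and the Frobenius compatibilities of Lemmas \ref{le:frobenius_a} and \ref{le:frobenius_c}. Throughout I write $a'$ and $b'$ for the middle-row inclusion and projection, and $\phi$ for the $\sigma$-semilinear endomorphism of $\fR$ induced by $\Phi^*\Sigma^*$.

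\textbf{Well-definedness.} For $\omega\in\Omega(\fG^\vee/\fR)_\cO^1$, the defining relation $(\cF_\fG^\vee)^*\omega=\Phi^*\Sigma_{\fG^\vee}^*\omega$ combined with the two left squares of \eqref{eq:diagram_table} gives $\rD(\Fr_\bG^\vee)a(\omega)=a'((\cF_\fG^\vee)^*\omega)=a'(\Phi^*\Sigma_{\fG^\vee}^*\omega)=\rD(\Sigma_{\bG^\vee})a(\omega)$, placing $a(\omega)$ in $\rD(\bG^\vee)_\fR^1$; precomposing with the isomorphism of Lemma \ref{le:rationality}(1) defines $a_1$. Dually, for $\xi\in\rD(\bG^\vee)_\fR^0$ the two right squares yield $\cF_{\fG*}b(\xi)=\varpi\Phi^*\Fr_{\fG*}b(\xi)$, so $b(\xi)\in\Lie(\fG/\fR)^0$; postcomposing with the inverse of Lemma \ref{le:rationality}(2) defines $b_0$.

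\textbf{Isomorphism $a_1$.} Injectivity is automatic from injectivity of $a$ and of $\omega_{\fG^\vee}$. For surjectivity, take $\xi\in\rD^1$ and write $b(\xi)=L_\fG(\ell)$ using Lemma \ref{le:rationality}(2). The right-square relation $\cF_{\fG*}b(\xi)=\Phi^*\Fr_{\fG*}b(\xi)$ becomes, by Lemmas \ref{le:frobenius_c}(2) and \ref{le:frobenius_a}(3), $\varpi L_{\fG'}(\ell\circ\sigma^{-1})=L_{\fG'}(\ell\circ\sigma^{-1})$. Since $\varpi-1$ is a unit in $\breve\cO$ and $L_{\fG'}$ is injective, $\ell=0$, so $\xi\in\ker(b)=\IM(a)$, i.e., $\xi=a(\omega)$. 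The left squares, together with injectivity of $a'$, then force $(\cF_\fG^\vee)^*\omega=\Phi^*\Sigma_{\fG^\vee}^*\omega$, that is $\omega\in\Omega(\fG^\vee/\fR)_\cO^1$, and Lemma \ref{le:rationality}(1) produces the unique preimage in $\Tp\bG(k)$.

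\textbf{Isomorphism $b_0$.} For injectivity, if $b_0(\xi)=0$ then $\xi=a(\omega)$, and the condition $\xi\in\rD^0$ together with injectivity of $a'$ yields $(\cF_\fG^\vee)^*\omega=\varpi\Phi^*\Sigma_{\fG^\vee}^*\omega$. Expanding $\omega=\omega_{\fG^\vee}\bigl(\sum_i\alpha_i\otimes r_i\bigr)$ via Lemma \ref{le:rationality}(1) and applying Lemmas \ref{le:frobenius_a}(2) and \ref{le:frobenius_c}(1) reduces this to $r_i=\varpi\phi(r_i)$ for each $i$; iterating forces $r_i\in\bigcap_n\varpi^n\fR=0$, so $\omega=0$ and $\xi=0$. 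For surjectivity, given $\ell\in\Hom_\cO(\Tp\bG^D(k),\cO)$ I would take any lift $\xi$ of $L_\fG(\ell)$ under $b$ and set $\eta=\rD(\Fr_\bG^\vee)\xi-\varpi\rD(\Sigma_{\bG^\vee})\xi$. Since $L_\fG(\ell)\in\Lie^0$, applying $b'$ gives $b'(\eta)=0$, so $\eta=a'(\eta')$ for some $\eta'$, and it remains to solve $(\cF_\fG^\vee)^*\omega-\varpi\Phi^*\Sigma_{\fG^\vee}^*\omega=\eta'$ for $\omega$. In the basis of Lemma \ref{le:rationality}(1) this becomes the equation $r_i-\varpi\phi(r_i)=s_i$; the series $r_i=\sum_{n\geq 0}\varpi^n\phi^n(s_i)$ converges in $\fR$ by $\varpi$-adic completeness, and then $\xi-a(\omega)\in\rD^0$ maps to $L_\fG(\ell)$.

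\textbf{Main obstacle.} The only delicate point is the bookkeeping that ensures $\Phi^*\Sigma^*$ really does act on coordinate functions of $\omega$ by a $\sigma$-semilinear Frobenius lift $\phi$ on $\fR$, so that Lemmas \ref{le:frobenius_a}(2) and \ref{le:frobenius_c}(1) combine cleanly to give the coefficient identity used above; once this compatibility is pinned down, everything reduces to the two $\varpi$-adic linear-algebra solves sketched here.
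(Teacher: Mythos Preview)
Your overall strategy matches the paper's: reduce to Lemma~\ref{le:rationality} together with a $\varpi$-adic argument, and the well-definedness and the injectivity of $b_0$ are handled just as the paper does (the paper phrases the latter as $\IM(a)\cap\rD(\bG^\vee)_\fR^0=\{0\}$, which is exactly your equation $r_i=\varpi\phi(r_i)\Rightarrow r_i=0$). For the surjectivity of $b_0$ the paper takes a different but equivalent route: rather than correcting the lift by solving the series $r_i=\sum_{n\geq0}\varpi^n\phi^n(s_i)$ inside $\Omega(\fG^\vee/\fR)_\cO$, it iterates the operator $\rD(\Ver_\bG^\vee)\rD(\Sigma_{\bG^\vee})$ directly on the crystal, setting $\xi_{n+1}=\rD(\Ver_\bG^\vee)\rD(\Sigma_{\bG^\vee})\xi_n$ and observing that the sequence converges to a fixed point. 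Both are the same $\varpi$-adic contraction, viewed on different pieces of the Hodge filtration.

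There is, however, a genuine slip in your argument for the surjectivity of $a_1$. You invoke Lemma~\ref{le:rationality}(2) to write $b(\xi)=L_\fG(\ell)$ with $\ell\in\Hom_\cO(\Tp\bG^D(k),\cO)$, but $b(\xi)$ lies a priori only in $\Lie(\fG/\fR)$, so $\ell$ must be taken in $\Hom_\cO(\Tp\bG^D(k),\fR)$ via the $\fR$-linear extension of $L_\fG$. More importantly, the map $\Phi^*\Fr_{\fG*}$ is \emph{not} $\fR$-linear (if it were, the defining condition for $\Lie(\fG/\fR)^0$ would be vacuous, contradicting Lemma~\ref{le:rationality}(2)); it is $\phi$-semilinear for exactly the Frobenius lift $\phi$ you introduce later. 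So the displayed equation $\varpi L_{\fG'}(\ell\circ\sigma^{-1})=L_{\fG'}(\ell\circ\sigma^{-1})$ is not what you get, and ``$\varpi-1$ is a unit'' is not the right reason. The correct coefficient equation is $\varpi r_j=\phi(r_j)$; reducing modulo $\varpi$ and using that $\phi$ lifts Frobenius (hence is injective on $\fR/\varpi$) gives $r_j\in\varpi\fR$, and then iterating forces $r_j\in\bigcap_n\varpi^n\fR=0$---the same mechanism you correctly used for $b_0$ injectivity. The paper sidesteps this by citing \cite{Kat81}*{Lemma 4.2.1}.
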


\begin{proof}
For the first part, by a similar argument in \cite{Kat81}*{Lemma 4.2.1}, we
know that $b(\xi)=0$ for $\xi\in\rD(\bG^\vee)_\fR^1$, that is, $\xi$ is in
the image of $a$. The conclusion then follows from Lemma \ref{le:rationality}
(1).

For the second part, it is easy to see that
$\IM(a)\cap\rD(\bG^\vee)_\fG^0=\{0\}$ by choosing an $\cO$-basis of
$\Tp\bG(k)$. Therefore, $b$ restricts to an injective map
$\rD(\bG^\vee)_\fR^0\to\Lie(\fG/\fR)^0$. We only need to show that this map
is also surjective. For every $\delta\in\Lie(\fG/\fR)^0$, choose an element
$\xi_0\in(\rD(\bG^\vee)_\fR)_\cO$. Put
$\xi_{n+1}=\rD(\Ver_\bG^\vee)\rD(\Sigma_{\bG^\vee})\xi_n$ for $n\geq0$. Then
$b(\xi_n)=\delta$ and $\{\xi_n\}$ converge to an element
$\xi\in\rD(\bG^\vee)_\fR^0$.
\end{proof}

\begin{lem}\label{le:nabla_pre}
For every $\ell\in\Hom_\cO(\Tp\bG(k)\otimes_\cO\Tp\bG^D(k),\cO)$, the action
of $D(\ell)$ under the Gauss--Manin connection on $(\rD(G^\vee)_{\fR})_\cO$
satisfies the formula
\[D(\ell)(\nabla(\rD(\Ver_\bG^\vee)\rD(\Sigma_{G^\vee})\xi))
=\varpi\rD(\Ver_\bG^\vee)\rD(\Sigma_{\bG^\vee})(D(\ell)(\nabla\xi))\] for
every $\xi\in(\rD(\bG^\vee)_\fR)_\cO$.
\end{lem}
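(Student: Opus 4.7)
The plan is to follow Katz's strategy in \cite{Kat81}, exploiting the horizontality of the crystalline Frobenius with respect to the Gauss--Manin connection, combined with the explicit behavior of the Serre--Tate coordinate under the map $\Phi$ recorded in Lemmas \ref{le:frobenius_a} and \ref{le:frobenius_b}.

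First, I would identify the composition $\phi \colonequals \Sigma\circ\Phi\colon\fM_\bG\to\fM_\bG$ as a lift of the absolute Frobenius of $\fM_\bG\otimes_{\breve\cO}k$. By construction, $\Phi^*\fG^\sigma \simeq \fG'$ and $\cF_\fG\colon\fG\to\fG'$ reduces to $\Fr_\bG$ on the special fiber, so the operator $F\colonequals\rD(\Ver_\bG^\vee)\rD(\Sigma_{\bG^\vee})\colon(\rD(\bG^\vee)_\fR)_\cO \to (\rD(\bG^\vee)_\fR)_\cO$ is precisely the $\phi$-semilinear crystalline Frobenius induced by $\cF_\fG$ via the functoriality of the Dieudonn\'{e} crystal (compare the middle column of diagram \eqref{eq:diagram_table}).

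Next, I would invoke the fundamental horizontality of crystalline Frobenius: since $\cF_\fG$ is an honest morphism of $\cO$-divisible groups (not merely a map on special fibers), the induced map on crystals is horizontal for the Gauss--Manin connection, i.e.
\[\nabla\circ F=(F\otimes\id)\circ\phi^*\nabla.\]
Concretely, if $\nabla\xi=\sum_i\xi_i\otimes\omega_i$, then $\nabla(F\xi)=\sum_iF(\xi_i)\otimes\phi^*\omega_i$.

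The third step is to compute the pairing of $D(\ell)$ against $\phi^*\omega_i$. Working in the logarithmic Serre--Tate coordinates $x_{\alpha,\alpha_D}\colonequals\log q(\alpha,\alpha_D)$ on $\fR\otimes\dQ$, Lemma \ref{le:frobenius_a}(1) gives $\Phi^*\Sigma^* q(\alpha,\alpha_D)=q(\fG';\sigma\alpha,\sigma\alpha_D)$, and Lemma \ref{le:frobenius_b}(2) then identifies this with $\varpi.q(\alpha,\alpha_D)$ in the Lubin--Tate group law. Since $\log(\varpi.T)=\varpi\log(T)$ for the normalized logarithm, this yields $\phi^*x_{\alpha,\alpha_D}=\varpi\,x_{\alpha,\alpha_D}$ and hence $\phi^*dx_{\alpha,\alpha_D}=\varpi\,dx_{\alpha,\alpha_D}$. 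By the definition of $D(\ell)$ we have $\langle D(\ell),dx_{\alpha,\alpha_D}\rangle=\ell(\alpha\otimes\alpha_D)$, and because the elements $x_{\alpha,\alpha_D}$ (for $\alpha,\alpha_D$ in $\cO$-bases) are topological generators of $\fR\otimes\dQ$ over $\breve\cO\otimes\dQ$, their differentials topologically span $\Omega_{\fR/\breve\cO}\otimes\dQ$. It follows that
\[\langle D(\ell),\phi^*\omega\rangle=\varpi\cdot\langle D(\ell),\omega\rangle\]
for every $\omega\in\Omega_{\fR/\breve\cO}$. Combining with the horizontality identity,
\[D(\ell)(\nabla(F\xi))=\sum_iF(\xi_i)\langle D(\ell),\phi^*\omega_i\rangle=\varpi\,F\Bigl(\sum_i\xi_i\langle D(\ell),\omega_i\rangle\Bigr)=\varpi\,F(D(\ell)(\nabla\xi)).\]

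The main obstacle is the horizontality statement in the second step. Although this is essentially formal in crystalline theory for $p$-divisible groups, here one must check that restricting to the $\cO$-isotypical piece $(\rD(\bG^\vee)_\fR)_\cO$ does not destroy the compatibility -- this follows because $\cF_\fG$ and $\Sigma$ are both $\cO$-linear, so the induced crystalline Frobenius $F$ preserves the $\cO$-decomposition and commutes with $\nabla$ on the isotypical component. A smaller technical point is to verify carefully that the factor $\varpi$, rather than some other power of $\varpi$, arises in the differential computation; this is where the specific normalization of $\log\colon\LT\to\Gaf$ and the choice of uniformizer $\varpi$ feeding into the identification $\LT^\sigma\simeq\LT$ (fixed after Lemma \ref{le:frobenius_a}) play their role.
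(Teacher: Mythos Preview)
Your approach is correct and is precisely Katz's argument from \cite{Kat81}*{Lemma 4.3.3}, which is all the paper invokes. There is, however, a slip in your third step that you should repair. The identity
\[
\langle D(\ell),\phi^*\omega\rangle=\varpi\cdot\langle D(\ell),\omega\rangle
\]
is \emph{not} true for arbitrary $\omega\in\Omega_{\fR/\breve\cO}$: writing $\omega=f\,dx_{\alpha,\alpha_D}$ with $f\in\fR$, one has $\phi^*\omega=\phi^*(f)\cdot\varpi\,dx_{\alpha,\alpha_D}$, so the left side is $\varpi\,\phi^*(f)\,\ell(\alpha\otimes\alpha_D)$, not $\varpi\,f\,\ell(\alpha\otimes\alpha_D)$. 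The correct identity is
\[
\langle D(\ell),\phi^*\omega\rangle=\varpi\cdot\phi^*\bigl(\langle D(\ell),\omega\rangle\bigr),
\]
i.e.\ $D(\ell)\circ\phi^*=\varpi\cdot\phi^*\circ D(\ell)$ as derivations of $\fR$. Your spanning argument fails because the relation you want is not $\fR$-linear in $\omega$.

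Fortunately this is exactly what the final chain of equalities needs, since $F$ is $\phi^*$-\emph{semi}linear rather than linear: with the corrected identity,
\[
\sum_i F(\xi_i)\,\langle D(\ell),\phi^*\omega_i\rangle
=\varpi\sum_i F(\xi_i)\,\phi^*\bigl(\langle D(\ell),\omega_i\rangle\bigr)
=\varpi\sum_i F\bigl(\xi_i\,\langle D(\ell),\omega_i\rangle\bigr)
=\varpi\,F\bigl(D(\ell)(\nabla\xi)\bigr).
\]
Alternatively, you can sidestep the issue entirely by expanding $\nabla\xi$ in the specific basis $\{dx_{\alpha_i,\alpha_{D,j}}\}$, so that each $\langle D(\ell),\omega_{i,j}\rangle=\ell(\alpha_i\otimes\alpha_{D,j})\in\cO$ is a constant fixed by $\phi^*$; then your original computation goes through verbatim.
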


\begin{proof}
It is proved in the same way as \cite{Kat81}*{Lemma 4.3.3}.
\end{proof}

\begin{lem}\label{le:nabla}
If $\xi\in(\rD(\bG^\vee)_\fR)_\cO$ satisfies
$\rD(\Ver_\bG^\vee)\rD(\Sigma_{\bG^\vee})\xi=\lambda\xi$ for some
$\lambda\in\breve\cO$, then for every
$\ell\in\Hom_\cO(\Tp\bG(k)\otimes_\cO\Tp\bG^D(k),\cO)$, the element
$D(\ell)(\nabla\xi)\in(\rD(\bG^\vee)_\fR)_\cO$ satisfies
\[\varpi\rD(\Ver_\bG^\vee)\rD(\Sigma_{\bG^\vee})(D(\ell)(\nabla\xi))=\lambda D(\ell)(\nabla\xi).\]
\end{lem}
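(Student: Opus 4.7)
The plan is to obtain the stated identity as an immediate formal consequence of Lemma \ref{le:nabla_pre} applied to the eigenvector equation $\rD(\Ver_\bG^\vee)\rD(\Sigma_{\bG^\vee})\xi=\lambda\xi$. First I would apply the operator $D(\ell)\circ\nabla$ to both sides of this equation and track the two sides separately.

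On the left side, $D(\ell)\bigl(\nabla(\rD(\Ver_\bG^\vee)\rD(\Sigma_{\bG^\vee})\xi)\bigr)$ is exactly the expression whose value is computed by Lemma \ref{le:nabla_pre}: it equals $\varpi\,\rD(\Ver_\bG^\vee)\rD(\Sigma_{\bG^\vee})\bigl(D(\ell)(\nabla\xi)\bigr)$. On the right side, since $\lambda\in\breve\cO$ and the Gauss--Manin connection $\nabla$ is $\breve\cO$-linear (recall $\Omega_{\fR/\breve\cO}$ is the sheaf of differentials relative to $\breve\cO$, so $d\lambda=0$), we have $\nabla(\lambda\xi)=\lambda\nabla\xi$, and hence $D(\ell)(\nabla(\lambda\xi))=\lambda\,D(\ell)(\nabla\xi)$. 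Equating the two yields the claimed formula.

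There is essentially no obstacle here: the content of the lemma is entirely in the preceding Lemma \ref{le:nabla_pre}, which provides the commutation rule between the derivation $D(\ell)\circ\nabla$ and the semilinear operator $\rD(\Ver_\bG^\vee)\rD(\Sigma_{\bG^\vee})$ up to the factor $\varpi$. The only point worth verifying carefully is that the scalar $\lambda$ is killed by $d$, which is the reason the passage to the eigenspace relation picks up no correction term. With that in hand, the argument is a one-line manipulation.
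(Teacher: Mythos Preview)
Your proposal is correct and matches the paper's approach: the paper's proof is the single line ``It follows immediately from Lemma \ref{le:nabla_pre},'' and what you have written is precisely the unpacking of that sentence.
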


\begin{proof}
It follows immediately from Lemma \ref{le:nabla_pre}.
\end{proof}

\begin{proposition}
For $\alpha\in\Tp\bG(k)$ and $\alpha_D\in\Tp\bG^D(k)$, there exists a unique
character $Q(\alpha,\alpha_D)$ of $\fM_\bG$ such that
\[\omega_\fG(\alpha_D)\cdot\KS(\omega_{\fG^\vee}(\alpha))=\rd\log Q(\alpha,\alpha_D).\]
\end{proposition}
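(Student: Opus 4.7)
The plan is to show that the $1$-form
\[\omega_{\alpha,\alpha_D}\colonequals\omega_\fG(\alpha_D)\cdot\KS(\omega_{\fG^\vee}(\alpha))\in\Omega_{\fR/\breve\cO}\]
is translation-invariant on the $\cO$-formal group $\fM_\bG$, and then exhibit an $\cO$-linear functional whose corresponding character has it as logarithmic differential. Recall that $\fM_\bG$ is, as an $\cO$-formal group, isomorphic to $\Hom_\cO(\Tp\bG(k)\otimes_\cO\Tp\bG^D(k),\LT)$, a product of Lubin--Tate formal groups, so its characters (homomorphisms to $\LT$) are naturally parameterized by $\Tp\bG(k)\otimes_\cO\Tp\bG^D(k)$ via the universal Serre--Tate pairing.

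The key computation is to pair $\omega_{\alpha,\alpha_D}$ with each translation-invariant derivation $\bD(\ell)$, $\ell\in\Hom_\cO(\Tp\bG(k)\otimes_\cO\Tp\bG^D(k),\cO)$, and show the result lies in $\cO\subset\fR$. By Lemma~\ref{le:rationality}(1), $\omega_{\fG^\vee}(\alpha)$ lifts via $a$ to an element $\xi\in\rD(\bG^\vee)_\fR^1$, so $\rD(\Ver_\bG^\vee)\rD(\Sigma_{\bG^\vee})\xi=\varpi\xi$. Applying the Gauss--Manin connection and Lemma~\ref{le:nabla} with $\lambda=\varpi$, we obtain
\[\rD(\Ver_\bG^\vee)\rD(\Sigma_{\bG^\vee})(\bD(\ell)\nabla\xi)=\bD(\ell)\nabla\xi,\]
i.e.\ $\bD(\ell)\nabla\xi\in\rD(\bG^\vee)_\fR^0$. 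Projecting via $b$ lands in $\Lie(\fG/\fR)^0$, which by Lemma~\ref{le:rationality}(2) equals $L_\fG(\Hom_\cO(\Tp\bG^D(k),\cO))$; hence $b(\bD(\ell)\nabla\xi)=L_\fG(\phi_\ell)$ for a unique $\phi_\ell\in\Hom_\cO(\Tp\bG^D(k),\cO)$. The defining property of $L_\fG$ then yields
\[\langle\bD(\ell),\omega_{\alpha,\alpha_D}\rangle
=\omega_\fG(\alpha_D)\cdot L_\fG(\phi_\ell)=\phi_\ell(\alpha_D)\in\cO.\]
Because the $\bD(\ell)$ span the module of translation-invariant derivations and the pairing produces constants, $\omega_{\alpha,\alpha_D}$ is translation-invariant.

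It remains to produce the character. The map $\ell\mapsto\phi_\ell(\alpha_D)$ is manifestly $\cO$-linear (the Gauss--Manin connection is $\fR$-linear in the $\Omega$-factor, $\bD(\ell)$ is $\cO$-linear in $\ell$, and $b_0^{-1}$, $L_\fG^{-1}$, evaluation at $\alpha_D$ are all $\cO$-linear), so by reflexivity of the finite free $\cO$-module $\Tp\bG(k)\otimes_\cO\Tp\bG^D(k)$ it corresponds to a unique element $Q(\alpha,\alpha_D)\in\Tp\bG(k)\otimes_\cO\Tp\bG^D(k)$, i.e.\ to a character $Q(\alpha,\alpha_D)\colon\fM_\bG\to\LT$. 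Comparing values on each $\bD(\ell)$ shows $\rd\log Q(\alpha,\alpha_D)=\omega_{\alpha,\alpha_D}$. Uniqueness follows because any character of $\fM_\bG$ (a product of Lubin--Tate groups) with trivial logarithmic differential is itself trivial, since the Lubin--Tate logarithm is injective on the rigid generic fiber.

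The main obstacle is the integrality statement $\langle\bD(\ell),\omega_{\alpha,\alpha_D}\rangle\in\cO$, rather than merely in $\fR$ or $\breve\cO$; this is precisely what the Frobenius decomposition (Lemma~\ref{le:rationality}) together with the Frobenius-equivariance of $\nabla$ (Lemma~\ref{le:nabla}) deliver, replacing the use of $p$ by the uniformizer $\varpi$ in Katz's classical $\dZ_p$-argument \cite{Kat81}*{\Sec 4}. Everything else is essentially a verification that the identifications $a_1$, $b_0$, $\omega_\fG$ and $L_\fG$ are compatibly $\cO$-linear, which follows directly from their constructions.
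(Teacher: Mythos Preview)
Your proof is correct and follows essentially the same route as the paper's: both use that $\omega_{\fG^\vee}(\alpha)$ lies in the slope-$1$ piece $\rD(\bG^\vee)_\fR^1$, apply Lemma~\ref{le:nabla} to push $\bD(\ell)\nabla$ of it into the slope-$0$ piece, identify the image via $b_0$ with an element of $\Hom_\cO(\Tp\bG^D(k),\cO)$, and then pair with $\omega_\fG(\alpha_D)$ to obtain a constant in $\cO$. The only cosmetic difference is that the paper fixes bases $\{\alpha_i\},\{\alpha_{D,j}\}$ and writes the character explicitly as $\prod_{i,j}q(\alpha_i,\alpha_{D,j})^{\alpha_D\cdot\alpha_{D,i,j}^\vee}$, whereas you phrase the same thing coordinate-free via the linear functional $\ell\mapsto\phi_\ell(\alpha_D)$ and reflexivity of the finite free $\cO$-module.
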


\begin{proof}
Let $\{\alpha_i\}$ (resp.\ $\{\alpha_{D,j}\}$) be an $\cO$-basis of
$\Tp\bG(k)$ (resp.\ $\Tp\bG^D(k)$). Let $\{\ell_{i,j}\}$ be the basis of
$\Hom_\cO(\Tp\bG(k)\otimes_\cO\Tp\bG^D(k),\cO)$ dual to
$\{\alpha_i\otimes\alpha_{D,j}\}$. Then for every element
$\xi\in(\rD(\bG^\vee)_\fR)_\cO$, we have
\[\nabla\xi=\sum_{i,j}D(\ell_{i,j})(\nabla\xi)\otimes\rd\log q(\alpha_i,\alpha_{D,j}).\]
In particular, for $\xi=\omega_{\fG^\vee}(\alpha)$, we have
\[\nabla\omega_{\fG^\vee}(\alpha)=\sum_{i,j}D(\ell_{i,j})(\nabla\omega_{\fG^\vee}(\alpha))\otimes\rd\log q(\alpha_i,\alpha_{D,j}).\]
By Lemmas \ref{le:rationality} and \ref{le:nabla},
$\nabla\omega_{\fG^\vee}(\alpha)\in\rD(\bG^\vee)_\fR^0$. Therefore, there
exist unique elements $\alpha_{D,i,j}^\vee\in\Hom_\cO(\Tp\bG^D(k),\cO)$ such
that
\[\nabla\omega_{\fG^\vee}(\alpha)=b_0^{-1}(\alpha_{D,i,j}^\vee)\] for every $i,j$.
By definition,
\[\KS(\omega_{\fG^\vee}(\alpha))=\sum_{i,j}L_\fG(\alpha_{D,i,j}^\vee)\otimes\rd\log q(\alpha_i,\alpha_{D,j}),\]
and
\[\omega_\fG(\alpha_D)\cdot\KS(\omega_{\fG^\vee}(\alpha))=\rd\log
\(\prod_{i,j}q(\alpha_i,\alpha_{D,j})^{\alpha_D\cdot\alpha_{D,i,j}^\vee}\).\]
\end{proof}

\begin{corollary}
For elements $\alpha\in\Tp\bG(k)$, $\alpha_D\in\Tp\bG^D(k)$ and
$\ell\in\Hom_\cO(\Tp\bG(k)\otimes_\cO\Tp\bG^D(k),\cO)$, we have that
$D(\ell)(\omega_\fG(\alpha_D)\cdot\KS(\omega_{\fG^\vee}(\alpha)))$ is a
constant in $\cO$.
\end{corollary}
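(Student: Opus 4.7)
The plan is to deduce the corollary immediately from the preceding proposition together with the explicit formula describing how $D(\ell)$ acts on the basic one-forms $\rd\log q(\alpha,\alpha_D)$.

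First, I would invoke the proposition to write
\[
\omega_\fG(\alpha_D)\cdot\KS(\omega_{\fG^\vee}(\alpha))=\rd\log Q(\alpha,\alpha_D)
\]
for a character $Q(\alpha,\alpha_D)$ of $\fM_\bG$. Since $\fM_\bG=\Hom_\cO(\Tp\bG(k)\otimes_\cO\Tp\bG^D(k),\LT)$, any character is (up to a constant in $\LT(\breve\cO)$) a product of the basic characters $q(\alpha_i,\alpha_{D,j})$ for $\cO$-bases $\{\alpha_i\}$, $\{\alpha_{D,j}\}$, with exponents in $\cO$. In fact the very last display in the proof of the proposition exhibits
\[
Q(\alpha,\alpha_D)=\prod_{i,j}q(\alpha_i,\alpha_{D,j})^{\,\alpha_D\cdot\alpha_{D,i,j}^\vee},
\]
with $\alpha_D\cdot\alpha_{D,i,j}^\vee\in\cO$.

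Second, I would compute $D(\ell)$ on each basic one-form. By the definition of $D(\ell)$ as a continuous derivation, $D(\ell)q(\alpha,\alpha_D)=\ell(\alpha\otimes\alpha_D)\cdot q(\alpha,\alpha_D)$, and the induced pairing of $D(\ell)$ with one-forms therefore satisfies
\[
D(\ell)\bigl(\rd\log q(\alpha,\alpha_D)\bigr)
=\frac{D(\ell)q(\alpha,\alpha_D)}{q(\alpha,\alpha_D)}
=\ell(\alpha\otimes\alpha_D)\in\cO.
\]
This is the only substantive input needed beyond the proposition.

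Combining the two steps by $\cO$-linearity yields
\[
D(\ell)\bigl(\omega_\fG(\alpha_D)\cdot\KS(\omega_{\fG^\vee}(\alpha))\bigr)
=\sum_{i,j}(\alpha_D\cdot\alpha_{D,i,j}^\vee)\,\ell(\alpha_i\otimes\alpha_{D,j})\in\cO,
\]
which is manifestly a constant, proving the corollary. There is no genuine obstacle here: all of the work is already packaged inside the proposition (where Lemmas \ref{le:rationality} and \ref{le:nabla} were used to locate $\nabla\omega_{\fG^\vee}(\alpha)$ inside $\rD(\bG^\vee)_\fR^0$ with $\cO$-rational coefficients); the corollary is simply the observation that differentiating a $\rd\log$ of a character of a formal torus-type deformation space along an invariant derivation returns the exponent.
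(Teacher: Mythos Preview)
Your proposal is correct and matches the paper's intended argument: the corollary is stated without proof precisely because it follows immediately from the explicit product formula for $Q(\alpha,\alpha_D)$ at the end of the proposition's proof together with the defining relation $D(\ell)q(\alpha_i,\alpha_{D,j})=\ell(\alpha_i\otimes\alpha_{D,j})q(\alpha_i,\alpha_{D,j})$. Your computation is exactly the one the reader is expected to carry out.
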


\begin{corollary}
Suppose for every integer $n\geq 1$, we can find a homomorphism
$f_n\colon\fR\to\breve\cO/p^n$ such that
\[f_n(D(\ell)(\omega_\fG(\alpha_D)\cdot\KS(\omega_{\fG^\vee}(\alpha))))=\ell(\alpha\otimes\alpha_D)\]
holds $W_n$. Then $Q=q$ and Theorem \ref{th:o_main_a} follows.
\end{corollary}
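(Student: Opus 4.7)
The plan is to exploit the multiplicative coordinates on $\fM_\bG$ furnished by Theorem \ref{th:serre_tate}. Fix an $\cO$-basis $\{\alpha_i\}$ of $\Tp\bG(k)$, a basis $\{\alpha_{D,j}\}$ of $\Tp\bG^D(k)$, and the dual basis $\{\ell_{i,j}\}$ of $\Hom_\cO(\Tp\bG(k)\otimes_\cO\Tp\bG^D(k),\cO)$. Writing $q_{i,j}=q(\alpha_i,\alpha_{D,j})$, the universal pairing $q$ identifies $\fM_\bG$ with a product of copies of $\LT$, so the $1$-forms $\rd\log q_{i,j}$ are a free $\fR$-basis of $\Omega_{\fR/\breve\cO}$, with dual derivation basis $\{D(\ell_{i,j})\}$.

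First I would expand the form $\omega_\fG(\alpha_D)\cdot\KS(\omega_{\fG^\vee}(\alpha))$ in this basis:
\[
\omega_\fG(\alpha_D)\cdot\KS(\omega_{\fG^\vee}(\alpha))=\sum_{i,j}c_{i,j}(\alpha,\alpha_D)\,\rd\log q_{i,j},
\qquad c_{i,j}(\alpha,\alpha_D)\colonequals D(\ell_{i,j})\bigl(\omega_\fG(\alpha_D)\cdot\KS(\omega_{\fG^\vee}(\alpha))\bigr).
\]
By the previous corollary, each $c_{i,j}(\alpha,\alpha_D)$ is a constant belonging to $\cO\subset\fR$. Now I would use the hypothesis: applying $f_n$ to $c_{i,j}(\alpha,\alpha_D)$ one has on one hand $f_n(c_{i,j}(\alpha,\alpha_D))=c_{i,j}(\alpha,\alpha_D)\bmod p^n$ since $f_n$ is a ring homomorphism trivial on constants, and on the other hand $f_n(c_{i,j}(\alpha,\alpha_D))\equiv\ell_{i,j}(\alpha\otimes\alpha_D)\pmod{p^n}$ by assumption. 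Letting $n\to\infty$ yields $c_{i,j}(\alpha,\alpha_D)=\ell_{i,j}(\alpha\otimes\alpha_D)$ on the nose.

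To conclude, write $\alpha=\sum_i a_i\alpha_i$ and $\alpha_D=\sum_j b_j\alpha_{D,j}$ with $a_i,b_j\in\cO$; by bilinearity of $q$ one has $q(\alpha,\alpha_D)=\prod_{i,j}q_{i,j}^{a_ib_j}$, hence
\[
\rd\log q(\alpha,\alpha_D)=\sum_{i,j}a_ib_j\,\rd\log q_{i,j}=\sum_{i,j}\ell_{i,j}(\alpha\otimes\alpha_D)\,\rd\log q_{i,j}.
\]
Combining with the previous displayed expansion gives $\rd\log Q(\alpha,\alpha_D)=\rd\log q(\alpha,\alpha_D)$, and since both $Q(\alpha,\alpha_D)$ and $q(\alpha,\alpha_D)$ are characters of $\fM_\bG$ valued in $\LT$ (equivalently, group-like elements in $\LT(\fR)$), this forces $Q(\alpha,\alpha_D)=q(\alpha,\alpha_D)$, which is exactly Theorem~\ref{th:o_main_a}. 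The step that carries genuine content is of course the hypothesis on existence of the $f_n$, not anything in this formal argument; once that hypothesis is granted, the corollary is essentially bookkeeping on the multiplicative chart, so I do not anticipate a serious obstacle in the above reduction itself.
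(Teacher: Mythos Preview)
Your argument is correct and is precisely the bookkeeping the paper leaves implicit (the paper gives no proof of this corollary, treating it as immediate from the preceding one). The only point worth tightening is the final step: rather than invoking an abstract injectivity of $\rd\log$ on characters, you have already shown that both $\rd\log Q(\alpha,\alpha_D)$ and $\rd\log q(\alpha,\alpha_D)$ have the \emph{same} coefficients $\ell_{i,j}(\alpha\otimes\alpha_D)$ in the basis $\{\rd\log q_{i,j}\}$, and since characters of $\fM_\bG\simeq\prod\LT$ are exactly $\cO$-monomials $\prod q_{i,j}^{n_{i,j}}$, equality of these coefficients already gives $Q=q$ directly.
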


The condition of this corollary is fulfilled by Theorem \ref{th:o_main_b}.
Therefore, we have reduced Theorem \ref{th:o_main_a} to Theorem
\ref{th:o_main_b}.

\subsection{Infinitesimal computation}

Let $R$ be an (artinian) local $\breve\cO$-algebra with the maximal ideal
$\fm_R$ satisfying $\fm_R^{n+1}=0$. We suppose $G/R$ is canonical deformation
of $\bG$. Let $\tilde{G}$ be a deformation of $G$ to $\tilde{R}\colonequals
R[\varepsilon]/(\varepsilon^2)$, which gives rise to a map
$\partial\colon\Omega(G^\vee/R)\to\Lie(G/R)$. Note that the target may be
identified with $\Ker(G^0(\tilde{R})\to G^0(R))$.

\begin{lem}
The reduction map $\Tp G(R)\to\Tp\bG(k)$ is an isomorphism.
\end{lem}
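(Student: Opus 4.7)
The plan is to decompose $G$ via its canonical-lift structure and treat the two parts separately. First I would invoke Theorem \ref{th:serre_tate}: canonicity means $q(G/R;\alpha,\alpha_D)=1$ identically, which by the construction in that proof is equivalent to the classifying homomorphism $\varphi_{G/R}\colon\Tp\bG(k)\to G^0(R)$ being zero, i.e.\ to the Serre--Tate extension
\[0\to G^0\to G\to\Tp\bG(k)\otimes_\cO F/\cO\to 0\]
splitting. Hence $G\simeq G^0\oplus G^{\et}$ as $\cO$-divisible groups over $R$, where $G^{\et}$ is the unique \'{e}tale lift of $\bG^{\et}$, and so $\Tp G(R)=\Tp G^0(R)\oplus\Tp G^{\et}(R)$.

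Next I would dispatch the \'{e}tale summand: each $G^{\et}[p^m]$ is finite \'{e}tale over the henselian local ring $R$, so $G^{\et}[p^m](R)=G^{\et}[p^m](k)=\bG^{\et}[p^m](k)$, and in the limit $\Tp G^{\et}(R)=\Tp\bG^{\et}(k)$. Since the connected formal group $\bG^0$ has no nonzero $p$-power torsion in the perfect field $k$, this coincides with $\Tp\bG(k)$.

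The remaining task---and the core of the argument---is to show $\Tp G^0(R)=0$. My plan here is to filter by $\fm_R$-adic depth: after choosing coordinates $T_1,\dots,T_d$ for $G^0=\Spf R[[T_1,\dots,T_d]]$, let $V_k$ denote the subset of $G^0(R)\subset\fm_R^d$ consisting of points with all coordinates in $\fm_R^k$. Since $[p](X)=pX+O(X^2)$ and $p\in\fm_R$, one has $[p]V_k\subset V_{k+1}$, whence $[p^j]V_k\subset V_{k+j}$. If $(x_m)_m\in\Tp G^0(R)$ satisfies $[p]x_{m+1}=x_m$, then $x_m=[p^{N-m}]x_N\in V_{N-m+1}$ for every $N\geq m$, because $x_N\in V_1$; choosing $N\geq m+n$ forces $x_m\in V_{n+1}\subset(\fm_R^{n+1})^d=0$, so $x_m=0$ for all $m$.

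The only nontrivial step is the splitting of the Serre--Tate extension from canonicity; once that decomposition is in place, both halves reduce to routine arguments---henselianity plus finite \'{e}taleness for the \'{e}tale part, and nilpotence of $\fm_R$ combined with the shape of $[p]$ for the connected part.
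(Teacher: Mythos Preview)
Your proof is correct and follows essentially the same approach as the paper, which simply cites \cite{Kat81}*{Lemma 6.1}; Katz's argument there likewise uses the canonical-lift splitting $G\simeq G^0\oplus G^{\et}$, handles the \'{e}tale part by henselian lifting, and kills $\Tp G^0(R)$ via the $\fm_R$-adic filtration and nilpotence of $\fm_R$.
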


\begin{proof}
It follows from the same argument in \cite{Kat81}*{Lemma 6.1}.
\end{proof}

In particular, we may define
$\lambda_{G^\vee}\colon\Tp\bG(k)\to\Hom_{\BT_R}(G^\vee,\Gmf)$ and
\begin{align}\label{eq:o_pairing}
\omega_{G^\vee}\colon \Tp\bG(k)\to\Omega(G^\vee/R).
\end{align}

\begin{theorem}\label{th:o_main_b}
The Serre--Tate coordinate for $\tilde{G}/\tilde{R}$ satisfies
\[q(\tilde{G}/\tilde{R};\alpha,\alpha_D)=1+\varepsilon\omega_G(\alpha_D)
\cdot\partial(\omega_{G^\vee}(\alpha)).\]
\end{theorem}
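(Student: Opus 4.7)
The plan is to follow the strategy of \cite{Kat81}*{\Sec 5} (where the analogous statement is proved for ordinary $p$-divisible groups), making the necessary modifications to accommodate the $\cO$-action, the Lubin--Tate group $\LT$ in place of $\Gmf$, and the $\cO$-Cartier duality in place of the usual one. By the construction of the Serre--Tate coordinate in the proof of Theorem~\ref{th:serre_tate}, we have
\[
q(\tilde{G}/\tilde{R};\alpha,\alpha_D)=E_{\tilde{G}}\(\varphi_{\tilde{G}/\tilde{R}}(\alpha),\alpha_D\),
\]
where $\varphi_{\tilde{G}/\tilde{R}}\colon\Tp\bG(k)\to\tilde{G}^{0}(\tilde{R})$ is the $\cO$-linear map measuring the failure of the extension \eqref{eq:o_extension} to split (for $\tilde G/\tilde R$). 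Since $G/R$ is the \emph{canonical} deformation of $\bG$, the extension \eqref{eq:o_extension} splits canonically over $R$, i.e.\ $\varphi_{G/R}=0$. Consequently $\varphi_{\tilde{G}/\tilde{R}}$ factors through the kernel of the reduction $\tilde{G}^{0}(\tilde{R})\to G^{0}(R)$, which is canonically identified with $\varepsilon\cdot\Lie(G/R)$ via the formal group law on $\tilde G^0$.

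The first key step is to identify the induced $\cO$-linear map
\[
\bar\varphi\colon\Tp\bG(k)\to\Lie(G/R),\qquad \varphi_{\tilde{G}/\tilde{R}}(\alpha)=1+\varepsilon\bar\varphi(\alpha),
\]
with the composition $\partial\circ\omega_{G^{\vee}}$ (using \eqref{eq:o_pairing}). This is the heart of the argument: one interprets both sides via the crystalline/deformation-theoretic description of the extension of $\Tp\bG(k)\otimes_\cO F/\cO$ by $G^0$ and compares it, through the $\cO$-Cartier duality $\lambda_{G^\vee}\colon\Tp\bG(k)\xrightarrow{\sim}\Hom_{\BT_R}(G^\vee,\Gmf)$, to the infinitesimal deformation datum $\partial\colon\Omega(G^\vee/R)\to\Lie(G/R)$ attached to $\tilde G$. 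Concretely, one lifts $\alpha$ to a point of $\Tp G(R)$ (possible by the isomorphism $\Tp G(R)\simeq\Tp\bG(k)$), computes how this lift fails to extend to $\tilde{G}$, and sees that the discrepancy is exactly $\varepsilon\,\partial(\omega_{G^\vee}(\alpha))$ because $\omega_{G^\vee}(\alpha)=\lambda_{G^\vee}(\alpha)^{*}(\rd T/T)$ is the canonical element pulled back from $\Gmf$.

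The second step is to evaluate $E_{\tilde G}\bigl(1+\varepsilon\bar\varphi(\alpha),\alpha_D\bigr)$ via the $\cO$-Cartier pairing: unwinding the definition of $E_{\tilde G}$ in terms of the $\LT$-valued pairing lifting $E_\bG$, together with the definition $\omega_G(\alpha_D)=\lambda_G(\alpha_D)^{*}\omega_0$ with $\omega_0=\log^{*}\rd T$, one sees that the logarithm of $E_{\tilde G}(1+\varepsilon\delta,\alpha_D)$ equals $\varepsilon\,\omega_G(\alpha_D)\cdot\delta$ for $\delta\in\Lie(G/R)$. Since the value lies in $1+\varepsilon R$ and the logarithm on $\LT$ is an isomorphism there, this directly yields
\[
q(\tilde{G}/\tilde{R};\alpha,\alpha_D)=1+\varepsilon\,\omega_G(\alpha_D)\cdot\partial(\omega_{G^\vee}(\alpha)).
\]

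The main obstacle will be the first step---establishing the identification $\bar\varphi=\partial\circ\omega_{G^\vee}$. In \cite{Kat81} this is achieved by a delicate analysis using the Messing crystalline theory and the fact that $\Gmf$ represents the functor of units in the structure sheaf. Here one must use the analogue of Messing's theory for $\cO$-divisible groups (available through the work of Faltings \cite{Fal02}) and the fact that $\LT$ represents a corresponding $\cO$-linear functor after the chosen identification via $\log$ and $\omega_0$. The compatibility of the $\cO$-Cartier duality $\lambda_{G^\vee}$ with the deformation functor $\partial$, which is essentially a restatement of the fact that the Dieudonné crystal of $G^\vee$ computes deformations of $G$, allows one to translate between the splitting-obstruction description of $\bar\varphi$ and the differential description via $\partial\circ\omega_{G^\vee}$.
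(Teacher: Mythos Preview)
Your overall architecture matches the paper's exactly: write $q(\tilde G/\tilde R;\alpha,\alpha_D)=E_{\tilde G}(\varphi_{\tilde G/\tilde R}(\alpha),\alpha_D)$, observe that since $G/R$ is the canonical lift $\varphi_{\tilde G/\tilde R}$ lands in $\Ker(\tilde G^0(\tilde R)\to G^0(R))=\varepsilon\Lie(G/R)$, compute $E_{\tilde G}$ on that kernel as $1+\varepsilon\,\omega_G(\alpha_D)\cdot(-)$ (this is the unnamed lemma just before Theorem~\ref{th:o_main_b} in the paper, and your second step proves it the same way), and finally identify the resulting map $\Tp\bG(k)\to\Lie(G/R)$ with $\partial\circ\omega_{G^\vee}$.

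The divergence is in this last identification, which the paper isolates as Proposition~\ref{pr:infinitestimal}. You propose to prove it intrinsically via crystalline/Messing theory for $\cO$-divisible groups. The paper does \emph{not} do this: it instead realizes $\bG$ as $\bA[p^\infty]^+$ for an abelian variety $\bA/k$ equipped with an action of a CM field $E=E^+\otimes_\dQ K$ (with $E^+_p\simeq F$ and $p$ split in $K$) and a prime-to-$p$ polarization, so that by Serre--Tate $\fM_\bG$ also parameterizes deformations of $(\bA,\theta,i)$. The identity $\partial\omega_{G^\vee}(\alpha)=\varphi_G(\alpha)$ then becomes a statement about $A$, for which the paper cites \cite{Kat81}*{Lemma 5.4 \& \Sec 6.5} directly, noting that Katz's argument there (via normalized cocycles) does not require $A$ to be ordinary in the usual sense. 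The paper explicitly remarks that ``it is interesting to find a proof purely using $\cO$-divisible groups,'' i.e.\ the route you sketch is left open.

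So your proposal is not wrong, but the part you flag as ``the main obstacle'' is precisely the part the paper sidesteps. Your outline of how the crystalline comparison should go is plausible but schematic; Katz's original argument for this step in \cite{Kat81}*{\Sec 5--6} is not short and relies on the concrete cocycle description available for abelian varieties, and you have not indicated what replaces that in the purely $\cO$-divisible setting. The paper's detour through the auxiliary CM abelian variety buys a clean citation at the cost of the embedding construction; your route would be more intrinsic but needs the crystalline step actually written out before it can be called a proof.
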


\begin{lem}
For $\alpha_D\in\Tp\bG^D(k)$ and $\alpha\in\Ker(G^0(\tilde{R})\to
G^0(R))=\Lie(G/R)$, we have
\[E_G(\alpha,\alpha_D)=1+\varepsilon\omega_G(\alpha_D)\alpha.\]
\end{lem}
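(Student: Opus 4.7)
The plan is to unwind the construction of $E_G$ and reduce the statement to the elementary fact that a morphism of formal groups, restricted to infinitesimal elements lifting the identity, equals its derivative on Lie algebras.

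First I would recall that $E_G(\alpha,\alpha_D) = \lambda_G(\alpha_D)(\alpha)$ essentially by the very definition of $E_G$: the map $\lambda_G \colon \Tp\bG^D(k) \xrightarrow{\sim} \Hom_{\BT^{\cO}_R}(G^0,\LT)$ that appears in \S\ref{ss:o_divisible} is constructed so that evaluation of $\lambda_G(\alpha_D)$ at a point of $G^0$ recovers the Cartier/$\cO$-Cartier pairing with $\alpha_D$. Thus the question becomes the evaluation of the morphism of formal groups $\lambda_G(\alpha_D)\colon G^0 \to \LT$ on the element $\alpha \in \Ker(G^0(\tilde R) \to G^0(R))$.

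Next I would identify the relevant kernels. Working modulo $\varepsilon^2$, any morphism of formal groups $f\colon H_1 \to H_2$ over $R$ sends $\Ker(H_1(\tilde R) \to H_1(R))$ to $\Ker(H_2(\tilde R) \to H_2(R))$, and on these kernels it agrees with its Lie-algebra derivative $df \colon \Lie(H_1/R) \to \Lie(H_2/R)$ (the higher-order terms of the formal expansion vanish against $\varepsilon^2 = 0$). Applying this with $f = \lambda_G(\alpha_D)$, we see that $\lambda_G(\alpha_D)(\alpha)$ equals $\varepsilon$ times $d\lambda_G(\alpha_D)(\alpha) \in \Lie(\LT/R)$, under the canonical identification $\Ker(\LT(\tilde R) \to \LT(R)) = \varepsilon \cdot \Lie(\LT/R)$. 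The normalization $\omega_0 = \log^* dT$ gives a canonical isomorphism $\Lie(\LT/R) \simeq R$, via which the right-hand side $1 + \varepsilon(-)$ is interpreted.

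Finally, I would translate to cotangent spaces by duality: the linear functional $d\lambda_G(\alpha_D)(\alpha) \in \Lie(\LT/R)$ is characterized by its pairing with $\omega_0$, namely
\[
\langle \omega_0,\, d\lambda_G(\alpha_D)(\alpha)\rangle
= \langle \lambda_G(\alpha_D)^*\omega_0,\, \alpha\rangle
= \langle \omega_G(\alpha_D),\, \alpha\rangle
= \omega_G(\alpha_D)\cdot\alpha,
\]
where the middle equality is the definition of $\omega_G(\alpha_D)$ in \S\ref{ss:o_divisible}. Combining the two previous steps yields $E_G(\alpha,\alpha_D) = 1 + \varepsilon\,\omega_G(\alpha_D)\,\alpha$, as required. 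The only point requiring any care is bookkeeping with the ``$1 + \varepsilon(-)$'' vs.\ ``$\varepsilon \cdot \Lie$'' conventions for $\LT(\tilde R)$; once that identification is fixed consistently with the formulation of Theorem~\ref{th:o_main_b}, the computation is essentially formal.
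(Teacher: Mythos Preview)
Your argument is correct and rests on the same underlying idea as the paper's proof: on the kernel $\Ker(G^0(\tilde R)\to G^0(R))$ the map $\lambda_G(\alpha_D)\colon G^0\to\LT$ is just its derivative, and pairing with $\omega_0$ recovers $\omega_G(\alpha_D)\cdot\alpha$ by the very definition $\omega_G(\alpha_D)=\lambda_G(\alpha_D)^*\omega_0$.

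The paper takes a slightly less direct route: it first reduces by functoriality to the universal deformation $\fG/\fR$, then composes with the normalized logarithm $\log\colon\LT\to\Gaf$ over $\fR[p^{-1}]$, where the identity becomes trivial in the additive group; injectivity of $\fR\hookrightarrow\fR[p^{-1}]$ then gives the result. Your version avoids both the passage to the universal object and the inversion of $p$, working directly over the artinian $R$ via the elementary fact that a morphism of formal groups linearizes on $\varepsilon$-infinitesimals. The paper's detour is there only because the logarithm is a rational isogeny, hence needs a $p$-torsion-free base; your computation on Lie algebras sidesteps this entirely and is cleaner.
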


\begin{proof}
By functoriality, we only need to prove the lemma for the universal object
$\fG/\fR$. By definition,
\[1+\varepsilon\omega_\fG(\alpha_D)\alpha=1+\varepsilon\(\lambda_\fG(\alpha_D)_*\alpha\cdot\omega_0\)\in\LT(\tilde{R}).\]
We also have
\[\lambda_\fG(\alpha_D)_*\alpha\cdot\omega_{\LT}=(\log\circ\lambda_\fG(\alpha_D))_*\alpha\cdot\rd T\]
in $\fR[p^{-1}]$. Therefore, we have the equality
\[E_\fG(\alpha,\alpha_D)=1+\varepsilon\omega_\fG(\alpha_D)\alpha\] in
$\Ker(\LT(\tilde{\fR}[p^{-1}])\to\LT(\fR[p^{-1}]))$.
\end{proof}

For an integer $N>n$, denote by $\alpha_N$ the image of $\alpha$ in
$G[p^N](R)$. Let $\tilde\alpha_N\in\tilde{G}(\tilde{R})$ be an arbitrary
lifting of $\alpha_N$. Then
\[p^N\tilde\alpha_N\in\Ker(\tilde{G}(\tilde{R})\to G(R))
=\Ker(\tilde{G}^0(\tilde{R})\to G^0(R))\simeq\Lie(G/R).\] Such process
defines a map $\varphi_G\colon\Tp G(R)\to\Lie(G/R)$.

\begin{proposition}\label{pr:infinitestimal}
We have $\partial\omega_{G^\vee}(\alpha)=\varphi_G(\alpha)$ for every
$\alpha\in\Tp G(R)$.
\end{proposition}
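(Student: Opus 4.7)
The plan is to reduce to the classical case of $p$-divisible groups established by Katz in \cite{Kat81}*{\Sec 6}. Both $\omega_{G^\vee}$ (defined in \eqref{eq:o_pairing}) and $\varphi_G$ depend only on the underlying $p$-divisible group structure and on ordinary Cartier duality with $\Gmf$; the $\cO$-action intervenes only in subsequent identifications and plays no role in the infinitesimal computation. So I would first simply forget the $\cO$-structure and treat the statement as one about $p$-divisible groups.

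Next, I would reinterpret both sides as Grothendieck--Messing obstructions. Under the Cartier duality $\Tp G^{\et}(R)\simeq\Hom_{\BT_R}((G^{\et})^\vee,\Gmf)$, the element $\alpha$ corresponds to a homomorphism $\lambda\colonequals\lambda_{G^\vee}(\alpha)$, and $\omega_{G^\vee}(\alpha)=\lambda^*(\rd T/T)$. By naturality of the Kodaira--Spencer-type derivative $\partial$ attached to the deformation $\tilde G/\tilde R$, the element $\partial\omega_{G^\vee}(\alpha)\in\Lie(G/R)$ is precisely the Messing obstruction to lifting $\lambda$ to a morphism $(\tilde G^{\et})^\vee\to\Gmf$ over $\tilde R$. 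On the other hand, $\varphi_G(\alpha)=p^N\tilde\alpha_N$ is well-defined once $N>n$: in that range $p^N\in\fm_R^{n+1}=0$ (since $p\in\fm_R$), so any two choices of lift of $\alpha_N$ produce the same $p^N\tilde\alpha_N$. Under Cartier duality, this $\varphi_G(\alpha)$ is the obstruction to lifting $\alpha$ itself to $\Tp\tilde G(\tilde R)$, which is the same as the obstruction to lifting $\lambda$ (both arrow directions of the Cartier-dual pair lift together).

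The identification of these two obstructions as the same element of $\Lie(G/R)$ is the main content and the main obstacle. I would carry it out by an explicit computation inside the universal vector extension $E(\tilde G)$: lift $\tilde\alpha_N$ to an element of $E(\tilde G)(\tilde R)$, multiply by $p^N$, and observe that the result equals, on the one hand, $p^N\tilde\alpha_N=\varphi_G(\alpha)$ in the quotient $\tilde G(\tilde R)$, and, on the other hand, the pairing of $\omega_{G^\vee}(\alpha)$ (viewed inside $\rD(G)(\tilde R)$ via the crystal) with the Hodge-filtration lift determined by the deformation $\tilde G$, which by definition unwinds to $\partial\omega_{G^\vee}(\alpha)$. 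The remaining bookkeeping -- the identification $\Ker(\tilde G^0(\tilde R)\to G^0(R))\simeq\Lie(G/R)$ via $\varepsilon$, the compatibility of $\lambda_{G^\vee}$ with Cartier duality at the deformation level, and the Messing description of $\partial$ -- is handled as in Katz's original argument, and carries over verbatim to our $\cO$-divisible setting.
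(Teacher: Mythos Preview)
Your approach differs from the paper's and contains a real gap at the key step.

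The paper does \emph{not} try to prove the identity directly for $p$-divisible groups. Instead, it realizes the given ordinary $\cO$-divisible group $\bG$ as the $\fp^+$-component of $\bA[p^\infty]$ for a suitable CM abelian variety $\bA$ over $k$ (with $O_E$-action for a CM field $E=E^+K$, where $E^+$ is totally real with $E^+\otimes_\dQ\dQ_p\simeq F$ and $p$ splits in $K$), so that deformations of $\bG$ correspond via Serre--Tate to deformations of $(\bA,\theta,i)$. Both $\omega_{G^\vee}(\alpha)$ and $\varphi_G(\alpha)$ are then the restrictions of the analogous $\omega_{A^\vee}(\alpha)$ and $\varphi_A(\alpha)$, and the identity $\partial\omega_{A^\vee}(\alpha)=\varphi_A(\alpha)$ is exactly \cite{Kat81}*{Lemma 5.4 \& \Sec 6.5}. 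The point is that Katz's argument there, based on normalized cocycles, works for \emph{abelian schemes} and does not require $A$ to be ordinary in the usual sense. The paper even remarks that ``it is interesting to find a proof purely using $\cO$-divisible groups.''

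Your proposal attempts exactly that --- a proof staying with $p$-divisible groups via Grothendieck--Messing and the universal vector extension --- but the final appeal to ``Katz's original argument'' is where it breaks. Katz's computation in \cite{Kat81}*{\Sec 5--6} is carried out for abelian schemes using normalized cocycles and the biextension structure; there is no version there for bare $p$-divisible groups that you can cite. Moreover, once you forget the $\cO$-structure, the underlying $p$-divisible group of an ordinary $\cO$-divisible group is \emph{not} ordinary over $\dZ_p$ when $F\neq\dQ_p$ (its connected part is a power of $\LT$, not of $\Gmf$), so one cannot fall back on Katz's ordinary theory either. Making your route rigorous would require redoing \cite{Kat81}*{\Sec 6} in the crystalline language of Messing for general $p$-divisible groups --- that is precisely the content you have deferred, and precisely what the paper sidesteps by passing to an abelian variety.
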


Assuming the above proposition, we prove Theorem \ref{th:o_main_b}.

\begin{proof}[Proof of Theorem \ref{th:o_main_b}]
It is clear that $G^0\otimes_R\tilde{R}$ is the unique, up to isomorphism,
deformation of $G^0$ to $\tilde{R}$. Then the deformation $\tilde{G}$
corresponds to the extension
\[\xymatrix{
0 \ar[r] &  G^0\otimes_R\tilde{R} \ar[r] & \tilde{G} \ar[r] &
\Tp\bG(k)\otimes_\cO F/\cO \ar[r] & 0. }\] In particular, we may identify
$\tilde{G}^0$ with $G^0\otimes_R\tilde{R}$. We have
\[\Ker(\tilde{G}(\tilde{R})\to G(R))
=\Ker(\tilde{G}^0(\tilde{R})\to G^0(R))=\Ker(G^0(\tilde{R})
\to G^0(R))=\Lie(G^0/R).\] For $D\in\Lie(G^0/R)$, we have
\[E_{\tilde{G}}(D,-)=E_G(D,-)\colon\Tp\bG^D(k)\simeq\Tp(\bG^0)^D(k)\to\LT(\tilde{R}),\]
where in the pairing $E_{\tilde{G}}$ (resp.\ $E_G$), we view $D$ as an
element of $\tilde{G}^0(\tilde{R})$ (resp.\ $G^0(\tilde{R})$). For
$\alpha_D\in\Tp\bG^D(k)$, we have
\[E_G(D,\alpha_D)=1+\varepsilon\omega_G(\omega_\alpha)D\]
by definition. Therefore, Theorem \ref{th:o_main_b} follows from Proposition
\ref{pr:infinitestimal} and the construction of $q$.
\end{proof}

The rest of the appendix is devoted to the proof of Proposition
\ref{pr:infinitestimal}. We will reduce it to certain statements in
\cite{Kat81} about abelian varieties. It is interesting to find a proof
purely using $\cO$-divisible groups.

Recall that ordinary $\cO$-divisible groups over $k$ are classified by its
dimension and $\cO$-height. Let $\bG_{r,s}$ be an $\cO$-divisible group of
dimension $r$ and $\cO$-height $r+s$ with $r\geq 0,r+s>0$.

Choose a totally real number field $E^+$ such that $F\simeq
E^+\otimes_\dQ\dQ_p\simeq$, and an imaginary quadratic field $K$ in which
$p=\fp^+\fp^-$ splits. Put $E=E^+\otimes_\dQ K$. Suppose
$\tau_1,\tau_2,\dots,\tau_h$ are all complex embeddings of $E^+$. Consider
the data $(\bA_{r,s},\theta,i)$ where
\begin{itemize}
  \item $\bA_{r,s}$ is an abelian variety over $k$;
  \item $\theta\colon\bA_{r,s}\to\bA_{r,s}^\vee$ is a prime-to-$p$
      polarization;
  \item $i\colon O_E\to\End_k\bA_{r,s}$ is an $O_E$-action which sends
      the complex conjugation on $O_E$ to the Rosati involution and such
      that, in the induced decomposition
      \[\bA_{r,s}[p^\infty]=\bA_{r,s}[p^\infty]^+\oplus\bA_{r,s}[p^\infty]^-\]
      of the $O_E\otimes\dZ_p$-module $\bA_{r,s}[p^\infty]$,
      $\bA_{r,s}[p^\infty]^+$ is isomorphic to $\bG_{r,s}$ as an
      $\cO$-divisible group.
\end{itemize}
It is clear that the polarization $\theta$ induces an isomorphism
$\bA_{r,s}[p^\infty]^+\xrightarrow{\sim}(\bA_{r,s}[p^\infty]^-)^\vee$. By
Serre--Tate theorem, $\fM_{\bG_{r,s}}$ also parameterizes deformation of the
triple $(\bA_{r,s},\theta,i)$. In what follows, we fix $r,s$ and suppress
them from notation. Let $R$ be as in Theorem \ref{th:serre_tate}, $A/R$ be
the canonical deformation of $\bA/k$, and $\tilde{A}$ be a deformation of $A$
to $\tilde{R}$ such that $\tilde{G}\simeq\tilde{A}[p^\infty]^+$.

There is a similar map \eqref{eq:o_pairing} for $A$ and we have
$\omega_{G^\vee}(\alpha)=\omega_{A^\vee}(\alpha)$ for $\alpha\in\Tp
G(R)\subset\Tp A(R)$, where we view $\Omega(G^\vee/R)$ as a submodule of
$\rH^0(A^\vee,\Omega^1_{A^\vee/R})$. Moreover, the map $\varphi_G\colon\Tp
G(R)\to\Lie(G/R)$ can be extended in a same way to a map $\varphi_A\colon\Tp
A(R)\to\Lie(A/R)$. Then Proposition \ref{pr:infinitestimal} follows from
\cite{Kat81}*{Lemma 5.4 \& \Sec 6.5}, where the argument uses normalized
cocycles and does \emph{not} require $A$ to be ordinary in the usual sense.

\begin{bibdiv}
\begin{biblist}

\bib{BDP13}{article}{
   author={Bertolini, Massimo},
   author={Darmon, Henri},
   author={Prasanna, Kartik},
   title={Generalized Heegner cycles and $p$-adic Rankin $L$-series},
   note={With an appendix by Brian Conrad},
   journal={Duke Math. J.},
   volume={162},
   date={2013},
   number={6},
   pages={1033--1148},
   issn={0012-7094},
   review={\MR{3053566}},
   doi={10.1215/00127094-2142056},
}

\bib{BGR84}{book}{
   author={Bosch, S.},
   author={G{\"u}ntzer, U.},
   author={Remmert, R.},
   title={Non-Archimedean analysis},
   series={Grundlehren der Mathematischen Wissenschaften [Fundamental
   Principles of Mathematical Sciences]},
   volume={261},
   note={A systematic approach to rigid analytic geometry},
   publisher={Springer-Verlag},
   place={Berlin},
   date={1984},
   pages={xii+436},
   isbn={3-540-12546-9},
   review={\MR{746961 (86b:32031)}},
}

\bib{Bour}{book}{
   author={Bourbaki, Nicolas},
   title={Lie groups and Lie algebras. Chapters 1--3},
   series={Elements of Mathematics (Berlin)},
   note={Translated from the French;
   Reprint of the 1975 edition},
   publisher={Springer-Verlag},
   place={Berlin},
   date={1989},
   pages={xviii+450},
   isbn={3-540-50218-1},
   review={\MR{979493 (89k:17001)}},
}

\bib{Bro}{book}{
   author={Brooks, Ernest H.},
   title={Generalized Heegner cycles, Shimura curves, and special values of $p$-adic $L$-functions},
   note={Thesis (Ph.D.)--University of Michigan},
   date={2013},
}

\bib{Bum97}{book}{
   author={Bump, Daniel},
   title={Automorphic forms and representations},
   series={Cambridge Studies in Advanced Mathematics},
   volume={55},
   publisher={Cambridge University Press},
   place={Cambridge},
   date={1997},
   pages={xiv+574},
   isbn={0-521-55098-X},
   review={\MR{1431508 (97k:11080)}},
   doi={10.1017/CBO9780511609572},
}

\bib{Car86}{article}{
   author={Carayol, Henri},
   title={Sur la mauvaise r\'eduction des courbes de Shimura},
   language={French},
   journal={Compositio Math.},
   volume={59},
   date={1986},
   number={2},
   pages={151--230},
   issn={0010-437X},
   review={\MR{860139 (88a:11058)}},
}

\bib{Col85}{article}{
   author={Coleman, Robert F.},
   title={Torsion points on curves and $p$-adic abelian integrals},
   journal={Ann. of Math. (2)},
   volume={121},
   date={1985},
   number={1},
   pages={111--168},
   issn={0003-486X},
   review={\MR{782557 (86j:14014)}},
   doi={10.2307/1971194},
}

\bib{Col94}{article}{
   author={Coleman, Robert F.},
   title={A $p$-adic Shimura isomorphism and $p$-adic periods of modular
   forms},
   conference={
      title={$p$-adic monodromy and the Birch and Swinnerton-Dyer conjecture
      },
      address={Boston, MA},
      date={1991},
   },
   book={
      series={Contemp. Math.},
      volume={165},
      publisher={Amer. Math. Soc., Providence, RI},
   },
   date={1994},
   pages={21--51},
   review={\MR{1279600 (96a:11050)}},
   doi={10.1090/conm/165/01602},
}

\bib{DT94}{article}{
   author={Diamond, Fred},
   author={Taylor, Richard},
   title={Nonoptimal levels of mod $l$ modular representations},
   journal={Invent. Math.},
   volume={115},
   date={1994},
   number={3},
   pages={435--462},
   issn={0020-9910},
   review={\MR{1262939 (95c:11060)}},
   doi={10.1007/BF01231768},
}

\bib{Eme}{article}{
   author={Emerton, Matthew},
   title={Locally analytic vectors in representations of locally $p$-adic analytic groups},
   journal={Mem. Amer. Math. Soc. (to appear)},
   eprint={http://www.math.uchicago.edu/~emerton/pdffiles/analytic.pdf},
}

\bib{Fal02}{article}{
   author={Faltings, Gerd},
   title={Group schemes with strict $\mathcal O$-action},
   note={Dedicated to Yuri I. Manin on the occasion of his 65th birthday},
   journal={Mosc. Math. J.},
   volume={2},
   date={2002},
   number={2},
   pages={249--279},
   issn={1609-3321},
   review={\MR{1944507 (2004i:14052)}},
}

%\bib{Fea99}{article}{
%   author={F{\'e}aux de Lacroix, Christian Tobias},
%   title={Einige Resultate \"uber die topologischen Darstellungen
%   $p$-adischer Liegruppen auf unendlich dimensionalen Vektorr\"aumen \"uber
%   einem $p$-adischen K\"orper},
%   language={German},
%   conference={
%      title={Schriftenreihe des Mathematischen Instituts der Universit\"at
%      M\"unster. 3. Serie, Heft 23},
%   },
%   book={
%      series={Schriftenreihe Math. Inst. Univ. M\"unster 3. Ser.},
%      volume={23},
%      publisher={Univ. M\"unster},
%      place={M\"unster},
%   },
%   date={1999},
%   pages={x+111},
%   review={\MR{1691735 (2000k:22021)}},
%}

%\bib{Ill85}{article}{
%   author={Illusie, Luc},
%   title={D\'eformations de groupes de Barsotti-Tate (d'apr\`es A.
%   Grothendieck)},
%   language={French},
%   note={Seminar on arithmetic bundles: the Mordell conjecture (Paris,
%   1983/84)},
%   journal={Ast\'erisque},
%   number={127},
%   date={1985},
%   pages={151--198},
%   issn={0303-1179},
%   review={\MR{801922}},
%}

\bib{Kas04}{article}{
   author={Kassaei, Payman L.},
   title={$\mathcal P$-adic modular forms over Shimura curves over totally real
   fields},
   journal={Compos. Math.},
   volume={140},
   date={2004},
   number={2},
   pages={359--395},
   issn={0010-437X},
   review={\MR{2027194 (2004m:11082)}},
   doi={10.1112/S0010437X03000150},
}

\bib{Kat73}{article}{
   author={Katz, Nicholas M.},
   title={Travaux de Dwork},
   language={French, with English summary},
   conference={
      title={S\'eminaire Bourbaki, 24\`eme ann\'ee (1971/1972), Exp. No.
      409},
   },
   book={
      publisher={Springer},
      place={Berlin},
   },
   date={1973},
   pages={167--200. Lecture Notes in Math., Vol. 317},
   review={\MR{0498577 (58 \#16672)}},
}

\bib{Kat78}{article}{
   author={Katz, Nicholas M.},
   title={$p$-adic $L$-functions for CM fields},
   journal={Invent. Math.},
   volume={49},
   date={1978},
   number={3},
   pages={199--297},
   issn={0020-9910},
   review={\MR{513095 (80h:10039)}},
   doi={10.1007/BF01390187},
}

\bib{Kat81}{article}{
   author={Katz, Nicholas M.},
   title={Serre--Tate local moduli},
   conference={
      title={Algebraic surfaces},
      address={Orsay},
      date={1976--78},
   },
   book={
      series={Lecture Notes in Math.},
      volume={868},
      publisher={Springer},
      place={Berlin},
   },
   date={1981},
   pages={138--202},
   review={\MR{638600 (83k:14039b)}},
}

\bib{KW09}{article}{
   author={Khare, Chandrashekhar},
   author={Wintenberger, Jean-Pierre},
   title={Serre's modularity conjecture. I \& II},
   journal={Invent. Math.},
   volume={178},
   date={2009},
   number={3},
   pages={485--504 \& 505--586},
   issn={0020-9910},
   review={\MR{2551763 (2010k:11087)} \& \MR{2551764 (2010k:11088)}},
   doi={10.1007/s00222-009-0205-7 \& 10.1007/s00222-009-0206-6},
}

\bib{Mil92}{article}{
   author={Milne, James S.},
   title={The points on a Shimura variety modulo a prime of good reduction},
   conference={
      title={The zeta functions of Picard modular surfaces},
   },
   book={
      publisher={Univ. Montr\'eal},
      place={Montreal, QC},
   },
   date={1992},
   pages={151--253},
   review={\MR{1155229 (94g:11041)}},
}

%\bib{Mor09}{article}{
%   author={Mori, A.},
%   title={Power series expansions of modular forms and their interpolation properties},
%    note={\href{http://arxiv.org/abs/math/0406388}{arXiv:math/0406388v2}},
%    date={2009},
%}

\bib{Rib92}{article}{
   author={Ribet, Kenneth A.},
   title={Abelian varieties over ${\bf Q}$ and modular forms},
   conference={
      title={Algebra and topology 1992 (Taej\u on)},
   },
   book={
      publisher={Korea Adv. Inst. Sci. Tech.},
      place={Taej\u on},
   },
   date={1992},
   pages={53--79},
   review={\MR{1212980 (94g:11042)}},
}

\bib{Rub92}{article}{
   author={Rubin, Karl},
   title={$p$-adic $L$-functions and rational points on elliptic curves with
   complex multiplication},
   journal={Invent. Math.},
   volume={107},
   date={1992},
   number={2},
   pages={323--350},
   issn={0020-9910},
   review={\MR{1144427 (92m:11063)}},
   doi={10.1007/BF01231893},
}

\bib{Sai93}{article}{
   author={Saito, Hiroshi},
   title={On Tunnell's formula for characters of ${\rm GL}(2)$},
   journal={Compositio Math.},
   volume={85},
   date={1993},
   number={1},
   pages={99--108},
   issn={0010-437X},
   review={\MR{1199206 (93m:22021)}},
}

\bib{ST01}{article}{
   author={Schneider, P.},
   author={Teitelbaum, J.},
   title={$p$-adic Fourier theory},
   journal={Doc. Math.},
   volume={6},
   date={2001},
   pages={447--481 (electronic)},
   issn={1431-0635},
   review={\MR{1871671 (2002j:11143)}},
}

\bib{Tun83}{article}{
   author={Tunnell, Jerrold B.},
   title={Local $\epsilon $-factors and characters of ${\rm GL}(2)$},
   journal={Amer. J. Math.},
   volume={105},
   date={1983},
   number={6},
   pages={1277--1307},
   issn={0002-9327},
   review={\MR{721997 (86a:22018)}},
   doi={10.2307/2374441},
}

\bib{Wal85}{article}{
   author={Waldspurger, J.-L.},
   title={Sur les valeurs de certaines fonctions $L$ automorphes en leur
   centre de sym\'etrie},
   language={French},
   journal={Compositio Math.},
   volume={54},
   date={1985},
   number={2},
   pages={173--242},
   issn={0010-437X},
   review={\MR{783511 (87g:11061b)}},
}

\bib{YZZ13}{book}{
   author={Yuan, X.},
   author={Zhang, S.-W.},
   author={Zhang, W.},
   title={The Gross--Zagier Formula on Shimura Curves},
   series={Annals of Mathematics Studies},
   volume={184},
   publisher={Princeton University Press},
   place={Princeton, NJ},
   date={2013},
   pages={viii+256},
   isbn={978-0-691-15591-3},
   isbn={0-691-15592-0},
}

\end{biblist}
\end{bibdiv}

\printindex

\end{document}